\documentclass{amsart}

\usepackage{myPreamble}
\author{Madie Farris}
\address{Department of Mathematics and Computer Science\\Moravian University\\1200 Main Street\\Bethlehem, PA 18018\\ USA}
\email{farrism02@moravian.edu}

\author{Philipp Hieronymi}
\address{Mathematical Institute\\ University of Bonn\\
Endenicher Allee 60\\ 53115 Bonn\\ Germany}
\email{hieronymi@math.uni-bonn.de}

\title{Uniform bounds in d-minimal structures}
\date{\today}

\begin{document}

\begin{abstract}
Let $\R$ be an expansion of the real field such that every subset of $\RR$ definable in $\R$ either has interior or is a finite union of discrete sets. Answering a question by Chris Miller, we show that 
for every $n\in \NN$ and every definable subset $A\subseteq \RR^{n+1}$ there is $N\in \NN$ such that for all $x\in \RR^n$ either $A_x$ has interior or is the union of $N$ discrete sets.
\end{abstract}
\maketitle

\section{Introduction}

Let $\R$ be an expansion of the real field $(\RR,<,+,\cdot)$. When we say a set $X\subseteq \RR^n$ is definable, we mean $X$ is definable in $\R$, possibly with parameters. We say $\R$ is \textbf{o-minimal} if every definable subset of $\RR$ is a finite union of intervals and points.  O-minimality was isolated by van den Dries \cite{vdD-tarski} to generalize key results from semi-algebraic geometry, and developed by Pillay and Steinhorn \cite{PSI} as a tameness notion in the setting of dense linear orders.  The last decade has seen a surge in applications, ranging from number theory and algebraic geometry to theoretical physics and machine learning. \newline

One of the foundational results in the study of o-minimality is that the finiteness condition in the definition of o-minimality holds uniformly in the parameters used to define the set. Indeed, the expansion $\R$ is said to be \textbf{strongly o-minimal} if for every $n\in \NN$ and every definable subset $A\subseteq \RR^{n+1}$ there is $N\in \NN$ such that for all $x\in \RR^n$ the fiber $A_x$ is a union of $N$ intervals and points. Using cell decomposition, Knight, Pillay, and Steinhorn \cite{PSII,PSIII} show that o-minimality implies strong o-minimality in arbitrary o-minimal structures.\\

In this paper, we extend this uniformity to a large class of structures introduced by Miller in \cite{Miller-tame}. 
We say that $\R$ is \textbf{d-minimal} if every definable subset of $\RR$ is a union of an open set and finitely many discrete sets, and that $\R$ is \textbf{strongly d-minimal} if for every $n\in \NN$ and every definable subset $A\subseteq \RR^{n+1}$ there is $N\in \NN$ such that for all $x\in \RR^n$ either $A_x$ has interior or is the union of $N$ discrete sets. The first known examples of strongly d-minimal (yet not o-minimal) expansions of $\RR$ are expansions by a cyclic multiplicative subgroup of $\RR_{>0}$ in \cite{vdd-Powers2}, although they predate the definition of d-minimality. Miller and Speissegger observe (see \cite[Corollary after Theorem 3.4.2]{Miller-tame}) that for $\omega \in \RR_{>0}$ the real field expanded by the logarithmic spiral
\[
S_{\omega} := \{  (e^t\cos(\omega t),e^t \sin(\omega t) \ : \ t \in \RR \}
\]
is strongly d-minimal. As explained in \cite{Miller-linear}, such structures play a crucial role in the study of expansions of the real field as the only examples of tame, yet not o-minimal, expansions by a locally closed trajectory of a vector field. For more examples of strongly d-minimal expansions see Friedman and Miller \cite{FM-Sparse,Miller-fast} and Miller and Tyne \cite{Miller-iteration}.\\

As noted, all the above examples are strongly d-minimal. Indeed, it has been a long standing open question, first raised in \cite{Miller-tame}, whether there is as an expansion of $\RR$ that is not strongly d-minimal, but d-minimal. We answer this question here.

\begin{thm}\label{thm:uniform2}
If $\R$ is d-minimal, then $\R$ is strongly d-minimal.
\end{thm}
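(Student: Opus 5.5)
The plan is to pass from ``union of $N$ discrete sets'' to a bound on Cantor--Bendixson rank, and then to show that a definable family whose ranks are unbounded produces a single definable subset of $\RR$ of infinite rank. That contradicts d-minimality.

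For $E\subseteq \RR$ write $E'$ for its derived set and $E^{(k)}$ for the $k$-th iterate. If $E$ has no interior and $\overline{E}^{(N)}=\emptyset$, then $E=\bigcup_{k<N}\bigl(E\cap(\overline{E}^{(k)}\setminus\overline{E}^{(k+1)})\bigr)$. Each piece consists of isolated points of a closed set, hence is discrete. So it suffices to bound the rank of $\overline{A_x}$ uniformly over those $x$ for which $A_x$ has no interior. For this I need the known consequence of d-minimality (Miller) that a definable $E\subseteq\RR$ without interior has $\overline{E}$ of finite rank. I would use it in this form. Conversely, a bound $N$ on the number of discrete pieces gives only that each fiber is finite-rank, not uniformity; so uniformity of rank is the real content. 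For each $k$, the set
\[
X_k:=\{x\in\RR^n : A_x \text{ has no interior and } \overline{A_x}^{(k)}\neq\emptyset\}
\]
is definable, because closure and derived set are first-order. The sets $X_k$ are decreasing, and the theorem is equivalent to $X_k=\emptyset$ for some $k$. Assume toward a contradiction that all $X_k$ are nonempty.

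First I reduce to $n=1$ by induction on $n$. Write $x=(x_1,x'')$, and for fixed $x_1$ view $A$ as a family over $x''\in\RR^{n-1}$. The induction hypothesis gives, for each $x_1$, a bound $N(x_1)$; the function $x_1\mapsto N(x_1)$ is not definable, but the sets $\{x_1 : \exists x''\,(x_1,x'')\in X_k\}$ are definable and decreasing. Applying the $n=1$ case to a suitable auxiliary family should make $N$ bounded. Concretely, I take the projection to $x_1$ of the set of points $(x,y)$ with $y\in\overline{A_x}^{(k)}$ and $x\in X_k$. I expect this bookkeeping to be routine once the case $n=1$ is established with some flexibility, namely for families of closed nowhere dense fibers.

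For $n=1$, I replace $A$ by $B=\{(x,y): x\in X_0,\ y\in\overline{A_x}\}$, a definable subset of $\RR^2$ all of whose vertical fibers are closed and nowhere dense. I then look at the linear maps $\pi_t(x,y)=y+tx$. The aim is to find a parameter $t$ and a small box $U$ such that $\pi_t(B\cap U)\subseteq\RR$ has infinite Cantor--Bendixson rank; this set is definable, so that is the desired contradiction. To pick the box, I use d-minimality of $X_k$. Each $X_k$ is an open set together with finitely many discrete sets, so either some $X_k$ has interior near a point $x^*$ lying in all $X_k$, or the $X_k$ accumulate. In either case I choose $x^*$ together with points $x_k\to x^*$, $x_k\in X_k$, with rank-$k$ configurations in $\overline{A_{x_k}}$ lying in a fixed bounded window. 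Under $\pi_t$, a rank-$k$ configuration in the fiber over $x_k$ maps by a translation to a rank-$k$ configuration in $\RR$. The problem is to keep the images of different fibers from piling up and destroying isolation. This is where genericity of $t$ comes in. Each of the countably many relevant configurations (using that discrete sets, and hence the layers $\overline{A_x}^{(j)}\setminus\overline{A_x}^{(j+1)}$ along suitable countable choices, are countable) is spoiled by at most a small set of $t$. A Baire-category argument then gives a $t$ for which the images over distinct $x_k$ are separated. Their union then has rank at least $k$ for every $k$, hence infinite rank.

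I expect the main obstacle to be this last step: controlling $\pi_t(B\cap U)$ when $B$ has uncountably many nonempty fibers, for instance when $X_k$ has interior. Then other fibers, not just the chosen $x_k$, also project into the window, and the image could acquire interior or collapse the ranks. What I would try first is to shrink $U$ and use the fiberwise structure. By d-minimality applied to the definable sets $\{x : y\in\overline{A_x}\}$ and to definable ``rank-$j$ point'' predicates, I would try to show that on an open box $B$ is a finite union of graphs of continuous definable functions of $x$ together with a nowhere dense remainder, in the spirit of Miller's results on d-minimal structures. Once that is done, projecting along a direction transverse to all these graphs preserves isolation locally, and the rank computation goes through.
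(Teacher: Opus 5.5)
There is a genuine gap. Your first reduction is correct and matches how the paper finishes: a uniform bound on $\rk(\cl(A_x))$ over the $x$ with $A_x$ of empty interior gives the theorem, via Fact~\ref{fact:RK} or your layer decomposition.

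\textbf{Where the argument fails.} The problem is the case that carries all the difficulty: when the parameters with high-rank fibers form a set with interior.

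The other case needs none of your projection or genericity machinery. If $X_0$ has empty interior, it is countable by d-minimality. Then $E=\bigcup_{x\in X_0}\cl(A_x)$ is a countable definable set, so $\cl(E)$ has finite rank $N$. Derived sets are monotone under inclusion, so $\cl(A_x)^{(N)}\subseteq\cl(E)^{(N)}=\emptyset$ for every $x\in X_0$. In particular, overlapping images can never lower rank, so separating them by a generic $t$ is pointless. (Also, no $x^*$ lies in all $X_k$: each fiber has finite rank, so $\bigcap_k X_k=\emptyset$.)

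A linear image therefore fails to give a contradiction only by acquiring interior, and that is what happens once $X_k$ has interior. If $B\cap U$ contains the graph of a continuous $f$ over an interval, then $\pi_t(B\cap U)\supseteq\{f(x)+tx\}$, which is a nondegenerate interval unless $f$ is affine of slope $-t$. For instance, for $B=\{(x,y): y-x^2\in F\}$ with $F$ closed and countable, $\pi_t(B\cap U)$ contains an interval for every $t$ and every box $U$ meeting $B$. Once the image has interior, d-minimality says nothing about the high-rank translates sitting inside it.

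Your proposed fix does not help. Projecting graphs transversally produces intervals, not isolated points. A local description of $B$ as finitely many continuous graphs plus a nowhere dense remainder only sees locally finite fibers, whereas the rank lives at accumulation points, where such a description breaks down. Moreover, the decomposition results of \cite{Miller-tame} you would invoke are proved under the uniform bound (Miller's definition of d-minimality includes it), so appealing to them is circular. Your reduction from $n$ parameters to one runs into the same obstacle: the natural auxiliary family $\bigcup_{x''}\cl(A_{(x_1,x'')})$ typically has interior.

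\textbf{The missing idea.} You need a \emph{definable} way to pick countably many representative parameters without losing rank. That requires knowing that the rank of $\cl(A_x)$ is constant along connected pieces of the parameter space, and this is the heart of the paper.

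The paper proves, by a simultaneous induction with the uniform rank bound, that every definable set is a finite union of surfaces of grids, i.e.\ increasing definable families $(S_d)_{d\in D}$ of finite stacks. Theorem~\ref{prop:boundfortowers} then shows, by comparing ``approximation types'' of points relative to the stacks $S_d$, that $\rk(\cl(\surf(\mathcal{S})_x))$ is constant as $x$ ranges over a cell. Over a base with interior, the paper passes to the midpoints of the connected components of the base (Lemma~\ref{lem:UCB}). This reduces to a discrete base, where a countable-union argument like the one above applies (the paper uses a rescaling). Nothing in your proposal plays the role of this constancy statement or of the decomposition that makes it available.
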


What we call \emph{strong d-minimality} here is called \emph{d-minimality} in \cite{Miller-tame}. The reason for dropping \emph{strong} in the original definition might have been that the uniformity was necessary to prove the first cell decomposition results for such structures, and it was far from clear whether a result like Theorem \ref{thm:uniform2} would actually be true.\newline

While (strong) d-minimality has not been studied as extensively as o-minimality, a countable cell decomposition theorem is known (see \cite[Theorem 4]{Miller-tame} and Thamrongthanyalak \cite[Theorem B]{Athipat-Michael}): if $\R$ is strongly d-minimal, then every definable subset of $\RR^n$ is a union of countably many cells. In contrast to the o-minimal case, the uniform bounds are used as an assumption and not obtained as a conclusion. Here, we prove a countable cell decomposition theorem for d-minimal structures that allows us to deduce the uniformity as part of its proof.\newline

Cells are defined as in o-minimal structures (see \cref{defn:cells}). We then introduce \textbf{stacks} as finite disjoint unions of cells of the same type (see \cref{defn:stacks}) such that these cells are also the connected components of the stack. Thus stacks have finitely many connected components, and their definitions make perfect sense in o-minimal structures. However, in d-minimal structures we have to account for sets with countably many connected components. To do so, we introduce grids. Let $D\subseteq (0,1]$ be a discrete set such that $1\in D$ and $D\cup \{0\}$ is closed. We call such a set a \textbf{sequence set}. Then a \textbf{grid} is a definable family $(S_d)_{d\in D}$ of stacks satisfying certain further conditions (see \cref{defn:grid}). In particular, for every $d,e\in D$ with $d>e$, we require that $S_d \subseteq S_e$ and every connected component of $S_d$ is also a connected component of $S_e$. The \textbf{surface} of a grid is defined as the union $\bigcup_{d\in D} S_d$.  A collection $\mathcal{A}$ of subsets of $\RR^n$ is \textbf{compatible} with a collection $\mathcal{B}$ of subsets of $\RR^n$ if for
every $A \in \mathcal{A}$ and every $B\in \mathcal{B}$, either $A\subseteq B$ or $A\cap B = \emptyset$.

\begin{thm}[Grid decomposition theorem]\label{thm:intro2}
Suppose $\R$ is d-minimal. Let $\mathcal{A}$ be a finite collection of definable subsets of $\RR^n$. Then there is a finite partition of $\RR^n$ into surfaces of grids compatible with $\mathcal{A}$.
\end{thm}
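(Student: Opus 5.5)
We will prove the grid decomposition theorem by induction on $n$, in the style of the o-minimal cell decomposition argument, proving simultaneously the uniform bound of \cref{thm:uniform2} for definable subsets of $\RR^{n+1}$. The induction hypothesis for $\RR^n$ will thus be both the decomposition statement and strong d-minimality for families parametrised by $\RR^n$. The order is: (I$_n$) decomposition in $\RR^n$; then (II$_n$) uniform bounds for families $A\subseteq \RR^{n+1}$; then (I$_{n+1}$).

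For $n=1$ the decomposition is essentially d-minimality. Let $A\subseteq \RR$ be definable. Write $\RR$ as the union of the interior of the boundaries of the sets in $\mathcal{A}$ with their complements, and a closed nowhere dense definable set $E$. By d-minimality, $E$ is a finite union of discrete sets. Hence its Cantor--Bendixson rank is finite, and $E$ splits definably into finitely many discrete pieces $E^{(k)}\setminus E^{(k+1)}$. Each discrete piece can be exhausted by finite stacks indexed by a sequence set. For example, for $d\in D$ take the points of the piece that are isolated at scale $d$ within a box of radius $1/d$. Here $D=\{1/m\}$ or another definable sequence set, the point being that it must be definable. The open complement is a countable union of intervals, organised the same way: intervals of length $\ge d$ inside $[-1/d,1/d]$. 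We must check that this indexing is definable. This uses the field structure to build a definable sequence set, for instance from a definable discrete set via d-minimality.

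For (II$_n$), given $A\subseteq\RR^{n+1}$, let $B=\{x: A_x \text{ has no interior}\}$. For $x\in B$, define the Cantor--Bendixson derivatives of $\overline{A_x}$ uniformly and definably in $x$. Let $r(x)$ be the least $k$ with the $k$-th derivative empty, or $\infty$. The sets $\{x: r(x)=k\}$ are definable. We need to show that $B_\infty=\{x\in B: r(x)=\infty\}$ is empty and that $r$ is bounded on $B$. This is the main obstacle. Pointwise, each $r(x)$ is finite, but we need a bound, and compactness is unavailable in $\RR$. The plan is to apply (I$_n$) to the sets $B_k=\{r=k\}$. If infinitely many $B_k$ were nonempty, we would fix a grid and build a single definable set witnessing failure of d-minimality. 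Concretely, we would choose definably a point $x_k\in B_k$ in a common cell via the definable ordering on grid components. Then we would produce a definable subset of $\RR$ (after a definable change of coordinates along one coordinate direction of the cell) whose Cantor--Bendixson rank is infinite. This is possible because cells are homeomorphic definably to boxes and the stacks are finite in each $d$-layer, so one can select definably. A nowhere dense subset of $\RR$ of infinite rank is not a finite union of discrete sets, contradicting d-minimality. Making this diagonal selection definable is where the grid structure (the nested $S_d$ with definable components) is essential.

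With the uniform bound $N$, (I$_{n+1}$) proceeds as in o-minimality. Over a grid surface in $\RR^n$, we split $\RR^{n+1}$ into the part where fibers have interior and the part where they are discrete. Each discrete fiber decomposes into at most $N$ rank layers, each uniformly discrete. The uniform layers are then sliced into definable graphs and bands, indexed jointly by the base grid parameter and a new fiber parameter $e\in D'$. A common sequence set is obtained by taking, e.g., $\min(d,e)$ indices. Continuity of the section functions is achieved by a further refinement of the base, using the fact that a definable function is continuous off a nowhere dense set. That fact in turn follows from (I$_n$) and an induction on dimension.
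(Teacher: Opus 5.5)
Your overall architecture (a simultaneous induction of decomposition and uniform rank bounds, with fibers split into an interior part and finitely many rank layers) matches the paper's. However, your step (II$_n$), which is where the real difficulty lies, does not contain the idea that makes it work.

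The sets $B_k=\{r=k\}$ are definable one at a time, but $k$ ranges over $\NN$, which is not definable. So $(B_k)_k$ is not a definable family, (I$_n$) can only be applied to finitely many of the $B_k$, and ``choose definably $x_k\in B_k$'' has no meaning. What you \emph{can} select definably is, for instance, one canonical point per connected component of a grid in the base (finitely many per layer $d$). But then you only see the ranks at the selected points. Nothing in your argument controls $\rk(\cl(A_x))$ at the other points of the same component, and for an arbitrary definable $A$ with discrete fibers the rank does vary inside a base cell. For example, take fibers $D$ over $(0,\tfrac12)$ and $\{1\}$ over $[\tfrac12,1)$. Decomposing only the base cannot fix this.

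The missing ingredient in the paper is threefold.
\begin{enumerate}
\item First prove (PC): after replacing each fiber by its isolated points, $A$ itself is a finite union of surfaces of $(w,0)$-grids in $\RR^{n+1}$. This needs only the decomposition of $\RR^n$, so it is compatible with your ordering.
\item Then Theorem~\ref{prop:boundfortowers} shows that over a cell $\rk(\cl(\surf(\es)_x))$ is \emph{constant} in $x$. The proof uses the approximation types $\sim_{\es}$ and Lemma~\ref{lem:typeimplies}, which transport Cantor--Bendixson derivatives from one fiber to another along limits of the ordered section functions.
\item Only with this constancy does a definable selection suffice. Lemma~\ref{lem:UCB} restricts to fibers over midpoints of the last-coordinate intervals of the base, reducing to a base whose last coordinate is discrete. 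There the sets $\cl(A_{(x,y)})$ are squeezed into small neighbourhoods of $y$ to form one set over $\RR^{n-1}$. The uniform bound one dimension lower is then applied, so the contradiction with d-minimality is reached by induction rather than by a single diagonal subset of $\RR$.
\end{enumerate}

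Your step (I$_{n+1}$) is also thinner than ``as in o-minimality''. The graph functions must be arranged so that the union over \emph{all} $e\in D$ is étale and the stacks are nested. You cannot refine the base separately for each of the countably many $e$ with finitely many applications of (I$_n$). The paper instead removes the definable sets $\bigcup_{e}(E_e\cup F_e)$ and $\bigcup_e\operatorname{bd}_{\pi(A)}\pi(A(e))$, which are nowhere dense by Lemma~\ref{lem:dim}, and recurses on $\dim\pi(A)$. Étaleness of the resulting union then needs its own argument via the fiber lemma for grids.

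A minor point: your rank layers have discrete fibers, but they are not ``uniformly discrete''.
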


Since the surface of a grid is a countable union of cells, it follows that every definable set in a d-minimal structure is a countable union of cells. We also reprove that every d-minimal structures admits a countable (cell) decomposition as defined in \cite[Section 4]{Miller-tame}. It is worth noting that surfaces of grids are not special manifolds in the sense of \cite{Miller-tame}. We discuss this distinction and the connection to earlier decomposition results in detail in Section \ref{section:special}.
\newline

At the beginning of Section \ref{section:Main-Theorem} we collect various consequences of the grid decomposition theorem. Here we announce one that we expect to be of particular importance. 

\begin{thm}[Approximation theorem]\label{thm:approx}
Suppose $\R$ is d-minimal. Let $A\subseteq \RR^n$ be definable. Then there is a sequence set $D$ and a decreasing definable family $(A_d)_{d \in D}$ such that $A = \bigcup_{d \in D} A_d$ and for every $d \in D$
 \begin{enumerate}
     \item $A_d$ has finitely many connected components, and
     \item $\dim A = \dim A_d$.
 \end{enumerate}
\end{thm}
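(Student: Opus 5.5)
We will derive the approximation theorem from the grid decomposition theorem (\cref{thm:intro2}) applied to $\mathcal{A}=\{A\}$. This gives a finite partition of $\RR^n$ into surfaces of grids compatible with $A$. Let $G^1,\dots,G^m$ be those grids whose surfaces are contained in $A$, so that $A=\bigcup_{i\le m}\bigcup_{d\in D_i} S^i_d$. Each grid $G^i=(S^i_d)_{d\in D_i}$ is a definable family of stacks indexed by a sequence set $D_i$, and it is increasing as $d$ decreases. Each $S^i_d$ is a stack, so it has finitely many connected components, namely its cells.

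\textbf{Common index set.} We first pass to a single sequence set. Take $D=\bigcup_i D_i$; a finite union of sequence sets is again discrete, contains $1$, and has $D\cup\{0\}$ closed, so $D$ is a sequence set. For $d\in D$ set
\[
T^i_d := S^i_{\min\{e\in D_i : e\ge d\}}.
\]
The minimum exists because $D_i\cup\{0\}$ is closed and $d>0$, so only finitely many elements of $D_i$ lie in $[d,1]$, and $1\in D_i$. Then $(T^i_d)_{d\in D}$ is still definable and increasing as $d$ decreases, each $T^i_d$ is a stack, and $\bigcup_d T^i_d$ equals the surface of $G^i$. Define
\[
A_d:=\bigcup_{i\le m} T^i_d.
\]
This family is definable and decreasing in $d$ (increasing as $d\to 0$), which is the sense of ``decreasing'' intended here. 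It has union $A$, and each $A_d$ is a finite union of stacks, hence has finitely many connected components.

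\textbf{Dimension.} We need $\dim A_d=\dim A$ for every $d$. Since $A_d\subseteq A$, the inequality $\le$ is clear. For the reverse, $\dim A=\max_i \dim(\text{surface of } G^i)$. By definition a grid is a family of stacks of a fixed cell type: the connected components of $S_d$ remain components of $S_e$ for $e<d$, and all are cells of one type. Hence every nonempty $S^i_d$ has the dimension of that type, and so does the surface. Taking $i$ with maximal surface dimension, it suffices that $T^i_d\neq\emptyset$ for all $d$. This holds once $S^i_1\neq\emptyset$, because $T^i_d\supseteq S^i_1$. If the grid definition allows $S_1=\emptyset$, we fix this by discarding the finitely many $d\in D$ lying above the first index with nonempty stack. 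Then we rescale $D$ by a constant so that $1\in D$ again; rescaling preserves being a sequence set. Doing this for one grid of maximal dimension suffices, since empty terms of the other grids do not affect the maximum.

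\textbf{Main obstacle.} The expected difficulty is confirming from \cref{defn:grid} that all stacks in a grid consist of cells of the same type, so that dimension is constant along the family. If instead a grid only controls the type within each stack, we will need a countable additivity of dimension: $\dim\bigcup_d S_d=\sup_d\dim S_d$, with the supremum attained by some $d_0$. Since $D$ accumulates only at $0$ and the family is monotone, that dimension is attained from $d_0$ on, and we reindex past $d_0$ as above. This property should follow from d-minimality, via the Baire category fact that a countable union of nowhere dense definable sets cannot have interior in a definable manifold.
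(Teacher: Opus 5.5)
Your proposal is correct and follows the paper's own proof. Both apply the grid decomposition theorem, pass to a common sequence set (the paper via Lemma~\ref{lem:changeseqs}, you via the analogous reindexing over $\bigcup_i D_i$), set $A_d$ to be the union of the $d$-th stacks, and read off $\dim A_d$ from the fixed type $w$ of each grid; your extra truncation-and-rescaling step is a real gain in care, because the paper's equality $\dim A=\dim A_d$ silently assumes the stacks of a maximal-dimensional grid are nonempty, while the definition of a grid (and constructions such as $S''_d=A(d)\cap(S'_d\times\RR)$) allow empty stacks, for which Corollary~\ref{cor:dim-stack}(3) fails.
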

Here $\dim(A)$ is the maximum $m \in \mathbb{N}\cup\{0\}$ for which there is a coordinate projection $\pi:\RR^n\to \RR^m$ such that $\pi(A)$ has interior, if $A$ is nonempty, and $-1$ otherwise.
Similar results, for example Tychonievich \cite[Corollary 4.1.7]{tycho-thesis}, have only been shown for particular examples of d-minimal structures such as the expansion of the real field by $S_{\omega}$. The $A_d$'s in the statement of \cref{thm:approx} satisfy some properties of definable sets in o-minimal structures, and hence sets definable in d-minimal structure can be uniformly approximated by very well-behaved sets. An open question is whether each $A_d$ can be chosen to actually be definable in an o-minimal structure.\newline

All results are stated and proven only for expansions of the real field. However, we expect that our results hold in the larger generality of definably complete expansions of ordered fields, but non-trivial adjustments are necessary. In this generality, finiteness has to be replaced by pseudo-finiteness, and the definition of stacks has to allow for infinitely many connected components. For that reason, the argument in the case over $\RR$ is more straightforward and we decided to concentrate on it here. We leave the more involved argument in the case of definably complete expansions to a future paper.

\subsection*{Acknowledgments} The authors were partially supported by NSF grant DMS-1654725. The second author was partially supported  by the Deutsche Forschungsgemeinschaft (DFG, German Research Foundation) under Germany's Excellence Strategy – EXC-2047. 

\section{Preliminaries}\label{section:preliminaries}

Let $i,j,k,m,$ and $n$ denote natural numbers.

\subsection{Topology}
We first fix notations and recall basic results from point-set topology.\\

We consider $\RR^n$ as a topological space with the usual Euclidean topology.
For convenience, we often add two endpoints $-\infty$ and $+\infty$ to $\RR$ satisfying $-\infty < a < \infty$ for all $a\in \RR$. For $x\in \RR^n$ and $\ep>0$, let $B_\ep(x)$ be the open box given by
\[
B_{\ep}(x):=(x_1-\ep,x_1+\ep)\times \cdots \times (x_{n}-\ep,x_{n}+\ep).
\]
For sets $A\subseteq \RR^m$ and $B\subseteq \RR^n$ and a function $f:A \to B$ we denote the graph of $f$ by $\Gamma(f)$.\\

Let $A,B\subseteq \RR^n$ be such that $A\subseteq B$. We consider $B$ as a topological space with the topology on $B$ induced by the Euclidean topology of $\RR^n$. We use $\cl_B(A)$ to denote the closure of $A$ in $B$, we use $\operatorname{int}_B(A)$ for the interior of $A$ in $B$, we use $\operatorname{bd}_B(A)$ for the boundary $\cl_B(A) \setminus \operatorname{int}_B(A)$ of $A$ in $B$, and we use $\fr_B(A)$ for the frontier $\cl_B(A) \setminus A$ of $A$ in $B$. When $B=\RR^n$, we suppress the index and simply write $\cl(A),\operatorname{int}(A),\operatorname{bd}(A),$ and $\fr(A)$. We say $A$ is \textbf{locally closed} if for every $x\in A$ there is an open neighborhood $U$ of $x$ such that $A \cap U$ is closed in $U$.\newline

For $\varepsilon\in \RR_{>0}$, we define $\isol(A,\varepsilon)$ as
\[
\{ a \in A \ : \ B_{\varepsilon}(a)\cap A = \{a\}\}.
\]
We denote by $\isol(A)$ the set of isolated points of $A$, that is,
\[
\isol(A):= \bigcup_{\varepsilon>0} \isol(A,\varepsilon).
\]
Let $A\subseteq \RR^m \times \RR^n$, and let $x \in \RR^m$. The \textbf{fiber} $A_x$ of $A$ over $x$ is defined as 
\[
A_x := \{ y \in \RR^n \ : \ (x,y) \in A\}.
\]

Let $A\subseteq \RR$ be open. We denote by $\Midp(A)$, the set of midpoints of bounded connected components of $A$. If $A$ is definable in $\R$, so is $\Midp(A)$.\\

Let $w = (w_1, \dots, w_n) \in \{0, 1\}^n$ be a tuple. Set $k:=w_1 + \dots + w_n$ and let
$\lambda(1) < \cdots < \lambda(k)$ be exactly the indices in $\{1,\dots,n\}$ such that $w_{\lambda(j)} = 1$. Let $\pi_w \colon \RR^n \to \RR^k$
be the coordinate projection mapping
\begin{equation*}
(x_1, \dots, x_n) \mapsto (x_{\lambda(1)}, \dots, x_{\lambda(k)}).
\end{equation*}
When we write $\pi$ without an index, we always mean $\pi_{(1,\dots,1,0)}$; that is
the projection onto the first $n-1$ coordinates.\newline 

Let $A\subseteq \RR^n \times \RR$. We say $A$ is \textbf{vertically bounded} if there is a bounded open interval $I\subseteq \RR$ such that $A_x \subseteq I$ for all $x\in \RR^n$. \\

We define $\fisol(A,\varepsilon)$ as the set of $(x,y)\in A$ such that every point in $A_x$ is $\varepsilon$-isolated, that is
\[
\fisol(A,\varepsilon) := \{ (x,y) \in A \ : \ A_x \subseteq \isol(A_x,\varepsilon)\}.
\]
Set
\[
\fisol(A):= \{ (x,y) \in A \ : \ A_x \subseteq \isol(A_x)\}.
\]

\begin{fact}\label{fact:opensubset}
    Let $A\subseteq \RR^{n+1}$, let $B\subseteq \RR^{n}$ be an open subset of $\pi(A)$, and let $x\in B$. Then
\[
\cl(A)_x = \cl\big((B \times \RR) \cap A\big)_x
\]
\end{fact}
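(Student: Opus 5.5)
The plan is to prove the two inclusions separately, working directly with sequences or neighborhoods. No definability is needed, so this is pure point-set topology. Write $A' := (B\times \RR)\cap A$. Since $A'\subseteq A$, monotonicity of closure gives $\cl(A')\subseteq \cl(A)$. Taking fibers over $x$ then gives $\cl(A')_x \subseteq \cl(A)_x$, which is the easy inclusion.

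For the reverse inclusion, I take $y\in \cl(A)_x$, so $(x,y)\in \cl(A)$. The goal is to show that every open neighborhood of $(x,y)$ meets $A'$. It suffices to check this on basic boxes $B_\varepsilon((x,y))$. The key input is that $B$ is open in $\RR^n$ and $x\in B$, so there is $\delta>0$ with $B_\delta(x)\subseteq B$. (Openness of $B$ in $\RR^n$ is what matters here; that $B\subseteq\pi(A)$ is not even needed.) For any $\varepsilon>0$, set $\eta:=\min(\varepsilon,\delta)$. Since $(x,y)\in\cl(A)$, the box $B_\eta((x,y))$ contains a point $(x',y')\in A$. Then $x'\in B_\eta(x)\subseteq B_\delta(x)\subseteq B$, so $(x',y')\in (B\times\RR)\cap A = A'$. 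Moreover $(x',y')\in B_\varepsilon((x,y))$. Hence $(x,y)\in\cl(A')$, that is, $y\in\cl(A')_x$.

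There is no real obstacle. The only point requiring care is to shrink the neighborhood so that its projection lies inside $B$, which is exactly where the openness of $B$ and the hypothesis $x\in B$ are used. Here $B_\eta((x,y))$ is the box in $\RR^{n+1}$, and its projection onto the first $n$ coordinates is $B_\eta(x)$.
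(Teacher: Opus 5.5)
Your argument is correct if $B$ is open in $\RR^n$, and it is the paper's argument. The paper approximates $(x,y)$ by a sequence $(x_i,y_i)\in A$ and discards the terms with $x_i\notin B$, while you shrink a box around $(x,y)$ until its projection lies in $B$. The difference is only cosmetic.

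However, the paper reads ``open subset of $\pi(A)$'' in the subspace topology of $\pi(A)$, and that is how it applies the fact. In Lemma~\ref{lem:BC-stacks} and Lemma~\ref{cor:FL-grids} it is used with $B$ a connected component of a stack or of the surface of a grid. Such a set is clopen in $\pi(A)$ but in general not open in $\RR^n$; a $(1,0)$-cell in $\RR^2$ is an example. For such $B$ your step ``choose $\delta>0$ with $B_\delta(x)\subseteq B$'' fails, so as written you prove a weaker statement than the one the paper needs.

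The repair takes one line. Write $B=U\cap\pi(A)$ with $U\subseteq\RR^n$ open, and choose $\delta$ with $B_\delta(x)\subseteq U$. The point $(x',y')\in A$ you produce automatically has $x'\in\pi(A)$, hence $x'\in U\cap\pi(A)=B$. The paper's sequence argument uses the same observation, $x_i\in\pi(A)$, without stating it.

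So drop your parenthetical remark that $B\subseteq\pi(A)$ is not needed. Relative openness in $\pi(A)$ is exactly the hypothesis that makes the fact usable later in the paper, and the automatic membership $x'\in\pi(A)$ is what makes the proof go through.
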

\begin{proof}
Let $y\in \cl(A)_x$. Then there is a sequence $\big((x_i,y_i)\big)_{i\in \NN}$ of elements in $A$ such that $\lim_{i\to \infty} (x_i,y_i)= (x,y)$. Since $B$ is open, we may assume that $x_i \in B$ for $i\in \NN$. Thus, $y \in \cl\big((B \times \RR) \cap A\big)_x$. The other inclusion is immediate.     
\end{proof}

Let $A \subseteq \RR^m$ and let $B\subseteq \RR^n$. We say that a function $f:A\to B$ is a \textbf{local homeomorphism} if for every $x\in A$ there exists an open neighborhood $U$ around $x$ such that $f(A \cap U)$ is open in $B$ and the restriction $f|_{A\cap U}$ is a homeomorphism.\\

We say $A\subseteq \RR^n$ is \textbf{\'etale} at $x\in A$ if there is an open box $U\subseteq \RR^n$ around $x$ such that $\pi|_{A\cap U}$ is a homeomorphism onto its image in $\RR^{n-1}$.
We say that $A$ is \textbf{\'etale} if $A$ is \'etale at every $x\in A$.

\begin{fact}\label{fact:etale}
Let $A \subseteq \RR^{n+1}$ and let $B\subseteq \RR^{n}$ be an open subset of $\pi(A)$ such that $A$ is \et~at every $z\in (B\times \RR)\cap A$. Then $(B\times \RR)\cap A$ is \et.  
\end{fact}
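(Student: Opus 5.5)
Fix $z=(x,y)\in (B\times\RR)\cap A$. Since $x\in B\subseteq \pi(A)$, $A$ is \et~at $z$ by hypothesis. So there is an open box $U\subseteq\RR^{n+1}$ around $z$ such that $\pi|_{A\cap U}$ is a homeomorphism onto $\pi(A\cap U)$. The plan is to shrink $U$ so that it lies over $B$, and then note that nothing changes when we intersect with $B\times\RR$.

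First, write $U=V\times I$ with $V\subseteq \RR^n$ an open box around $x$ and $I$ an open interval around $y$. Because $B$ is open and $x\in B$, we can choose an open box $V'\subseteq V\cap B$ around $x$. Set $U':=V'\times I$; this is again an open box around $z$. Since $A\cap U'\subseteq A\cap U$, the restriction $\pi|_{A\cap U'}$ is still a homeomorphism onto its image, because a restriction of a homeomorphism to a subset is a homeomorphism onto the image of that subset.

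Next, since $V'\subseteq B$ we have $U'\subseteq B\times\RR$. Hence $\big((B\times\RR)\cap A\big)\cap U' = A\cap U'$. Therefore $\pi$ restricted to $\big((B\times\RR)\cap A\big)\cap U'$ is a homeomorphism onto its image, so $(B\times \RR)\cap A$ is \et~at $z$. As $z$ was arbitrary, $(B\times\RR)\cap A$ is \et.

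There is no real obstacle. The only point needing care is that \'etaleness is witnessed by an open box, so the shrunken neighbourhood must also be a box. This is why we take $V'$ to be a box contained in $V\cap B$, rather than simply intersecting $U$ with $B\times\RR$.
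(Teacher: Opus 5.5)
Your argument is correct when $B$ is open in $\RR^n$. Its core is the same as the paper's: the restriction of the homeomorphism $\pi|_{A\cap U}$ to a subset is a homeomorphism onto its image. The difference is that you shrink the box, and that step is both unnecessary and the only place you use the openness of $B$.

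The paper keeps the original box $U$. Since $\big((B\times\RR)\cap A\big)\cap U\subseteq A\cap U$, the map $\pi$ restricted to this set is already a homeomorphism onto its image. So $(B\times\RR)\cap A$ is \'etale at $z$ with the same witness. The ``point needing care'' in your last paragraph is therefore not an issue: you never need $U\subseteq B\times\RR$, and you never need to intersect or shrink anything.

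What the paper's version buys is that the openness of $B$ plays no role at all. \'Etaleness at $z$ passes to every subset of $A$ containing $z$. This matters in two ways:
\begin{enumerate}
\item If ``open subset of $\pi(A)$'' is read as relatively open in $\pi(A)$, your choice of an open box $V'\subseteq V\cap B$ can fail. For instance, if $\pi(A)$ is a segment in $\RR^2$, no open box of $\RR^2$ fits inside $B$.
\item The paper later applies this fact with $B$ not open in $\RR^n$, namely $B=S'_d$ in Section~\ref{section:step3} and the midpoint sets in Lemma~\ref{lem:UCB}. There your box-shrinking step is unavailable, while the paper's one-line argument goes through unchanged.
\end{enumerate}
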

\begin{proof}
    Let $z\in (B\times \RR)\cap A$. 
    Since $A$ is \'etale at $z$, there is an open box $U\subseteq \RR^{n+1}$ around $z$ such that $\pi|_{A\cap U}$ is a homeomorphism onto its image.
    Therefore, $\pi|_{(B\times \RR)\cap A \cap U}$ is also homeomorphic onto its image and thus $(B\times \RR)\cap A$ is \'etale at $z$.
\end{proof}

We now recall the definition of Cantor-Bendixson rank. Let $A\subseteq \RR$ be closed, and let $\lambda$ be an ordinal. We define the set $A^{[\lambda]}$ as follows:
\begin{align*}
    A^{[0]} &= A\\
    A^{[\lambda+1]} &= A^{[\lambda]}\setminus \isol(A^{[\lambda]})\\
    A^{[\lambda]} &= A\setminus \bigcup_{\mu<\lambda} \isol(A^{[\mu]}) \hspace{0.2 in} \text{if $\lambda$ is a limit.}
\end{align*}
The  \textbf{Cantor-Bendixson rank} of $A$, written $\rk(A)$, is the least ordinal $\lambda$ such that $A^{[\lambda+1]}=A^{[\lambda]}$. For $B\subseteq \RR^{n+1}$ and $x\in \pi(B)$, be aware that $B_x^{[\lambda]}$ denotes $(B_x)^{[\lambda]}$ and not $(B^{[\lambda]})_x$. We now recall some basic and well-known facts. 

\begin{fact}\label{fact:cb-homeo}\label{fact:cbmax}\label{fact:cbclosedin}
Let $A,B \subseteq \RR$ be closed. Then
\begin{enumerate}
    \item if there is a homeomorphism $f: A \to B$, then $\rk(A)=\rk(B)$,
    \item $\rk(A\cup B)\leq \max \{\rk(A),\rk(B)\}$,
    \item if $B\subseteq A$ and open in $A$, then $\rk(B)\leq \rk(A)$.
\end{enumerate}
\end{fact}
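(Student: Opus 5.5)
The plan is to prove all three parts by transfinite induction on the ordinal $\lambda$, using one basic observation about derived sets. The Cantor--Bendixson derivatives are defined purely topologically, so they are preserved by homeomorphisms, commute in a controlled way with unions, and localize to open subsets. I will first prove the following auxiliary statements about the sets $A^{[\lambda]}$ themselves, and then read off the rank inequalities from them.

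For (1), I will show by induction on $\lambda$ that $f(A^{[\lambda]})=B^{[\lambda]}$. For the successor step, note that $a$ is isolated in $A^{[\lambda]}$ if and only if $f(a)$ is isolated in $f(A^{[\lambda]})=B^{[\lambda]}$. This holds because $f$ restricts to a homeomorphism $A^{[\lambda]}\to B^{[\lambda]}$, and isolatedness is a property of the subspace topology. At limit stages, the definition only involves the union of the $\isol(A^{[\mu]})$, and these are mapped correctly by the induction hypothesis. Hence $A^{[\lambda+1]}=A^{[\lambda]}$ if and only if $B^{[\lambda+1]}=B^{[\lambda]}$, which gives $\rk(A)=\rk(B)$.

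For (3), with $B$ open in $A$, I will show $B^{[\lambda]}=A^{[\lambda]}\cap B$ by induction. The key point is that if $B$ is open in $A$, then $A^{[\lambda]}\cap B$ is open in $A^{[\lambda]}$. For a point of an open subset of a space, being isolated in the subset is equivalent to being isolated in the whole space. So
\[
\isol(A^{[\lambda]}\cap B)=\isol(A^{[\lambda]})\cap B.
\]
The limit step is immediate from the definitions. Consequently, if $A^{[\lambda+1]}=A^{[\lambda]}$, then $B^{[\lambda+1]}=B^{[\lambda]}$. Since $\rk(B)$ is the least stabilizing ordinal, $\rk(B)\le\rk(A)$.

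For (2), let $\lambda=\max\{\rk(A),\rk(B)\}$. Both $A^{[\lambda]}$ and $B^{[\lambda]}$ are then perfect, i.e. they have no isolated points: $A$ stabilizes at $\rk(A)$, and derivatives of a perfect set remain equal to it. First I will show by induction that $(A\cup B)^{[\mu]}\supseteq A^{[\mu]}\cup B^{[\mu]}$ for all $\mu$. The reason is that a point of $A^{[\mu]}$ that is isolated in the larger set $(A\cup B)^{[\mu]}$ is also isolated in $A^{[\mu]}$. Second, I will show $(A\cup B)^{[\mu]}\subseteq A^{[\mu]}\cup B^{[\mu]}$ for all $\mu$, also by induction. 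In the successor step, take $x\in(A\cup B)^{[\mu+1]}$, and suppose $x\in A^{[\mu]}$ but $x\notin A^{[\mu+1]}$. Then $x$ is isolated in $A^{[\mu]}$. Since $A^{[\mu]}$ is closed, a small neighborhood of $x$ meets $(A\cup B)^{[\mu]}$ only in $\{x\}\cup B^{[\mu]}$. Because $x$ is not isolated in $(A\cup B)^{[\mu]}$, $x$ is a limit of points of $B^{[\mu]}$. As $B^{[\mu]}$ is closed, $x\in B^{[\mu]}$, and $x$ is not isolated there, so $x\in B^{[\mu+1]}$. Limit stages are intersections, where the inclusion passes through directly. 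Combining both inclusions, $(A\cup B)^{[\lambda]}=A^{[\lambda]}\cup B^{[\lambda]}$ is a union of perfect sets and hence perfect, so $\rk(A\cup B)\le\lambda$.

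The main obstacle I expect is bookkeeping at limit ordinals. The paper's limit clause is written as $A\setminus\bigcup_{\mu<\lambda}\isol(A^{[\mu]})$, so I will first check that this equals $\bigcap_{\mu<\lambda}A^{[\mu]}$, which follows since the sets $A^{[\mu]}$ are decreasing. I also need closedness of each $A^{[\mu]}$, which holds because removing the isolated points of a closed set of reals leaves a closed set. Once these two points are recorded, all three inductions go through uniformly.
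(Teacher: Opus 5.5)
Your proposal is correct: part (3) is exactly the paper's argument, namely the identity $B^{[\lambda]}=A^{[\lambda]}\cap B$ proved by induction using that isolatedness is a local property, and for (1) and (2), which the paper only cites as well known, your inductions $f(A^{[\lambda]})=B^{[\lambda]}$ and $(A\cup B)^{[\mu]}=A^{[\mu]}\cup B^{[\mu]}$ are the standard proofs. The one step that deserves an extra line is the limit case of $(A\cup B)^{[\lambda]}\subseteq A^{[\lambda]}\cup B^{[\lambda]}$: the inclusion $\bigcap_{\mu<\lambda}(A^{[\mu]}\cup B^{[\mu]})\subseteq\bigcap_{\mu<\lambda}A^{[\mu]}\cup\bigcap_{\mu<\lambda}B^{[\mu]}$ is not a general fact about intersections, but it holds here because both sequences are decreasing, which you already record.
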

\begin{proof}
Statements (1) and (2) are well known results. For (3), observe that since $B$ is both closed and open in $A$ then $B^{[\lambda]}=A^{[\lambda]}\cap B$ for any ordinal $\lambda$.
\end{proof}

We now collect two observation from Friedman and Miller \cite{Miller-fast}.

\begin{fact}[{\cite[1.3]{Miller-fast}}]\label{fact:RK}
Let $A\subseteq \RR$ be closed and let $k\in \NN$. Then the following are equivalent:
\begin{enumerate}
    \item $A$ is countable and $\rk(A)=k$,
    \item $k$ is the least number of discrete sets whose union is $A$.
\end{enumerate}
\end{fact}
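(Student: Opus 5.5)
We prove the two implications separately, writing $A^{[j]}$ for the Cantor--Bendixson derivatives, and use two elementary observations throughout. First, for a closed $A\subseteq\RR$ the set $\isol(A)$ is discrete. Second, a discrete subset of $\RR$ is countable, since each of its points can be enclosed in an interval with rational endpoints meeting it only in that point.

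For (1)$\Rightarrow$(2), suppose $A$ is countable and $\rk(A)=k$, so that $A^{[k]}=A^{[k+1]}$. The set $A^{[k]}$ is closed and has no isolated points, so it is perfect. A nonempty perfect subset of $\RR$ is uncountable, and $A^{[k]}$ is countable, so $A^{[k]}=\emptyset$. Hence
\[
A=\isol(A^{[0]})\cup\cdots\cup\isol(A^{[k-1]}),
\]
and each term is discrete, so $A$ is a union of $k$ discrete sets.

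It remains to show that $A$ is not a union of fewer than $k$ discrete sets. The key lemma is this: if $A=D_1\cup\cdots\cup D_m$ with each $D_i$ discrete, then $A^{[m]}=\emptyset$. We prove it by induction on $m$, and the main step is to show that $A'=A^{[1]}$ is covered by $m-1$ discrete sets. One way is to use Baire category: any nonempty closed subset $C\subseteq A$ is covered by the relatively closed sets $\cl(D_i)\cap C$. Since $C$ is a complete metric space, some $D_i$ is dense in a nonempty open piece $C\cap U$. Because $D_i$ is discrete, its points in $C\cap U$ are isolated in $C$.

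A cleaner route is a direct induction. Each point $x\in D_i$ has a neighborhood $U_x$ with $U_x\cap D_i=\{x\}$. Consider $A^{[m]}$ and suppose it contains a point $x$. Then $x\in D_i$ for some $i$, and we shrink to $U_x$: the set $A\cap U_x\setminus\{x\}$ is covered by the other $m-1$ discrete sets. Since taking derivatives commutes with restriction to open sets, we get $x\in (A\cap \cl U')^{[m]}$ for a smaller closed neighborhood $U'$. Applying induction on $m$ to the closed set $\cl(A\cap U_x\setminus\{x\})\cap \overline{B}$ then gives a contradiction. The delicate bookkeeping is that the closure adds back $x$ and finitely many other points. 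I expect this lemma to be the main obstacle, and I would handle it via the localization and induction just described.

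For (2)$\Rightarrow$(1), suppose $k$ is the least number of discrete sets whose union is $A$. Then $A$ is a countable union of countable sets, so it is countable. By the lemma $A^{[k]}=\emptyset$, so $\rk(A)\le k$. If $\rk(A)=j<k$, the first implication would write $A$ as a union of $j$ discrete sets, contradicting the minimality of $k$. Therefore $\rk(A)=k$.
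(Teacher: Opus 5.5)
The overall structure of your proof is sound. For countable closed $A$ of rank $k$, the perfect-kernel argument correctly gives $A=\isol(A^{[0]})\cup\cdots\cup\isol(A^{[k-1]})$. The minimality in (1)$\Rightarrow$(2) and all of (2)$\Rightarrow$(1) then follow once you have the key lemma: if $A=D_1\cup\cdots\cup D_m$ with each $D_i$ discrete, then $A^{[m]}=\emptyset$. (The paper gives no proof of its own here; it simply cites Friedman--Miller.) The gap is that this lemma, which carries all the content, is not actually proved.

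The Baire route only shows that every nonempty closed $C\subseteq A$ has an isolated point, i.e.\ that $A$ is scattered. It gives no bound $m$ on the number of derivative steps, and it does not show that $A^{[1]}$ is covered by $m-1$ discrete sets. It also needs $\cl(D_i\cap C)$ in place of $\cl(D_i)\cap C$; otherwise $D_i$ may accumulate on $C\cap U$ without meeting $C$, and there are no ``points of $D_i$ in $C\cap U$''.

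The induction route breaks exactly at the bookkeeping you flag. Since $x\in A^{[m]}$ is a limit point, the closed set $\cl(A\cap U_x\setminus\{x\})\cap\overline{B}$ contains $x$, and $x$ need not lie in any $D_j$ with $j\neq i$. You cannot absorb $x$ into some $D_j$ either, because $x$ may be an accumulation point of $D_j$, in which case $D_j\cup\{x\}$ is not discrete. So this closed set is only known to be a union of $m$ discrete sets, and an inductive hypothesis about closed sets covered by $m-1$ discrete sets does not apply to it.

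The fix is to drop closedness from the lemma. For arbitrary $X\subseteq\RR$ put $X^{[0]}=X$ and $X^{[j+1]}=X^{[j]}\setminus\isol(X^{[j]})$; this is the paper's formula, which makes sense for any set. Since isolatedness is local, induction on $j$ gives $V^{[j]}=X^{[j]}\cap V$ for every $V\subseteq X$ that is relatively open in $X$.

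Now show by induction on $m$ that $X=D_1\cup\cdots\cup D_m$ with each $D_i$ discrete implies $X^{[m]}=\emptyset$; the case $m=0$ is trivial. Suppose $x\in X^{[m]}$. Pick $i$ with $x\in D_i$ and an open $U_x$ with $U_x\cap D_i=\{x\}$. The set $W:=(X\cap U_x)\setminus\{x\}$ is relatively open in $X$ and is the union of the $m-1$ discrete sets $D_j\cap W$, $j\neq i$. By induction $\emptyset=W^{[m-1]}=X^{[m-1]}\cap U_x\setminus\{x\}$. Hence $x$ is isolated in $X^{[m-1]}$, so $x\notin X^{[m]}$, a contradiction. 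No closures are taken, so no point is ever added back. With this lemma in hand, the rest of your argument goes through as written.
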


\begin{fact}[{\cite[1.4]{Miller-fast}}]\label{fact:unionsofdiscrete}
Let $A_1,\dots,A_m\subseteq \RR$, and let $k \in  \NN$ be such that $\cl(A_i)$ is a union of $k$ discrete sets for $i=1,\dots,m$. 
Then $\cl(\bigcup_{i=1}^{m} A_i)$ is a union of $k$ discrete sets.
\end{fact}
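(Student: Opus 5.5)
The plan is to reduce everything to Cantor--Bendixson ranks, using \cref{fact:RK} to pass between ``union of $k$ discrete sets'' and ``rank at most $k$'', and \cref{fact:cbmax}(2) to control the rank of a finite union.

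First, since closure commutes with finite unions, $\cl(\bigcup_{i=1}^m A_i)=\bigcup_{i=1}^m \cl(A_i)$. So it suffices to show that a finite union of closed sets $C_1,\dots,C_m\subseteq \RR$, each a union of $k$ discrete sets, is again a union of $k$ discrete sets.

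Fix $i$. Every discrete subset of $\RR$ is countable, so $C_i$ is countable. Let $j_i\le k$ be the least number of discrete sets whose union is $C_i$. Applying \cref{fact:RK} with $j_i$ in place of $k$ gives $\rk(C_i)=j_i\le k$. By \cref{fact:cbmax}(2), and by induction on $m$,
\[
\rk\Big(\bigcup_{i=1}^m C_i\Big)\le \max_i \rk(C_i)\le k.
\]
The union $C:=\bigcup_i C_i$ is countable and closed. Set $j:=\rk(C)\le k$. By the direction (1)$\Rightarrow$(2) of \cref{fact:RK}, $C$ is a union of $j$ discrete sets. Padding with empty sets, which are discrete, shows that $C$ is a union of $k$ discrete sets.

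The only point needing care is the degenerate case where some $j_i$ or $j$ equals $0$, i.e.\ the set is empty; there the statements hold trivially. I also need that \cref{fact:RK} is applied with an arbitrary natural number, not just the fixed $k$. Beyond this bookkeeping, I expect no real obstacle, since the substantive content is already contained in \cref{fact:RK} and \cref{fact:cbmax}.
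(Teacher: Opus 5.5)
Your argument is correct: rewriting the set as $\bigcup_i \cl(A_i)$, turning ``union of $k$ discrete sets'' into ``countable with Cantor--Bendixson rank at most $k$'' via \cref{fact:RK}, bounding the rank of the finite union with \cref{fact:cbmax}(2), and converting back (padding with empty sets) is sound, and your handling of the empty case is fine. The paper gives no proof of its own here, only the citation \cite[1.4]{Miller-fast}, and your derivation is the natural one from the two facts it records just before.
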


\subsection{D-minimality and noiselessness} \label{sect:noiseless} We now recall facts about d-minimal structures from \cite{Miller-tame}. Indeed, many results that we are going to use and going to prove hold in the large generality of noiseless structures.

\begin{defn} We say $\R$ is \textbf{noiseless} if every definable subset of $\RR$ either has interior or is nowhere dense.
\end{defn}

Clearly, d-minimality implies noiselessness. Noiselessness as a tameness condition is introduced and studied in \cite{Miller-tame}, although the name was only suggested later by Chris Miller. 

\begin{fact}[{\cite[Main Lemma (3,5\& 6)]{Miller-tame}}]\label{fact:sbctforn} \label{fact:fiberlemma} Suppose that $\R$ is noiseless. Let $A\subseteq \RR^{m+n}$ be definable. Then 
\begin{enumerate}
    \item every definable subset of $\RR^n$ either has interior or is nowhere dense,
    \item $\{ x \in \RR^m \ : \ \cl(A_x) \neq \cl(A)_x \}$ is nowhere dense, and
    \item $\{ x \in \RR^m \ : \ \fr(A_x) \neq \fr(A)_x \}$ is nowhere dense.
\end{enumerate}
\end{fact}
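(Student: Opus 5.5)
This is a fact quoted from Miller's paper, so in the paper it would simply be cited. If one wants to prove it from noiselessness alone, I would proceed as follows.

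\emph{Part (1).} I would induct on $n$. The case $n=1$ is the definition. Suppose $X\subseteq\RR^{n}$ is definable and not nowhere dense, so $\cl(X)\supseteq U$ for some open box $U=V\times I$ with $V\subseteq \RR^{n-1}$ and $I\subseteq\RR$ an interval. Replacing $X$ by $X\cap U$, we may assume $X$ is dense in $U$. Consider
\[
Y:=\{x\in V : X_x\cap I \text{ has interior}\}.
\]
By the case $n=1$, for each $x\in V$ the fiber $X_x\cap I$ either has interior or is nowhere dense.
\begin{itemize}
\item Suppose $Y$ is not nowhere dense. By induction $Y$ contains a box $W$. For $x\in W$, definably choose an interval in $X_x$: for instance, take the first complementary... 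Rather, use the open set $\operatorname{int}(X_x)$ together with the definable midpoint map $\Midp$. Then apply the induction hypothesis on $W$, together with a Baire-category argument, to find a subbox on which some fixed open interval $J$ lies in $X_x$ for all $x$ in a non-nowhere-dense set. This gives interior.
\item Suppose $Y$ is nowhere dense. Then on a dense open subset of $V$ each fiber is nowhere dense. Yet $X$ is dense in $U$, so $\cl(X_x)$ must be large for many $x$. I would derive a contradiction by a Kuratowski--Ulam style argument, using definable choice of gaps: for each $x$, the set $I\setminus\cl(X_x)$ is open dense, so it has definable components. Pick the component containing a fixed rational point, or one near it, and get a box missing $X$.
\end{itemize}
The ``near a fixed rational point'' step needs care, and I would use the countability of the rationals together with Baire category. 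That is the main technical point of part (1).

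\emph{Parts (2) and (3).} Set
\[
E:=\{x:\cl(A_x)\ne\cl(A)_x\}.
\]
It is definable, since closures are first-order definable. By (1), if $E$ is not nowhere dense then it contains a box $W$. For $x\in W$ pick a point $y\in\cl(A)_x\setminus\cl(A_x)$ and a rational box $B$ around $y$ with $B\cap A_x=\emptyset$. Since there are countably many such boxes, Baire category gives one box $B$ for which the definable set of $x$ with this property is somewhere dense, hence by (1) contains a box $W'$. The coordinate-wise reduction to $\RR^n$ is handled via (1). Then $(W'\times B)\cap A=\emptyset$. However, $(x,y)\in\cl(A)$ for $x\in W'$ with $y\in B$, which contradicts $(W'\times B)\cap A=\emptyset$ because $W'\times B$ is an open neighborhood of $(x,y)$.

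For (3), I would note that
\[
\fr(A_x)=\cl(A_x)\setminus A_x, \qquad \fr(A)_x=\cl(A)_x\setminus A_x,
\]
so $\fr(A_x)=\fr(A)_x$ whenever $\cl(A_x)=\cl(A)_x$. Hence (3) follows from (2).

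The main obstacle is (1): getting uniformity across fibers from noiselessness in dimension one. I would try the Baire-over-rational-boxes trick there as well.
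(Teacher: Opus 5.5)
Your proposal is correct. The paper gives no argument for this statement; it imports it directly from Miller's Main Lemma, so there is no internal proof to compare with. What you supply is a self-contained proof. Its engine is the right one: cover a box by countably many definable sets indexed by rational intervals or boxes, use the Baire category theorem to make one of them somewhere dense, and invoke part (1) to upgrade ``somewhere dense'' to ``contains a box''. Parts (2) and (3) work as written. In (2), the indexed property should be ``$B\cap A_x=\emptyset$ and $B\cap\cl(A)_x\neq\emptyset$'', which is definable. Part (3) is a formal consequence of (2), since $\fr(A)_x=\cl(A)_x\setminus A_x$ for the frontier as defined in this paper. Compared with the citation, your route shows that nothing beyond one-variable noiselessness and Baire category is needed.

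In part (1), drop both detours and use the same trick in each case. The choice of an interval via $\Midp$ does not give an interval that is uniform in $x$. The component of $I\setminus\cl(X_x)$ around a fixed rational can also shrink as $x$ varies, so neither yields a box directly. The cases go as follows.
\begin{enumerate}
\item If $Y$ contains a box $W$, then $W=\bigcup_J\{x\in W : J\subseteq X_x\}$ over rational intervals $J$. Baire and the induction hypothesis then give a box $W'$ with $W'\times J\subseteq X$.
\item If $Y$ is nowhere dense, then on the nonempty open set $V\setminus\cl(Y)$ each $X_x\cap I$ is nowhere dense. So $V\setminus\cl(Y)$ is the union, over rational intervals $J\subseteq I$, of the definable sets $\{x\in V\setminus\cl(Y) : J\cap X_x=\emptyset\}$. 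Baire and induction give a box $W'\times J\subseteq U$ disjoint from $X$, which contradicts the density of $X$ in $U$.
\end{enumerate}
You flagged this yourself, so this is tidying rather than a gap.
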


\begin{fact}[Almost continuity {\cite[Theorem 3.3]{Miller-tame}}]\label{fact:genericcnt} Suppose that $\R$ is noiseless. Let $U\subseteq \RR^{m}$ be definable, open, and nonempty, and let $f: U\to \RR$ be definable. Then there is a definable, open, dense, and nonempty set $V\subseteq U$ such that $f|_V$ is continuous. 
\end{fact}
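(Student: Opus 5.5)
This is a proposal for the Almost continuity fact (Fact~\ref{fact:genericcnt}): a definable $f\colon U\to\RR$ on a definable open nonempty $U\subseteq\RR^m$ is continuous on some definable open dense $V\subseteq U$. I would not use the uniformity of Theorem~\ref{thm:uniform2}. The argument uses only noiselessness, in the form of Fact~\ref{fact:sbctforn}(1): every definable subset of $\RR^m$ either has interior or is nowhere dense. It is a Baire-category argument.

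The first reduction is to let $D\subseteq U$ be the set of points at which $f$ is discontinuous. This set is definable by the $\varepsilon$--$\delta$ formula. If $D$ is nowhere dense, then $V:=U\setminus\cl(D)$ is definable, open and dense in $U$. On $V$ the function $f$ is continuous at every point of $V$ as a function on $U$, so $f|_V$ is continuous, and we are done. By Fact~\ref{fact:sbctforn}(1), the remaining task is to rule out that $D$ has interior. So assume, for a contradiction, that $D$ contains an open box $B$; after shrinking $B$ we may take $B$ bounded.

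The main step is to derive the contradiction on $B$ from the fact that $f$ is discontinuous everywhere there. For $q\in\QQ$ let $L_q:=\{x\in B: f(x)<q\}$; each $L_q$ is definable, so each is either nowhere dense or has interior. I then split into two cases.
\begin{enumerate}
\item If every $L_q$ and every $\{x\in B: f(x)>q\}$ with nonempty interior had interior dense in itself in a suitable sense, one could pass to nested boxes. The concrete plan is as follows. Choose a box $B_1\subseteq B$ and rationals $q_1<r_1$ with $r_1-q_1<1$ and $f(B_1)\cap(q_1,r_1)$ dense in $B_1$. This uses the fact that $B=\bigcup_q (L_{q+1}\setminus L_q)$ is a countable union of definable sets, so by Baire some $f^{-1}[q,q+1)\cap B$ is not nowhere dense and hence, being definable, has interior.
\item Iterate with intervals of length $2^{-k}$ to obtain nested boxes $B_k$ with $f(B_k)\subseteq J_k$, where $|J_k|\le 2^{-k}$. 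This gives that the oscillation of $f$ on $B_k$ is at most $2^{-k}$.
\end{enumerate}
Refining this, by taking each $B_{k+1}$ inside $B_k$ with centre chosen and radii shrinking, shows that the set of points of oscillation less than $2^{-k}$ is dense in $B$. That set is also definable and open, so $\bigcap_k$ of these sets is comeager. Hence there is a continuity point in $B$, contradicting $B\subseteq D$.

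Put more cleanly, this is the classical fact that a Baire-class-one function has a comeager set of continuity points, with definability replacing measurability. For each $k$ let $O_k:=\{x\in U:\operatorname{osc}_f(x)<2^{-k}\}$, which is definable and open. Showing $O_k$ dense uses the pigeonhole over countably many intervals $[j2^{-k-1},(j+1)2^{-k-1})$ together with noiselessness: inside any box $W$ some preimage $f^{-1}(I)\cap W$ is not nowhere dense, hence contains a box, and on that box the oscillation is at most $2^{-k-1}$. Then $\bigcap_k O_k=U\setminus D$ is dense, so $D$ is not somewhere dense on an open set. Being definable, $D$ is therefore nowhere dense.

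The step I expect to need the most care is the passage from ``$f^{-1}(I)\cap W$ is not nowhere dense'' to ``it contains a box''. This is exactly where Fact~\ref{fact:sbctforn}(1) in dimension $m$ is needed, not just noiselessness on $\RR$. The Baire argument itself is routine.
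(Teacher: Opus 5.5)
Your argument is correct. The paper gives no proof of this statement; it imports it from \cite[Theorem 3.3]{Miller-tame}. What you supply is therefore a self-contained derivation from \cref{fact:sbctforn}(1), which is also imported from Miller. There is no circularity in this: the $m$-dimensional dichotomy follows from the one-dimensional one by induction on $m$, using a Baire argument on fibers and no continuity statement.

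The version in your last two paragraphs is complete as it stands:
\begin{enumerate}
\item The set $O_k=\{x\in U:\operatorname{osc}_f(x)<2^{-k}\}$ is definable and open.
\item Fix a box $W\subseteq U$ and cover it by the countably many sets $f^{-1}(I_j)\cap W$, where $I_j=[j2^{-k-1},(j+1)2^{-k-1})$. By Baire, one of them is somewhere dense.
\item The dichotomy in $\RR^m$ upgrades this to: that set contains a box $W'$. Then $W'\subseteq O_k$, so $O_k$ is dense.
\item Hence $\bigcap_k O_k=U\setminus D$ is dense, so the definable set $D$ has empty interior and is therefore nowhere dense.
\item So $V=U\setminus\cl(D)$ is as required, and it is nonempty because $U$ is.
\end{enumerate}

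Compared with the bare citation, your route shows that noiselessness enters exactly once. It turns ``somewhere dense'' into ``has interior'' for the sets $f^{-1}(I_j)\cap W$, and everything else is classical Baire category.

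For the write-up:
\begin{enumerate}
\item Drop the case split on the sets $L_q$; it is never used.
\item Correct ``$f(B_1)\cap(q_1,r_1)$ dense in $B_1$'' to ``$f^{-1}((q_1,r_1))\cap B_1$ dense in $B_1$''.
\item The argument by contradiction is superfluous. Density of $\bigcap_k O_k$ already shows directly that $D$ has empty interior.
\item Your nested-box idea also works on its own if you require $\cl(B_{k+1})\subseteq B_k$ with radii tending to $0$. The common point of the $B_k$ is then a continuity point inside any given box, which suffices.
\end{enumerate}
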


\subsection{Sequence sets} 

Every o-minimal expansion is d-minimal. When $\R$ is o-minimal, the conclusion of \cref{thm:uniform2} follows immediately, because every finite set is discrete. Thus in order to prove \cref{thm:uniform2}, we can assume that $\R$ is not o-minimal. In this situation, an infinite set without interior is definable. Here we are interested in a special kind of such a set.   

\begin{defn}
We say $D\subseteq (0,1]$ is a \textbf{sequence set} if $D$ is an infinite discrete definable set such that $1\in D$ and $D\cup \{0\}$ is closed.
\end{defn}

Note that a sequence set is necessarily countable.

\begin{lem}\label{lem:sequenceset}
    Suppose that $\R$ is noiseless, but not o-minimal. Then $\R$ defines a sequence set.
\end{lem}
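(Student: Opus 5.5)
Since $\R$ is not o-minimal, there is a definable set $X\subseteq\RR$ that is not a finite union of intervals and points. Because $\R$ expands the real field, finite unions of intervals and points are closed under boolean combinations, so the boundary $\operatorname{bd}(X)$ must be infinite: otherwise $X$ would be a union of finitely many points together with some of the open intervals between consecutive boundary points. The set $\operatorname{bd}(X)$ is definable and closed with empty interior, since an open interval inside $\operatorname{bd}(X)$ would meet both $\operatorname{int}(X)$ and the complement of $\cl(X)$. By noiselessness (\cref{fact:sbctforn}(1)), or directly because it is closed with empty interior, $Y:=\operatorname{bd}(X)$ is nowhere dense. So we may fix an infinite, closed, nowhere dense definable set $Y\subseteq\RR$.

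Next I would produce an infinite \emph{discrete} definable set. The complement $\RR\setminus Y$ is open, dense and definable. Because $Y$ is infinite and closed, there are infinitely many bounded connected components of $\RR\setminus Y$: between any two points of $Y$ with a gap there is a bounded component, and $Y$, being infinite and nowhere dense, has infinitely many such gaps. Set $M:=\Midp(\RR\setminus Y)$, which is definable and infinite. Distinct midpoints come from disjoint open intervals, and each midpoint $m$ is the center of its component $(a,b)$, so $B_{(b-a)/2}(m)\cap M=\{m\}$. Hence $M$ is discrete, though it need not be closed.

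Then I would transform $M$ into a subset of $(0,1]$ accumulating only at $0$. Since $M$ is infinite, either $M$ is unbounded or it has an accumulation point $c\in\RR$. In the unbounded case, say $M\cap(r,\infty)$ is infinite for some $r$; after translating we may take $r=1$ and replace $M$ by $\{1/m : m\in M,\ m\geq 1\}$. The unbounded case with $M$ unbounded below is symmetric. In the accumulation case, after possibly reflecting, infinitely many points of $M$ lie in $(c,c+1)$ and accumulate at $c$. Take $E:=\{m-c : m\in M\cap(c,c+1)\}$, shrinking the interval if necessary so that $c$ is the only accumulation point. That last step is the delicate one. Instead, I would use the Cantor--Bendixson style trick of replacing $M$ by a set whose only accumulation point is controlled: choose $c$ to be the \emph{least} accumulation point of $M$ in $[c_0,\infty)$ from the right, which exists because $\cl(M)\setminus M$ is closed and definable and we can take infima.

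The main obstacle is making the resulting set discrete with $0$ as its only possible limit point in $[0,1]$, so that $D\cup\{0\}$ is closed. My plan is to pass to a set of ``first points'' to thin it out: define $E':=\{e\in E: (0,e)\cap E \text{ is not finite... }\}$. That condition is not first-order, so I would avoid it and instead use the definable operation $e\mapsto \inf\big(E\cap(e,\infty)\big)$ along a decreasing sequence. Concretely, let $D_0$ be the set of $e\in E$ such that no point of $\cl(E)\setminus\{0\}$ lies in $(0,e]$ other than points of $E$ that are isolated in $\cl(E)$. This can be arranged by first replacing $E$ by $E\cap(0,\delta)$, where $\delta:=\inf\big((\cl(E)\setminus E)\cap(0,\infty)\big)$ if that set is nonempty and $0$ is its infimum. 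Otherwise we repeat the midpoint construction on $\cl(E)\setminus E$ itself. Finally, rescale so that the largest element equals $1$, or add $1$ to the set, to obtain the sequence set $D$.
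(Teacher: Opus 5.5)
\paragraph{Gap: no control over accumulation points.} Your first two steps can be made to work. However, the reason you give for $\operatorname{bd}(X)$ having empty interior is wrong: an interval $I\subseteq \operatorname{bd}(X)=\cl(X)\setminus\operatorname{int}(X)$ meets neither $\operatorname{int}(X)$ nor $\RR\setminus\cl(X)$. The correct argument is that $X\cap I$ is dense in $I$, so by noiselessness it has interior, contradicting $I\cap\operatorname{int}(X)=\emptyset$. (The paper instead starts from an infinite definable set with empty interior and takes its closure.)

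The genuine gap is your third step. The set $M=\Midp(\RR\setminus Y)$ is discrete, but you never control where it accumulates, and that control is the whole content of the lemma.
\begin{itemize}
\item In your unbounded case, $\{1/m : m\in M,\ m\geq 1\}$ is a sequence set only if $M\cap[1,\infty)$ is closed. This fails whenever $Y$ has an accumulation point beyond $1$, because gap midpoints accumulate there as well.
\item In the bounded case, taking $c$ to be the least accumulation point of $M$ in $[c_0,\infty)$ does not make $c$ the only accumulation point of $M$ in any $(c,c+\delta)$. The closed set $\cl(M)\setminus M$ may itself accumulate at $c$ from the right. For a suitable $c$ to exist you would need an isolated point of $\cl(M)\setminus M$. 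You have not shown this, and it is not an evident consequence of noiselessness; it would follow from finite Cantor--Bendixson rank, which is a d-minimality-type property.
\item Your fallback, ``repeat the midpoint construction on $\cl(E)\setminus E$'', comes with no termination argument.
\end{itemize}

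\paragraph{The missing idea.} You need a definable quantity that can only accumulate at $0$. The paper gets it by noting that $\cl(X)$ witnesses that the open core is not o-minimal. It then quotes \cite[2.8]{HM} to obtain an infinite closed discrete definable $E$, which is necessarily unbounded. After reflecting if needed, $\{1\}\cup\{e^{-1} : e\in E,\ e\geq 1\}$ is a sequence set; this is exactly your unbounded case, but now with $E$ closed.

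You can also finish within your own framework by replacing midpoints with \emph{lengths}.
\begin{itemize}
\item Compose with the semialgebraic homeomorphism $t\mapsto t/(1+|t|)$ and take the closure. This lets you assume $Y\subseteq[-1,1]$ while keeping $Y$ infinite, closed and nowhere dense.
\item The bounded components of $\RR\setminus Y$ are infinitely many pairwise disjoint open subintervals of $[-1,1]$. So for each $\varepsilon>0$, at most $2/\varepsilon$ of them have length $\geq\varepsilon$.
\item Let $L$ be the definable set of their lengths. Then $L\subseteq(0,2]$, $L$ is infinite, and $L\cap[\varepsilon,\infty)$ is finite for every $\varepsilon>0$.
\item Hence $L$ is discrete, $L\cup\{0\}$ is closed, and $\max L$ exists. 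Therefore $\{\ell/\max L : \ell\in L\}$ is a sequence set.
\end{itemize}
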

\begin{proof}
Since $\R$ is not o-minimal, there is an infinite $X\subseteq \RR$ definable in $\R$ with empty interior. Because $\R$ is noiseless, the set $X$ is nowhere dense. Thus its topological closure $\cl(X)$ is definable, infinite, and nowhere dense. Hence the open core $\R^{\circ}$ is not o-minimal. By \cite[2.8]{HM} we know that $\R$ defines an infinite, closed, and discrete set $E \subseteq \RR$. Note that $E$ is necessarily unbounded. By taking a subset and, if necessary, multiplying the elements of $E$ by $-1$, we can assume that $E \subseteq \RR_{\geq 1}$. Then 
\[
D = \{1\} \cup \{ e^{-1} \ : \ e \in E\} 
\]
is the desired sequence set.
\end{proof}

\begin{defn}
Let $D$ be a sequence set. 
We define $\lambda_D : (0,1] \to (0,1]$ as the function given by
\[
x \mapsto \max \big( (0,x] \cap D \big).
\]
\end{defn}

\subsection{Naive dimension} Let $A\subseteq \RR^n$ be non-empty.
The \textbf{dimension} of $A$, written $\dim(A)$, is the maximum $m \in \mathbb{N}\cup\{0\}$ for which there is a coordinate projection $\pi:\RR^n\to \RR^m$ such that $\pi(A)$ has interior. We set $\dim(\emptyset) := - \infty$.\\

This notion of dimension is much more convenient with regard to definability than other more classical notions of topological dimension. In particular, whenever $A\subseteq \RR^m \times \RR^n$ is definable and $k\in \NN$, then
\[
\{ x \in \RR^m \ : \dim A_x \geq k\} 
\]
is definable. For a more detailed analysis of various dimensions in this setting, see Hieronymi and Miller \cite{HM}. We gather the following facts.

\begin{fact}\label{fact:naive} Let $A,B \subseteq \RR^n$. Then
\begin{enumerate}
    \item $\dim A = n$ if and only if A has nonempty interior in $\RR^n$,
      \item $\dim A \leq \dim B$, if $A\subseteq B$,
      \item $\dim A \times B = \dim A + \dim B$.
\end{enumerate}
If $\R$ is noiseless and $A,B$ are definable, then
\begin{enumerate}
    \item[(4)] $\dim (A\cup B) = \max \{\dim A, \dim B\}$,
    \item[(5)] $\dim \cl(A) = \dim A$.
\end{enumerate}
\end{fact}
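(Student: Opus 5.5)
Items (1)--(3) are purely topological, and items (4) and (5) will come from noiselessness in the form of \cref{fact:sbctforn}(1).

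For (1), the only coordinate projection $\RR^n\to\RR^n$ is the identity $\pi_{(1,\dots,1)}$. Hence $\dim A=n$ holds exactly when $A$ itself has interior.

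For (2), if $A\subseteq B$ then $\pi_w(A)\subseteq \pi_w(B)$ for every $w$. So any projection witnessing $\dim A\geq m$ also witnesses $\dim B\geq m$.

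For (3), I will first dispose of the case that $A$ or $B$ is empty, using the convention $\dim\emptyset=-\infty$. Otherwise, take projections $\pi_u$ and $\pi_v$ witnessing $\dim A$ and $\dim B$. The projection $\pi_{(u,v)}$ maps $A\times B$ onto $\pi_u(A)\times\pi_v(B)$. This set contains a product of two open boxes, which gives $\geq$. Conversely, suppose $\pi_w(A\times B)$ has interior, and split $w=(u,v)$ according to the two blocks of coordinates. Then $\pi_w(A\times B)=\pi_u(A)\times\pi_v(B)$. A product containing an open box has factors containing open boxes, so the number of $1$'s in $w$ is at most $\dim A+\dim B$.

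Now assume $\R$ is noiseless and $A,B$ are definable. Then every projection $\pi_w(A)$ and $\pi_w(B)$ is a definable subset of some $\RR^k$. By \cref{fact:sbctforn}(1), each such projection either has interior or is nowhere dense.

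For (4), the inequality $\geq$ follows from (2). For $\leq$, suppose $\pi_w(A\cup B)=\pi_w(A)\cup\pi_w(B)$ has interior. A finite union of nowhere dense sets is nowhere dense, so not both $\pi_w(A)$ and $\pi_w(B)$ are nowhere dense. By the dichotomy, one of them has interior.

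For (5), again $\geq$ is given by (2). For $\leq$, suppose $\pi_w(\cl A)$ has interior. Since $\pi_w$ is continuous, $\pi_w(\cl A)\subseteq \cl(\pi_w(A))$. Thus $\cl(\pi_w(A))$ has interior, so the definable set $\pi_w(A)$ is not nowhere dense. By the dichotomy it has interior.

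The only step needing care is this use of noiselessness in (4) and (5). There one must apply \cref{fact:sbctforn}(1) in $\RR^k$, not merely in $\RR$, which is exactly why that fact is needed rather than the bare definition of noiselessness.
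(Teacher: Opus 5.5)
Your proof is correct. For (1)--(3) you do what the paper intends; it only remarks that these follow easily from the definition. Your product argument is fine, and it genuinely needs the convention $\dim\emptyset=-\infty$ from the preliminaries: with the convention $-1$ from the introduction, (3) would fail for $\emptyset\times\RR$.

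For (4) and (5) your route differs from the paper's. The paper simply cites two further items of Miller's Main Lemma, items (11) and (1) of \cite{Miller-tame}. You instead derive both from the single dichotomy already recorded as \cref{fact:sbctforn}(1), applied to the definable sets $\pi_w(A)$ and $\pi_w(B)$ in $\RR^{k}$. The only other ingredients are two elementary topological facts: a finite union of nowhere dense sets is nowhere dense, and $\pi_w(\cl(A))\subseteq\cl(\pi_w(A))$ by continuity.

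What your route buys:
\begin{itemize}
\item The fact becomes self-contained modulo an input the paper needs anyway.
\item It shows that (4) and (5) are soft consequences of the $\RR^k$-dichotomy. In fact they hold for arbitrary $A,B$ all of whose coordinate projections satisfy the interior/nowhere-dense dichotomy.
\item It makes clear that definability of $\cl(A)$ is never used.
\end{itemize}
The citation route is shorter and defers to Miller's treatment, but it hides how little is actually involved.

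Your closing remark is also right. Noiselessness as defined only constrains subsets of $\RR$, so the higher-dimensional dichotomy of \cref{fact:sbctforn}(1) is exactly the nontrivial input.
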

\begin{proof}
Statements (1)-(3) follow easily from the definition of the dimension. Statement (4) is \cite[Main Lemma (11)]{Miller-tame} and Statement (5) is  \cite[Main Lemma (1)]{Miller-tame}.
\end{proof}

\begin{lem}\label{lem:dim}
Suppose that $\R$ is noiseless.
Let $D$ be a sequence set and let $(A_d)_{d\in D}$ be a definable family of subsets of $\RR^n$. Then 
\[
\dim\left ( \bigcup_{d\in D} A_d \right) = \max_{d \in D}  \dim(A_d).
\]
\end{lem}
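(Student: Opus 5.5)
The inequality $\geq$ follows from \cref{fact:naive}(2), since each $A_d$ is contained in the union. It also shows that the maximum on the right exists: every $\dim(A_d)$ lies in $\{-\infty,0,\dots,n\}$, so only finitely many values occur. For the reverse inequality, set $A:=\bigcup_{d\in D}A_d$ and let $m:=\dim A$. Choose a coordinate projection $\pi_w:\RR^n\to\RR^m$ such that $\pi_w(A)$ has nonempty interior. It then suffices to find a single $d\in D$ with $\pi_w(A_d)$ having interior, since this gives $\dim A_d\geq m$.

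Because projection commutes with unions, $\pi_w(A)=\bigcup_{d\in D}\pi_w(A_d)$. This is a countable union, since a sequence set is countable. Each $\pi_w(A_d)$ is definable, as the family is definable and $D$ is definable. By \cref{fact:sbctforn}(1), each $\pi_w(A_d)$ either has interior or is nowhere dense in $\RR^m$. Suppose, for contradiction, that every $\pi_w(A_d)$ is nowhere dense. Then $\pi_w(A)$ is a countable union of nowhere dense sets, i.e.\ meager. But it contains a nonempty open box, which contradicts the Baire category theorem in $\RR^m$. Hence some $\pi_w(A_d)$ has interior, and $\dim A_d\geq m$.

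The only delicate point is applying noiselessness to the sets $\pi_w(A_d)$. This is legitimate because each of them is a definable subset of $\RR^m$, and \cref{fact:sbctforn}(1) gives the interior/nowhere-dense dichotomy in every dimension. The cases $m=0$ and $A=\emptyset$ are handled separately and trivially. If $m=0$, then $A$ is nonempty, so some $A_d$ is nonempty and has dimension at least $0$. If $A=\emptyset$, every $A_d$ is empty and both sides equal $-\infty$. So I expect no real obstacle. The essential input is the countability of $D$, which is what makes Baire category applicable; this is the one place where the structure of sequence sets is used.
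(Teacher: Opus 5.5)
Your proof is correct, but it follows a different and more direct route than the paper's. The paper argues by induction on $n$. Assuming $\dim\bigcup_{d\in D} A_d = k > \max_{d\in D}\dim A_d$, it first shows $k<n$: it applies Baire category in $\RR^n$ to the nowhere dense closures of the $A_d$, then uses noiselessness for their definable union. It then projects to $\RR^k$ and applies the induction hypothesis to the family $(\pi(A_d))_{d\in D}$, using $\dim\pi(A_d)\le\dim A_d<k$.

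You instead project at once onto the $m=\dim A$ coordinates that witness the dimension of the union. You then run the Baire argument a single time, directly in $\RR^m$, on the definable sets $\pi_w(A_d)$. The paper's induction ultimately reduces to the full-dimensional statement ``a countable definable union with interior has a member with interior,'' and that statement is exactly your argument. So your version drops both the induction and the detour through closures, with nothing lost. It also makes clear that the only inputs are the countability of $D$, Baire category in $\RR^m$, and the dichotomy of \cref{fact:sbctforn}(1) in $\RR^m$.
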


\begin{proof}
    We proceed by induction on $n$. Suppose $n=0$.
    First, consider the case that 
    \[\max_{d \in D}  \dim(A_d)=-\infty.\] 
    Then $A_d$ is empty for all $d\in D$. Thus
    \[
    \dim(\bigcup_{d\in D} A_d)=\dim \emptyset = -\infty.
    \]
    Now suppose that 
    \[\max_{d \in D}  \dim(A_d)=0.\]
    Then, for each $d\in D$, the set $A_d$ is either empty or a singleton set for each $d\in D$. Thus $\bigcup_{d\in D} A_d$ does not have interior by the Baire Category Theorem. Since there is $d\in D$ such that $A_d\neq \emptyset$, we get that $\dim(\bigcup_{d\in D} A_d)=0$.\\
    
    Now suppose that $n\in \NN_{\geq 1}$. By \cref{fact:naive}(2), it is enough to show that
    \[
\dim\left ( \bigcup_{d\in D} A_d \right) \leq \max_{d \in D}  \dim(A_d).
    \]
    If $\max_{d\in D} \dim A_d=n$, then this is trivial. So we can reduce to the case that
    \[
    \max_{d \in D} \dim (A_d)<n.
    \]
    Towards a contradiction, suppose that there is $k\in \NN$ such that 
    \[
    \dim(\bigcup_{d\in D} A_d)=k>\max_{d \in D} \dim (A_d).
    \]
    We first show that $k<n$. Let $d\in D$. Since $n \geq k > \dim A_d$, we know that $A_d$ does not have interior and hence is nowhere dense by \cref{fact:sbctforn}. Thus, its topological closure $\cl(A_d)$ is nowhere dense. By the Baire Category Theorem $\bigcup_{d\in D}\cl(A_d)$ does not have interior, and hence is nowhere dense by \cref{fact:sbctforn}. Thus
    \[
    k = \dim(\bigcup_{d\in D} A_d)\leq \dim(\bigcup_{d\in D} \cl(A_d))<n.
    \]
    Let $\pi: \RR^n \to \RR^k$ be a coordinate projection such that $\pi(\bigcup_{d\in D} A_d)$ has interior in $\RR^k$.
    Therefore 
    \[
    \dim(\bigcup_{d\in D} \pi(A_d))\geq k.
    \]
    Since $\dim(A_d)<k$ for each $d \in D$, we conclude by induction that 
    \[\dim(\bigcup_{d\in D} \pi(A_d))<k,
    \]
    a contradiction.
\end{proof}

\subsection{$\DSig$ Sets} A set $A\subseteq \RR^n$ is $\DSig$ if there is a definable family $(X_{t})_{t>0}$ of compact subsets of $\RR^n$ such that for all $s,t\in \RR_{>0}$
\begin{enumerate}
    \item $X_{s}\subseteq X_{t}$ if $s \leq t$,
    \item $A = \bigcup_{t>0} X_t$.
\end{enumerate}
Miller and Speissegger introduce $\DSig$ sets in \cite{MS-opencore}. Since it is easy to see that every $\DSig$ set is  $F_{\sigma}$, one can think of $\DSig$ sets as a definable analogue of $F_{\sigma}$ sets. Here we collect some facts about $\DSig$ sets.

\begin{fact}[{\cite[1.2,2.2]{MS-opencore}}]\label{fact:D-Sigma}
\begin{enumerate}
    \item Open and closed definable sets are $\DSig$.
    \item A finite union or finite intersection of $\DSig$ sets is $D_\Sigma$.
    \item The image of a $D_\Sigma$ set under a continuous definable function is $D_\Sigma$.
    \item Every locally closed set is $\DSig$.
\end{enumerate}
\end{fact}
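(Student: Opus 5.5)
The statement is \cref{fact:D-Sigma}, which collects results of Miller and Speissegger \cite[1.2,2.2]{MS-opencore}. I would nonetheless verify each item directly from the definition of $\DSig$. Throughout, I use definable families $(X_t)_{t>0}$ of compact sets, increasing in $t$, and the fact that definable sets are closed under the usual first-order operations.

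For (1), let $C$ be closed and definable. Take $X_t := C \cap [-t,t]^n$. This is compact, definable uniformly in $t$, increasing, and its union is $C$.

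For an open definable $U \neq \RR^n$, take
\[
X_t := \{x \in [-t,t]^n : \operatorname{dist}(x,\RR^n\setminus U)\geq 1/t\}.
\]
The distance function is definable in the real field with the definable set $\RR^n\setminus U$ as parameter, and it is continuous, so $X_t$ is closed and bounded, hence compact. The family is increasing, and its union is $U$ because the complement of $U$ is closed. The case $U=\RR^n$ is already covered by the closed case.

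For (2), let $A=\bigcup_t X_t$ and $B=\bigcup_t Y_t$. Unions are handled by $X_t\cup Y_t$, since monotonicity gives $\bigcup_t (X_t\cup Y_t)=A\cup B$. For intersections I take $X_t\cap Y_t$. Monotonicity ensures that a point in $X_s\cap Y_{s'}$ already lies in $X_{t}\cap Y_{t}$ for $t=\max(s,s')$, so the union is $A\cap B$. Finite unions and intersections then follow by induction.

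For (3), let $f$ be continuous and definable with $A$ contained in its domain. Then $f(X_t)$ is compact as a continuous image of a compact set, and definable uniformly in $t$. The family is increasing, and its union is $f(A)$.

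For (4), let $A$ be locally closed and definable. I would first show that $A=\cl(A)\cap U$, where $U$ is the open set $\RR^n\setminus \fr(A)$, i.e. the complement of $\cl(\fr(A))$ once we check that $\fr(A)$ is closed.

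To see that $\fr(A)$ is closed, suppose a point $a\in A$ were a limit of frontier points. Local closedness gives a neighborhood $V$ of $a$ with $A\cap V$ closed in $V$. Each frontier point in $V$ is in $\cl(A)\cap V=\cl_V(A\cap V)=A\cap V$, which is a contradiction. Hence $\cl(A)\setminus A$ is closed, and $U:=\RR^n\setminus(\cl(A)\setminus A)$ is open and definable.

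Since $A=\cl(A)\cap U$, parts (1) and (2) then give that $A$ is $\DSig$.

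The only mildly delicate point is this local-closedness step. The rest is routine bookkeeping with monotone compact exhaustions.
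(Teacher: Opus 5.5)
Your proof is correct. The paper gives no argument of its own for this fact and simply imports it from Miller--Speissegger; your direct verifications via monotone compact exhaustions are the standard ones, and you correctly reduce the locally closed case to $A=\cl(A)\cap(\RR^n\setminus\fr(A))$ after checking that $\fr(A)$ is closed. You are also right to read (4) as concerning \emph{definable} locally closed sets, since every $\DSig$ set is automatically definable.
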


We will use the following fact about dimensions of $\DSig$ sets.

\begin{fact}[{\cite[Cor. 6.7]{FHW-Compact}}]\label{fact:DSigDim}
Suppose that $\R$ does not define $\ZZ$.
    Let $A\subseteq \RR^m$ and $B \subseteq \RR^n$ be $\DSig$ and $f : A \to B$ be continuous
and definable. If there is a $d \in \NN$ such that $\dim f^{-1}(x) = d$ for all $x \in B$, then
\[
\dim f^{-1}(B) = \dim B + d.
\]
\end{fact}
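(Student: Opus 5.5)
This statement is quoted from \cite[Cor. 6.7]{FHW-Compact}, and in the paper it is used as a black box. If I had to prove it, I would work entirely inside the ``tame $\DSig$'' theory available when $\R$ does not define $\ZZ$. The main input I would take from that setting is the \emph{definable Baire property}: if $(Y_t)_{t>0}$ is an increasing definable family of nowhere dense sets, then $\bigcup_t Y_t$ has no interior. The plan is to prove the two inequalities $\dim A \geq \dim B + d$ and $\dim A \leq \dim B + d$ separately, writing $A=\bigcup_{t>0}X_t$ with $X_t$ compact and increasing. Replacing $f$ by $f|_{X_t}$ makes each $f(X_t)$ compact, and $B=\bigcup_t f(X_t)$. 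This lets me pass freely to compact pieces.

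For the lower bound, set $k=\dim B$ and fix a coordinate projection $\pi_v$ with $\pi_v(B)$ having interior. For each $x\in B$, some coordinate projection $\pi_w\colon\RR^m\to\RR^d$ maps $f^{-1}(x)$ onto a set with interior. Since there are only finitely many $w$, $B$ is a finite union of the definable sets $B_w$ on which $w$ works. I would shrink these to $\DSig$ sets by intersecting with $f(X_t)$ and with the set of $x$ for which $\pi_w(f^{-1}(x)\cap X_t)$ contains a box of radius $1/t$ with rational-free description. By the definable Baire property, some $\pi_v(B_w)$ still has interior. A Kuratowski--Ulam type argument, again driven by definable Baire, then shows that $(\pi_v\circ f,\pi_w)(A)$ has interior in $\RR^{k+d}$; this is a set whose vertical fibers over an open set contain boxes. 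After composing with a suitable coordinate projection of $A$ itself (one needs $w$ and the coordinates that see $B$ to combine into a genuine coordinate projection), this gives $\dim A\geq k+d$.

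For the upper bound, suppose $\pi_u(A)$ has interior in $\RR^{k+d+1}$. Splitting $u$ according to the fibers, for generic $y$ in that open set the fiber of $\pi_u(A)$ over the first coordinates is a projection of a union of fibers $f^{-1}(x)$. The difficulty is that $x$ is not determined by $\pi_u(a)$, so I would need to control how many fibers contribute. I would apply definable Baire to the family indexed by compact pieces of $B$ to find an open set where a single $\DSig$ piece of $B$, of dimension $\le k$, accounts for an interior set. Then a dimension count with the fibers of dimension exactly $d$ (again by definable Baire / Kuratowski--Ulam) gives a contradiction.

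I expect the upper bound to be the main obstacle, since it is essentially the statement that continuous definable maps between $\DSig$ sets do not raise dimension; this fails outright when $\ZZ$ is definable, for instance via space-filling constructions. My first attempt would be to reduce to the case that $f$ is a continuous map on a compact set. I would then prove $\dim f(K)\le\dim K$, and the fiber-additivity version of it, by induction on the ambient dimension, using definable Baire at each step. If this became unwieldy, I would instead invoke the equality of naive and topological (or Hausdorff-type) dimension for $\DSig$ sets in expansions not defining $\ZZ$, and use the classical fiber inequalities for those dimensions.
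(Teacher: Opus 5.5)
The paper gives no proof of this fact; it imports it from \cite[Cor.~6.7]{FHW-Compact}. So the question is whether your sketch is sound, and it has a genuine gap: none of its steps uses that $\ZZ$ is not definable. Over $\RR$ your ``definable Baire property'' is just the Baire category theorem, since an increasing family $(Y_t)_{t>0}$ has the same union as $(Y_j)_{j\in\NN}$. Kuratowski--Ulam is classical too, so both tools are available in $(\RR,<,+,\cdot,\ZZ)$. There the statement fails in both directions, because this structure defines a Hilbert curve $h\colon[0,1]\to[0,1]^2$. Taking $A=[0,1]$, $B=[0,1]^2$, $f=h$ (finite fibres, so $d=0$) gives $\dim A=1<2=\dim B+d$. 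Taking $A=\Gamma(h)\subseteq\RR^3$, $B=[0,1]$, and $f$ the projection to the first coordinate (singleton fibres) gives $\dim A\geq 2>1=\dim B+d$.

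In your lower bound the failure is exactly the last step. The map $a\mapsto(\pi_v(f(a)),\pi_w(a))$ is not a coordinate projection of $A$, because the coordinates that see $B$ are coordinates of $f(a)$, not of $a$. Your argument does correctly establish $\dim\Gamma(f)\geq\dim B+d$: the reduction to $\DSig$ pieces works, since $\{(f(a),\pi_w(a)) : a\in X_t\}$ is compact, and Kuratowski--Ulam applies to $F_\sigma$ sets. But getting from $\Gamma(f)$ back to $A$ is precisely the claim that the definable homeomorphism $a\mapsto(a,f(a))$ does not raise dimension, which $h$ violates. So the lower bound is not the easy half. You also have the directions swapped when you call the upper bound ``essentially'' non-raising of dimension: $\dim f(A)\leq\dim A$ is the $d=0$ case of the lower bound, while the upper bound is a Hurewicz-type, dimension-lowering statement.

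For the same reason, proving non-raising ``by induction on the ambient dimension, using definable Baire at each step'' cannot work. The missing ingredient is the actual content of \cite{FHW-Compact}: when $\ZZ$ is not definable, the naive dimension of a $\DSig$ set equals its topological dimension $\dim_{\mathrm{top}}$. Your fallback is the right repair, but it has to be assembled differently from what you describe.

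For the upper bound, each $f|_{X_t}\colon X_t\to f(X_t)$ is a closed map. So Hurewicz's theorem, together with monotonicity and the countable sum theorem for $\dim_{\mathrm{top}}$, gives $\dim_{\mathrm{top}}A\leq\dim_{\mathrm{top}}B+\sup_{x\in B}\dim_{\mathrm{top}}f^{-1}(x)$. Since $A$, $B$ and every fibre $f^{-1}(x)=\bigcup_{t}(X_t\cap f^{-1}(x))$ are $\DSig$, these are the naive dimensions.

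For the lower bound there is no classical fibre inequality at all: a Cantor set maps continuously onto $[0,1]^2$ with zero-dimensional fibres. So ``classical fiber inequalities'' will not do here. Instead, combine your Kuratowski--Ulam bound with $\dim\Gamma(f)=\dim_{\mathrm{top}}\Gamma(f)=\dim_{\mathrm{top}}A=\dim A$. This holds because $\Gamma(f)$ is a $\DSig$ set homeomorphic to $A$.
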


Note that if $\R$ is noiseless, then it does not define $\QQ$, and hence does not define $\ZZ$.

\section{Cells and stacks}\label{section:cells-and-stacks}

In this section, we recall the definition of a cell and prove basic facts about cells. Afterwards, we introduce stacks, which are well-behaved finite unions of cells. While in this paper we are mainly interested in d-minimal structures, all results in this section hold under the weaker assumption of noiselessness. Thus, we only assume that $\R$ is noiseless.

\subsection{Cells} We first introduce the notion of a cell, defined as in an o-minimal structure. We now recall the notation used in \cite[(3.2.2)]{tametop}.  Let  $X\subseteq \RR^n$ be definable. Set
\begin{align*}
\mathcal{C}(X) &:= \{ f : X \to \RR \ : \ f \text{ is definable and continuous}\},\\
\mathcal{C}_{\infty}(X) &:= \mathcal{C} \cup \{-\infty,+\infty\},
\end{align*}
where we regard $-\infty$ and $+\infty$ as constant functions on $X$.
For $f,g \in \mathcal{C}_{\infty}(X)$, we write $f<g$ whenever $f(x)<g(x)$ for all $x \in X$. If this is the case, we set 
\[(f,g) := \{(x,y) \in X \times \RR : f(x) < y < g(x)\}.\]

\begin{defn}\label{defn:cells}
Let $n \in \NN_{\geq 1}$, and let $w=(w_1,\dots,w_n) \in \{0,1\}^n$. We now define a \textbf{$w$-cell} recursively:\newline

For $n=1$: a $0$-cell is a point in $\RR$, and a $1$-cell is an open subinterval of $\RR$.\newline
For $n>1$: a set $A\subseteq \RR^n$ is a \textbf{$w$-cell} if
\begin{enumerate}
    \item $\pi(A)$ is a $(w_1,\dots,w_{n-1})$-cell,
    \item if $w_n=0$, there is $f\in \mathcal{C}(\pi(A))$ such that $A$ is the graph of $f$,
    \item if $w_n=1$, there are $f,g\in \mathcal{C}_{\infty}(\pi(A))$ such that $f<g$ and $A=\big(f,g\big)$.
\end{enumerate}
\end{defn}

Following \cite[(3.2.6)]{tametop}, we view the one-point space $\RR^0$ as a $()$-cell, where $( )$ is the sequence of length 0. 

\begin{lem}\label{lem:celldsig}\label{lem:proj-cell}\label{cor:cell-dim}
Let $w=(w_1,\dots,w_n)\in\{0,1\}^n$, and let $A\subseteq \RR^n$ be a $(w_1,\dots,w_n)$-cell. Then 
\begin{enumerate}
    \item $A$ is connected,
    \item $A$ is locally closed, and hence $\DSig$, 
    \item $\pi_w|_A$ is a homeomorphism and $\pi_w(A)$ is open, and
    \item $\dim A = w_1+\dots+w_n$.
\end{enumerate}
\end{lem}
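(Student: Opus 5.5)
We prove all four statements simultaneously by induction on $n$, using the recursive structure of \cref{defn:cells}. For $n=1$ everything is immediate: a point or an open interval is connected and locally closed; $\pi_w$ is the identity when $w=(1)$ (an open interval is open) and the map to the one-point space $\RR^0$ when $w=(0)$; and the dimension is $0$ or $1$ by \cref{fact:naive}(1). For the inductive step, let $A\subseteq\RR^n$ be a $w$-cell with $n>1$, set $B:=\pi(A)$, and write $w'=(w_1,\dots,w_{n-1})$. By induction $B$ is a connected, locally closed $w'$-cell, $\pi_{w'}|_B$ is a homeomorphism, $\pi_{w'}(B)$ is open, and $\dim B = w_1+\dots+w_{n-1}$.

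For (1) and (3) we split on $w_n$. If $w_n=0$, then $A=\Gamma(f)$ with $f\in\mathcal{C}(B)$. Hence $\pi|_A$ is a homeomorphism onto $B$, with inverse $x\mapsto(x,f(x))$, so $A$ is connected. Moreover, $\pi_w=\pi_{w'}\circ\pi$ on $A$, which gives (3). If $w_n=1$, then $A=(f,g)$. The map $(x,t)\mapsto (x,(1-t)f(x)+t g(x))$ is a homeomorphism from $B\times(0,1)$ onto $A$, with suitable modifications when $f$ or $g$ is $\pm\infty$: for instance, use $y=f(x)+e^{s}$, $s\in\RR$, when $g=+\infty$, and $y=s$ when both are infinite. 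Thus $A$ is connected. Also, $\pi_w(x,y)=(\pi_{w'}(x),y)$ is injective and continuous on $A$, and its inverse $(u,y)\mapsto ((\pi_{w'}|_B)^{-1}(u),y)$ is continuous. Its image is
\[
\{(u,y): u\in\pi_{w'}(B),\ f\circ(\pi_{w'}|_B)^{-1}(u)<y<g\circ(\pi_{w'}|_B)^{-1}(u)\},
\]
which is open because $\pi_{w'}(B)$ is open and the two bounding functions are continuous on it.

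For (2), local closedness in the case $w_n=0$ holds because the graph of a continuous function on a locally closed set is locally closed. Concretely, if $U$ is an open set with $B\cap U$ closed in $U$, then $\Gamma(f)\cap(U\times\RR)$ is closed in $U\times \RR$, by continuity of $f$ and closedness of $B\cap U$. In the case $w_n=1$, given $(x,y)\in A$, we choose $U$ as above around $x$ and shrink it so that $f<y-\delta$ and $g>y+\delta$ on $B\cap U$, which is possible by continuity. Then $A\cap (U\times(y-\delta,y+\delta))=(B\cap U)\times(y-\delta,y+\delta)$ is closed in $U\times(y-\delta,y+\delta)$. In both cases, \cref{fact:D-Sigma}(4) then gives that $A$ is $\DSig$.

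For (4), (3) shows that $\pi_w(A)$ is a nonempty open subset of $\RR^{k}$, where $k=w_1+\dots+w_n$, so $\dim A\ge k$. For the reverse inequality, suppose some coordinate projection $p$ onto $m>k$ coordinates had $p(A)$ with interior. Since $A$ is the image of the open set $\pi_w(A)\subseteq\RR^k$ under the continuous definable map $h=(\pi_w|_A)^{-1}$, we would get a definable continuous map from an open subset of $\RR^k$ whose image under $p$ has interior in $\RR^m$. We would like to rule this out with \cref{fact:DSigDim}. Apply it to $p\circ h$, restricted to the $\DSig$ set $\pi_w(A)$ and restricted over a box in its image. The fibres have dimension at most $k$, and one can partition by fibre dimension, using \cref{fact:naive}(4) and noiselessness. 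This yields $m\le k$, a contradiction.

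This upper bound is the main obstacle, since naive dimension is not obviously invariant under homeomorphism. An alternative that avoids \cref{fact:DSigDim} is to argue via \cref{lem:dim}-style Baire arguments, or directly by induction. Here $A$ is a graph or band over $B$, so $p(A)$ is contained in a graph or band over $p'(B)$. By induction, $p'(B)$ is nowhere dense whenever it has too many coordinates, and a continuous graph over a set contributes no interior. I expect the \cref{fact:DSigDim} route to be the cleanest.
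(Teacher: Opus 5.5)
Your proofs of (1)--(3) follow the same inductive argument as the paper. In (2), case $w_n=1$, your identity $A\cap\big(U\times(y-\delta,y+\delta)\big)=(B\cap U)\times(y-\delta,y+\delta)$ is actually more accurate than the paper's claim ``$A\cap V=V$'', which only holds when $\pi(A)$ is open.

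The genuine difference is in (4). The paper inducts on $n$ and applies Fact~\ref{fact:DSigDim} to $\pi|_A\colon A\to\pi(A)$, whose fibres $\{b\}\times A_b$ all have dimension $w_n$. This is why (2) is proved first: both $A$ and $\pi(A)$ must be $\DSig$. It then gives $\dim A=\dim\pi(A)+w_n$ in one line. You instead take the lower bound from (3) and get the upper bound by applying Fact~\ref{fact:DSigDim} to $F=p\circ(\pi_w|_A)^{-1}$. Made explicit, your sketch goes as follows. Pick a box $W\subseteq p(A)$ and split it into the definable sets $W_j=\{y\in W:\dim F^{-1}(y)=j\}$ for $0\le j\le k$. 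By Fact~\ref{fact:naive}(4) and (1), some $W_j$ contains a box $W'$. Since $F^{-1}(W')$ is open in $\RR^k$, Fact~\ref{fact:DSigDim} gives $k\ge\dim F^{-1}(W')=m+j\ge m$, contradicting $m>k$.

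This argument is correct. It needs no separate induction for (4), uses only that open sets are $\DSig$, and proves something more general: a definable continuous image of an open subset of $\RR^k$ has dimension at most $k$. Its cost is that Fact~\ref{fact:naive}(4) requires noiselessness, whereas the paper's route for (4) only uses that $\ZZ$ is not definable. That is harmless under the standing assumption of this section.

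You should drop the alternative sketched at the end. The projection $p'$ need not be injective on $B$, so $p(A)$ is in general not a graph or band over $p'(B)$. Moreover, no purely topological induction can work: in an expansion of the real field defining $\ZZ$, the graph $\{(t,\gamma_1(t),\gamma_2(t)):0<t<1\}$ of a definable Peano curve is a $(1,0,0)$-cell with a $2$-dimensional projection. Some tameness input, such as Fact~\ref{fact:DSigDim}, is unavoidable.
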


\begin{proof}
Statement (1) follows as in the proof of {\cite[(3.2.9)]{tametop}}.\newline

For (2), it is enough to show that $A$ is locally closed by \cref{fact:D-Sigma}. We proceed by induction on $n$. The case $n=1$ is immediate. \newline

Suppose that (2) holds for $n-1$. Let $x \in A$, and let $U\subseteq \RR^{n-1}$ be an open box such that $\pi(x)\in U$ and $\pi(A) \cap U$ is closed in $U$.\\

Suppose that $w_n=0$. Then there is $f \in \mathcal{C}(f)$ such that $A = \Gamma (f)$. Set $V:= U \times \RR$. It follows from the continuity of $f$ that $A \cap V$ is closed in  $V$.\\

Suppose that $w_n=1$. Let $f,g \in \mathcal{C}_{\infty}(\pi(A))$ be such that $f < g$ and $A= \big(f,g\big)$. 
Let $J$ be an open subinterval of $\big(f(\pi(x)),g(\pi(x))\big)$.
    Since $f,g \in \mathcal{C}_{\infty}(\pi(A))$, there is an open box $B\subseteq U$ such that for all $b\in B$ 
    \[
    J \subseteq \big(f(\pi(b)),g(\pi(b))\big).
    \]
Now set $V:=B\times J$, and observe that $A\cap V=V$, and, thus $A\cap V$ is closed in $V$. This completes the proof of (2).\\

For (3), we also proceed by induction on $n$. First, suppose that $n = 1$. If $w_1=1$, the statement follows, since $\pi_{(1)}$ is the identity. If $w_1=0$, observe that $A$ is a singleton set. Now suppose that (3) holds for $n-1$.\newline

 Suppose that $w_n=0$.
  Let $X \subseteq \RR^{n-1}$ be a $(w_1, \dots, w_{n - 1})$-cell and let $f \in\mathcal{C}(X)$ be such that $\Gamma(f) = A$. Observe that the function from $X$ to $A$ mapping $x\in X$ to $(x, f(x))$ is a homeomorphism with inverse $\pi_{(1, 1, \dots, 1, 0)}\vert_A$. Thus
\[
\pi_w = \pi_{(w_1, \dots, w_{n - 1})} \circ \pi_{(1, 1, \dots, 1, 0)}\vert_A
\]
is a homeomorphism onto an open set.\newline

Suppose that $w_{n}=1$. Let $X$ be a $(w_1, \dots, w_{n - 1})$-cell
  and let $f,g \in \mathcal{C}_{\infty}(X)$ be such that $A = \big(f, g\big)$.
  Then
\begin{align*}
\big(f, g\big) &\to \RR^{\dim X} \times \RR\\
(x, y) &\mapsto (\pi_{(w_1, \dots, w_{n - 1})}(x), y)
\end{align*}
  is a homeomorphism onto an open set by our induction hypothesis, and is equal to the projection $\pi_{(w_1, \dots, w_{n - 1}, 1)}\vert_A$.\newline

For (4), we again use induction on $n$.
Suppose that $n=1$.
There are two cases.
If $A$ is a $0$-cell, then it is a point and thus $\dim(A)=0$.
If $A$ is a $1$-cell, then it is an interval in $\RR$, and thus $A$ has interior. Therefore $\dim(A)=1$.\\

Now suppose that $n>1$.
Set 
  \[
  B \coloneqq \pi_{(1, 1, \dots, 1, 0)}(A).
  \]
By induction,
\[
\dim B = w_1 + \cdots + w_{n - 1}.
\]
First consider the case that $w_n=1$. Then for every $b \in B$
\[
\dim ((\pi_{(1, 1, \dots, 1, 0)}|_A)^{-1}(b)) = \dim A_b = 1. 
\]
By \cref{fact:DSigDim}
\[
\dim A = \dim ((\pi_{(1, 1, \dots, 1, 0)}|_A)^{-1}(B)) = \dim B + 1 = w_1+\cdots + w_n.
\]
Now consider the case that $w_n=0$. Then for every $b \in B$
\[
\dim ((\pi_{(1, 1, \dots, 1, 0)}|_A)^{-1}(b)) = \dim A_b = 0. 
\]
By \cref{fact:DSigDim}
\[
\dim A = \dim ((\pi_{(1, 1, \dots, 1, 0)}|_A)^{-1}(B)) = \dim B = w_1+\cdots + w_n.
\]
\end{proof}

So far, we only established basic properties of cells, and have not fully used our assumption of noiselessness. Indeed, only for the proof of \cref{cor:cell-dim}(4) do we need the weaker assumption that $\R$ does not define $\ZZ$. The other statements do not require any assumption on $\R$. This will change now. In the following, we prove variants of results from Section \ref{sect:noiseless}, where we replace in the statement nowhere denseness in $\RR^n$ by nowhere denseness in a cell. 

\begin{lem}[Noiselessness in cells]\label{lem:BC-cell}\label{cor:cellnowheredense}
 Let $w \in \{0,1\}^n$, let $A\subseteq \RR^n$ be a $w$-cell, and let $Z\subseteq A$. Then 
    \begin{enumerate}
        \item  $Z$ either has interior in $A$ or is nowhere dense in $A$, and 
        \item if $Z$ is nowhere dense in $A$, then $\dim Z < \dim A$.
    \end{enumerate}

\end{lem}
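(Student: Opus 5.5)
The plan is to transfer everything to an open subset of $\RR^k$, where $k:=w_1+\dots+w_n$, using \cref{lem:proj-cell}(3), and then apply the noiselessness facts there. Throughout, $Z$ is taken to be definable, as is implicit in the statement. Put $U:=\pi_w(A)$ and $h:=\pi_w|_A$. By \cref{lem:proj-cell}(3), $U$ is open in $\RR^k$ and $h\colon A\to U$ is a homeomorphism. Since $\pi_w$ is a coordinate projection, $h$ is definable, and so $h(Z)=\pi_w(Z)$ is a definable subset of $\RR^k$.

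For (1), apply \cref{fact:sbctforn}(1) to $h(Z)\subseteq \RR^k$. If $h(Z)$ has interior in $\RR^k$, that interior is an open subset of $U$. Hence $h(Z)$ has interior in $U$, and pulling back along the homeomorphism $h$ shows that $Z$ has interior in $A$. Otherwise $h(Z)$ is nowhere dense in $\RR^k$. Since $U$ is open, $h(Z)$ is then nowhere dense in $U$: if $\cl_U(h(Z))$ contained a nonempty open subset of $U$, that subset would be open in $\RR^k$ and contained in $\cl(h(Z))$. Transporting this along $h$ again shows that $Z$ is nowhere dense in $A$.

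For (2), the subtlety is that $\dim Z$ is computed with respect to all coordinate projections, not just $\pi_w$. So knowing $\dim h(Z)<k$ is not immediately enough. To handle this I would pass to $C:=\cl_A(Z)$, which is definable and closed in $A$. Since $A$ is locally closed by \cref{lem:celldsig}(2), $C$ is locally closed and hence $\DSig$ by \cref{fact:D-Sigma}(4). Moreover, $h(C)=\cl_U(h(Z))$ is closed in the open set $U$, hence locally closed and $\DSig$.

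Now apply \cref{fact:DSigDim} to the continuous definable map $h|_C\colon C\to h(C)$. All of its fibres are singletons, so they have dimension $0$, and the fact gives $\dim C=\dim h(C)$. Because $Z$ is nowhere dense in $A$, the set $h(C)=\cl_U(h(Z))$ has empty interior in $U$, and hence empty interior in $\RR^k$ since $U$ is open. By \cref{fact:naive}(1), $\dim h(C)<k$. Using \cref{fact:naive}(2) and \cref{lem:proj-cell}(4), we get $\dim Z\le \dim C<k=\dim A$. The step needing the most care is checking that the hypotheses of \cref{fact:DSigDim} hold, namely that both $C$ and $h(C)$ are $\DSig$. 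This is exactly what the local closedness of cells provides, and \cref{fact:DSigDim} itself applies because a noiseless $\R$ does not define $\ZZ$.
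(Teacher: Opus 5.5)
Your proposal is correct and follows essentially the same route as the paper. For (1) you transport $Z$ to the open set $\pi_w(A)$ via the homeomorphism $\pi_w|_A$; for (2) you pass to the closure of $Z$ in $A$ and apply \cref{fact:DSigDim} to $\pi_w$ restricted to it. The differences are minor: you obtain $\DSig$-ness of both sets from local closedness, where the paper uses closure of $\DSig$ sets under intersections and continuous images, and you rightly make explicit the implicit assumption that $Z$ is definable.
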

\begin{proof}
We first show (1). Suppose that $Z$ is somewhere dense in $A$. 
    Then $\pi_w(Z)$ is somewhere dense in $\pi_w(A)$.
    Since $\pi_w(A)$ is open by \cref{lem:proj-cell}, the projection $\pi_w(Z)$ is somewhere dense in $\RR^{\dim(A)}$.
    Thus $\pi_w(Z)$ has interior in $\RR^{\dim(A)}$ by \cref{fact:sbctforn}, and hence has interior in $\pi_w(A)$.
    Since $\pi_w|_A$ is a homeomorphism, it follows that $Z$ has interior in $A$.\newline

For (2), suppose that $Z$ is nowhere dense in $A$. Let $Y$ be the topological closure of $Z$ in $A$. Note that $Y$ is $\cl(Z)\cap A$, and hence $Y$ is $\DSig$ by \cref{lem:celldsig} and \cref{fact:D-Sigma}(2). Since $Z$ is nowhere dense in $A$, so is $Y$. Since $\pi_w|_A$ is a homeomorphism by \cref{lem:proj-cell}, we have that $\pi_w(Y)$ is nowhere dense and closed in the open set $\pi_w(A)$. Thus by \cref{fact:naive}(1) 
\[
\dim \pi_w(Y) < w_1+\dots + w_n = \dim A.
\]
Observe that $\pi_w(Y)$ is $\DSig$ by \cref{fact:D-Sigma}(5). 
By \cref{fact:DSigDim} and using that $\pi_w|_A$ is a homeomorphism,
\[
\dim Z \leq \dim Y = \dim ((\pi_w|_A)^{-1}(\pi_w(Y))) = \dim \pi_w(Y) < \dim A.
\]
\end{proof}

\begin{lem}[Almost continuity for cells]\label{cor:cellcont}
Let $w \in \{0,1\}^n$, let $A \subseteq \RR^n$ be a $w$-cell, and let $f: A \to \RR$ be definable. Then there is a definable subset $B\subseteq A$ such that $B$ is open and dense in $A$, and the restriction $f|_B$ is continuous.
\end{lem}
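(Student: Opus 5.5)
Transfer the problem to an open subset of Euclidean space via the homeomorphism $\pi_w|_A$ and apply almost continuity there. Let $k:=w_1+\dots+w_n$. By \cref{lem:proj-cell}, $\pi_w|_A : A\to \pi_w(A)$ is a homeomorphism and $U:=\pi_w(A)\subseteq \RR^k$ is open. $U$ is definable and nonempty, since cells are nonempty by construction. Define
\[
g := f\circ (\pi_w|_A)^{-1} : U \to \RR .
\]
This map is definable, because its graph is the image of the graph of $f$ under the definable map $(x,y)\mapsto(\pi_w(x),y)$, restricted to $A\times\RR$.

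Suppose first that $k\geq 1$. By \cref{fact:genericcnt}, applied to $g$ on the open nonempty set $U\subseteq\RR^k$, there is a definable set $V\subseteq U$ that is open and dense in $U$ and on which $g|_V$ is continuous. Set
\[
B:=(\pi_w|_A)^{-1}(V)=\{a\in A : \pi_w(a)\in V\}.
\]
$B$ is definable. Since $\pi_w|_A$ is a homeomorphism of $A$ onto $U$, the set $B$ is open in $A$ (being the preimage of an open set) and dense in $A$ (being the image of a dense subset under a homeomorphism). On $B$ we have
\[
f|_B = g|_V\circ \pi_w|_B ,
\]
a composition of continuous maps, so $f|_B$ is continuous.

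Suppose instead that $k=0$. Then $\pi_w(A)=\RR^0$ is a point, so $A$ is a single point. Take $B=A$; any function on a point is continuous.

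The only step needing care is checking that \cref{fact:genericcnt} applies in $\RR^k$ and that openness, density and continuity transfer back along $\pi_w|_A$. Both are immediate from \cref{lem:proj-cell}(3), so I expect no real obstacle.
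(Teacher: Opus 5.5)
Your proof is correct and follows the paper's argument exactly. You transport $f$ to the open set $\pi_w(A)$ via the homeomorphism $\pi_w|_A$, apply almost continuity there, and pull the resulting open dense set back; your separate treatment of the case $w_1+\dots+w_n=0$ and your check that $f\circ(\pi_w|_A)^{-1}$ is definable are details the paper leaves implicit.
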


\begin{proof}
By \cref{lem:proj-cell}, we know that $\pi_w(A)$ is open and that  $\pi_w|_A$ is a homeomorphism. By \cref{fact:genericcnt} there is an open subset $V\subseteq \pi_w(A)$ such that $V$ is dense in $\pi_w(A)$ and $\big(f \circ (\pi_w|_A)^{-1}\big)|_V$ is continuous. Thus $(\pi_w|_A)^{-1}(V)$ is an open and dense subset of $A$ and the restriction of $f$ to $(\pi_w|_A)^{-1}(V)$ is continuous.
\end{proof}

\begin{lem}[Fiber lemma for cells]\label{lem:fl-cell}
Let $w =(w_1,\dots,w_n) \in \{0,1\}^n$, and let $A\subseteq \RR^{n+1}$ be definable such that $\pi(A)$ is a $w$-cell. Then 
    \[
    \{x \in \pi(A) : \cl(A_x)\neq \cl(A)_x\}
    \]
    is nowhere dense in $\pi(A)$.
\end{lem}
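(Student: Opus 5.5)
The plan is to transport the problem, via the homeomorphism $\pi_w|_{\pi(A)}$, to an open subset of $\RR^{\dim \pi(A)}$ and then apply \cref{fact:fiberlemma}(2) there. Set $C:=\pi(A)$, $k:=w_1+\dots+w_n$, and $h:=\pi_w|_C$. By \cref{lem:proj-cell}, $h$ is a homeomorphism onto the open set $U:=h(C)\subseteq \RR^k$. Consider the map $H: C\times \RR \to U\times \RR$ given by $H(x,y) := (h(x),y)$, and put $A' := H(A)\subseteq \RR^{k+1}$. This set is definable, and $H$ is a homeomorphism from $C\times\RR$ onto the open set $U\times \RR$. Since $h$ is a bijection, $A'_{h(x)} = A_x$ for every $x\in C$, so $\cl(A'_{h(x)}) = \cl(A_x)$.

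The key step is to compare $\cl(A)_x$ with $\cl(A')_{h(x)}$ for $x\in C$. First let $y\in \cl(A)_x$, and choose $(x_i,y_i)\in A$ converging to $(x,y)$. Since $x_i\in C$, $x\in C$, and $H$ is continuous on $C\times \RR$, the points $H(x_i,y_i)\in A'$ converge to $(h(x),y)$. Hence $y\in \cl(A')_{h(x)}$. Conversely, let $y\in\cl(A')_{h(x)}$, and choose $(u_i,y_i)\in A'$ converging to $(h(x),y)$. Each $u_i$ lies in $U$, and since $h^{-1}:U\to C$ is continuous (this is where we use that $h$ is a homeomorphism and not merely a continuous bijection), we get $h^{-1}(u_i)\to x$. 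Thus $(h^{-1}(u_i),y_i)\in A$ converges to $(x,y)$, and so $y\in\cl(A)_x$. Combining both inclusions,
\[
h\big(\{x\in C : \cl(A_x)\neq \cl(A)_x\}\big) = \{u\in U : \cl(A'_u)\neq \cl(A')_u\}.
\]

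Now apply \cref{fact:fiberlemma}(2) to $A'\subseteq \RR^{k+1}$ with $m=k$, $n=1$. It says that $\{u\in\RR^k : \cl(A'_u)\neq\cl(A')_u\}$ is nowhere dense in $\RR^k$. Its intersection with the open set $U$ is then nowhere dense in $U$. Pulling back along the homeomorphism $h$ shows that the exceptional set is nowhere dense in $C$.

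The edge case $k=0$ needs a separate check. Then $C$ is a single point, $U=\RR^0$ is a point, and "nowhere dense in a point" means empty. In this case $\cl(A)_x=\cl(A_x)$ holds trivially, because $A=\{x\}\times A_x$ and $\{x\}$ is closed, so the statement holds directly.

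I expect the main obstacle to be the second inclusion in the fiber comparison. There one must use the continuity of $h^{-1}$ together with the fact that $A$ lies over $C$, so that limit points of $A$ above $x\in C$ come only from sequences in $C$, which $h$ controls.
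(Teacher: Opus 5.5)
Your proposal is correct and is essentially the paper's proof: you transport $A$ along the homeomorphism $\pi_w|_{\pi(A)}$ to a set lying over the open set $\pi_w(\pi(A))\subseteq\RR^k$, match fibers and fibers of closures, and apply \cref{fact:fiberlemma}(2). The paper wraps this in an induction on $n$ with the open case as its base, but since the transported set already lies over an open set, your direct one-step version is the same argument without the detour, and it spells out the closure comparison via continuity of $h^{-1}$, which the paper only asserts.
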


\begin{proof}
    We proceed by induction on $n$.
    If $n=0$, this is immediate. Now assume that the statement holds for all $k< n$.\newline 

    \noindent Suppose $w_1+\dots + w_n=n$. Then $\pi(A)$ is open by \cref{lem:proj-cell}. Thus the statement follows from \cref{fact:fiberlemma}.\newline

    \noindent Now consider the case that $w_1+\dots+w_n <n$. By \cref{lem:proj-cell}, the projection $\pi_w|_{\pi(A)}$ is a homeomorphism.
    Set
    \[
    B:=\{x \in \pi(A) : \cl(A_x)\neq \cl(A)_x\}
    \]
    and  \[
    C:= \{ (\pi_w(x),z)  : x \in \pi(A), (x,z)\in A\}.
    \]
    Since $\pi_w|_{\pi(A)}$ is a bijection, we have that for all $x \in \pi(A)$
\[
A_x = C_{\pi_w(x)}.
\]
Thus $\cl(A_x)=\cl(C_{\pi_w(x)})$. Since $\pi_w|_{\pi(A)}$ is a homeomorphism,
\[
\cl(A)_x = \cl(C)_{\pi_w(x)}.
\]
Hence 
\[
\pi_w(B) = \{ y \in \pi_w(\pi(A)) \ : \ \cl(C_y) \neq \cl(C)_{y} \}.
\]
Since $w_1+\dots+w_n <n$, we obtain by induction that $\pi_w(B)$ is nowhere dense in $\pi_w(\pi(A))$. Since $\pi_w|_{\pi(A)}$ is a homeomorphism, we conclude that $B$ is nowhere dense in $\pi(A)$.
\end{proof}

\subsection{Stacks} We now introduce and study stacks, which we define as well-behaved unions of finitely many cells. 

\begin{defn}\label{defn:stacks}
Let $n\in \NN_{\geq 0}$, and let $w=(w_1,\dots,w_n) \in \{0,1\}^n$. We now define a \textbf{$w$-stack} recursively:\newline
For $n=1$: a \textbf{$0$-stack} is a finite set of points in $\RR$, and a \textbf{$1$-stack} is a union of finitely many disjoint open intervals in $\RR$.\newline
For $n>1$: a set $A\subseteq \RR^n$ is a \textbf{$w$-stack} if
\begin{enumerate}
    \item $\pi(A)$ is a $(w_1,\dots,w_{n-1})$-stack,
    \item if $w_n = 0$, then for each connected component $C$ of $\pi(A)$ there are 
    \[
    f_{A,1},\dots,f_{A,m_C} \in \mathcal{C}(C)
    \]
    such that  
\begin{enumerate}
    \item $f_{A,1} < \dots < f_{A,m_C}$,
    \item  $A \cap [ C \times \RR] = \bigcup\limits_{i=1}^{m_C} \Gamma(f_{A,i})$.
\end{enumerate}
\item if $w_n = 1$, then for each connected component $C$ of $\pi(A)$ there are  
\[
f_{A,1},\dots,f_{A,m_C},g_{A,1},\dots,g_{A,m_C} \in \mathcal{C}_{\infty}(C)
\]
such that  
\begin{enumerate}
    \item $f_{A,i}<g_{A,i}$ for all $i=1,\ldots,m_C$ and either $g_{A,i}=f_{A,i+1}$ or $g_{A,i}<f_{A,i+1}$ for all $i=1,\ldots,m_C-1$, and
    \item  $A \cap [C \times \RR] = \bigcup\limits_{i=1}^{m_C} \big(f_{A,i},g_{A,_i}\big)$.
\end{enumerate}
\end{enumerate}
In this situation, we often say that $A$ is a $w$-stack \textbf{over} $\pi(A)$, and that the functions described in (2) or (3) \textbf{witness} that $A$ is a $w$-stack over $\pi(A)$.
\end{defn}

Again, we consider $\RR^0$ as the unique $()$-stack. It is immediate that a $w$-cell is a $w$-stack.

\begin{lem}\label{lem:fin-CC}
Let $w=(w_1,\dots,w_n) \in \{0,1\}^n$, and let $A\subseteq \RR^n$ be a $w$-stack. Then
\begin{enumerate}
    \item $A$ has finitely many connected components,
    \item for each connected component $C$ of $A$
        \begin{enumerate}
            \item $C$ is a $w$-cell,
            \item $C$ is clopen in $A$,
                       \item if $n>1$, then $\pi(C)$ is a connected component of $\pi(A)$ and
                       \begin{enumerate}
                           \item if
                       $w_n=0$, there are $f_{A,1},\dots,f_{A,m_{\pi(C)}} \in \mathcal{C}({\pi(C)})$
witnessing that $A$ is a $w$-stack over $\pi(A)$ and there is $j\in \{1,\dots,m_{\pi(C)}\}$ such that 
\[
C=\Gamma(f_{A,i}\big),
\]
            \item if $w_n=1$, there are $f_{A,1},\dots,f_{A,m_{\pi(C)}},g_{A,1},\dots,g_{A,m_{\pi(C)}}\linebreak \in \mathcal{C}_{\infty}({\pi(C)})$
            witnessing that $A$ is a $w$-stack over $\pi(A)$ and  there is $j\in \{1,\dots,m_{\pi(C)}\}$ such that
            \[
            C=\big(f_{A,j},g_{A,_j}\big),
            \]   
            \end{enumerate}
        \end{enumerate}
    \item $A$ is definable.
\end{enumerate}
\end{lem}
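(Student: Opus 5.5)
We argue by induction on $n$, proving (1), (2) and (3) at the same time. For $n=1$ everything is immediate. A $0$-stack is a finite set of points, and its connected components are the points, which are $0$-cells, clopen in $A$, and definable. A $1$-stack is a finite disjoint union of open intervals, and its components are those intervals. Each is a $1$-cell, and each is open in $A$ and closed in $A$ because the intervals are pairwise disjoint open sets.

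Now let $n>1$ and suppose the lemma holds for $(w_1,\dots,w_{n-1})$-stacks. By the induction hypothesis, $\pi(A)$ has finitely many connected components $C_1,\dots,C_k$. Each $C_l$ is a $(w_1,\dots,w_{n-1})$-cell, clopen in $\pi(A)$, and definable. Since the $C_l$ partition $\pi(A)$, the set $A$ is the disjoint union of the sets $A\cap[C_l\times\RR]$. Each $A\cap[C_l\times \RR]$ is clopen in $A$, being the preimage of a clopen subset of $\pi(A)$ under the continuous map $\pi|_A$. Fix $l$ and let $f_{A,1},\dots,f_{A,m}$ (and $g_{A,1},\dots,g_{A,m}$ when $w_n=1$) witness that $A$ is a stack over $C_l$.

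\textbf{Case $w_n=0$.} $A\cap[C_l\times\RR]$ is the disjoint union of the graphs $\Gamma(f_{A,i})$, and each graph is a $w$-cell over $C_l$. Each graph is connected by \cref{lem:proj-cell}(1). Each graph is also clopen in $A\cap[C_l\times\RR]$. To see this, note that $\Gamma(f_{A,i})$ equals both
\[
\{(x,y) \in A\cap[C_l\times\RR] \ : \ f_{A,i-1}(x)<y<f_{A,i+1}(x)\}
\]
(with $f_{A,0}:=-\infty$ and $f_{A,m+1}:=+\infty$) and
\[
\{(x,y) \in A\cap[C_l\times\RR] \ : \ f_{A,i}(x)=y\}.
\]
By continuity of the $f_{A,j}$, the first description shows the graph is open and the second shows it is closed.

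\textbf{Case $w_n=1$.} The sets $(f_{A,i},g_{A,i})$ are $w$-cells and hence connected. They are pairwise disjoint because $g_{A,i}\le f_{A,i+1}$. Each is open in $C_l\times \RR$ by continuity of the bounding functions, so it is open in $A$. Each is also closed in $A\cap[C_l\times\RR]$. Indeed, if $(x,y)\in A$ is a limit of points of $(f_{A,i},g_{A,i})$, then by continuity $f_{A,i}(x)\le y\le g_{A,i}(x)$. The endpoints are excluded: a point with $y=g_{A,i}(x)$ does not lie in $A$, since $g_{A,i}\le f_{A,i+1}$ and the intervals are open. Hence $(x,y)$ lies in $(f_{A,i},g_{A,i})$.

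In both cases, a partition of a space into finitely many connected clopen sets is exactly its partition into connected components. So the components of $A$ are precisely these graphs or bands, taken over all $l$. This gives (1), (2a), (2b) and the descriptions in (2c)(i) and (2c)(ii). Moreover $\pi(C)=C_l$ is a component of $\pi(A)$, because each graph or band projects onto all of $C_l$.

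For (3), $\pi(A)$ is definable by induction, and each $C_l$ is definable. The witnessing functions lie in $\mathcal{C}$ or $\mathcal{C}_\infty$, so they are definable. Therefore $A=\bigcup_l (A\cap[C_l\times\RR])$ is a finite union of definable sets and is definable.

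The main point needing care is the closedness of the bands in the case $w_n=1$ when $g_{A,i}=f_{A,i+1}$. There the two bands share a boundary graph that lies outside $A$, and one must check that this shared graph does not glue the bands into a single component. The argument above handles this by observing that the common boundary graph lies in the frontier of $A$ rather than in $A$.
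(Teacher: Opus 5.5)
Your proof is correct and follows the paper's argument: induct on $n$, split $A$ over the finitely many clopen components of $\pi(A)$, and show that the graphs or bands over each component are connected clopen pieces, so they are exactly the connected components. You also supply the clopenness details that the paper leaves implicit, including the shared-boundary case $g_{A,i}=f_{A,i+1}$; the only cosmetic difference is that the paper deduces (3) from (1) and (2), whereas you carry definability through the induction.
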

\begin{proof}
Statement (3) follows from (1) and (2). We prove the other statements by simultaneous induction on $n$.\newline

For $n=0$ and $n=1$, this follows immediately from the definition of a stack in $\RR$.  Now assume that $n>1$ and that (1)-(2) holds for $n-1$.\newline

By the inductive hypothesis, $\pi(A)$ has finitely many connected components, each of which is a $(w_1,\dots,w_{n-1})$-cell and clopen in $\pi(A)$. Thus, we may assume that $\pi(A)$ is a  $(w_1,\dots,w_{n-1})$-cell, and, in particular, connected. We split into two cases.\\

Suppose that $w_n=0$. Let $f_1,\ldots ,f_m \in \mathcal{C}(\pi(A))$ witness that $A$ is a $w$-stack.
Since $f_1<\ldots<f_m$, we know that for each $i,j\in\{1,\dots,m\}$ with $i\neq j$ 
\[
\Gamma(f_i)\cap \Gamma(f_j)=\emptyset.
\]
Moreover, since each $f_i$ is continuous and $\pi(A)$ is connected, it follows that each $\Gamma(f_i)$ is connected and clopen.
Thus the collection of graphs $\Gamma(f_1),\ldots,\Gamma(f_m)$ are the connected components of $A$. Each $\Gamma(f_i)$ is a $w$-cell by definition, and its image under $\pi$ is $\pi(A)$.\\

Suppose that $w_n=1$. Let $f_1,\ldots f_m,g_1,\ldots,g_m \in \mathcal{C}_{\infty}(\pi(A))$ witness that $A$ is a $w$-stack.
Since either $g_i < f_{i+1}$ or $g_i=f_{i+1}$ for each $i=1,\dots,m-1$, then
\[
(f_i,g_i)\cap (f_j,g_j)=\emptyset
\]
for all $i,j\in\{1,\dots,m\}$ with $i\neq j$.
Since $\pi(A)$ is connected, it follows that each $(f_i,g_i)$ is connected and clopen in $A$. Thus the sets $(f_1,g_1),\dots,(f_m,g_m)$ are the connected components of $A$. Again, each $(f_i,g_i)$ is a $w$-cell by definition, and its image under $\pi$ is $\pi(A)$.
\end{proof}

\begin{cor}\label{cor:dim-stack}\label{lem:proj-stack}
Let $w=(w_1,\dots,w_n) \in \{0,1\}^n$, and let $A\subseteq \RR^n$ be a $(w_1,\dots,w_n)$-stack. Then
\begin{enumerate}
    \item $A$  is locally closed, and hence $\DSig$,
    \item $\pi_w|_A$ is a local homeomorphism and $\pi_w(A)$ is open, and
    \item $\dim(A) = w_1+ \dots + w_n$.
\end{enumerate}
\end{cor}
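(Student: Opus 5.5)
The plan is to reduce everything to the connected components of $A$ and then apply \cref{lem:celldsig} to each of them. By \cref{lem:fin-CC}, the stack $A$ has finitely many connected components $C_1,\dots,C_k$. Each $C_j$ is a $w$-cell and is clopen in $A$. All three statements are either local in nature or behave well under finite unions, so we can pass to the components without loss.

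For (1), fix $x\in A$ and let $C_j$ be the component containing it. Since $C_j$ is open in $A$, there is an open box $U_1$ around $x$ with $A\cap U_1=C_j\cap U_1$. By \cref{lem:celldsig}(2), $C_j$ is locally closed, so there is an open box $U_2$ around $x$ with $C_j\cap U_2$ closed in $U_2$. Put $U:=U_1\cap U_2$. Then $A\cap U=C_j\cap U$ is closed in $U$, so $A$ is locally closed, and \cref{fact:D-Sigma}(4) shows that $A$ is $\DSig$.

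For (2), we again fix $x\in C_j$ and take $U$ as above. By \cref{lem:celldsig}(3), $\pi_w|_{C_j}$ is a homeomorphism onto the open set $\pi_w(C_j)$. Since $A\cap U=C_j\cap U$ is open in $C_j$, its image $\pi_w(A\cap U)$ is open in $\pi_w(C_j)$, hence open in $\RR^{k}$ with $k=w_1+\dots+w_n$. The restriction of $\pi_w$ to $A\cap U$ is then a homeomorphism onto its image. This image is open in $\RR^k$, and therefore also open in $\pi_w(A)$. This gives the local homeomorphism property. Finally, $\pi_w(A)=\bigcup_j\pi_w(C_j)$ is a finite union of open sets, so it is open.

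For (3), each $C_j$ satisfies $\dim C_j=w_1+\dots+w_n$ by \cref{lem:celldsig}(4). Each $C_j$ is definable by \cref{lem:fin-CC}(3), so \cref{fact:naive}(4) applied finitely many times gives $\dim A=\max_j\dim C_j=w_1+\dots+w_n$. Alternatively, $\pi_w(A)$ is nonempty and open, which gives the lower bound directly. The only step that needs some care is (2): one has to check that the neighborhood $U$ really isolates a single component, which is exactly where clopenness from \cref{lem:fin-CC} is used. Everything else is a routine finite-union argument.
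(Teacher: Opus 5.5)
Your proof is correct and follows essentially the same route as the paper: pass to the finitely many clopen connected components given by \cref{lem:fin-CC} and apply \cref{lem:celldsig} to each component for (1) and (2). The one difference is in (3), where you use \cref{fact:naive}(4) for the finite union and the paper cites \cref{lem:dim}. Both are valid under the standing noiselessness assumption, and both tacitly assume, as the paper does, that $A\neq\emptyset$.
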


\begin{proof}
For (1), it is enough to show that $A$ is locally closed. Let $a \in A$ and let $C$ be the connected component of $A$ containing $a$. By \cref{lem:fin-CC}, we know that $C$ is a $w$-cell and open in $A$. Let $U\subseteq \RR^n$ be open such that $C = A \cap U$. By \cref{lem:celldsig}(2), there is an open neighborhood $V$ of $x$ such that $C\cap V$ is closed in $V$. Thus $A\cap (U\cap V) = C\cap (U \cap V)$ is closed in $U\cap V$.\newline

For (2),  let $a \in A$ and let $C$ be the connected component of $A$ containing $a$.
      By \cref{lem:fin-CC}, we know that $C$ is a $w$-cell and there is an open box $U\subseteq \RR^n$ containing $a$ such that $A\cap U\subseteq C$.  Thus, $\pi_w|_C$ is a homeomorphism and $\pi_w(C)$ is open by \cref{lem:proj-cell}. Hence $\pi_w|_A$ is a local homeomorphism. 
      Since 
      \[
      \pi_w(A) = \bigcup_{C \text{ conn. comp. of $A$}} \pi_w(C),
      \]
   we conclude that $\pi_w(A)$ is open.\newline

For (3), observe that $A$ has finitely many connected components $C_1,\ldots,C_k$ by \cref{lem:fin-CC}, each of which is a $w$-cell.  By \cref{cor:cell-dim}, we know that $\dim(C_i)=w_1+\cdots+w_n$.
Thus, by (1) amd \cref{lem:dim}, we conclude that 
    \[\dim(A)=\dim\left(\bigcup_{i=1}^k C_i\right)=w_1+\cdots+w_n\]
as desired. Hence (3) holds.
\end{proof}

As in the case of cells, our first results about stacks have not used noiselessness. Again, only \cref{cor:dim-stack}(3) required the extra assumption of the non-definability of $\ZZ$. This changes now.

\begin{lem}[Noiselessness for stacks]\label{cor:BC-stack}\label{cor:stacknowheredense}
   Let $w \in \{0,1\}^n$, let $A\subseteq \RR^n$ be a $w$-stack and let $Z \subseteq A$ be definable. 
    Then
    \begin{enumerate}
        \item $Z$ either has interior in $A$ or is nowhere dense in $A$, and
        \item if $Z$ is nowhere dense in $A$, then $\dim Z < \dim A$.
    \end{enumerate}
  \end{lem}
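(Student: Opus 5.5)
The plan is to reduce to the cell case, \cref{lem:BC-cell}, using the finite decomposition of a stack into its connected components from \cref{lem:fin-CC}. Let $C_1,\dots,C_k$ be the connected components of $A$. By \cref{lem:fin-CC}, each $C_i$ is a $w$-cell and is clopen in $A$. Set $Z_i := Z\cap C_i$. Each $Z_i$ is definable, since $C_i$ is definable: it is one of finitely many cells described by the witnessing functions. Because $C_i$ is open in $A$, a subset of $C_i$ has interior in $C_i$ if and only if it has interior in $A$.

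For (1), suppose $Z$ has no interior in $A$. Then no $Z_i$ has interior in $C_i$. By \cref{lem:BC-cell}(1), each $Z_i$ is therefore nowhere dense in $C_i$, so $\cl_{C_i}(Z_i)$ has empty interior in $C_i$. Since each $C_i$ is clopen in $A$, the closure of $Z$ in $A$ is
\[
\cl_A(Z) = \bigcup_{i=1}^k \cl_{C_i}(Z_i).
\]
Suppose some nonempty open subset $U$ of $A$ were contained in $\cl_A(Z)$. Then $U$ meets some $C_i$, and $U\cap C_i$ is a nonempty open subset of $C_i$ contained in $\cl_{C_i}(Z_i)$, which is a contradiction. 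Hence $Z$ is nowhere dense in $A$.

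For (2), suppose $Z$ is nowhere dense in $A$. Then each $Z_i$ is nowhere dense in $C_i$, because $C_i$ is open in $A$ and the closure of $Z_i$ in $C_i$ lies inside $\cl_A(Z)$. By \cref{lem:BC-cell}(2), $\dim Z_i < \dim C_i = w_1+\dots+w_n = \dim A$, using \cref{cor:cell-dim}(4) and \cref{cor:dim-stack}(3). Finally, \cref{fact:naive}(4) applied finitely many times to the definable sets $Z_i$ gives
\[
\dim Z = \max_i \dim Z_i < \dim A.
\]

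The only point requiring care is the topological bookkeeping: that interior and nowhere density relative to $A$ can be checked componentwise. This rests entirely on each $C_i$ being clopen in $A$ and on there being only finitely many components. I do not expect any serious obstacle.
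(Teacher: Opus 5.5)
Your proposal is correct and follows essentially the same route as the paper: split $A$ into its finitely many clopen connected components (each a $w$-cell by \cref{lem:fin-CC}), apply \cref{lem:BC-cell} componentwise, and recombine the dimension bound via \cref{fact:naive}. The only difference is that for (1) you argue the contrapositive (no interior implies nowhere dense) and spell out the closure bookkeeping, whereas the paper goes directly from somewhere dense in $A$ to somewhere dense in a component to interior there; the content is the same.
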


\begin{proof}
  For (1), suppose that $Z$ is somewhere dense in $A$.
    Then $Z$ is somewhere dense in some connected component $C$ of $A$ by \cref{lem:fin-CC}.
    Since $C$ is a $w$-cell, we know that $Z$ has interior in $C$ by \cref{lem:BC-cell}. Thus $Z$ has interior in $A$, because $C$ is open in $A$.\newline

  For (2), suppose that $Z$ is nowhere dense in $A$. Let $C_1,\dots,C_m\subseteq A$ be the connected components of $A$, each of which is a $w$-cell. Let $i\in \{1,\dots,m\}$. By \cref{lem:fin-CC}, we have that $Z\cap C_i$ is nowhere dense in $C_i$, and thus by \cref{cor:cellnowheredense}
\[
\dim Z \cap C_i < \dim C_i = \dim A.
\]
Putting this together with \cref{fact:naive}, we conclude that
\[
\dim Z = \max_{i\in\{1,\dots,m\}} \dim(Z \cap C_i)  < \dim A.
\]
\end{proof}

\begin{lem}[Almost continuity for stacks]\label{cor:stackcont}
Let $w \in \{0,1\}^n$, let $A \subseteq \RR^n$ be a $w$-stack, and let $f: A \to \RR$ be definable. Then there is a definable subset $B\subseteq A$ such that $B$ is open and dense in $A$ and the restriction $f|_B$ is continuous.
\end{lem}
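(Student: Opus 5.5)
The plan is to reduce to the cell case, \cref{cor:cellcont}, one connected component at a time. By \cref{lem:fin-CC}, $A$ has finitely many connected components $C_1,\dots,C_m$. Each $C_i$ is a $w$-cell, is clopen in $A$, and is definable.

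For each $i$, apply \cref{cor:cellcont} to the definable function $f|_{C_i}\colon C_i\to\RR$. This gives a definable $B_i\subseteq C_i$ that is open and dense in $C_i$, with $f|_{B_i}$ continuous. Then set
\[
B:=\bigcup_{i=1}^m B_i .
\]
This set is definable, being a finite union of definable sets.

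Next, $B$ is open in $A$. Each $B_i$ is open in $C_i$, and $C_i$ is open in $A$, so $B_i$ is open in $A$.

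Also, $B$ is dense in $A$. Take a nonempty open subset $W$ of $A$. It meets some $C_i$, and $W\cap C_i$ is open in $C_i$ because $C_i$ is open in $A$. Since $B_i$ is dense in $C_i$, the set $W\cap C_i$ meets $B_i$, so $W$ meets $B$.

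Finally, $f|_B$ is continuous. The sets $B_i$ are pairwise disjoint, and each is open in $B$, since $B_i=B\cap C_i$ and $C_i$ is open in $A$. A function whose restrictions to the pieces of a finite open cover are continuous is continuous, and $f|_{B_i}$ is continuous for every $i$.

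No step here is a real obstacle. The only point needing care is the clopenness of the components, which \cref{lem:fin-CC}(2)(b) supplies; it is what makes openness, density and continuity pass from the cells to the whole stack. The argument does not use noiselessness beyond what \cref{cor:cellcont} already requires.
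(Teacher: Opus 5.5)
Your proposal is correct and follows the paper's proof: it decomposes $A$ into its finitely many clopen connected components (which are $w$-cells) via \cref{lem:fin-CC}, applies \cref{cor:cellcont} to each component, and takes the union of the resulting open dense sets. The only difference is that you write out the openness, density, and pasting arguments that the paper compresses into a single sentence.
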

\begin{proof}
By \cref{lem:fin-CC}, there are $w$-cells $C_1,\dots,C_k$ such that these are the connected components of $A$ and each is clopen in $A$. By \cref{cor:cellcont}, for each $i=1,\dots,k$ there is a definable set $B_i\subseteq C_i$ such that $B_i$ is open and dense in $C_i$, and $f|_{B_i}$ is continuous. Since the $C_i$'s are open in $A$, it follows that $\bigcup_{i=1}^k B_i$ is open and dense in $A$, and the restriction of $f$ to $\bigcup_{i=1}^k B_i$ is continuous.
\end{proof}

\begin{lem}[Fiber lemma for stacks]\label{lem:BC-stacks}
Let $w \in \{0,1\}^n$, and let $A\subseteq \RR^{n+1}$ be definable such that $\pi(A)$ is a $w$-stack. 
  Then 
  \[
  \{x\in \pi(A) \ : \ \cl(A_x)\neq \cl(A)_x\}
  \]
  is nowhere dense in $\pi(A)$.
\end{lem}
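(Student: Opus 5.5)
The plan is to reduce to the cell case, \cref{lem:fl-cell}, by working one connected component of $\pi(A)$ at a time. By \cref{lem:fin-CC}, the stack $\pi(A)$ has finitely many connected components $C_1,\dots,C_k$. Each $C_i$ is a $w$-cell and is clopen in $\pi(A)$. For $i=1,\dots,k$, I will set
\[
A_i := A \cap (C_i \times \RR),
\]
so that $\pi(A_i)=C_i$ is a $w$-cell. Applying \cref{lem:fl-cell} to $A_i$ then shows that
\[
B_i:=\{x\in C_i \ : \ \cl((A_i)_x)\neq \cl(A_i)_x\}
\]
is nowhere dense in $C_i$.

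Next I will compare the fibers of $A_i$ with those of $A$ for $x\in C_i$. The first comparison is immediate: $(A_i)_x=A_x$.

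The second comparison, $\cl(A_i)_x=\cl(A)_x$, is the only step needing care, and it uses that $C_i$ is open in $\pi(A)$. I will choose an open set $U\subseteq\RR^n$ with $U\cap\pi(A)=C_i$. This gives
\[
(U\times\RR)\cap A = A_i .
\]
Now take $y\in\cl(A)_x$, so there is a sequence $(x_j,y_j)\in A$ converging to $(x,y)$. Since $x\in U$ and $U$ is open, eventually $x_j\in U$, hence eventually $(x_j,y_j)\in A_i$. Therefore $y\in \cl(A_i)_x$. The reverse inclusion is trivial because $A_i\subseteq A$. This is the same argument as in \cref{fact:opensubset}, only relative to the open set $U$.

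It follows that the set in the statement equals $\bigcup_{i=1}^k B_i$. It remains to see that this finite union is nowhere dense in $\pi(A)$. Each $B_i$ is nowhere dense in $C_i$, and $C_i$ is open in $\pi(A)$, so $B_i$ is nowhere dense in $\pi(A)$. A finite union of nowhere dense sets is nowhere dense, which finishes the proof. I do not expect a real obstacle; the only subtlety is the closure comparison above, which works because the components are open in $\pi(A)$.
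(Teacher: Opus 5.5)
Your proposal is correct and follows essentially the same route as the paper: split $\pi(A)$ into its finitely many clopen cell components via \cref{lem:fin-CC}, use the relative-openness argument of \cref{fact:opensubset} to identify $\cl(A)_x$ with $\cl(A\cap(C_i\times\RR))_x$, and apply \cref{lem:fl-cell} on each component. You also spell out two steps the paper leaves implicit: a set nowhere dense in an open subspace $C_i$ is nowhere dense in $\pi(A)$, and the finite union of these sets stays nowhere dense.
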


\begin{proof}
    By Lemma \ref{lem:fin-CC}, the connected components of $\pi(A)$ are $w$-cells $C_1,\ldots, C_k\subseteq \RR^{n}$, each of which is clopen in $\pi(A)$. Thus by \cref{fact:opensubset} we have that for each $i\in \{1,\dots,k\}$ and each $x\in C_i$
    \[
    \cl(A)_x  = \cl\big((C_i \times \RR)\cap A)\big)_x \hbox{ and } \cl(A_x)  = \cl\big((C_i \times \RR)\cap A)_x\big).
    \]
    Therefore
    \begin{align*}
     \{x\in \pi(A) &: \cl(A_x)\neq \cl(A)_x \} \\
     &= \bigcup_{i=1}^k   \{x\in C_i : \cl\big((C_i \times \RR)\cap A)_x\big)\neq \cl\big((C_i \times \RR)\cap A)\big)_x \}
    \end{align*}
By Lemma \ref{lem:fl-cell} the set
    \[  \{x\in C_i : \cl\big((C_i \times \RR)\cap A)_x\big)\neq \cl\big((C_i \times \RR)\cap A)\big)_x \} \]
    is nowhere dense in $C_i$ for all $i\in\{1,\dots,k\}$.
\end{proof}

\subsection{Constructing stacks} In this subsection, we present several different ways of constructing stacks. These constructions play crucial roles in the grid decomposition theorem later. We begin with the simplest one.

\begin{lem}\label{lem:restrict-stack}
    Let $w=(w_1,\ldots,w_n) \in \{0,1\}^n$, let $A\subseteq \RR^n$ be a $w$-stack, and let $C$ be a cell such that $C\subseteq \pi(A)$. 
    Then $A\cap [C\times \RR]$ is a $w$-stack.
\end{lem}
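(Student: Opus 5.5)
The plan is to verify \cref{defn:stacks} directly for $A\cap[C\times\RR]$, with the cell $C$ playing the role of the base stack, and to obtain the witnessing functions by restricting those of $A$. I read the statement with $C$ a $(w_1,\dots,w_{n-1})$-cell. If $C$ is a $v$-cell of some other type, the same argument shows that $A\cap[C\times\RR]$ is a $(v,w_n)$-stack. The case $n=1$ is trivial, since then $C=\RR^0$ and $A\cap[C\times\RR]=A$. So assume $n>1$ and set $A':=A\cap[C\times\RR]$.

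First I will show $\pi(A')=C$. Since $C\subseteq\pi(A)$, every $x\in C$ has some $y$ with $(x,y)\in A$, so $x\in\pi(A')$. Conversely, $\pi(A')\subseteq C$ holds trivially. Every cell is a stack, and since $C$ is connected by \cref{lem:celldsig}(1), its only connected component is $C$ itself. Hence condition (1) of \cref{defn:stacks} holds, and I must supply witnessing functions on $C$ for condition (2) or (3).

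The key step is to place $C$ inside a single component of $\pi(A)$. By \cref{lem:fin-CC}, $\pi(A)$ has finitely many connected components, each clopen in $\pi(A)$. Because $C$ is connected and contained in $\pi(A)$, it lies in exactly one such component $C'$.

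Now take the functions witnessing that $A$ is a stack over $C'$: either $f_{A,1}<\dots<f_{A,m_{C'}}\in\mathcal{C}(C')$ when $w_n=0$, or $f_{A,i},g_{A,i}\in\mathcal{C}_\infty(C')$ when $w_n=1$. Restrict all of them to $C$. The restrictions remain definable and continuous, or equal to $\pm\infty$, and the pointwise relations $f_{A,i}<f_{A,i+1}$, $f_{A,i}<g_{A,i}$, and "$g_{A,i}=f_{A,i+1}$ or $g_{A,i}<f_{A,i+1}$" are inherited on the subset $C$. Moreover,
\[
A'\cap[C\times\RR]=A\cap[C\times\RR]=\big(A\cap[C'\times\RR]\big)\cap[C\times\RR],
\]
which is the union of the graphs, respectively the open intervals $(f_{A,i}|_C,g_{A,i}|_C)$, of the restricted functions. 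These restrictions therefore witness that $A'$ is a stack over $C$, of type $w$ (or $(v,w_n)$ in general).

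I expect no real obstacle. The only point needing care is the connectedness argument that puts $C$ inside a single component $C'$. Without it, the witnessing functions would be defined separately on several components, and one would have to argue that they glue consistently on $C$.
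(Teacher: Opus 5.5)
Your proposal is correct and follows the paper's proof: use connectedness of $C$ to place it inside a single component $C'$ of $\pi(A)$, then restrict the functions witnessing that $A$ is a stack over $C'$ to $C$. Your remark that a $v$-cell $C$ yields a $(v,w_n)$-stack is accurate, and that is the form in which the lemma is used later, in the proof of \cref{lem:grid-refine}.
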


\begin{proof}
  We will prove the statement only for $w_n=0$, as the proof for $w_n=1$ is similar.
    Since $C$ is connected, it is contained in some connected component $C'$ of $\pi(A)$. 
    Let $f_1,\ldots,f_m\in \mathcal{C}(C')$ witness that $A$ is a stack.
    Then $f_1|_C,\ldots,f_m|_C \in \mathcal{C}(C)$ witness that $A \cap [C \times \RR]$ is a stack.
\end{proof}

The argument in the proof of the following lemma is the key step in the proof of uniform finiteness in the o-minimal case.

\begin{lem}\label{lem:4.11}
Let $w\in \{0,1\}^n$ and let $A \subseteq \RR^{n+1}$ be a vertically bounded definable set such that $\pi(A)$ is a $w$-stack and for every connected component $C$ of $\pi(A)$
\begin{enumerate}
\item  $\cl(A_a)= \cl(A)_a$ for all $a\in C$,
\item  $A_a$ is finite for all $a\in C$, and
  \item $A$ is \'etale at every $(a,b)\in A\cap (C\times \RR)$.
  \end{enumerate}
  Then for every connected component $C$ of $\pi(A)$ there are  $f_1, \dots, f_{m_C} \in \mathcal{C}(C)$
  such that $f_1 < \cdots < f_{m_C}$ and 
  \[
  A\cap [C\times \RR]=\bigcup_{i = 1}^{m_C} \Gamma(f_i).
  \]
In particular, $A$ is a $(w,0)$-stack.
\end{lem}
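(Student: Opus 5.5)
Fix a connected component $C$ of $\pi(A)$. The plan follows the o-minimal uniform finiteness argument: show that $a\mapsto |A_a|$ is locally constant on $C$. Since $C$ is connected (it is a cell by \cref{lem:fin-CC}), this count is then a constant $m:=m_C$. Set $f_i(a)$ to be the $i$-th smallest element of $A_a$. Each $f_i$ is definable, $f_1<\dots<f_m$, and $A\cap[C\times\RR]=\bigcup_i\Gamma(f_i)$. It then remains to prove continuity of the $f_i$. Once this is done for every component $C$, the definition of a $(w,0)$-stack is met directly, since $\pi(A)$ is a $w$-stack.

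Fix $a\in C$ and write $A_a=\{b_1<\dots<b_k\}$; this set is finite by (2) (if $A_a$ were empty then $a\notin\pi(A)$, so $k\ge 1$). By (3), for each $j$ there is an open box $U_j=V_j\times J_j$ around $(a,b_j)$ such that $\pi|_{A\cap U_j}$ is a homeomorphism onto its image. Shrinking, take the $J_j$ pairwise disjoint and set $V:=\bigcap_j V_j$. I would then prove two claims.

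\emph{Claim 1 (no escaping points).} After shrinking $V$, every $x\in V\cap C$ has $A_x\subseteq\bigcup_j J_j$. Otherwise there are $x_i\to a$ in $C$ and $y_i\in A_{x_i}\setminus\bigcup_j J_j$. Vertical boundedness confines the $y_i$ to a bounded interval, so we may pass to a convergent subsequence $y_i\to y$. Then $y\in\cl(A)_a=\cl(A_a)=A_a$, using (1) and finiteness of $A_a$. But $y\notin\bigcup_j J_j$ because this union is open, while $A_a\subseteq\bigcup_j J_j$, a contradiction.

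\emph{Claim 2 (each $J_j$ carries exactly one point).} After shrinking $V$, for $x\in V\cap C$ each $A_x\cap J_j$ has exactly one element. At most one holds because $\pi$ is injective on $A\cap U_j$. For at least one, the image $\pi(A\cap U_j)$ contains $a$, and I need it to contain a neighbourhood of $a$ in $C$. This is the main obstacle, since étaleness alone gives only that the image is some set on which $\pi$ inverts continuously, not that it is open in $C$. I would attack it by contradiction: suppose $x_i\to a$ in $C$ with $A_{x_i}\cap J_j=\emptyset$. Shrink $J_j$ to a closed subinterval $J'$ around $b_j$ with $A_a\cap J'=\{b_j\}$. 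By (1), $(a,b_j)$ lies in the closure of $A$ restricted over $C$, so there are points of $A$ over $C$ approaching $(a,b_j)$. I then need a connectedness argument along $C$ (e.g.\ along a path or within a small connected neighbourhood of $a$ in $C$) using the local homeomorphism $\pi|_{A\cap U_j}$. The idea is that the set of $x$ in a small connected neighbourhood $W\subseteq C$ of $a$ whose fiber meets $J'$ is both open, by étaleness at each such point, and closed in $W$. Closedness follows from (1) together with Claim 1, since limit points land in $A_x\cap J'$ and not on the boundary of $J'$, because over $W$ the fibers avoid $\operatorname{bd}(J')$ by a Claim-1-type argument. Connectedness of $W$ then gives the contradiction. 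This requires $C$ to be locally connected at $a$, which holds because $\pi_w|_C$ is a homeomorphism onto an open set by \cref{lem:proj-cell}.

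Given both claims, $|A_x|=k$ for all $x\in V\cap C$, so the count is locally constant and hence constant on $C$. Also near $a$ the function $f_j$ coincides with $(\pi|_{A\cap U_j})^{-1}$ followed by the last coordinate, which is continuous. Hence $f_j\in\mathcal{C}(C)$, completing the proof.
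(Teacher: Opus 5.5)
Your structure matches the paper's proof. You show that $a\mapsto|A_a|$ is locally constant on the connected set $C$, and you read continuity off the \'etale charts. Your Claim~1 is exactly the paper's limit argument (vertical boundedness, hypothesis (1), and finiteness of $A_a$). The gap is the openness half of Claim~2. You assert that $\{x\in W : A_x\cap J'\neq\emptyset\}$ is open ``by \'etaleness at each such point''. But \'etaleness at $(x,y)$, as the paper defines it, only says that $\pi|_{A\cap U}$ is a homeomorphism onto its image. It says nothing about that image containing a neighbourhood of $x$ in $C$. This is precisely the obstacle you flagged a few lines earlier, and the connectedness argument simply assumes it away. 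The closedness half is fine: it follows from (1) and compactness of $J'$.

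Moreover, this step cannot be repaired from (1)--(3) as literally stated. Take $n=1$, $w=(1)$ and
\[
A=\big((-1,1)\times\{0\}\big)\cup\big((-1,0]\times\{1\}\big).
\]
Then $A$ is vertically bounded and $\pi(A)=(-1,1)$ is a $(1)$-stack. For every $a\in(-1,1)$ the fiber $A_a$ is finite and $\cl(A)_a=A_a$, so (1) and (2) hold. $A$ is \'etale at every point in the paper's sense: near $(0,1)$ a small box meets $A$ in $(-\delta,0]\times\{1\}$, which $\pi$ maps homeomorphically onto $(-\delta,0]$. Yet $|A_x|$ is $2$ on $(-1,0]$ and $1$ on $(0,1)$. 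So Claim~2 fails at $a=0$, and $A$ is not a $(1,0)$-stack.

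What is actually needed is that each such $\pi|_{A\cap U}$ be a homeomorphism onto a set that is \emph{open} in $\pi(A)$. In other words, $\pi|_A$ should be a local homeomorphism in the sense of the preliminaries. The paper's proof uses exactly this when it writes, ``by (3)'', that $A\cap(B_0\times J_i)=\Gamma(f_i)$ with $f_i$ defined on all of $B_0\cap C$. Under that reading Claim~2 is immediate: shrink $V$ so that $V\cap C\subseteq\pi(A\cap U_j)$. No connectedness argument is needed, and the rest of your proof then goes through as in the paper.
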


\begin{proof}
Let $C$ be a connected component of $\pi(A)$.
  For $m \in \NN$, set
\begin{equation*}
C(m) \coloneqq \{a \in C \ : \ |A_a| = m\}.
\end{equation*}
We first prove that there is $m \in \NN$ such that $C=C(m)$. 
Since $C$ is connected, it suffices to show that $C(m)$ is open.
Pick an arbitrary point $a \in C(m)$, and let $b_1, \dots, b_m \in A_a$ be such that
$b_1 < \cdots < b_m$. Pick open intervals $J_1, \dots, J_m\subseteq \RR$ such that for
all $i \in \{1,\dots,m\}$
\begin{itemize}
    \item $b_i \in J_i$,
    \item $J_i \cap J_j = \emptyset$ for all $j \in \{1,\dots, i\}$ with $j \neq i$.
\end{itemize}
By (3), we can pick a nonempty open box $B_0 \subseteq \RR^n$ and continuous
functions $f_1, \dots, f_m \colon (B_0\cap C) \to \RR$ such that $a\in B_0$ and for all $i \in \{1,\dots,m\}$
\begin{equation}\label{eq:lem411}
A \cap (B_0 \times J_i) = \Gamma(f_i).
\end{equation}
In particular, we have that $f_i(a) = b_i$.\newline

\noindent Towards a contradiction, suppose that for every open box
$B \subseteq B_0$ containing $a$ there is $c \in B$ such that 
\[
A_c \neq \{f_1(c), \dots, f_m(c)\}.
\]
Then there is $\ell \in \{0, \dots,m\}$ and a sequence $(a_i)_{i\in \NN}$ of elements in $C$ such that $\lim_{i\to \infty} a_i= a$ and for each $i\in \NN$
\begin{equation*}
A_{a_i} \cap \Big(f_\ell(a_i), f_{\ell + 1}(a_i)\Big) \neq \emptyset. 
\end{equation*}
Here, we use the convention that $f_0(c)=-\infty$ and $f_{m + 1}(c)=\infty$ for $c\in C$.
For each $i\in \NN$ pick $u_i\in \RR$ such that 
\[
u_i\in A_{a_i} \cap \Big(f_\ell(a_i), f_{\ell + 1}(a_i)\Big).
\]
Since $A$ is vertically bounded, the sequence $((a_i,u_i))_{i\in \NN}$ is bounded. Thus, there is a convergent subsequence $((a_{i_k},u_{i_k}))_{k\in \NN}$. Let $u\in \RR$ be such that
\[
\lim_{k\to \infty } (a_{i_k},u_{i_k}) = (a,u).
\]
Since $\cl(A_a)= \cl(A)_a$ and $A_a$ is finite, we conclude that $u \in A_a$. Thus 
\[
\lim_{k\to \infty } u_{i_k} \in \{f_\ell(a), f_{\ell + 1}(a)\}.
\]
However, by \eqref{eq:lem411}, we have that for each $k\in \NN$
\[
u_{i_k} \notin (J_\ell \cup J_{\ell+1}).
\]
This contradicts that $u \in (J_\ell \cup J_{\ell+1})$.\newline

Let $m\in \NN$ be such that $C(m) = C$. For $i \in \{1,\dots,m\}$, let $h_i \colon C \to \RR$ be the function
\begin{equation*}
a \mapsto \text{($i$-th element of $A_a$)}.
\end{equation*}
Then $h_1 < \dots < h_m$ and
\begin{equation*}
\bigcup_{i = 1}^{n} \Gamma(h_i) = A.
\end{equation*}
The continuity of each $h_i$ follows from (3).
\end{proof}

The final construction in this subsection is one that is not necessary in the proof of the o-minimal cell decomposition theorem. However, it will play an important role in our proof when we need to handle open sets with infinitely many connected components. We first fix some notation.

\begin{defn}
    Let $A \subseteq \RR^n$ and let $f,g \in C_{\infty}(A)$ be such that $f<g$. The \textbf{midpoint function} $\Mid(f,g) : A \to \RR$ is the function given by
        \[
    x \mapsto \begin{cases}
    0 & f = -\infty, g = \infty\\
    f(x) + 1 & f \neq -\infty, g = \infty\\
    g(x) - 1 & f =-\infty, g \neq \infty\\
    \frac{f(x)+g(x)}{2} & \text{otherwise.}
    \end{cases}
    \]
\end{defn}

\begin{lem}\label{lem:stackmid}
Let $(w_1,\dots,w_{n}) \in \{0,1\}^{n}$, and let $A\subseteq \RR^{n+1}$ be a $(w_1,\dots,w_{n},1)$-stack. Then
\[
\Mid(A) := \{ (x,y) \in \RR^{n}\times \RR \ : \ y \in \Midp(A_x)\}
\]
is a $(w_1,\dots,w_{n},0)$-stack such that for every $C\subseteq \RR^n$, the following are equivalent:
\begin{enumerate}
    \item $C$ is a connected component of $\Mid(A)$
    \item there are $f,g \in \mathcal{C}_{\infty}(\pi(C))$ such that $(f,g)$ is a connected component of $A$ and $C=\Gamma(\Mid(f,g))$.
\end{enumerate}
\end{lem}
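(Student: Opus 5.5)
The plan is to build the witnessing functions for $\Mid(A)$ directly from those witnessing that $A$ is a stack. The verification then reduces to \cref{lem:fin-CC} and the fact that the fiber $A_x$ has an explicit description.

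First, since $A$ is a $(w_1,\dots,w_n,1)$-stack, $\pi(\Mid(A))\subseteq \pi(A)$ and $\pi(A)$ is a $w$-stack. Fix a connected component $C'$ of $\pi(A)$ and let
\[
f_1,\dots,f_m,g_1,\dots,g_m\in \mathcal{C}_\infty(C')
\]
witness that $A$ is a stack over $C'$. For $x\in C'$, the fiber $A_x$ is the union of the pairwise disjoint open intervals $(f_i(x),g_i(x))$. Because $g_i(x)\le f_{i+1}(x)$, these intervals are exactly the connected components of $A_x$. When $g_i=f_{i+1}$, the common endpoint is not in $A_x$, so the two intervals remain separate components. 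Hence
\[
\Midp(A_x)=\{\Mid(f_i,g_i)(x) : i=1,\dots,m\}.
\]
Here I read $\Midp$ with the same convention for unbounded components that is built into the definition of $\Mid(f,g)$. This matching of conventions is the one point needing care: for bounded components it is literally the midpoint, and for unbounded ones the convention $f+1$, $g-1$ or $0$ must be used in both places. In particular $\pi(\Mid(A))=\pi(A)$, since every $A_x$ with $x\in\pi(A)$ is nonempty.

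Next, set $h_i:=\Mid(f_i,g_i)$ on $C'$. Each $h_i$ is continuous, because it is the constant $0$, or $f_i+1$, or $g_i-1$, or $(f_i+g_i)/2$, and in every case a continuous function of continuous functions; so $h_i\in\mathcal{C}(C')$. Moreover
\[
f_i<h_i<g_i\le f_{i+1}<h_{i+1},
\]
so $h_1<\dots<h_m$, and by the previous paragraph
\[
\Mid(A)\cap(C'\times\RR)=\bigcup_i \Gamma(h_i).
\]
Thus $\Mid(A)$ is a $(w_1,\dots,w_n,0)$-stack. It is definable, since $A$ is definable by \cref{lem:fin-CC}.

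Finally, for the equivalence I will apply \cref{lem:fin-CC}(2) to both stacks. The components of $\Mid(A)$ are exactly the graphs $\Gamma(h_i)$ over the components $C'$ of $\pi(A)$, and the components of $A$ are exactly the cells $(f_i,g_i)$ over those same $C'$.

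For (1)$\Rightarrow$(2): if $C=\Gamma(h_j)$, then $\pi(C)=C'$ and $(f_j,g_j)$ is a component of $A$ with $C=\Gamma(\Mid(f_j,g_j))$.

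For (2)$\Rightarrow$(1): suppose $(f,g)$ is a component of $A$ with $f,g\in\mathcal{C}_\infty(\pi(C))$. Then $\pi(C)$ is a component $C'$ of $\pi(A)$, and $(f,g)=(f_j,g_j)$ for some $j$. A cell of this form determines its bounding functions: $f(x)=\inf\{y : (x,y)\in (f,g)\}$ and $g(x)=\sup\{y : (x,y)\in (f,g)\}$, with the obvious meaning when these are $\pm\infty$. Hence $f=f_j$ and $g=g_j$, and so $C=\Gamma(h_j)$ is a component of $\Mid(A)$.
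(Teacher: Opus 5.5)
Your proposal is correct and is essentially the paper's proof: take witnesses $f_i,g_i$ over each component of $\pi(A)$, set $h_i=\Mid(f_i,g_i)$, check $h_1<\dots<h_m$ and $\Mid(A)\cap(C'\times\RR)=\bigcup_i\Gamma(h_i)$, and read off the equivalence from \cref{lem:fin-CC}, supplying the details the paper calls easy to check. Your remark on conventions is a genuine point that the paper's proof passes over silently: with $\Midp$ read literally (bounded components only), that equality and the implication (2)$\Rightarrow$(1) fail, e.g.\ for $A=(0,1)\times\RR$, so either your convention or a boundedness hypothesis is needed; boundedness does hold wherever the paper actually uses the lemma (cf.\ \cref{lem:UCB}).
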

\begin{proof}
    Let $C$ be a connected component of $\pi(A)$, and let $f_1,\dots,f_m,g_1,\dots,g_m \in \mathcal{C}_{\infty}(C)$ witness that $A$ is a stack. For $i\in \{1,\dots,m\}$, set $h_i:=\Mid(f_i,g_i)$.
    It is easy to check that $h_1,\dots,h_m \in \mathcal{C}(C)$,
    \[
    h_1 < \dots < h_m, \hbox{ and } \Mid(A) \cap [C \times \RR] = \bigcup_{i=1}^m \Gamma(h_i).    
    \]
    Thus $\Mid(A)$ is a $(w_1,\dots,w_{n},0)$-stack. The stated equivalence follows.
\end{proof}

\subsubsection{Substacks} In the next section, we will define grids as \emph{decreasing} families of stacks. In this subsection, we define and study the partial order on stacks that makes precise what we mean by decreasing.

\begin{defn}
    Let $A,B\subseteq \RR^n$ be $w$-stacks. 
    We say $A$ is a \textbf{substack} of $B$ if every connected component of $A$ is a connected component of $B$.
\end{defn}

The sets $(0,1)$, $\RR_{>0}$, and $(0,1)\cup(1,\infty)$ are all $1$-stacks. While $(0,1)\subseteq \RR_{>0}$, it is not a substack of $\RR_{>0}$. However, the interval $(0,1)$ is a substack of $(0,1)\cup(1,\infty)$.

\begin{lem}\label{lem:projsubstack}
Let $w=(w_1,\dots,w_n)\in \{0,1\}^n$, and let $A,B\subseteq \RR^n$ be $w$-stacks such that $A$ is a substack of $B$. Then $\pi(A)$ is a substack of $\pi(B)$.    
\end{lem}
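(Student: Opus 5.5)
We must show that every connected component of $\pi(A)$ is a connected component of $\pi(B)$. The plan is to use \cref{lem:fin-CC}, which tells us that the connected components of a stack are cells and that projections of components are components of the projected stack. The case $n=1$ is vacuous, since $\pi$ maps to $\RR^0$ and $\RR^0$ is a substack of itself; so assume $n>1$.

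First, let $D$ be a connected component of $\pi(A)$. By \cref{lem:fin-CC}(1), $A$ is the finite union of its connected components $C_1,\dots,C_k$. By \cref{lem:fin-CC}(2)(c), each $\pi(C_i)$ is a connected component of $\pi(A)$. Since $\pi(A)=\bigcup_i \pi(C_i)$ and distinct components are disjoint, $D=\pi(C_i)$ for some $i$. Here we need $\pi(C_i)\cap D\neq\emptyset$, and then equality holds because both are components.

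Second, $A$ is a substack of $B$, so $C_i$ is a connected component of $B$. Applying \cref{lem:fin-CC}(2)(c) to $B$, the set $\pi(C_i)$ is a connected component of $\pi(B)$. Hence $D$ is a connected component of $\pi(B)$.

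Finally, $\pi(A)$ and $\pi(B)$ are $(w_1,\dots,w_{n-1})$-stacks by the definition of a stack, so the substack relation makes sense. This completes the argument. There is no real obstacle: the only point to check is that each component of $\pi(A)$ arises as the projection of some component of $A$, which follows from surjectivity of $\pi$ onto $\pi(A)$ together with the finiteness and disjointness of the components.
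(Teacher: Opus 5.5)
Your proof is correct and is essentially the paper's argument: a component of $\pi(A)$ is the projection of a component of $A$, that component is also a component of $B$, and so by \cref{lem:fin-CC}(2)(c) its projection is a component of $\pi(B)$. The only difference is cosmetic: the paper finds the relevant component of $A$ as the graph (or band) of a witnessing function over the given component of $\pi(A)$ and treats $w_n=0$ and $w_n=1$ separately, while your covering argument handles both cases at once.
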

\begin{proof}
We only prove the case that $w_n=0$ as the proof in the case that $w_n=1$ is similar. Let $C$ be a connected component of $\pi(A)$ and let $f_1,\ldots,f_{m_C} \in \mathcal{C}(C)$ be such that
    \[
    A \cap [C \times \RR] = \bigcup_{i=1}^{m_C} \Gamma(f_i).
    \]
    Then each $\Gamma(f_i)$ is a connected component of $A$, and hence a connected component of $B$. Thus $C=\pi(\Gamma(f_i))$ is a connected component of $\pi(B)$ by \cref{lem:fin-CC}(2).
\end{proof}

\begin{lem}\label{lem:substack}
    Let $w=(w_1,\dots,w_n)\in \{0,1\}^n$, and let $A,B\subseteq \RR^n$ be $w$-stacks such that $\pi(A)$ is a substack of $\pi(B)$ and $w_n=0$. If $A\subseteq B$, then $A$ is a substack of $B$.
\end{lem}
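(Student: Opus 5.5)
Let $C$ be a connected component of $A$; we must show $C$ is a connected component of $B$. By \cref{lem:fin-CC}(2), $\pi(C)$ is a connected component of $\pi(A)$, and there are $f_1<\dots<f_m\in\mathcal{C}(\pi(C))$ witnessing that $A$ is a $w$-stack over $\pi(C)$, with $C=\Gamma(f_j)$ for some $j$. Because $\pi(A)$ is a substack of $\pi(B)$, the set $\pi(C)$ is also a connected component of $\pi(B)$. Since $w_n=0$, \cref{defn:stacks}(2) gives $h_1<\dots<h_k\in\mathcal{C}(\pi(C))$ with
\[
B\cap[\pi(C)\times\RR]=\bigcup_{i=1}^k\Gamma(h_i),
\]
and by the proof of \cref{lem:fin-CC} the graphs $\Gamma(h_i)$ are exactly the connected components of $B$ lying over $\pi(C)$.

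The argument then reduces to the following claim: $f_j=h_i$ for some $i$. From $A\subseteq B$ we have $\Gamma(f_j)\subseteq\bigcup_i\Gamma(h_i)$, so $\pi(C)=\bigcup_{i=1}^k\{x\in\pi(C): f_j(x)=h_i(x)\}$. Each of these sets is closed in $\pi(C)$, by continuity of $f_j$ and $h_i$, and they are pairwise disjoint because the $h_i$ are strictly ordered. Hence they form a finite partition of $\pi(C)$ into relatively closed sets, so each is also relatively open. Since $\pi(C)$ is a cell, it is connected by \cref{lem:celldsig}(1). Therefore exactly one of these sets is all of $\pi(C)$, so $f_j=h_i$ on $\pi(C)$ for a single $i$. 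It follows that $C=\Gamma(h_i)$, which is a connected component of $B$.

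Nothing here is a serious obstacle; the only point needing care is that the hypothesis on $\pi(A)$ is what makes the components of $A$ and $B$ sit over the same base $\pi(C)$. Without it, $\Gamma(f_j)$ might be a proper piece of some $\Gamma(h_i)$ over a larger base. The corresponding statement fails for $w_n=1$, as the example $(0,1)\subseteq\RR_{>0}$ shows, and that is why the hypothesis $w_n=0$ is needed.
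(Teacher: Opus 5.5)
Your proof is correct and follows essentially the paper's route: both use the substack hypothesis to make $\pi(C)$ a common connected component of $\pi(A)$ and $\pi(B)$, and then use connectedness to identify $f_j$ with one of the functions witnessing that $B$ is a stack over $\pi(C)$. The only difference is phrasing. The paper argues that the connected set $\Gamma(f_j)$ meets the component $\Gamma(h_i)$ of $B$ and must therefore lie inside it. You partition $\pi(C)$ into the closed coincidence sets $\{x : f_j(x)=h_i(x)\}$, which is the same connectedness fact made explicit.
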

\begin{proof}
Let $C\subseteq \RR^n$ be a connected component of $A$. Since $A$ is a $w$-stack, there is $f \in \mathcal{C}(\pi(C))$ such that $\pi(C)$ is a connected component of $\pi(A)$ and $C = \Gamma(f)$. Hence $\pi(C)$ a connected component of $\pi(B)$. Since $B$ is a $w$-stack, there are $f_1,\dots,f_m \in \mathcal{C}(C)$ such that 
\[
B \cap (C\times \RR) = \bigcup_{i=1}^m \Gamma(f_i).
\]
Let $i\in \{1,\dots,m\}$ be such that $\Gamma(f_i)\cap \Gamma(f)\neq \emptyset$. Since $\Gamma(f_i)$ is a connected component of $B$ and $A\subseteq B$, we get that $\Gamma(f)\subseteq \Gamma(f_i)$. Since the domains of $f$ and $f_i$ are the same the functions must be the same. Thus $C$ is a connected component of $B$.
\end{proof}

\begin{lem}\label{lem:substack2}
    Let $w=(w_1,\dots,w_{n+1})\in \{0,1\}^{n+1}$, let $A,B \subseteq \RR^{n+1}$ be $w$-stacks, and let $A',B'\subseteq \RR^{n}$ be $w'$-stacks such that 
    \begin{enumerate}
        \item $A$ is a substack of $B$,
        \item $A'$ is a substack of $B'$,
        \item $\pi(A)\cap A'$ is a substack of $A'$, and
        \item $A\cap [A' \times \RR]$ is a stack.
    \end{enumerate}
    Then $A\cap [A' \times \RR]$  is a substack of $B\cap [B'\times \RR]$.
\end{lem}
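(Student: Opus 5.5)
The plan is to work directly with the definition of substack: we show that every connected component $K$ of $E:=A\cap[A'\times \RR]$ is also a connected component of $F:=B\cap[B'\times\RR]$. By (4), $E$ is a $w$-stack. So by \cref{lem:fin-CC}, $K$ is a $w$-cell and $\pi(K)$ is a connected component of $\pi(E)$.

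First I would locate $\pi(K)$ inside $B'$. Note that $\pi(E)=\pi(A)\cap A'$. By (3), $\pi(E)$ is a substack of $A'$, so $\pi(K)$ is a connected component of $A'$. By (2), it is then a connected component of $B'$.

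Next I would locate $K$ inside a component of $A$. Since $K\subseteq A$ is connected, there is a connected component $C$ of $A$ with $K\subseteq C$. By \cref{lem:fin-CC}, $C$ is either $\Gamma(f)$ or $(f,g)$ over the connected component $\pi(C)$ of $\pi(A)$, and $\pi(K)\subseteq \pi(C)$. The set $C\cap[\pi(K)\times\RR]$ is the graph or band of the restricted functions over the connected set $\pi(K)$, so it is connected; this is the argument of \cref{lem:restrict-stack}. It is also contained in $E$, because $\pi(K)\subseteq A'$, and it contains $K$. Maximality of $K$ therefore gives $K=C\cap[\pi(K)\times\RR]$.

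Now let $K'$ be the connected component of $F$ containing $K$. Since $K'\subseteq B$ is connected and meets $C$, and $C$ is a connected component of $B$ by (1), we get $K'\subseteq C$. Likewise $\pi(K')\subseteq B'$ is connected and meets the connected component $\pi(K)$ of $B'$, so $\pi(K')\subseteq\pi(K)$. Hence $K'\subseteq C\cap[\pi(K)\times\RR]=K$, and therefore $K=K'$ is a connected component of $F$.

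The main subtlety I expect is that the definition of substack is phrased for two $w$-stacks, while the hypotheses say nothing directly about $F$ being a stack. If this is required, I would check it the same way. Each connected component of $\pi(B)\cap B'$ is a connected set of the form $D\cap D'$, with $D$ a component of $\pi(B)$ and $D'$ a component of $B'$. Over each such component, the witnessing functions of $B$ over $D$ restrict to witnesses. For this one needs $D\cap D'$ to be a cell, which I would expect to extract from the component-matching argument above. Otherwise I would verify only the component condition, which is what the definition actually uses.
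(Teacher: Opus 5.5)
Your proof is correct and follows essentially the paper's route: the component $K$ of $A\cap[A'\times\RR]$ is the restriction of a component $C$ of $A$ to the component $\pi(K)$ of $\pi(A)\cap A'$, (3) and then (2) make $\pi(K)$ a component of $B'$, and (1) makes $C$ a component of $B$. Your direct closing step ($K'\subseteq C$ and $\pi(K')\subseteq\pi(K)$, so $K'\subseteq C\cap[\pi(K)\times\RR]=K$) is cleaner than the paper's contradiction argument, which leaves implicit why the component of $B\cap[B'\times\RR]$ containing $K$ is a restriction of $C$.

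Your optional plan to show that $B\cap[B'\times\RR]$ is a stack would not go through. A component of $\pi(B)\cap B'$ is in general only a component of some $D\cap D'$, and $D\cap D'$ may have infinitely many components, so (1)--(4) do not force $B\cap[B'\times\RR]$ to be a stack. Checking only the component condition, as you fall back to and as the paper does, is the right reading of the conclusion.
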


\begin{proof}
    We will prove this only for $w_{n+1}=0$ as the proof for $w_{n+1}=1$ is similar. Let $C_0$ be a connected of $A\cap [A' \times \RR]$. By (4) there is a connected component $C'$ of $A'\cap \pi(A)$ and $f_0\in \mathcal{C}(C')$ such that $C_0=\Gamma(f_0)$. Let $C$ be the connected component of $\pi(A)$ containing $C'$. Then there is $f\in \mathcal{C}(C)$ such that $f|_{C'} = f_0$ and $\Gamma(f)$ is a connected component of $A$. By (1), we know that $\Gamma(f)$ is a connected component of $B$ as well.
    Since $C'$ is a connected component of $A'\cap \pi(A)$, it is a connected component of $A'$ by (3), and thus a connected component of $B'$ by (2).
    Hence
    \[ 
    C_0 \subseteq \Gamma(f_0|_{C'}) \subseteq B \cap [B' \times \RR].
    \]
    Towards a contradiction, suppose that $C_0$ is not a connected component of $B \cap [B'\times \RR]$.
    Then there is $C''\subseteq \RR^{n}$ containing $C'$ such that $C''\neq C'$ and $\Gamma(f|_{C''})$ is a connected component of $B \cap [B' \times \RR]$.
    However, this contradicts that $C'$ is a connected component of $B'$.
\end{proof}

\begin{lem}\label{lem:substack-CC}
    Let $w=(w_1,\dots,w_n)\in \{0,1\}^n$, let $A\subseteq \RR^n$, and let $B\subseteq \RR^n$ be a $w$-stack such that
    $A\subseteq B$ and every connected component of $A$ is a connected component of $B$. Then $A$ is a stack and a substack of $B$.
\end{lem}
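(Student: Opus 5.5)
We argue by induction on $n$, following the recursive definition of a $w$-stack. The cases $n=0$ and $n=1$ are immediate. For $n=1$, a connected component of a $0$-stack is a point, so $A$ is a finite set of points. A connected component of a $1$-stack is one of its finitely many open intervals, so $A$ is a finite disjoint union of open intervals. In both cases $A$ is a stack of the same type, and its connected components are, by hypothesis, connected components of $B$. Since the components of $A$ are components of $B$ by assumption, once we know $A$ is a $w$-stack the substack claim is automatic. So the real content is showing that $A$ is a $w$-stack.

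For $n>1$, the plan is to apply the inductive hypothesis to $\pi(A)$ inside $\pi(B)$. By \cref{lem:fin-CC}, every connected component $C$ of $A$ is a connected component of $B$, hence a $w$-cell, and $\pi(C)$ is a connected component of $\pi(B)$. Since $\pi(A)=\bigcup_C \pi(C)$ and distinct components of $\pi(B)$ are disjoint, the connected components of $\pi(A)$ are exactly the sets $\pi(C)$, each of which is a connected component of $\pi(B)$. Here we use that each $\pi(C)$ is clopen in $\pi(B)$ and $B$ has only finitely many components. The inductive hypothesis then gives that $\pi(A)$ is a $(w_1,\dots,w_{n-1})$-stack and a substack of $\pi(B)$.

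Next we produce witnessing functions. Fix a connected component $D$ of $\pi(A)$; it is a component of $\pi(B)$. Suppose $w_n=0$, and let $f_{B,1}<\dots<f_{B,m_D}\in\mathcal{C}(D)$ witness that $B$ is a stack over $D$. By \cref{lem:fin-CC}(2), the components of $B$ lying over $D$ are the graphs $\Gamma(f_{B,i})$. Every component of $A$ over $D$ is one of these graphs, so $A\cap[D\times\RR]$ is the union of $\Gamma(f_{B,i})$ over some index set $I\subseteq\{1,\dots,m_D\}$. The subfamily $(f_{B,i})_{i\in I}$ is still strictly increasing, so it witnesses condition (2) of \cref{defn:stacks}.

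Suppose instead $w_n=1$, with $B$ witnessed over $D$ by $f_{B,i},g_{B,i}$. Again $A\cap[D\times\RR]$ is the union of $(f_{B,i},g_{B,i})$ over some $I$. For consecutive chosen indices $i<j$ in $I$, either $g_{B,i}=f_{B,j}$ (possible only when $j=i+1$) or $g_{B,i}\le f_{B,i+1}<g_{B,i+1}\le\dots\le f_{B,j}$, which gives $g_{B,i}<f_{B,j}$. So the ordering condition (3)(a) survives passing to a subfamily. The one point needing care is the degenerate case $j=i+1$, which is exactly the allowed equality. Hence $A$ is a $w$-stack, and by hypothesis it is a substack of $B$.

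The main obstacle is only bookkeeping: confirming that the components of $\pi(A)$ are whole components of $\pi(B)$. Without that, the witnesses for $B$ could not be restricted to those components. This is what the argument above via \cref{lem:fin-CC}(2)(c) secures.
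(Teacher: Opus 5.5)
Your proof is correct and follows essentially the same route as the paper: induction on $n$, showing that the connected components of $\pi(A)$ are connected components of $\pi(B)$ so that the inductive hypothesis applies, and then taking a subfamily of the functions witnessing that $B$ is a stack. The differences are minor. You obtain the component claim directly from \cref{lem:fin-CC}(2)(c) rather than through the paper's contradiction argument with $C\neq C'$, and you explicitly check that the ordering condition survives passing to a subfamily when $w_n=1$, a case the paper leaves as ``similar''.
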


\begin{proof}
    We need only prove that $A$ is a stack, because then it follows that $A$ is a substack of $B$.
    We proceed by induction on $n$. The case $n=0$ is immediate.\newline

    Let $n\geq 1$, and suppose that the statement holds for $n-1$.
    We only consider the case when $w_n=0$, since the proof for $w_n=1$ is similar.
    Let $C$ be a connected component of $\pi(A)$. 
    Then there is a connected component $C'$ of $\pi(B)$ such that $C\subseteq C'$.
    Because $B$ is a $w$-stack, there are $f_1,\ldots,f_{m_{C'}} \in \mathcal{C}(C')$ such that 
    \[ 
    B\cap[C'\times \RR] = \bigcup_{i=1}^{m_{C'}} \Gamma(f_i).
    \]
    Since $A\subseteq B$, 
    \[
    A\cap [C\times \RR] \subseteq B \cap [C'\times \RR].
    \]
    Because each connected component of $A$ is a connected component of $B$, there is $I\subseteq \{1,\ldots,m_ {C'}\}$ such that
    \[ A\cap [C\times \RR] = \bigcup_{i\in I} \Gamma(f_i|_{C}).\]
    If $C\neq C'$, then $\Gamma(f_i|_{C})$ is a connected component of $A$ for $i\in I$, but not a connected component of $B$, a contradiction.
    Thus $C=C'$. 
    By induction, we conclude that $\pi(A)$ is a stack and a substack of $\pi(B)$. 
    Then the functions $f_i$ with $i\in I$ witness that $A$ is a stack.
\end{proof}

\begin{lem}\label{lem:midsubstack}
Let $(w_1,\dots,w_{n}) \in \{0,1\}^{n}$ and let $A, B\subseteq \RR^{n+1}$ be $(w_1,\dots,w_{n},1)$-stacks such that $A$ is a substack of $B$. Then $\Mid(A)$ is a substack of $\Mid(B)$.
\end{lem}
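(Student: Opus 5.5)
The plan is to verify directly that every connected component of $\Mid(A)$ is a connected component of $\Mid(B)$, using the explicit description of components given by \cref{lem:stackmid}. By that lemma, $\Mid(A)$ and $\Mid(B)$ are both $(w_1,\dots,w_n,0)$-stacks, so the substack relation makes sense.

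Let $C$ be a connected component of $\Mid(A)$. By \cref{lem:stackmid}, there are $f,g\in\mathcal{C}_{\infty}(\pi(C))$ such that $(f,g)$ is a connected component of $A$ and $C=\Gamma(\Mid(f,g))$. Since $A$ is a substack of $B$, the set $(f,g)$ is also a connected component of $B$, with the same projection $\pi(C)$ and the same boundary functions $f,g$. Applying the reverse direction of the equivalence in \cref{lem:stackmid} to $B$, we get that $\Gamma(\Mid(f,g))=C$ is a connected component of $\Mid(B)$. This is exactly the definition of $\Mid(A)$ being a substack of $\Mid(B)$.

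The only point needing care is that the pair $(f,g)$ representing a component is determined by the component itself. For a $(w,1)$-cell $(f,g)$ over $\pi(C)$, the functions are recovered pointwise: $f(x)=\inf$ and $g(x)=\sup$ of the fiber over each $x\in\pi(C)$, with $\pm\infty$ meaning the fiber is unbounded. Hence $\Mid(f,g)$ depends only on the set $(f,g)$, not on whether we view it as a component of $A$ or of $B$. With this observation the argument above is complete, and I expect no further obstacle.
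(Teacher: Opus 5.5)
Your proposal is correct and is essentially the paper's proof: use \cref{lem:stackmid} to write a component of $\Mid(A)$ as $\Gamma(\Mid(f,g))$ for a component $(f,g)$ of $A$, use the substack hypothesis to see that $(f,g)$ is a component of $B$, and apply \cref{lem:stackmid} again to $B$. Your extra remark that $f$ and $g$ are determined by the set $(f,g)$ is harmless. It is not actually needed, since you reuse the same pair $f,g$ in both applications of the lemma.
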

\begin{proof}
Let $d,e\in D$ with $d>e$ and let $C$ be a connected component of $\Mid(A)$. By \cref{lem:stackmid} there are $f,g \in \mathcal{C}_{\infty}(\pi(C))$ such that  $(f,g)$ is a connected component of $A$ and 
\[
C = \Gamma(\Mid(f,g)).
\]
Then $(f,g)$ is a connected component of $B$, because $A$ is substack of $B$. Thus $\Gamma(\Mid(f,g))$ is a connected component of $\Mid(B)$ by \cref{lem:stackmid}.
\end{proof}

\section{Grids}\label{section:grids}
Both cells and stacks have finitely many connected components, and hence might live in an o-minimal structure. We now finally introduce grids, which will help us study sets with infinitely many connected components. 

\begin{defn}\label{defn:grid}
Let $n\in \NN_{\geq 1}$, let $w=(w_1,\dots,w_n) \in \{0,1\}^n$, and let $D\subseteq (0,1]$ be a sequence set. Let $\mathcal{S}$ be a definable family $(S_{d})_{d \in D}$ of $w$-stacks. 
We say $\mathcal{S}$ is a \textbf{$(w,D$)-grid} if 
\begin{enumerate}
    \item $(\pi(S_d))_{d \in D}$ is a $((w_1,\dots,w_{n-1}),D)$-grid if $n>1$,
    \item $S_d$ is a substack of $S_e$ for $d,e\in D$ with $d>e$,
     \item $\bigcup_{d \in D} S_d$ is étale if $w_n=0$, 
\end{enumerate}
For $n=0$, the unique $((),D)$-grid is the constant family $(\RR^0)_{d\in D}$. \\

We say $\bigcup_{d \in D} S_d$ is the \textbf{surface} of $\mathcal{S}$, and write $\surf(\mathcal{S})$. We write $\pi(\es)$ for the $((w_1,\dots,w_{n-1}),D)$-grid $(\pi(S_d))_{d \in D}$. 
\end{defn}

We often suppress the $D$ in $(w,D)$-grid if $D$ can be deduced from the context, and simply say that $\mathcal{S}$ is a $w$-grid. Indeed, as we show now, we can always change the sequence set without changing the surface.

\begin{defn} Let $D_1$ and $D_2$ be sequence sets, and let $\es$ be a $(w,D_1)$-grid. We define the \textbf{$D_2$-restriction} of $\es$, written  $\es|_{D_2}$, as the definable family $(S'_{d})_{d\in D_2}$ such that for every $d\in D_2$
\[
S'_{d} := S_{\lambda_{D_1}(d)}.
\]
\end{defn}

\begin{lem}\label{lem:changeseqs}
Let $w=(w_1,\dots,w_n) \in \{0,1\}^n$, let $D_1$ and $D_2$ be sequence sets, and let $\es$ be a $(w,D_1)$-grid. Then $\es|_{D_2}$ is a $(w,D_2)$-grid and
    \[
    \surf(\es) = \surf(\es|_{D_2}).
    \]
\end{lem}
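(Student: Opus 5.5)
Let $\es=(S_d)_{d\in D_1}$ and write $S'_d := S_{\lambda_{D_1}(d)}$ for $d\in D_2$. I will prove the statement by induction on $n$, checking the three grid conditions of \cref{defn:grid} directly for $\es|_{D_2}$ and then comparing surfaces.

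First, the family is well defined and definable. Since $1\in D_1$ and $D_1\cup\{0\}$ is closed and discrete away from $0$, the set $(0,x]\cap D_1$ is nonempty for every $x\in(0,1]$. It is also closed in $(0,1]$ and bounded away from $0$ near its maximum, so $\lambda_{D_1}(x)$ exists and lies in $D_1$. Moreover $\lambda_{D_1}$ is definable. Hence $(S'_d)_{d\in D_2}$ is a definable family of $w$-stacks.

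Next come the grid conditions.
\begin{itemize}
\item For $n=0$ there is nothing to prove, as the family is constant.
\item For $n>1$, observe that $\pi(S'_d)=\pi(S_{\lambda_{D_1}(d)})$, so $\pi(\es|_{D_2})=\pi(\es)|_{D_2}$. This is a $((w_1,\dots,w_{n-1}),D_2)$-grid by the induction hypothesis, which gives condition (1).
\item For condition (2), take $d>e$ in $D_2$. Since $\lambda_{D_1}$ is monotone, $\lambda_{D_1}(d)\ge\lambda_{D_1}(e)$. If the two values are equal, $S'_d=S'_e$, and a stack is trivially a substack of itself. Otherwise condition (2) for $\es$ applies directly.
\end{itemize}

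The surface equality comes next, and condition (3) then follows from it. The inclusion $\surf(\es|_{D_2})\subseteq\surf(\es)$ is immediate. For the reverse inclusion, let $x\in S_c$ with $c\in D_1$. Since $D_2$ is an infinite discrete subset of $(0,1]$ with $D_2\cup\{0\}$ closed, it accumulates only at $0$, so there is $d\in D_2$ with $d<c$. Pick such a $d$. Then $e:=\lambda_{D_1}(d)\le d<c$, so $S_c$ is a substack, and in particular a subset, of $S_e=S'_d$. Thus $x\in\surf(\es|_{D_2})$. Condition (3) is a property of the surface alone, so it transfers from $\es$ once the surfaces agree.

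The main (mild) obstacle is the existence of small elements of $D_2$. This needs the fact that an infinite set $D_2\subseteq(0,1]$ with $D_2\cup\{0\}$ closed and $D_2$ discrete must have $0$ as an accumulation point: an infinite bounded set has an accumulation point, which must lie in $D_2\cup\{0\}$, and it cannot lie in $D_2$ by discreteness. The same observation also justifies that the maximum in $\lambda_{D_1}$ exists.
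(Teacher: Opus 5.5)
Your proof is correct and follows essentially the same route as the paper: surface equality via an element of $D_2$ below a given index in $D_1$, the substack condition from monotonicity of $\lambda_{D_1}$, condition (1) by induction using $\pi(\mathcal{S}|_{D_2})=\pi(\mathcal{S})|_{D_2}$, and \'etaleness transferred through the equal surfaces. You are slightly more careful than the paper in treating the case $\lambda_{D_1}(d)=\lambda_{D_1}(e)$ and in checking that $\lambda_{D_1}$ is well defined, though nonemptiness of $(0,x]\cap D_1$ comes from $D_1$ accumulating at $0$ (as your last paragraph notes), not from $1\in D_1$.
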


\begin{proof}
    We first show that 
    \[\surf(\es)=\bigcup_{d\in D_2} S_{\lambda_{D_1}(d)}.\]
    The inclusion $\bigcup_{d\in D_2} S_{\lambda_{D_1}(d)} \subseteq \surf(\es)$ is immediate from the definitions. Thus, we only need to prove that $\surf(\es)\subseteq \bigcup_{d\in D_2} S_{\lambda_{D_1}(d)}$.
    Let $d\in D_1$.
    Since $D_1$ and $D_2$ have limit points at $0$, there is $e\in D_2$ with $e<d$.
    But then $\lambda_{D_1}(e)\leq d$ as well, and therefore $S_d\subseteq S_{\lambda_{D_1}(e)}$.\\

Let $d,e\in D_2$ be such that $e<d$. We show that $S_{\lambda_{D_1}(d)}$ is a substack of $S_{\lambda_{D_1}(e)}$. Let $C$ be a connected component of $S_{\lambda_{D_1}(d)}$.
Since $e<d$, we have that $\lambda_{D_1}(e)\leq \lambda_{D_1}(d)$. Thus $C$ is a connected component of $S_{\lambda_{D_1}(e)}$, because $\es$ is a $(w,D_1)$-grid.\\
  
We now prove by induction on $n$ that $\es|_{D_2}$ is a $(w,D_2)$-grid. The case $n=0$ is trivial. Now suppose that $n>1$ and the claim holds for $n-1$. Since
\[
\pi(\es|_{D_2}) = \pi(\es)|_{D_2},
\]
we get from the induction hypothesis that $\pi(\es|_{D_2})$ is a $(w_1,\ldots,w_{n-1},D_2)$-grid.
    If $w_n=0$,
     then $\bigcup_{d\in D_2} S_{\lambda_{D_1}(d)}$ is étale, because $\bigcup_{d\in D} S_d$ is.
    Thus, together with the above observations, we conclude that $\es|_{D_2}$ is a $(w,D_2)$-grid and $\surf(\es) = \surf(\es|_{D_2}).$
\end{proof}

Fix now a sequence set $D$ for the remainder of this section.

\begin{lem}\label{lem:CC0-grid}
Let $w=(w_1,\dots,w_n) \in \{0,1\}^n$, let $\mathcal{S}=(S_d)_{d\in D}$ be a $w$-grid, and let $C\subseteq \RR^n$. Then 
\begin{enumerate}
    \item $C$ is a connected component of $\surf(\mathcal{S})$ if and only if there is $d\in D$ such that $C$ is a connected component of $S_d$,
    \item each connected component of $\surf(\mathcal{S})$ is clopen in $\surf(\mathcal{S})$, and
    \item $S_d$ is clopen in $\surf(\es)$ for each $d\in D$.
\end{enumerate}
\end{lem}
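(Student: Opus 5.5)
The plan is to prove (1) first and deduce (2) and (3) from it. The key structural input is the substack condition in \cref{defn:grid}(2): for $d>e$ in $D$, every connected component of $S_d$ is a connected component of $S_e$. Each $S_d$ has finitely many connected components, each clopen in $S_d$ (\cref{lem:fin-CC}). We will also use that $S_d\subseteq S_e$ whenever $d\geq e$.

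For (1), first take a connected component $C$ of some $S_d$. The set $C$ is connected and lies in $\surf(\es)$, so it sits inside a connected component $K$ of the surface. We must show $K=C$, and for this it suffices to show that $C$ is clopen in $\surf(\es)$: since $K$ is connected and meets $C$, it is then contained in $C$.

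\emph{Openness.} $C$ is open in $S_d$, so $C=S_d\cap U$ for some open $U\subseteq\RR^n$. We want $C=\surf(\es)\cap U$. Let $x\in \surf(\es)\cap U$, so $x\in S_e$ for some $e\in D$. If $e\geq d$, then $x\in S_d\cap U=C$. If $e<d$, then $C$ is a connected component of $S_e$, hence open in $S_e$. The main point is to get $S_e\cap U=C$ for a suitable $U$, and $U$ must not depend on $e$. I expect this to be the main obstacle.

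I would try to handle it as follows. Work with the single index $d$, and note that $C$ is clopen in every $S_e$ with $e\leq d$. Choose $U$ using local closedness of $S_d$ (\cref{cor:dim-stack}(1)), or more robustly argue pointwise. Fix $x\in C$. Since $C$ is a cell, it is locally closed (\cref{lem:celldsig}), and I would aim to show that points of $S_e\setminus C$ cannot accumulate at $x$. Suppose $y_k\in S_{e_k}\setminus C$ with $y_k\to x$. The component $C_k$ of $S_{e_k}$ containing $y_k$ is a connected component of every $S_{e'}$ with $e'\leq e_k$, and it is disjoint from $C$. Taking $e'=\min(e_k,d)$, both $C$ and $C_k$ are components of $S_{e'}$.

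If $e_k$ does not tend to $0$, there are only finitely many relevant indices (since $D$ is discrete away from $0$), so the $S_{e_k}$ fall into finitely many stacks. Each $C_k$ is then a component of one fixed stack $S_{e'}$, in which $C$ is open, giving a contradiction.

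If $e_k\to 0$, this is where the grid structure is genuinely needed. I would proceed by induction on $n$ using the projected grid $\pi(\es)$. The point $\pi(x)\in\pi(C)$ is a component of the projected grid, which by induction is open in $\surf(\pi(\es))$, so $\pi(y_k)\in\pi(C)$ eventually. Then:
\begin{itemize}
\item if $w_n=0$, étaleness of the surface (condition (3) of \cref{defn:grid}) near $x$ forces $y_k\in\Gamma(f)=C$;
\item if $w_n=1$, the fiber $\surf(\es)_{\pi(y_k)}$ is open, and intervals from distinct components are disjoint, so a point of the open interval $C_{\pi(x)}$ near $x_n$ cannot lie in another component.
\end{itemize}

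\emph{Closedness.} Closedness in the surface follows similarly. A limit point of $C$ in $\surf(\es)$ lies in some $S_e$. It therefore lies in the closure of $C$ within $S_{\min(e,d)}$, where $C$ is closed.

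Conversely, for a component $K$ of the surface, pick $x\in K$ and $d$ with $x\in S_d$. The component $C$ of $S_d$ containing $x$ is a component of the surface by the above, so $K=C$. This proves (1).

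Statement (2) is then exactly the clopenness established along the way. For (3), $S_d$ is the finite union of its components, each clopen in $\surf(\es)$, so $S_d$ is clopen in the surface.
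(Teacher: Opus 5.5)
Your proof is correct and follows essentially the paper's route: induction on $n$ through $\pi(\es)$, so that $\pi(C)$ is clopen in $\surf(\pi(\es))$, étaleness of the surface when $w_n=0$, and the band structure $C=(f,g)$ when $w_n=1$. Your organization is slightly cleaner than the paper's separate treatment of the two directions of (1): you first show every component of every $S_d$ is clopen in the surface, with closedness coming without induction from the single stack $S_{\min(e,d)}$. Two minor remarks: the case split on whether $e_k\to 0$ is unnecessary, since the inductive argument handles every sequence; and for $w_n=1$ the operative fact is that $(f,g)$ is open in $\pi(C)\times\RR$ by continuity of $f,g$, so $y_k\in C$ for large $k$.
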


\begin{proof}
Since $S_d$ has finitely many connected components for each $d\in D$ by \cref{lem:fin-CC}, the third statement follows from the second. Set $A:=\surf(\mathcal{S})$. We prove (1) and (2) by induction on $n$. The base case that $n=0$ is trivial.\newline

    Now suppose the statement holds for $n-1$. 
    Let $C$ be a connected component of $A$. Then there is a connected component $C'$ of $\pi(A)$ such that $\pi(C)\subseteq C'$.
    By our inductive assumption, there is $d\in D$ such that $C'$ is a connected component of $\pi(S_d)$. Let $x\in C$ and let $e \in D_{\leq d}$ be maximal such that $x \in S_e$. We will show that $C$ is a connected component of $S_e$. Since $\pi(S_d)$ is a substack of $\pi(S_e)$, we have that $C'$ is a connected component of $\pi(S_e)$.\\

    Suppose first that $w_n=0$.  Let $f_1,\ldots, f_m \in \mathcal{C}(C')$ witness that $S_e$ is a $w$-stack.
    Then 
    \[
    S_e\cap [C' \times \RR]=\bigcup_{i=1}^m \Gamma(f_i).
    \]
    Let $j\in \{1,\dots,m\}$ be such that $x \in \Gamma(f_j).$ Since $\Gamma(f_j)$ is connected and $x\in C$, we have that $\Gamma(f_j)\subseteq C$. Thus, it is enough to show that $\Gamma(f_j)$ is a connected component of $A$.
    Because $\Gamma(f_j)$ is connected, it is enough to show that $\Gamma(f_j)$ is maximal with this property.  
    Since $C'$ is closed in $\pi(A)$, it follows that $\Gamma(f_j)$ is closed in $A$. Let $y \in \Gamma(f_j)$.
    Since $A$ is étale at $y$, there is an open box $U\subseteq \RR^n$ containing $x$ such that $\pi|_{U\cap A}$ is a homeomorphism onto its image. Thus, 
    \[
    U\cap A = U\cap \Gamma(f_j).
    \]
    Hence $\Gamma(f_j)$ is clopen in $A$, and hence a connected component of $A$.\newline

    Now suppose that $w_n=1$. Let $f_1,\ldots,f_m,g_1,\ldots,g_m \in \mathcal{C}_{\infty}(C')$ witness that $S_e$ is a $w$-stack. 
    Then 
    \[
    S_e\cap [C' \times \RR] = \bigcup_{i=1}^m \big(f_i,g_i\big).
    \]
    Let $j\in \{1,\dots,m\}$ be such that $x \in \big(f_j,g_j\big).$ 
    Since $(f_j,g_j)$ is connected and $x\in C$ we have that $(f_j,g_j)\subseteq C$. Thus, it is enough to show that $(f_j,g_j)$ is a connected component of $A$. Because $(f_j,g_j)$ is connected, it is enough to show that $(f_j,g_j)$ is maximal with this property. Since $C'$ is open in $\pi(A)$, we have that $(f_j,g_j)$ is open in $A$. Let $y \in A \setminus (f_j,g_j)$, and let $e' \in D_{\geq e}$ be such that $y \in S_{e'}$. Since $S_{e'}$ is a $w$-stack, there are definable continuous functions $f,g : C'\to \RR$ such that $y \in (f,g)$. Since $S_e$ is a subgrid of $S_{e'}$ and $y\notin (f_j,g_j)$, we have that $(f_j,g_j) \cap (f,g) =\emptyset$. Since $C'$ is closed in $\pi(A)$, it follows that $(f_j,g_j)$ is closed in $A$. Hence $(f_j,g_j)$ is clopen in $A$, and hence a connected component of $A$.\\

    We have established (2) and the forward direction of (1). We now prove the backwards direction of (1). Let $d\in D$, and let
    Let $C$ be a connected component of $S_d$. Let $C'$ be a connected subset of $A$ such that $C\subseteq C'$.\newline

    Suppose first that $w_n=0$.
    Then there is $f\in \mathcal{C}(\pi(C))$ such that $C=\Gamma(f)$  and $\pi(C)$ is a connected component of $\pi(S_d)$. 
    Then $\pi(C)\subseteq\pi(C')$, and thus, by our inductive hypothesis, $\pi(C')=\pi(C)$.
    By the forward direction of $(1)$, there is $e\in D$ such that $C'$ is a connected component of $S_e$. Thus, there is a continuous definable function $g:\pi(C)\to \RR$ such that $C'=\Gamma(g)$.
    But then $f=g$, and so $C=C'$. Hence, $C$ is a connected component of $A$.\\

    Suppose now that $w_n=1$. Let $f,g \in \mathcal{C}(\pi(C))$ be such that $C=\big(f,g\big)$ and $\pi(C)$ is a connected component of $\pi(S_d)$.
    Similarly to above, we conclude that $\pi(C')=\pi(C)$.
    By the forward direction of $(1)$ there is $e\in D$ such that $C'$ is a connected component of $S_e$.
    Thus, there are $\hat{f},\hat{g}\in\mathcal{C}(\pi(C))$ such that $C'=\big(\hat{f},\hat{g}\big)$. Since $S_d$ is a substack of $S_e$, we conclude that $f=\hat{f}$ and $g=\hat{g}$.
    Thus, $C=C'$ and $C$ is a connected component of $A$.
\end{proof}

\begin{cor}\label{cor:thingrid}
   Let $\mathcal{S}=(S_d)_{d\in D}$ be a $(0,\dots,0)$-grid in $\RR^n$. Then $\surf(\es)$ is discrete.
\end{cor}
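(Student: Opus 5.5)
The plan is to use \cref{lem:CC0-grid}: the connected components of $\surf(\es)$ are exactly the connected components of the stacks $S_d$, and each of them is clopen in $\surf(\es)$. So it suffices to show that every connected component of $S_d$ is a single point. Once that holds, every singleton $\{x\}\subseteq \surf(\es)$ is a connected component and therefore open in $\surf(\es)$, which means $\surf(\es)$ is discrete.

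\textbf{Step 1: components of a $(0,\dots,0)$-stack are points.} By \cref{lem:fin-CC}(2)(a), every connected component $C$ of $S_d$ is a $(0,\dots,0)$-cell. I will show by induction on $n$ that a $(0,\dots,0)$-cell is a singleton.
\begin{itemize}
\item For $n=1$, a $0$-cell is a point by definition.
\item For $n>1$, $\pi(C)$ is a $(0,\dots,0)$-cell in $\RR^{n-1}$, so by induction $\pi(C)=\{a\}$.
\item Since $w_n=0$, $C$ is the graph of a function on $\{a\}$, so $C=\{(a,f(a))\}$.
\end{itemize}
Alternatively, \cref{lem:proj-cell}(3) gives the same conclusion directly: $\pi_w$ with $w=(0,\dots,0)$ maps into $\RR^0$ and is a homeomorphism on $C$, so $C$ is a point.

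\textbf{Step 2: conclude discreteness.} Let $x\in\surf(\es)$. By Step 1, $\{x\}$ is a connected component of some $S_d$. By \cref{lem:CC0-grid}(1), $\{x\}$ is then a connected component of $\surf(\es)$. By \cref{lem:CC0-grid}(2), it is clopen in $\surf(\es)$. Hence $x$ is isolated in $\surf(\es)$, and $\surf(\es)$ is discrete.

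I expect no real obstacle here. The substantive work, namely that the étaleness condition makes the graphs clopen in the surface, is already contained in \cref{lem:CC0-grid}. The only point to check is that \cref{lem:CC0-grid} applies in the degenerate case where every $w_i=0$, which it does.
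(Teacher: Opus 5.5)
Your proof is correct and follows essentially the paper's route: both arguments show that each $x\in\surf(\mathcal{S})$ forms a singleton connected component of some $S_d$, and then invoke Lemma~\ref{lem:CC0-grid}(1),(2) to conclude that $\{x\}$ is clopen in $\surf(\mathcal{S})$. The only difference is cosmetic. The paper obtains the singleton component by induction on $n$, using that $\surf(\pi(\mathcal{S}))$ is discrete. You instead read it off directly from Lemma~\ref{lem:fin-CC}(2)(a) together with the fact that $(0,\dots,0)$-cells are points.
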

\begin{proof}
We proceed by induction on $n$. The case $n=0$ is trivial. Now suppose that $n\geq 1$ and that the statement holds for $n-1$.\newline

Let $x\in\surf(\es)$, and let $d\in D$ be such that $x\in S_d$.
By our induction hypothesis, $\{\pi(x)\}$ is a connected component of $\pi(S_d)$.
Thus $\{x\}$ is a connected component of $S_d$.
So $\{x\}$ is a connected component of $\surf(\es)$ as well and is clopen in $\surf(\es)$ by \cref{lem:CC0-grid}.
Therefore, $\surf(\es)$ is discrete.
\end{proof}

\begin{lem}\label{lem:proj-grid}\label{cor:grid-dim}
Let $w \in \{0,1\}^n$, let $\mathcal{S}=(S_d)_{d\in D}$ be a $w$-grid. Then 
\begin{enumerate}
    \item $\surf(\es)$ is locally closed, and hence $\DSig$,
    \item $\pi_w|_{\surf(\mathcal{S})}$ is a local homeomorphism and $\pi_w(\surf(\mathcal{S}))$ is open, and
    \item   $\dim(\surf(\mathcal{S})) = w_1+\dots + w_n$.
\end{enumerate}
\end{lem}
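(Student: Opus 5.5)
The plan is to reduce all three statements to the corresponding facts for stacks, \cref{cor:dim-stack}, using the clopen structure of the surface from \cref{lem:CC0-grid}. Set $A:=\surf(\es)$.

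For (1), take $a\in A$ and choose $d\in D$ with $a\in S_d$. By \cref{lem:CC0-grid}(3), $S_d$ is clopen in $A$. Hence there is an open set $U\subseteq\RR^n$ with $A\cap U=S_d\cap U$. By \cref{cor:dim-stack}(1), $S_d$ is locally closed, so there is an open $V\ni a$ with $S_d\cap V$ closed in $V$. Then
\[
A\cap(U\cap V)=S_d\cap(U\cap V)
\]
is closed in $U\cap V$. So $A$ is locally closed, and \cref{fact:D-Sigma}(4) shows it is $\DSig$.

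For (2), argue in the same way. Given $a\in A$, pick $d$ with $a\in S_d$ and an open $U$ with $A\cap U=S_d\cap U$. By \cref{cor:dim-stack}(2) there is an open $W\ni a$ such that $\pi_w|_{S_d\cap W}$ is a homeomorphism onto an open set. Replacing $W$ by $W\cap U$ keeps this property, since an open subset of the domain of a homeomorphism onto an open set has open image. Hence $\pi_w|_A$ is a local homeomorphism. Moreover $\pi_w(A)=\bigcup_{d}\pi_w(S_d)$ is a union of open sets, so it is open.

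For (3), apply \cref{lem:dim} to the definable family $(S_d)_{d\in D}$. This gives
\[
\dim A=\max_{d\in D}\dim S_d=w_1+\dots+w_n,
\]
where the last equality is \cref{cor:dim-stack}(3). Alternatively, (2) gives it directly: $\pi_w(A)$ is open and nonempty in $\RR^{\sum w_i}$, which yields the lower bound, and the upper bound comes from \cref{lem:dim}.

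The only delicate point is the clopenness of $S_d$ in the surface. This is exactly \cref{lem:CC0-grid}(3), so I expect no real obstacle. The remaining care is to keep track of the fact that the neighborhoods from the stack and from clopenness can be intersected.
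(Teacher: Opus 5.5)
Your proposal is correct and follows essentially the paper's argument. Both localize using the clopen structure from \cref{lem:CC0-grid}, reduce to the stack and cell facts, and get (3) from \cref{lem:dim} together with \cref{cor:dim-stack}(3); the only cosmetic difference is that in (1) you use the clopen stack $S_d$ and \cref{cor:dim-stack}(1), while the paper uses the connected component of $a$ (a clopen cell) and \cref{lem:celldsig}(2).
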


\begin{proof}
Set $A:= \surf(\mathcal{S})$.    
For (1), it is enough to show that $A$ is locally closed. Let $a \in A$ and let $C$ be the connected component of $A$ containing $a$. By \cref{lem:fin-CC} and \cref{lem:CC0-grid}, we know that $C$ is a $w$-cell and clopen in $A$. Let $U\subseteq \RR^n$ be open such that $C = A \cap U$. By \cref{lem:celldsig}(2), there is an open neighborhood $V$ of $x$ such that $C\cap V$ is closed in $V$. Thus $A\cap (U\cap V) = C\cap (U \cap V)$ is closed in $U\cap V$.\newline

For (2), let $a\in A$ and let $C$ be the connected component of $A$ containing $a$.
    By Lemma \ref{lem:CC0-grid} we know that $C$ is clopen in $A$ and there is $d\in D$ such that $C$ is a connected component of $S_d$. Let $U\subseteq \RR^n$ be an open ball containing $a$ such that $A\cap U\subseteq C\cap U$. Since $C\subseteq S_d$,
    \[
    S_d \cap U = A \cap U = C\cap U.
    \]
    By Lemma \ref{lem:proj-stack} we get that $\pi_w|_{S_d\cap U}$ is a local homeomorphism and $\pi_w(S_d)$ is open. Therefore $\pi_w|_A$ is a local homeomorphism.
    Since
    \[
    \pi_w(A) = \bigcup_{d \in D} \pi_w(S_d), 
    \]
    we conclude that $\pi_w(A)$ is open.\newline

By \cref{lem:dim}
    \[\dim(A)=\dim\left(\bigcup_{d\in D} S_d\right) = \max_{d\in D}\dim(S_d)\]
    and $\dim(S_d)=w_1+\cdots+w_n$ by \cref{cor:dim-stack}. Thus (3) holds.
\end{proof}

As in the cases of cells and stacks, we have established basic properties of grids and surfaces of grids without invoking noiseless. Only \cref{cor:grid-dim}(3) used the assumption that $\R$ does not define $\ZZ$. Analogous to the previous section, the first real use of noiseless is the following lemma.

    \begin{lem}[Noiselessness for grids] \label{lem:BC-grids}\label{cor:gridnowheredense}\label{cor:boundary}
Let $\mathcal{S}=(S_d)_{d\in D}$ be a grid, and let $Z\subseteq \surf(\mathcal{S})$ be definable. Then 
    \begin{enumerate}
        \item $Z$ either has interior in $\surf(\mathcal{S})$ or is nowhere dense in $\surf(\mathcal{S})$,
        \item if $Z$ is nowhere dense in $\surf(\mathcal{S})$, then $\dim Z < \dim \surf(\mathcal{S})$,
        \item the boundary of $Z$ in $\surf(\mathcal{S})$ is nowhere dense in $\surf(\mathcal{S})$.
    \end{enumerate}
\end{lem}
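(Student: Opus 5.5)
The plan is to reduce everything to the corresponding statements for stacks, \cref{cor:BC-stack}, using that each $S_d$ is clopen in $A:=\surf(\mathcal{S})$ by \cref{lem:CC0-grid}(3).

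For (1), suppose $Z$ is somewhere dense in $A$. Then there is a nonempty open $U\subseteq \RR^n$ with $U\cap A\neq\emptyset$ and $Z$ dense in $U\cap A$. Pick $a\in U\cap A$ and $d\in D$ with $a\in S_d$. Since $S_d$ is open in $A$, we may shrink $U$ so that $U\cap A\subseteq S_d$. Then $Z\cap S_d$ is dense in the nonempty open subset $U\cap S_d$ of $S_d$. The set $Z\cap S_d$ is definable, since $(S_d)_{d\in D}$ is a definable family. By \cref{cor:BC-stack}(1), $Z\cap S_d$ has interior in $S_d$, and because $S_d$ is open in $A$, $Z$ has interior in $A$.

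For (2), suppose $Z$ is nowhere dense in $A$. For each $d\in D$, the set $Z\cap S_d$ is nowhere dense in $S_d$: any open subset of $S_d$ is open in $A$, so density of $Z$ there would contradict nowhere density in $A$. By \cref{cor:BC-stack}(2) and \cref{cor:dim-stack}(3), $\dim(Z\cap S_d)<\dim S_d=w_1+\dots+w_n$. The family $(Z\cap S_d)_{d\in D}$ is definable and its union is $Z$. So \cref{lem:dim} gives $\dim Z=\max_d\dim(Z\cap S_d)<w_1+\dots+w_n$, and this equals $\dim A$ by \cref{cor:grid-dim}(3).

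For (3), write $\operatorname{bd}_A(Z)=\cl_A(Z)\setminus\operatorname{int}_A(Z)$, which is definable. Suppose it has interior in $A$, say it contains a nonempty relatively open $V\subseteq A$. Both $Z\cap V$ and $V\setminus Z$ are dense in $V$. Indeed, $V\subseteq \cl_A(Z)$, and no point of $V$ has a neighbourhood inside $Z$, so $V\setminus Z$ is dense in $V$. Apply (1) to $Z\cap V$: being somewhere dense, it has interior in $A$, so there is a nonempty open $W\subseteq V$ with $W\subseteq Z$. Then $W\subseteq \operatorname{int}_A(Z)$, contradicting $W\subseteq V\subseteq \operatorname{bd}_A(Z)$. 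So the boundary has empty interior in $A$, and by (1) it is nowhere dense in $A$.

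The main point needing care is in (1): the choice of $d$ with $S_d$ open in $A$ so that we can pass to a stack. That step rests on \cref{lem:CC0-grid}(3), so the rest of the argument is formal.
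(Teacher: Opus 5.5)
Your proof is correct and follows essentially the same route as the paper. Like the paper, you reduce (1) and (2) to the stack versions using that each $S_d$ is clopen in $A$ (\cref{lem:CC0-grid}) together with \cref{lem:dim}, and you obtain (3) from (1); your argument for (3) also spells out the step the paper only asserts, namely that $\operatorname{bd}_A(Z)$ has empty interior in $A$ because of (1). The one detail to state explicitly is that applying (1) to $Z\cap V$ requires $Z\cap V$ to be definable, which you can ensure by taking $V=B\cap A$ for an open box $B$, since parameters are allowed.
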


\begin{proof}
Set $A:=\surf(\mathcal{S})$. For (1), suppose that $Z$ is somewhere dense in $A$.
    By \cref{lem:CC0-grid}(2) we can assume that $Z$ is dense in some connected component $C$ of $A$.
    Then, by \cref{lem:CC0-grid}(1), there is $d\in D$ such that $C$ is a connected component of $S_d$.
    Thus $Z$ is somewhere dense in $S_d$, and has interior in $S_d$  by \cref{lem:BC-stacks}.
    Therefore $Z$ has interior in $A$ by \cref{lem:CC0-grid}(2).\newline

For (2), suppose that $Z$ is nowhere dense in $A$. By \cref{lem:CC0-grid}, we know that $Z\cap S_d$ is nowhere dense in $S_d$. Thus, by \cref{cor:stacknowheredense}
\[
\dim (Z \cap S_d) < \dim S_d \leq \dim A.
\] 
By \cref{lem:dim}
\[
\dim Z = \dim \bigcup_{d\in D} (Z\cap S_d) = \max_{d\in D} \dim (Z\cap S_d) < \dim A.
\]    

For (3), note that the boundary of $Z$ in $\surf(\mathcal{S})$ does not have interior in $\surf(\mathcal{S})$, and hence is nowhere dense in $\surf(\mathcal{S})$ by (1).
\end{proof}      

\begin{lem}[Almost continuity for grids]\label{cor:gridcont}
Let $w \in \{0,1\}^n$, let $\es=(S_d)_{d\in D}$ be a $w$-grid, and let $f: \surf(\es) \to \RR$ be definable. Then there is a definable subset $B\subseteq \surf(\es)$ such that $B$ is open and dense in $\surf(\es)$ and the restriction $f|_B$ is continuous.
\end{lem}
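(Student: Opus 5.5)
Mirroring the proof of \cref{cor:stackcont}, we reduce to the pieces $S_d$, using that by \cref{lem:CC0-grid} the surface $A:=\surf(\es)$ is the disjoint union of its connected components, each clopen in $A$, and each $S_d$ is clopen in $A$.

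The cleanest route is to work uniformly on each $S_d$. For every $d\in D$, apply \cref{cor:stackcont} to the $w$-stack $S_d$ and the restriction $f|_{S_d}$. This yields a definable set $B_d\subseteq S_d$, open and dense in $S_d$, with $f|_{B_d}$ continuous. The issue is that the family $(B_d)_{d\in D}$ need not be definable if the sets are chosen independently. To avoid this, we use a canonical choice instead. Let
\[
B:=\{x\in A \ : \ f|_A \text{ is continuous at } x \text{ relative to } A\}^{\circ_A},
\]
or more simply $B:=\operatorname{int}_A\{x\in A : f \text{ is continuous at } x\}$. This set is clearly definable, since continuity at a point is first-order expressible, and so is relative interior. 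Moreover, $f|_B$ is continuous, because $f$ is continuous at every point of $B$ and $B$ is open in $A$.

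It remains to show that $B$ is dense in $A$. Let $U\subseteq A$ be nonempty and open in $A$. Then $U\cap S_d\neq\emptyset$ for some $d$, and since $S_d$ is clopen in $A$ by \cref{lem:CC0-grid}(3), the set $U\cap S_d$ is open in both $A$ and $S_d$. By \cref{cor:stackcont}, $B_d$ is dense in $S_d$, so $U\cap S_d\cap B_d\neq\emptyset$. At any point $x$ of $B_d\cap U$, the set $B_d\cap U\cap S_d$ is an open neighbourhood of $x$ in $A$, because $B_d$ is open in $S_d$ and $S_d$ is open in $A$. The map $f$ restricted to this neighbourhood is continuous, hence $f$ is continuous at every point of that neighbourhood relative to $A$. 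Thus the neighbourhood lies in $B$, and so $B\cap U\neq\emptyset$.

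The only subtle step is ensuring definability, which is handled by the canonical definition of $B$ rather than by gluing the $B_d$. Openness of $S_d$ in $A$ then transfers continuity and density from the stacks to the surface. Alternatively, density could be obtained from \cref{lem:BC-grids}(1): the complement of $B$ is definable, and if it had interior in $A$ it would contain a nonempty open subset of some $S_d$, contradicting the density of $B_d$.
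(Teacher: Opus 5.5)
Your proof is correct and essentially the paper's: your set $\operatorname{int}_A\{x\in A : f \text{ is continuous at } x\}$ is exactly the paper's $B=\surf(\es)\setminus Z$ (where $Z$ is the closure in $\surf(\es)$ of the set of discontinuity points of $f$), and in both arguments density comes from localizing to a single $S_d$, which is open in the surface by \cref{lem:CC0-grid}, and applying \cref{cor:stackcont}. Your write-up is slightly more explicit than the paper's about why the points of $B_d\cap U$ actually lie in $B$; the alternative via \cref{lem:BC-grids}(1) is unnecessary, since the complement of $B$ is closed in $A$ and a closed set with empty interior automatically has dense complement.
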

\begin{proof}
Let $Y\subseteq \surf(\es)$ be the set of points at which $f$ is not continuous. Let $Z$ be the topological closure of $Y$ in $\surf(\es)$. Set $B:= \surf(\es)\setminus Z$. By definition, $B$ is open and $f|_B$ is continuous. It is left to show that $B$ is dense in $\surf(\es)$. Let $U\subseteq \RR^n$ be an open box such that $\surf(\es)\cap U \neq\emptyset$. By \cref{lem:CC0-grid}, we may shrink $U$ such that there is $d\in D$ with 
\begin{equation}\label{eq:gridcont}
U\cap \surf(\es) = U \cap S_d.    
\end{equation}
By \cref{cor:stackcont} there is $B'\subseteq S_d$ such that $B'$ is open and dense in $S_d$ and the restriction $f|_{B'}$ is continuous. From \eqref{eq:gridcont}, it follows that there is $x \in U \cap \surf(\es)$ at which $f$ is continuous.
\end{proof}

\begin{lem}\label{lem:grid-inter}
Let $\mathcal{S}$ be a grid, and let $Z_1,\dots,Z_m\subseteq \surf(\mathcal{S})$ be definable such that $\bigcup_{i=1}^m Z_i$ has interior in $\surf(\es)$.
    Then there is $i\in\{1,\dots,m\}$ such that $Z_i$ has interior in $\surf(\es)$.
\end{lem}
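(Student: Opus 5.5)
We argue by contraposition, reducing to the dichotomy in \cref{lem:BC-grids}(1). Set $A:=\surf(\es)$ and suppose that no $Z_i$ has interior in $A$. By \cref{lem:BC-grids}(1), each $Z_i$ is then nowhere dense in $A$. It remains to show that a finite union of nowhere dense subsets of $A$ is nowhere dense in $A$. This would contradict the assumption that $\bigcup_{i=1}^m Z_i$ has interior in $A$.

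To see that the finite union is nowhere dense, we use the standard topological argument, which is valid in any topological space. Let $V$ be a nonempty open subset of $A$. Since $Z_1$ is nowhere dense in $A$, the set $V\setminus \cl_A(Z_1)$ is a nonempty open subset of $A$. Since $Z_2$ is nowhere dense in $A$, we can shrink once more and obtain a nonempty open subset of $V$ disjoint from $\cl_A(Z_1)\cup \cl_A(Z_2)$. After $m$ such steps we have a nonempty open $V_m\subseteq V$ with $V_m\cap \bigcup_{i=1}^m Z_i=\emptyset$. Hence $\bigcup_i Z_i$ is not dense in any nonempty open subset of $A$, so it has empty interior in $A$. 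This contradiction shows that some $Z_i$ has interior in $A$.

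Alternatively, one could induct on $m$, treating two sets at a time. If $Z_1\cup Z_2$ contains a nonempty relatively open $V\subseteq A$ and $Z_1$ has no interior, then $Z_1\cap V$ is nowhere dense by \cref{lem:BC-grids}(1). Then $V\setminus \cl_A(Z_1)$ is nonempty, open in $A$, and contained in $Z_2$.

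There is no real obstacle here. The only point needing care is that \cref{lem:BC-grids}(1) requires definability, which holds because each $Z_i$ is assumed definable. All further sets that appear are handled purely topologically, so they need not be definable.
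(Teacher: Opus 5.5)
Your proposal is correct and matches the paper's proof: suppose no $Z_i$ has interior, use \cref{lem:BC-grids}(1) to conclude that each $Z_i$ is nowhere dense in $\surf(\es)$, and reach a contradiction because a finite union of nowhere dense sets is nowhere dense. The only difference is that you write out the standard topological fact that the paper uses without proof, and your write-up of it is correct.
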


\begin{proof}
    Towards a contradiction, suppose that $Z_i$ does not have interior in $\surf(\es)$ for $i=1,\dots,m$. By \cref{lem:BC-grids}, each $Z_i$ is nowhere dense in $\surf(\es)$.
    Then $\bigcup_{i=1}^k Z_i$ is nowhere dense in $\surf(\es)$, contradicting our assumption.
\end{proof}

\begin{lem}[Fiber lemma for grids]\label{cor:FL-grids}
    Let $A\subseteq\RR^{n+1}$ be such that $\pi(A)$ is a surface of a grid. 
    Then  $\{x\in \pi(A): \cl(A_x)\neq \cl(A)_x \}$ is nowhere dense in $\pi(A)$.
\end{lem}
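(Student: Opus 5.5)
Throughout, set $P:=\pi(A)=\surf(\es)$ for a grid $\es=(S_d)_{d\in D}$, and
\[
B:=\{x\in P : \cl(A_x)\neq \cl(A)_x\}.
\]
The plan is to reduce to the fiber lemma for stacks, \cref{lem:BC-stacks}, by working locally: near any point, $P$ coincides with a single stack $S_d$. Since $B$ is definable, \cref{lem:BC-grids}(1) tells us $B$ either has interior in $P$ or is nowhere dense in $P$. So it suffices to show that $B$ has no interior in $P$.

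Suppose, towards a contradiction, that $B$ contains a nonempty set $V$ open in $P$. Pick $x\in V$. By \cref{lem:CC0-grid}, the connected component $C$ of $P$ containing $x$ is clopen in $P$. It is also a connected component of some $S_d$, hence clopen in $S_d$ by \cref{lem:fin-CC}, and a cell. Replacing $V$ by $V\cap C$, which is still nonempty and open in $P$, we may assume $V\subseteq C$.

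Now set $A':=A\cap (C\times\RR)$. Then $\pi(A')=C$ is a cell, hence a stack, and \cref{lem:BC-stacks} (or \cref{lem:fl-cell}) shows that
\[
\{y\in C : \cl(A'_y)\neq \cl(A')_y\}
\]
is nowhere dense in $C$. For $y\in C$ we have $A'_y=A_y$. It remains to compare $\cl(A')_y$ with $\cl(A)_y$, and this is the one point needing care. \cref{fact:opensubset} is stated for $C$ open in $\RR^n$, but the same sequence argument works when $C$ is merely open in $\pi(A)$. Indeed, if $(x_i,y_i)\in A$ converges to $(y,z)$ with $y\in C$, then $x_i\in\pi(A)$ and $x_i\to y$. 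Since $C$ is open in $\pi(A)$, eventually $x_i\in C$, so $(x_i,y_i)\in A'$. Hence $\cl(A)_y=\cl(A')_y$ for $y\in C$.

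It follows that $B\cap C$ is nowhere dense in $C$. Since $C$ is open in $P$, the set $V\subseteq B\cap C$, which is nonempty and open in $C$, gives a contradiction. Therefore $B$ has no interior in $P$, and so $B$ is nowhere dense in $P$.

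The only step needing real attention is this relative version of \cref{fact:opensubset}; the rest is bookkeeping with \cref{lem:CC0-grid}. Alternatively, one can avoid the dichotomy argument altogether: for any open box $U$ meeting $P$, shrink $U$ so that $U\cap P\subseteq C$ and find a point of $U\cap C$ outside $B$ directly.
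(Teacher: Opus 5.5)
Your proof is correct and follows the paper's argument essentially step for step. Both use the dichotomy from \cref{lem:BC-grids}(1), localize to a connected component $C$ of $\pi(A)$ that is clopen and a cell (via \cref{lem:CC0-grid} and \cref{lem:fin-CC}), apply the relative-to-$\pi(A)$ version of \cref{fact:opensubset} to identify the fibers, and derive the contradiction from \cref{lem:fl-cell}.

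One caveat on your closing aside: finding merely a point of $U\cap C$ outside $B$ shows only that $B$ has empty interior in $\pi(A)$. To bypass the dichotomy you would instead need a nonempty open subset of $U\cap C$ disjoint from $B$, which the nowhere density of $B\cap C$ in $C$ does provide.
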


\begin{proof}
    Let $\mathcal{S}=(S_d)_{d\in D}$ be a grid such that $\surf(\mathcal{S})=\pi(A)$.
    Set 
    \[
    B:=\{x\in \pi(A): \cl(A_x)\neq \cl(A)_x \}.
    \]
    By \cref{lem:BC-grids} either $B$ has interior or is nowhere dense in $\pi(A)$.
    Suppose that $B$ has interior in $\pi(A)$.
    Then, by \cref{lem:CC0-grid}(2), there is a connected component $C$ of $\pi(A)$ such that $B$ has interior in $C$. Since $C$ is clopen,
    we have that for each $x\in C$ 
    \[
    \cl(A)_x  = \cl\big((C \times \RR)\cap A)\big)_x \hbox{ and } \cl(A_x)  = \cl\big((C \times \RR)\cap A)_x\big).
    \]
 Therefore
    \begin{align*}
     B\cap C &= \{x\in C : \cl(A_x)\neq \cl(A)_x \} \\
     &=   \{x\in C : \cl\big((C \times \RR)\cap A)_x\big)\neq \cl\big((C \times \RR)\cap A)\big)_x \}.
    \end{align*}
    By \cref{lem:CC0-grid}(1) there is $d\in D$ such that $C$ is a connected component of $S_d$. Thus $C$ is a cell by \cref{lem:fin-CC}.
By Lemma \ref{lem:fl-cell} 
    \[  \{x\in C : \cl\big((C \times \RR)\cap A)_x\big)\neq \cl\big((C \times \RR)\cap A)\big)_x \} \]
    is nowhere dense in $C$. This contradicts that $B$ has interior in $C$.
\end{proof}

\subsection{Constructing grids} Moving forward as in the previous section, we describe several different ways
of constructing grids. Of course, we use these constructions in the proof of the grid decomposition theorem later.

\begin{lem}\label{lem:grid-above}
Let $w \in \{0,1\}^n$, and let $\es=(S_d)_{d\in D}$ be a $w$-grid in $\RR^n$. 
    Then $(S_d \times \RR)_{d\in D}$ is a $(w,1)$-grid in $\RR^{n+1}$.
\end{lem}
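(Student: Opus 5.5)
We verify the three clauses of \cref{defn:grid} for $\mathcal{T}:=(T_d)_{d\in D}$ with $T_d:=S_d\times\RR$, after first checking that each $T_d$ is a $(w,1)$-stack and that $\mathcal{T}$ is a definable family. Definability is immediate, since $(S_d)_{d\in D}$ is a definable family.

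To see that $T_d$ is a $(w,1)$-stack, observe that $\pi(T_d)=S_d$, which is a $w$-stack by hypothesis. Let $C$ be a connected component of $S_d$, and take $m_C=1$, $f_{T_d,1}=-\infty$ and $g_{T_d,1}=+\infty$, both regarded as elements of $\mathcal{C}_\infty(C)$. Then $f_{T_d,1}<g_{T_d,1}$, and
\[
T_d\cap[C\times\RR]=C\times\RR=(-\infty,+\infty).
\]
Condition (3) of \cref{defn:stacks} therefore holds, with the adjacency requirement vacuous because $m_C=1$. As a consequence, by \cref{lem:fin-CC}, the connected components of $T_d$ are exactly the sets $C\times\RR$ with $C$ a connected component of $S_d$.

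For clause (1) of \cref{defn:grid}, we have $(\pi(T_d))_{d\in D}=(S_d)_{d\in D}=\mathcal{S}$, which is a $(w,D)$-grid by hypothesis. Clause (3) is vacuous, since the last coordinate of $(w,1)$ is $1$. For clause (2), let $d>e$ in $D$ and let $K$ be a connected component of $T_d$. By the description above, $K=C\times\RR$ for some connected component $C$ of $S_d$. Since $\mathcal{S}$ is a grid, $S_d$ is a substack of $S_e$, so $C$ is a connected component of $S_e$. Hence $C\times\RR$ is a connected component of $T_e$, again by the description of components. Thus $T_d$ is a substack of $T_e$.

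The only step requiring any care is identifying the connected components of $T_d$ as the sets $C\times\RR$. This follows directly from \cref{lem:fin-CC}(2)(c)(ii) applied with the single witnessing pair $(-\infty,+\infty)$, or alternatively from elementary topology: $C\times\RR$ is connected, and it is clopen in $S_d\times\RR$ because $C$ is clopen in $S_d$. No noiselessness assumption is used anywhere.
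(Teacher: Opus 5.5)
Your proof is correct and follows essentially the same route as the paper's. Both arguments observe that $\pi(T_d)=S_d$, check that $T_d$ is a $(w,1)$-stack, and transfer the substack relation through the description of the connected components of $T_d$ as the sets $C\times\RR$. The difference is only in detail: you give the explicit witnesses $-\infty<+\infty$ and justify the component description, where the paper just calls these steps ``easy to see''.
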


\begin{proof}
    For each $d\in D$, define $S_d':= S_d \times \RR$, and set $\es' := (S_d')_{d\in D}$. Clearly, $\pi(S_d')_{d\in D}$ is
 just $(S_d)_{d\in D}$, and hence a $w$-grid. It is also easy to see that $S'_d$ is a $(w,1)$-stack for every $d \in D$.
    Let $d,e\in D$ be such that $d>e$.
    Then every  connected component of $S_d'$ is of the form $C\times \RR$, where $C$ is a connected component of $S_d$. Since $S_d$ is a substack of $S_e$, the set $C$ is a connected component of $S_e$ as well.
    Thus $C\times \RR$ is a connected component of $S_e'$. Hence $S_d'$ is a substack of $S'_e$. Therefore $(S_d')_{d\in D}$ is a $(w,1)$-grid.
\end{proof}

The next lemma is indeed another key step in the proof the grid decomposition theorem, roughly replacing the argument from \cite[(3.2.15)]{tametop} in the o-minimal case.

\begin{lem}\label{lem:grid-complement}\label{cor:grid-complement}
Let $w\in \{0,1\}^{n}$, and let $A\subseteq \RR^{n+1}$ be a vertically bounded definable set such that $\pi(A)$  is the surface of a $w$-grid and $A_x$ is open for all $x\in \pi(A)$, and let $\es_1=(S_{1,d})_{d \in D},\dots,\es_m=(S_{m,d})_{d \in D}$ be $(w,0)$-grids over $\pi(A)$ such that their surfaces are disjoint and 
    \begin{align*}
    B_1&:=\{ (x,y)  \in C \times \RR \ : \ y \in \operatorname{bd}(A_x)\},\\
    B_2&:=\{ (x,y)  \in C \times \RR \ : \ y \in \Midp(A_x)\}
    \end{align*}
    can be written as unions of these surfaces. Then $A$ is the surface of a $(w,1)$-grid.
\end{lem}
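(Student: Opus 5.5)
The plan is to build the $(w,1)$-grid directly from the given $(w,0)$-grids $\es_1,\dots,\es_m$. Here the set $C$ in the definitions of $B_1,B_2$ is read as $\pi(A)$. Each $A_x$ is open and bounded, so it is a disjoint union of bounded open intervals. Every such interval has its two endpoints in $\operatorname{bd}(A_x)$ and its midpoint in $\Midp(A_x)$.

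\textbf{Step 1: a common stack structure at level $d$.} For $d\in D$, let $T_d:=\bigcup_{i=1}^m S_{i,d}$. The surfaces are disjoint and each is étale, so the union $\bigcup_i\surf(\es_i)$ is étale. The key claim is that $T_d$ is a $(w,0)$-stack over the stack $P_d:=\bigcup_i \pi(S_{i,d})$, after possibly passing to a common refinement of the base grids. By \cref{lem:CC0-grid} the connected components of $\surf(\pi(\es_i))$ are cells that are clopen in $\pi(A)$. So over a component $C'$ of $\pi(A)$, each $S_{i,d}\cap(C'\times\RR)$ is a finite union of graphs of continuous functions on $C'$. These graphs are pairwise disjoint because the surfaces are disjoint, and on the connected set $C'$ they can therefore be linearly ordered. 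A small point: different $\es_i$ may have base grids whose level-$d$ components differ. To handle this I would use that all bases are grids with the same surface $\pi(A)$, and work componentwise over the components of $\pi(A)$ using \cref{lem:CC0-grid}(1).

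\textbf{Step 2: define the $(w,1)$-stacks.} Over each component $C'$ of $\pi(T_d)$, let $h_1<\dots<h_k$ be the functions whose graphs make up $T_d\cap(C'\times\RR)$. Define $S_d$ as the union of the sets $(h_j,h_{j+1})$ such that $\Gamma(h_j)\subseteq B_1$, $\Gamma(h_{j+1})\subseteq B_1$, and some graph $\Gamma(h_l)$ with $j<l<j+1$ in the refined ordering is contained in $B_2$. A cleaner equivalent choice is this: take $S_d$ to consist of those bands $(h_j,h_{j'})$ between consecutive $B_1$-graphs whose gap contains exactly one $B_2$-graph and meets $A$.

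Each $B_1$, $B_2$ is a union of some of the surfaces, so membership in $B_1$ or $B_2$ is constant on each graph. Hence whether $(h_j(x),h_{j'}(x))\subseteq A_x$ is also constant in $x\in C'$. Here I use the fact that $h_j(x),h_{j'}(x)\in\operatorname{bd}(A_x)$ are consecutive boundary points among the listed ones, and that the midpoint test detects a genuine component of $A_x$. This makes $S_d$ a $(w,1)$-stack with $S_d\subseteq A$. The witnesses satisfy $g_i\le f_{i+1}$, as \cref{defn:stacks} requires.

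\textbf{Step 3: verify the grid axioms and the surface.} Condition (1) of \cref{defn:grid} holds because $\pi(S_d)$ is a substack of the base grid; I would apply \cref{lem:substack-CC} to show it is a stack forming a grid. Condition (2) holds because $S_{i,d}$ is a substack of $S_{i,e}$ for $d>e$. The graphs bounding a band at level $d$ persist at level $e$, and no new graph can appear strictly inside a component of $A_x$ bounded by boundary points, since $B_1\cap$(band)$=\emptyset$ and $B_2$ meets the band in a single point. So the band is still a component at level $e$. For the surface, take $(x,y)\in A$. Its component $(a,b)$ in $A_x$ has $a,b\in\operatorname{bd}(A_x)\subseteq B_1$ and midpoint in $B_2$. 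These three points lie in finitely many surfaces, hence in $T_d$ for small enough $d$ because the families are increasing. So $(x,y)\in S_d$, and therefore $\surf=A$.

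I expect Step 1 to be the main obstacle. The difficulty is showing that the level-$d$ pieces of different grids sit over a common base stack and interleave coherently, and in particular that the band construction stays constant along connected components of the base. This is where étaleness and the disjointness of the surfaces must be used carefully.
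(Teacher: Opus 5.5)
\textbf{There is a genuine gap in Step~2: your selection rule does not pick out components of $A_x$.} At level $d$ only finitely many of the boundary and midpoint graphs are visible. So two $B_1$-graphs that are consecutive among the level-$d$ graphs need not bound a component of $A_x$, and counting level-$d$ $B_2$-graphs in the gap cannot detect this.

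Here is a counterexample satisfying all hypotheses. Let $n=1$, let $\pi(A)=(0,1)$ be the surface of the constant $(1)$-grid, and let $A=(0,1)\times((0,1)\cup(2,3))$. For $d>\tfrac12$ put $S_{1,d}=(0,1)\times\{0,3\}$ and $S_{2,d}=(0,1)\times\{\tfrac12\}$. For $d\le\tfrac12$ put $S_{1,d}=(0,1)\times\{0,1,2,3\}$ and $S_{2,d}=(0,1)\times\{\tfrac12,\tfrac52\}$. Here $B_1$ and $B_2$ are the two surfaces.

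At $d=1$ the gap between the consecutive $B_1$-graphs $0$ and $3$ contains exactly one $B_2$-graph and meets $A$. So your $S_1$ contains $(0,1)\times(0,3)\not\subseteq A$. Hence $S_d\subseteq A$ fails, the surface is too large, and $S_1$ is not a substack of $S_e$ for $e\le\tfrac12$.

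Sharpening the test does not help. For instance, requiring the visible $B_2$-graph to be the midpoint of the band is defeated by $A_x=(0,1)\cup(1.5,2.5)\cup(3,4)$ with only $0,2,4$ visible at level $d$. Your first formulation, with $j<l<j+1$, is vacuous. The same flaw undermines two further steps:
\begin{itemize}
\item your claim that being a component is constant in $x\in C'$, which only uses consecutiveness among level-$d$ graphs;
\item your Step~3 substack argument, which presupposes that every selected band is already a component of $A_x$.
\end{itemize}

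\textbf{The missing idea is to select bands by the actual condition and to prove constancy using graphs from \emph{all} levels.} The paper puts into $S^*_d$ exactly those $(x,y)\in A$ with $x$ in the level-$d$ base stack such that both endpoints $\lambda(x,y),\mu(x,y)$ of the component of $A_x$ containing $y$ lie in $\bigcup_i S_{i,d}$. Equivalently, $S^*_d$ consists of the bands between consecutive level-$d$ $B_1$-graphs that are contained in $A$.

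Constancy over a component $C$ of the base then goes as follows. Suppose $(f(x_0),g(x_0))$ is a component of $A_{x_0}$ but $(f(x),g(x))$ is not a component of $A_x$.
\begin{itemize}
\item Either the band at $x$ contains a point of $\operatorname{bd}(A_x)$, or it misses $A_x$; in the latter case take the midpoint of $(f(x_0),g(x_0))$.
\item That point lies on a $B_1$- resp.\ $B_2$-graph from some level $e\le d$.
\item Since components of the level-$d$ base remain components at level $e$, this graph is a continuous function on all of $C$. It is disjoint from $\Gamma(f)$ and $\Gamma(g)$, hence strictly between $f$ and $g$ on $C$.
\item Evaluating it at $x_0$ resp.\ $x$ gives a contradiction.
\end{itemize}

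This cross-level transport is the heart of the proof, not the alignment of bases you flag in Step~1. The paper simply works over a component $C$ of the base stack and takes each $S_{i,d}\cap(C\times\RR)$ to be a finite union of graphs of continuous functions on $C$.

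Your Step~1 claim that the union of the surfaces is étale is false in general. For $A_x=\bigcup_k(\tfrac1{k+1},\tfrac1k)$, both $(0,1)\times\{0\}$ and $(0,1)\times\{1/k : k\ge1\}$ lie in $B_1$, and their union is not étale. You never need this claim, though; only the ordering of disjoint graphs over a connected base is used. With the corrected selection rule, your Steps~1 and~3 go through essentially as in the paper.
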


\begin{proof}
Let $\es = (S_d)_{d \in D}$ be the $w$-grid such that $\pi(A) = \surf(\es)$.
Let 
    \begin{align*}
        \lambda : A &\to \RR\\
        (x,y) &\mapsto \max \Big( (\RR\setminus A_x) \cap (-\infty,y) \Big)
    \end{align*}
    and let
    \begin{align*}
        \mu : A &\to \RR\\
        (x,y) &\mapsto \min \Big( (\RR\setminus A_x) \cap (y,\infty)\Big).
    \end{align*}
Since $(x,y) \in A$, then
\begin{equation}\label{eq:lemgrco3}
\big(\lambda(x,y),\mu(x,y)\big) \subseteq A_x.
\end{equation}
Set
\[
S^*_{d} := \{ (x,y) \in (S_d \times \RR) \cap A \ : \ \bigvee_{i,j \in \{1,\dots,m\}} \big( (x,\lambda(x,y)) \in S_{i,d} \wedge (x,\mu(x,y)) \in S_{j,d}\big)\}.
\]
Since $A_x$ is open for every $x \in \pi(A)$, it is easy to check that
\[
A = \bigcup_{d\in D} S^*_d.
\]
Thus, it is left to show that $(S^*_d)_{d\in D}$ is a $(w,1)$-grid.\\

Let $C\subseteq \RR^{n}$ be a connected component of $S_d$. For $i\in \{1,\dots,m\}$ and $d\in D$, let $f_{i,d,1},\dots,f_{i,d,m_{i,d}} \in \mathcal{C}(C)$ be such that
\begin{enumerate}
    \item $f_{i,d,1}<\cdots<f_{i,d,m_{i,d}}$,
    \item $S_{i,d}\cap (C\times \RR) = \bigcup_{j=1}^{m_{i,d}} \Gamma(f_{i,d,j})$.
\end{enumerate}
Since the surfaces of $\es_1,\dots,\es_m$ are disjoint, 
\[
\Gamma(f_{i,d,k}) \cap  \Gamma(f_{j,d,\ell}) = \emptyset,
\]
for all $i,j \in \{1,\dots,m\}$ and $k,\ell\in \NN$ whenever $i\neq j$ or $k\neq \ell$. Since $C$ is connected, we get in such a situation that
\begin{equation}\label{eq:lemgrco1}
\hbox{either } f_{i,d,k} < f_{j,d,\ell} \hbox{ or } f_{j,d,\ell} < f_{i,d,k}.
\end{equation}

Let $x_0\in C$. Since $S_{1,d},\dots,S_{m,d}$ are $(w,0)$-stacks, there are $y_1,\dots,y_{2p} \in \RR$ such that
\begin{equation}\label{eq:lemgrco2}
y_1<\dots < y_{2p}, \text{ and } (S_d^*)_{x_0} = \bigcup_{i=0}^{p-1} \big(y_{2i+1},y_{2i+2}\big).
\end{equation}
Let $k_1,\dots,k_{2p},\ell_1,\dots,\ell_{2p} \in \NN$ be such that for $i=1,\dots,2p$
\[
y_i = f_{k_i,d,\ell_i}(x).
\]
By \eqref{eq:lemgrco1},
\[
f_{k_1,d,\ell_1} < \dots < f_{k_{2p},d,\ell_{2p}}.
\]
To show that $S_d^*$ is a $(w,1)$-stack, it is enough to show that
\[
S_d^*\cap (C\times \RR) = \bigcup_{i=0}^{p-1} \big(f_{k_{2i+1},d,\ell_{2i+1}}, f_{k_{2i+2},d,\ell_{2i+2}}).
\]

Let $i\in \{0,\dots,p-1\}$. First we show that 
\[
\big(f_{k_{2i+1},d,\ell_{2i+1}},f_{k_{2i+2},d,\ell_{2i+2}}\big) \subseteq S_d^*\cap (C\times \RR) .
\]
Let $(x,y) \in  \big(f_{k_{2i+1},d,\ell_{2i+1}},f_{k_{2i+2},d,\ell_{2i+2}}\big)$. It is enough to show that
\[
\lambda(x,y) = f_{k_{2i+1},d,\ell_{2i+1}}(x) \hbox{ and } \mu(x,y) = f_{k_{2i+2},d,\ell_{2i+2}}(x).
\]
Suppose not. Then there is $u \in \{1,\dots,m\}$ and $v \in \{1,\dots,m_{u,e}\}$ such that $\surf(\es_u)\subseteq B_1$ and
\[
f_{k_{2i+1},d,\ell_{2i+1}}(x) < f_{u,d,v}(x) < f_{k_{2i+2},d,\ell_{2i+2}}(x).
\]
By \eqref{eq:lemgrco1}, 
\[
f_{k_{2i+1},d,\ell_{2i+1}}< f_{u,d,v} <f_{k_{2i+2},d,\ell_{2i+2}}.
\]
But then $f_{u,d,v}(x_0) \in (y_{2i+1},y_{2i+2})$, contradicting \eqref{eq:lemgrco2}.\\

Let $(x,y) \in S_d^*\cap (C\times \RR) $. Let $u_1,u_2 \in \{1,\dots,m\}$ and let $v_1,v_2 \in \NN$ be such that
\[
\lambda(x,y) = f_{u_1,d,v_1}(x) \hbox{ and } \mu(x,y) = f_{u_2,d,v_2}(x).
\]
It suffices to show that
\[
\big(f_{u_1,d,v_1}(x_0),f_{u_2,d,v_2}(x_0)\big)\subseteq A_{x_0}.
\]
Suppose not. Since $f_{u_1,d,v_1}(x)$ and $f_{u_2,d,v_2}(x)$ are boundary points of $A_x$, we get from \eqref{eq:lemgrco3} that
\[
\frac{f_{u_1,d,v_1}(x)+f_{u_2,d,v_2}(x)}{2}\in \Midp(A_x).
\]
Thus there is $u_3 \in \{1,\dots,m\}$ and $v_3 \in \NN$ such that
\[
f_{u_3,d,v_3}(x) = \frac{f_{u_1,d,v_1}(x)+f_{u_2,d,v_2}(x)}{2}.
\]
By \eqref{eq:lemgrco1},
\[
f_{u_1,d,v_1}< f_{u_3,d,v_3} < f_{u_2,d,v_2}.
\]
Hence
\[
\big(f_{u_1,d,v_1}(x_0),f_{u_2,d,v_2}(x_0)\big)\cap A_{x_0} \neq \emptyset.
\]
Thus 
\[
\big(f_{u_1,d,v_1}(x_0),f_{u_2,d,v_2}(x_0)\big)\cap \operatorname{bd}(A_{x_0}) \neq \emptyset.
\]
So there is some $u_4 \in \{1,\dots,m\}$ and $v_4 \in \NN$ such that $\Gamma(f_{u_4,d,v_4}) \subseteq B_1$, and 
\[
f_{u_1,d,v_1}(x_0)< f_{u_4,d,v_4}(x_0) < f_{u_2,d,v_2}(x_0).
\]
By \eqref{eq:lemgrco1},
\[
f_{u_1,d,v_1}(x)< f_{u_4,d,v_4}(x) < f_{u_2,d,v_2}(x).
\]
Since $f_{u_4,d,v_4}(x) \in (B_1)_x$, this contradicts \eqref{eq:lemgrco3}.\\

We now show that $(S_d^*)_{d\in D}$ is a $(w,1)$-grid. We have already seen that $\pi(S_d^*)=\pi(A)$ and thus that $(\pi(S_d^*))_{d\in D}$ is a grid. Let $d,e\in D$ be such that $d>e$. It is left to show that $S_d^*$ is a substack of $S_e^*.$ For this, let $C'$ be a connected component of $S_d^*$.
We know that there are $i,j \in \{1,\dots,m\}$ and $k,\ell \in \NN$ such that
\[
C'=\big(f_{i,d,k},f_{j,d,\ell}\big)
\]
We claim that $C'$ is a connected component of $S_e^*$. We first show that $C'\subseteq S_e^*$.
Let $(x,y)\in C'$.
Then 
\[
\lambda(x,y)=f_{i,d,k}(x) \hbox{ and  } \mu(x,y)=f_{j,d,\ell}(x).
\]
Since $\Gamma(f_{i,d,k})$ is a connected component of $S_{i,d}$, it is also a connected component of $S_{i,e}$. Therefore $f_{i,d,k}=f_{i,e,k'}$ for some $k'\in \NN$, and $f_{i,e,k'}(x)=\lambda(x,y)$. Similarly, we find $\ell'\in \NN$ such that $f_{j,d,\ell}=f_{j,e,\ell'}$ and $f_{j,e,\ell'}(x)=\mu(x,y)$. Thus $(x,y)\in S_e^*$.\\

Towards a contradiction, suppose that $C'$ is not a connected component of $S_e^*$.
Then there are $i',j'\in \{1,\dots,m\}$ and $k',\ell'\in \NN$ such that 
\[
\big(f_{i,d,k},f_{j,d,\ell}\big) = C'\subsetneq \big(f_{i',e,k'},f_{j',e,\ell'}\big).
\]
Thus either $f_{i',e,k'}<f_{i,d,k}$ or $f_{j',e,\ell'}>f_{j,d,\ell}$. Then either
\[
\Gamma(f_{i,d,k}) \subseteq \big(f_{i',e,k'},f_{j',e,\ell'}\big) \hbox{ or } \Gamma(f_{j,d,\ell}) \subseteq \big(f_{i',e,k'},f_{j',e,\ell'}\big).
\]
However, this contradicts that
\[
\Gamma(f_{i,d,k}) \cap A = \emptyset \hbox{ and } \Gamma(f_{j,d,\ell}) \cap A = \emptyset.
\]
Thus $C'$ is a connected component of $S_e^*$.
\end{proof}

\begin{lem}\label{lem:zerogrids} Let $A\subseteq \RR^{n+1}$ be such that $\pi(A)$ is the surface of a $(0,\dots,0)$-grid in $\RR^{n}$, and $A_x$ is discrete for every $x\in \pi(A)$. Then $A$ is the surface of a $(0,\dots,0)$-grid in $\RR^{n+1}$.
\end{lem}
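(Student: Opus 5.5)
We construct the grid explicitly and use the discreteness of $\pi(A)$ together with a sequence-set cutoff on isolation radius and height. Let $\es=(S_d)_{d\in D}$ be the $(0,\dots,0)$-grid with $\surf(\es)=\pi(A)$. By \cref{cor:thingrid}, $\pi(A)$ is discrete, and by \cref{lem:CC0-grid} every point $x\in \pi(A)$ is a connected component of some $S_d$, hence of all $S_e$ with $e\le d$. For $d\in D$ set
\[
S^*_d := \{(x,y)\in A \ : \ x\in S_d,\ |y|< 1/d,\ y\in \isol(A_x,d)\}.
\]
This family is definable. Since each $A_x$ is discrete, every $y\in A_x$ is $d$-isolated in $A_x$ for all small enough $d$. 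Hence every $(x,y)\in A$ lies in some $S^*_d$, because $D$ accumulates at $0$, and therefore $A=\bigcup_{d\in D}S^*_d$. The sets $S^*_d$ also decrease in the right direction: if $d>e$, then $S_d\subseteq S_e$, $1/d<1/e$, and $d$-isolated implies $e$-isolated, so $S^*_d\subseteq S^*_e$.

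Next we check that each $S^*_d$ is a $(0,\dots,0,0)$-stack. Its projection $\pi(S^*_d)$ is a subset of the $(0,\dots,0)$-stack $S_d$, which is finite. By \cref{lem:substack-CC} it is therefore a substack, since every subset of a finite set of points has singleton components. This projection need not equal $S_d$ itself, so we must ensure that the grid condition (1) uses $(\pi(S^*_d))_d$.

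The simplest fix is to verify directly that $(\pi(S^*_d))_{d\in D}$ is a $(0,\dots,0)$-grid in $\RR^n$, by induction on $n$ using the same kind of statement. Alternatively, for the inductive step one can observe that any decreasing-in-$d$ definable family of finite subsets of a discrete surface of a $(0,\dots,0)$-grid whose projections are again such families is a grid. I expect this bookkeeping to be the main nuisance: each level must be a stack, and the projections must themselves form a grid. I would handle it by proving, by induction on $n$, the auxiliary claim that for a $(0,\dots,0)$-grid $\es$, any definable increasing-in-union family $T_d\subseteq S_d$ that is decreasing in $d$ is again a $(0,\dots,0)$-grid. Within this claim, substack is just inclusion for finite sets of points, and the étale condition holds automatically for subsets of discrete sets.

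For fixed $x$, the fiber $(S^*_d)_x$ is a subset of $A_x\cap(-1/d,1/d)$ whose points are $d$-separated, so it is finite. Over each point $x\in\pi(S^*_d)$, a connected component of the stack, we list the finitely many fiber points as constant functions $f_1<\dots<f_m$ on $\{x\}$. This gives the stack structure.

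The substack condition (2) for $d>e$ is immediate, because all connected components are singletons and $S^*_d\subseteq S^*_e$. The étale condition (3) holds because $A=\surf$ is discrete. To see this, take $(x,y)\in A$. Since $\pi(A)$ is discrete, there is an open box $U$ around $x$ with $U\cap\pi(A)=\{x\}$. Since $A_x$ is discrete, there is an interval $J$ around $y$ with $J\cap A_x=\{y\}$. Then $A\cap(U\times J)=\{(x,y)\}$, so $\pi$ restricted to it is trivially a homeomorphism onto its image. Combining these facts, $(S^*_d)_{d\in D}$ is a $(0,\dots,0)$-grid in $\RR^{n+1}$ with surface $A$.
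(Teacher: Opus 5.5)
Your proof is correct and uses the same construction as the paper: truncate $A$ at height $1/d$ and isolation radius $d$, intersect with $S_d\times\RR$, then check the stack, substack and \'etale conditions. Two of your details, however, are real corrections rather than cosmetic variants.

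First, the paper cuts with $\fisol(A,d)$, which demands that \emph{every} point of $A_x$ be $d$-isolated. For a discrete fiber with no uniform isolation radius (e.g.\ $A=D$ itself when $n=0$), all of the paper's pieces are then empty over $x$ and do not exhaust $A$. Your pointwise condition $y\in\isol(A_x,d)$ does exhaust $A$ and still gives finite fibers.

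Second, the paper asserts $\pi(S'_d)=S_d$, which fails in general. You correctly treat $\pi(S^*_d)$ only as a subset of $S_d$. Your auxiliary claim for this is exactly \cref{lem:subgrid-CC}, since any subset of a finite set of points is a substack of it, so you need no separate induction.
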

\begin{proof}
Let $\es=(S_d)_{d\in D}$ be a $(0,\dots,0)$-grid in $\RR^n$ such that $\pi(A)=\surf(\es)$.
For every $d\in D$, set 
\[ 
A(d):=\fisol(A,d) \cap \big(\RR^n \times (-\frac1d,\frac1d)\big).
\]
Since the fiber $A_a$ is discrete for all $a\in \pi(A)$ and $D \cup \{0\}$ is closed, we have that
\[
\bigcup_{d\in D} A(d) = A.
\]
Moreover, the family $(A(d))_{d\in D}$ is decreasing and each $A(d)$ is vertically bounded.
Thus, for all $d\in D$ and $a\in \pi(A)$ the fiber $(A(d))_x$ is finite.
Set 
\[
S_d':= A(d) \cap [S_d \times \RR] \text{ and } D':=\{d\in D: \pi(A(d))\cap S_d\neq\emptyset\}.
\]
We claim that $(S_d')_{d\in D'}$ is a $((0,\ldots,0),D')$-grid.\\

First, we show that $S_d'$ is a stack for all $d\in D'$.
Let $d \in D'$. By construction $\pi(S_d')=S_d$ and hence is a stack. Since $\dim S_d = 0$, every connected component of $S_d$ is a singleton set. Let $x \in S_d$ be such that $\{x\}$ is a connected component of $S_d$. Since $A(d)_x$ is finite, we may write $A(d)=\{y_1,\ldots,y_k\}$ for some $k\in \NN$ and $y_1<\cdots<y_k$. For each $i=1,\dots, k$, let $f_i$ be the function mapping $x$ to $y_i$. Then $f_1,\dots,f_k$ witness that $S_d'$ is a stack.\\

Next, we show that $(S_d')_{d\in D'}$ is a grid.
Observe that $\pi(S_d')_{d\in D'}=(S_d)_{d\in D'}$, the latter being a grid by \cref{lem:changeseqs}.
Let $d,e\in D'$ with $d>e$.
Let $C$ be a connected component of $S_d'$.
Then there are $(x,y) \in C$ such that $x\in S_d$ and $C = \{(x,y)\}.$
Since $S_d$ is a substack of $S_e$, the set $\{x\}$ is a connected component of $S_e$.
Moreover, since $(x,y)\in A(d)$, we know that $(x,y)\in A(e)$ as well. Thus $\{(x,y)\}$ is a connected component of $S_e'$.\\

Lastly, we must show that $\bigcup_{d\in D'} S_d'$ is \'etale.
Let $d\in D'$, and let $(x,y)\in S'_d$.
Since $S_d$ is discrete there's some open box $U\subseteq \RR^n$ such that $S_d \cap U=\{x\}$.
Similarly, since $A_x$ is discrete there is an open interval $I\subseteq \RR^n$ such that $A_x \cap I=\{y\}$.
Then $U\times I$ is an open box in $\RR^{n+1}$ and since $(U \times I) \cap A = \{(x,y)\}$ this box witnesses that $A$ is \'etale at $(x,y)$.
Thus $(S_d')_{d\in D'}$ is a grid and $\surf((S_d')_{d\in D'})=A$.
\end{proof}

\begin{lem}\label{lem:subgrid-CC}
Let $w=(w_1,\dots,w_n)\in \{0,1\}^n$, let $(S_d)_{d\in D}$ be a $w$-grid in $\RR^n$, and let $(S_d')_{d\in D}$ be a definable family of $w$-stacks in $\RR^n$ such that for every $d,e\in D$ with $d>e$
\begin{enumerate}
    \item $S_d'$ is a substack of $S_d$, and
    \item $S_d'$ is a substack of $S_e'$.
\end{enumerate}
Then $(S_d')_{d\in D}$ is a $w$-grid.
\end{lem}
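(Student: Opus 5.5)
We argue by induction on $n$, checking the three clauses of \cref{defn:grid} for $(S'_d)_{d\in D}$. Clause (2) is exactly hypothesis (2), so the work lies in clause (1), that the projections form a grid, and clause (3), that the surface is étale when $w_n=0$. For $n=0$ there is nothing to prove, since every $()$-stack is $\RR^0$.

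For clause (1), assume $n>1$ and the statement for $n-1$. We apply the induction hypothesis to the $(w_1,\dots,w_{n-1})$-grid $(\pi(S_d))_{d\in D}$, which is a grid by clause (1) for $(S_d)_{d\in D}$, and to the family $(\pi(S'_d))_{d\in D}$. By \cref{lem:fin-CC} each $\pi(S'_d)$ is a $(w_1,\dots,w_{n-1})$-stack, and the family is definable. Since $S'_d$ is a substack of $S_d$, \cref{lem:projsubstack} shows that $\pi(S'_d)$ is a substack of $\pi(S_d)$. Likewise, for $d>e$ hypothesis (2) and \cref{lem:projsubstack} show that $\pi(S'_d)$ is a substack of $\pi(S'_e)$. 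The induction hypothesis then yields that $(\pi(S'_d))_{d\in D}$ is a grid.

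For clause (3), suppose $w_n=0$. The key observation is that $\surf((S'_d)_{d\in D})$ is a union of connected components of $\surf((S_d)_{d\in D})$. Indeed, every connected component $C$ of $S'_d$ is a connected component of $S_d$, hence a connected component of $\surf((S_d)_{d\in D})$ by \cref{lem:CC0-grid}(1), and it is clopen there by \cref{lem:CC0-grid}(2). Now fix $z\in\surf((S'_d)_{d\in D})$, say $z\in C$ with $C$ a connected component of $S'_d$. Because $\surf((S_d)_{d\in D})$ is étale, there is an open box $U$ around $z$ such that $\pi|_{U\cap\surf((S_d)_{d\in D})}$ is a homeomorphism onto its image. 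Since $C$ is open in $\surf((S_d)_{d\in D})$, we may shrink $U$ so that $U\cap\surf((S_d)_{d\in D})\subseteq C$. Then
\[
U\cap \surf((S'_d)_{d\in D}) = U\cap C = U\cap \surf((S_d)_{d\in D}),
\]
so $\pi$ restricted to this set is a homeomorphism onto its image, and the surface is étale at $z$.

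The main subtlety is the shrinking step. It requires that components be open in the larger surface, not merely in $S_d$, and this is precisely what \cref{lem:CC0-grid} supplies. Everything else is routine bookkeeping with \cref{lem:projsubstack}.
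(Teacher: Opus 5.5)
Your proof is correct and follows the paper's: induction on $n$, with \cref{lem:projsubstack} transferring both substack hypotheses to the projections so that the induction hypothesis applies. The only difference is in the \'etale clause. The paper simply observes that $\bigcup_{d\in D} S'_d\subseteq\bigcup_{d\in D} S_d$ and that a subset of an \'etale set is \'etale, since a homeomorphism onto its image restricts to one; so your detour through \cref{lem:CC0-grid} is valid, and gives the stronger local equality of the two surfaces, but it is not needed.
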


\begin{proof}
    We proceed by induction on $n$. For $n=0$, the statement is immediate. 
    Now suppose that $n\geq 1$ and that the statement holds for $n-1$. We first show that $(\pi(S_d'))_{d\in D}$ is a grid. By the assumptions and \cref{lem:projsubstack}, for every $d,e\in D$ with $d>e$,
    the stack $\pi(S'_d)$ is a substack of $\pi(S_d)$ and $\pi(S'_e)$. 
    By induction, it follows that $(\pi(S_d'))_{d\in D}$ is a grid.\\

    For every $d,e\in D$ with $d>e$, the set $S_d'$ is a substack of $S_e'$ by assumption.
    If $w_n=0$, the surface $\bigcup_{d\in D} S_d$ is \'etale. In this situation the union $\bigcup_{d\in D} S_d'$ is \'etale as well, since $\bigcup_{d\in D} S_d'\subseteq \bigcup_{d\in D} S_d$. 
    Thus  $(S_d')_{d\in D}$ is a grid.
\end{proof}

\begin{lem}\label{lem:midgrid}
Let $w \in \{0,1\}^{n}$, and let $\es = (S_d)_{d \in D}\subseteq \RR^{n+1}$ be a $(w,1)$-grid. Then
\[
\Mid(\es) := (\Mid(S_d))_{d \in D}.
\]
is a $(w,0)$-grid over $\pi(\es)$, and for $x \in \surf(\pi(\es))$
\[
\surf(\Mid(\es))_x = \Mid(\surf(\es)_x).
\]
\end{lem}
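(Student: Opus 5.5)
We verify the three clauses of \cref{defn:grid} for the family $(\Mid(S_d))_{d\in D}$, and then compute its surface fiberwise. First, each $\Mid(S_d)$ is a $(w,0)$-stack by \cref{lem:stackmid}, since $S_d$ is a $(w,1)$-stack. We also need $\pi(\Mid(S_d)) = \pi(S_d)$. This holds because every connected component $(f,g)$ of $S_d$ contributes the graph $\Gamma(\Mid(f,g))$ over the same base $\pi((f,g))$. Hence $(\pi(\Mid(S_d)))_{d\in D} = \pi(\es)$, which is a $(w,D)$-grid by clause (1) for $\es$. Definability of the family is immediate, since $\Midp$ of a definable open set is definable. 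Clause (2) follows directly from \cref{lem:midsubstack}: for $d>e$, $S_d$ is a substack of $S_e$, so $\Mid(S_d)$ is a substack of $\Mid(S_e)$.

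Next comes the fiber identity. For $x\in \surf(\pi(\es))$ we have $\surf(\Mid(\es))_x = \bigcup_{d} \Midp((S_d)_x)$, and we must show this equals $\Midp(\surf(\es)_x)$. Let $J$ be a bounded connected component of $(S_d)_x$. Then $J=(f(x),g(x))$ for a connected component $(f,g)$ of $S_d$ with finite $f,g$. By \cref{lem:CC0-grid}(1), $(f,g)$ is a connected component of $\surf(\es)$. Moreover, $(S_e)_x$ for $e\le d$ contains $J$ as a component, because $(f,g)$ remains a component of $S_e$. The step needing care is to show that $J$ is a full connected component of $\surf(\es)_x$, i.e.\ that no interval from some $S_e$ overlaps $J$ without being equal to it. If some $S_e$ had a component $(f',g')$ whose fiber meets $J$, then passing to $S_{\min(d,e)}$ gives two components of one stack over the same component of $\pi(S_{\min(d,e)})$ that meet. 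Hence they coincide by the disjointness in \cref{defn:stacks}. The points $f(x),g(x)$ lie outside every $(S_e)_x$ by the same argument, so $J$ is a component of the union. Conversely, every component of $\surf(\es)_x$ is contained in some $(S_e)_x$, hence by the same argument equals a component of some $(S_e)_x$. Midpoints then match, using the conventions of $\Mid(f,g)$ for unbounded components only in the definition of $\Mid(S_d)$.

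Clause (3), étaleness of $\surf(\Mid(\es))$, I expect to be the main obstacle. Take $z=(x,y)\in \Mid(S_d)$, so $z\in\Gamma(h)$ with $h=\Mid(f,g)$ and $(f,g)$ a component of $S_d$ over a component $C$ of $\pi(S_d)$. By \cref{lem:CC0-grid}(2) applied to $\pi(\es)$, $C$ is clopen in $\surf(\pi(\es))$. By \cref{lem:CC0-grid} applied to $\es$, the component $(f,g)$ is open in $\surf(\es)$. Choose a box $U\times I$ around $z$ with $U\cap\surf(\pi(\es))\subseteq C$, $h$ close to $y$ on $U$, and $I\times\{x'\}$ inside $(f(x'),g(x'))$ for $x'\in U\cap C$; this uses continuity of $f,g,h$. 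Any point of $\surf(\Mid(\es))$ in $U\times I$ is a midpoint of some component of some $(S_e)_{x'}$. That component contains the point, so it meets $(f(x'),g(x'))$, hence equals it by the previous paragraph. Thus the point lies on $\Gamma(h)$. So $\surf(\Mid(\es))\cap (U\times I) = \Gamma(h|_{U\cap C})$, and $\pi$ restricted to it is a homeomorphism onto its image, which gives étaleness.
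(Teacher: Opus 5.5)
Your proposal is correct and follows essentially the same route as the paper. The paper also uses \cref{lem:stackmid} for the stack property and $\pi(\Mid(S_d))=\pi(S_d)$, and \cref{lem:midsubstack} for the substack condition. It likewise gets \'etaleness from the fact that the component $(f,g)$ of $S_d$ is a clopen component of $\surf(\es)$, so any nearby point of $\surf(\Mid(\es))$ must be the midpoint of that same component and so lies on $\Gamma(\Mid(f,g))$. Your component-matching argument for the fiber identity $\surf(\Mid(\es))_x=\Midp(\surf(\es)_x)$ is a useful addition, since the paper's proof never verifies that part explicitly.
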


\begin{proof}
By \cref{lem:stackmid} we know that $\Mid(S_d)$ is a $(w,0)$-stack with $\pi(\Mid(S_d))=\pi(S_d).$ Thus $\pi(\Mid(\es))=\pi(\es)$, and hence is a $w$-grid. For $d,e \in D$ with $d>e$, we have that $S_d$ is a substack of $S_e$, and hence $\Mid(S_d)$ is a substack of $\Mid(S_e)$ by Lemma \ref{lem:midsubstack}.\\

It is left to show that $\bigcup_{d\in D} \Mid(S_d)$ is \'etale. Let $(x,y)\in \Mid(S_d)$ for some $d\in D$.
    Let $C$ be the connected component of $\pi(S_d)$ containing $x$, and let $f,g\in C_{\infty}(C)$ be functions such that $(f,g)$ is the connected component of $S_d$ containing $(x,y)$.
    Then $\Gamma(\Mid(f,g))$ is the connected component of $S_d$ containing $(x,y)$.
    Since $(f,g)$ is open in $\surf(\es)$, there exists some open box $U$ around $(x,y)$ such that 
    \[
    U\cap \surf(\es)\subseteq (f,g).
    \]
    Therefore
    \[ 
    \bigcup_{d\in D} \Mid(S_d) \cap U = \Mid(S_d)\cap U\subseteq \Gamma(\Mid(f,g)).  
    \]
    Hence $\pi$ maps $(\bigcup_{d\in D} S_d) \cap U$ homeorphically onto its image.
    Thus $\bigcup_{d\in D} \Mid(S_d)$ is \'etale, and hence $(\Mid(S_d))_{d\in D}$ is a $(w,0)$-grid.
\end{proof}

The following lemma is crucial when extending the uniformity of the Cantor-Bendixson rank over a connected component of a grid to uniformity over the whole grid. In the proof of the cell decomposition theorem in the o-minimal case, there is no need for such a construction, essentially because the finite union of finite sets is still finite.  However, the countable union of sets with finite Cantor-Bendixson rank does not necessarily have finite Cantor-Bendixson rank.

\begin{lem}\label{lem:UCB}
Let $n\in \NN_{\geq 2}$, let $w  \in \{0,1\}^{n-2}$, and let $\es=(S_d)_{d\in D}$ be a $(w,1,0)$-grid in $\RR^n$ with bounded surface. Then
\[
\{(x,y,z) \in (\RR^{n-2} \times \RR \times \RR) \cap \surf(\es) \ : \ (x,y) \in \surf(\Mid(\pi(\es))) \}
\]
is the surface of a $(w,0,0)$-grid.
\end{lem}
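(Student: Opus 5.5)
The idea is to intersect each stack of $\es$ with the cylinder over the corresponding midpoint stack, and to check the three grid conditions directly. Write $X$ for the set in the statement. By \cref{lem:midgrid}, $\Mid(\pi(\es)) = (\Mid(\pi(S_d)))_{d\in D}$ is a $(w,0)$-grid over $\pi(\pi(\es))$. For $d\in D$ set
\[
S'_d := S_d \cap \big[\Mid(\pi(S_d)) \times \RR\big].
\]
I will show that $(S'_d)_{d\in D}$ is a $((w,0,0),D)$-grid whose surface is $X$. \cref{lem:zerogrids} does not apply, since the base is not a $(0,\dots,0)$-grid, so the stack structure has to be built by hand.

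\emph{Surface.} Clearly $\bigcup_d S'_d\subseteq X$. Conversely, let $(x,y,z)\in X$. Pick $d$ with $(x,y,z)\in S_d$ and $e$ with $(x,y)\in \Mid(\pi(S_e))$, and put $c:=\min(d,e)$. Since $S_d\subseteq S_c$, we get $(x,y,z)\in S_c$. Since $\pi(S_e)$ is a substack of $\pi(S_c)$, \cref{lem:midsubstack} shows $\Mid(\pi(S_e))$ is a substack of $\Mid(\pi(S_c))$, so $(x,y)\in \Mid(\pi(S_c))$. Hence $(x,y,z)\in S'_c$.

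\emph{Each $S'_d$ is a $(w,0,0)$-stack.} By \cref{lem:stackmid}, the connected components of $\Mid(\pi(S_d))$ are exactly the cells $\Gamma(\Mid(f,g))$, where $(f,g)$ ranges over the components of $\pi(S_d)$. Each such cell lies in $\pi(S_d)$ and has nonempty fibers of $S_d$ above it. So \cref{lem:restrict-stack} makes $S_d\cap[\Gamma(\Mid(f,g))\times\RR]$ a $(w,0,0)$-stack. These finitely many pieces lie over the distinct components of the $(w,0)$-stack $\Mid(\pi(S_d))$, so the witnessing functions over each component are available and $\pi(S'_d)=\Mid(\pi(S_d))$. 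This gives the stack property.

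\emph{Grid axioms.} Condition (1) holds because $(\pi(S'_d))_d = \Mid(\pi(\es))$ is a grid. Condition (3) holds because $\bigcup_d S'_d\subseteq\surf(\es)$, which is étale, and étaleness passes to subsets. For condition (2), take $d>e$ and apply \cref{lem:substack2} with $A=S_d$, $B=S_e$, $A'=\Mid(\pi(S_d))$, $B'=\Mid(\pi(S_e))$. Hypothesis (1) is the grid property of $\es$, and (2) is \cref{lem:midsubstack}. Hypothesis (3) holds because $A'\subseteq\pi(S_d)$, so $\pi(A)\cap A'=A'$. Hypothesis (4) was just shown.

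I expect the main obstacle to be this application of \cref{lem:substack2}. One must check that a component $\Gamma(\Mid(f,g)\restriction)$-type cell of $S'_d$ is not swallowed by a strictly larger component of $S'_e$. The argument should be that the base cell $\Gamma(\Mid(f,g))$ is a full component of $\Mid(\pi(S_e))$, and the graph above it is a full component of $S_e$. If that lemma's hypotheses prove too weak, I would instead argue directly as in the proof of \cref{lem:substack}, comparing the witnessing functions over the common base cell. Boundedness of the surface does not seem to be needed beyond keeping all functions real-valued, so the midpoint functions are honest averages.
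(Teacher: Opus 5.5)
Your proposal is correct and follows essentially the same route as the paper. You use the same stacks $S'_d = S_d\cap[\Mid(\pi(S_d))\times\RR]$, get witnesses by restricting those of $S_d$ to the midpoint cells, obtain grid condition (1) from \cref{lem:midgrid}, and inherit \'etaleness from $\surf(\es)$. The differences are minor. You get the substack condition by citing \cref{lem:substack2}, whose hypotheses you verify correctly, where the paper argues directly with connected components. You also explicitly check that $\bigcup_{d} S'_d$ equals the given set, a step the paper leaves implicit.
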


\begin{proof}
    For each $d\in D$, set
    \[
    S_d' := S_d \cap (\Mid(\pi(S_d)) \times \RR).
    \]
    Let $d\in D$. We claim that $S_d'$ is a $(w,0,0)$-stack.
    First observe that 
    \begin{equation}\label{eq:lemucb}
    \pi(S_d')=\pi(S_d) \cap \Mid(\pi(S_d))=\Mid(\pi(S_d)).
  \end{equation}
  
    Thus $\pi(S_d')$ is a $(w,0)$-stack.
    Let $C$ be a connected component of $\Mid(\pi(S_d))$. 
    Then $C$ is contained in a connected component $C'$ of $\pi(S_d)$.
    Thus there are $f_1,\ldots,f_m \in \mathcal{C}(C')$ such that
    \[ 
    S_d \cap [C' \times \RR] = \bigcup_{i=1}^m \Gamma(f_i).
    \]
    It follows that
    \[ S_d' \cap [C \times \RR] = \bigcup_{i=1}^m \Gamma(f_i|_C).\]
    Therefore the functions $f_1|_C, \ldots,f_m|_C$ witness that $S_d'$ is a $(w,0,0)$-stack.\\

    We claim that $(S_d')_{d\in D}$ is a $(w,0,0)$-grid.
    By \cref{lem:midgrid} it follows from \eqref{eq:lemucb} that $(\pi(S_d)')_{d \in D}$ is a $(w,0)$-grid. Let $d,e\in D$ be such that $d>e$. We now show that $S_d'$ is a substack of $S_e'$. Let $C_0$ be a connected component of $S_d'$. 
    Then there is a connected component $C$ of $\Mid(\pi(S_d))$, a connected component $C'$ of $\pi(S_d)$, and $f \in \mathcal{C}(C')$ such that $\Gamma(f)$ is a connected component of $S_d$ and $\Gamma(f|_C)=C_0.$  
    Then $\Gamma(f)$ is also a connected component of $S_e$, the set $C'$ is a connected component of $\pi(S_e)$, and $C$ is a connected component of $\Mid(\pi(S_e))$.
    Thus $\Gamma(f|_C)$ must also be a connected component of $S_e'$, as its connected components are precisely of this form.\\

    It is left to show that $\bigcup_{d\in D} S_d'$ is étale.
    Observe that
    \[ 
    \bigcup_{d\in D} S_d' = \bigcup_{d\in D} \Big(S_d \cap (\Mid(\pi(S_d)) \times \RR)\Big) \subseteq \bigcup_{d\in D} S_d.
    \]
    Since $\bigcup_{d\in D} S_d$ is \'etale, we conclude that $\bigcup_{d\in D} S_d'$ is \'etale as well by \cref{fact:etale}.
\end{proof}

\section{Uniformity of the Cantor-Bendixson rank for grid over cells} \label{section:Uniform-CB-Rank}

We prove the uniformity in \cref{thm:uniform2} in two steps, the first of which is made in this section. It does not depend on d-minimality, or any tameness condition on $\R$. 

\begin{thm}
\label{prop:boundfortowers}
 Let $w\in \{0,1\}^{n-1}$, $\mathcal{S}=(S_d)_{d\in D}$ be a $(w,0)$-grid in $\RR^n$ such that $\surf(\es)$ is vertically bounded, and let $C\subseteq \RR^{n-1}$ be a $w$-cell such that $\pi(S_d)=C$ for all $d\in D$. 
    Then there is $N\in \NN$ such that for all $x\in C$
    \[
    \rk(\cl(\surf(\es)_x))= N.
    \]
\end{thm}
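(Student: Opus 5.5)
The plan is to exploit that $C$ is connected, so that the whole grid is a totally ordered family of continuous functions on $C$. I then show that $\rk(\cl(\surf(\es)_x))$ depends only on the abstract order type of that family, and hence not on $x$.

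\emph{Step 1 (reduction to a family of functions).} Since $\pi(S_d)=C$ is a $w$-cell, hence connected, each $S_d$ is a $(w,0)$-stack over $C$ with $\pi(S_d)=C$. So there are $f_{d,1}<\dots<f_{d,m_d}$ in $\mathcal{C}(C)$ with $S_d=\bigcup_i\Gamma(f_{d,i})$. Let $\mathcal F$ be the set of all functions occurring in this way for some $d\in D$; it is countable because $D$ is. By the substack condition and \cref{lem:CC0-grid}(1), the graphs $\Gamma(f)$, $f\in\mathcal F$, are exactly the connected components of $\surf(\es)$, and they are pairwise disjoint. Since $C$ is connected, for distinct $f,g\in\mathcal F$ we have either $f<g$ or $g<f$ on all of $C$, as in \eqref{eq:lemgrco1}. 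Consequently, for every $x\in C$ the evaluation map $f\mapsto f(x)$ is an order isomorphism from $(\mathcal F,<)$ onto $\surf(\es)_x$. In particular, the order type of $\surf(\es)_x$ does not depend on $x$.

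\emph{Step 2 (fibers are discrete and bounded).} Because $\surf(\es)$ is étale (condition (3) of \cref{defn:grid}), for every $f\in\mathcal F$ and $x\in C$ there is a box $U$ around $(x,f(x))$ with $U\cap\surf(\es)=U\cap\Gamma(f)$. Hence $f(x)$ is isolated in $\surf(\es)_x$, so every fiber is discrete. Vertical boundedness makes every fiber bounded. Therefore $\cl(\surf(\es)_x)$ is a compact countable set whose points are of two kinds:
\begin{itemize}
\item the isolated points $f(x)$ with $f\in\mathcal F$;
\item limit points, each of which is $\sup_{g\in L}g(x)$ or $\inf_{g\in R}g(x)$ for some cut $(L,R)$ of $\mathcal F$ such that $L$ has no maximum or $R$ has no minimum.
\end{itemize}

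\emph{Step 3 (rank is an order invariant).} This is where I expect the main obstacle. The homeomorphism type of $\cl(\surf(\es)_x)$ need not be constant in $x$. For a cut $(L,R)$, whether $\sup_L g(x)$ equals $\inf_R g(x)$, i.e.\ whether two one-sided limits merge into a single point, can vary with $x$. The suprema are only lower semicontinuous, so I would not try to prove that $\sup_{g\in L} g(x)=\inf_{g\in R}g(x)$ holds on an open set. Instead I would compute the Cantor--Bendixson rank directly from the order. For a one-sided limit point coming from $L$ (with $L$ having no maximum), its rank in $\cl(\surf(\es)_x)$ is computed by transfinite induction from the ranks of the points approaching it from the left, and these are themselves cuts of $L$. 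This defines an ordinal $\rho^-(L)$ depending only on the order type of $L$, and symmetrically $\rho^+(R)$ depends only on the order type of $R$. If the two sides of a cut merge at $x$, the merged point has rank $\max(\rho^-(L),\rho^+(R))$. If they do not merge, the two separate points have ranks $\rho^-(L)$ and $\rho^+(R)$. Either way, the supremum of pointwise ranks is the same. I would carry out this induction on the iterated derived sets $\cl(\surf(\es)_x)^{[\lambda]}$, proving for each $\lambda$ simultaneously for all $x\in C$ that a cut's point survives to stage $\lambda$ iff an order-theoretic condition on $(L,R)$ holds. This gives that $\rk(\cl(\surf(\es)_x))$ is the same ordinal $N$ for all $x\in C$.

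\emph{Step 4 (finiteness).} To get $N\in\NN$ as stated, I would use that the fiber rank at a fixed $x$ is a property of a single countable closed set. The statement asks only for constancy, with finiteness being what is used later via \cref{fact:RK}, which needs finite rank. If the intended $N$ may be an ordinal, Step 3 already finishes the proof. Otherwise I would add the argument that the order type of $\mathcal F=\bigcup_d\{f_{d,i}\}$, built from finitely many functions at each $d\in D$, yields finite rank in the situations where this theorem is applied. I would flag that point for checking against how the theorem is used later in the paper.
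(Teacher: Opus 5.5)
Your proposal is correct and is essentially the paper's argument in different packaging. Your cuts $(L,R)$ of the linearly ordered family $\mathcal F$ are exactly the paper's approximation types (the data $\mu(d,u),\nu(d,u)$ for $d\in D$), and your dichotomy ``one merged point or the two one-sided points $\sup_{g\in L}g(x)$, $\inf_{g\in R}g(x)$'' is \cref{lem:equiv2}. Your induction on one-sided accumulation in the derived sets is \cref{lem:typeimplies}, where a point approached from the left over $u_1$ is matched with the left limit $\lim_{d\to 0}f_{d,\mu(d,u)}(v_1)$ over $v_1$.

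As you suspected, finiteness of $N$ does not follow from the order type, and the paper's proof also only establishes constancy. In the d-minimal setting where the result is used, finiteness comes from applying d-minimality to a single fiber closure: it is a closed definable set with empty interior, hence a finite union of discrete sets and so of finite rank by \cref{fact:RK}. The same fact is what justifies the countability you claim in Step 2.
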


We will later show that we can replace $\surf(\es)$ by any set $A \subseteq \RR^n$ definable in a d-minimal structure, as long as $A_a$ has empty interior for each $a\in \pi(A)$. For that, we have to show \cref{prop:boundfortowers} for arbitrary  $(w,0)$-grids and establish that every set definable in a d-minimal structure is a finite union of surfaces of grids.

\subsection{Proof of \cref{prop:boundfortowers}}
Let $\mathcal{S}=(S_d)_{d\in D}$ be a $(w,0)$-grid, and let $C\subseteq \RR^{n-1}$ be $w$-cell such that $\pi(S_d)=C$ for $d\in D$. Let $g: D \to \NN$ be the function that maps $d$ to the cardinality of a fiber $S_d$. For each $d\in D$, let $f_{S_d,1},\dots,f_{S_d,g(d)}$ be the definable functions witnessing that $S_d$ is a stack over $C$.
Thus
\[
\surf(\es) = \bigcup_{d\in D} \bigcup_{i=1}^{g(d)} \Gamma(f_{S_d,i}).
\]
For ease of notation, we write $f_{d,i}$ instead of $f_{S_d,i}$ for each $d\in D$ and $i\in\{1,\dots, g(d)\}$. Furthermore for each $d \in D$, let $f_{d,0}$ be the function that is constantly $-\infty$, and let $f_{d,g(d)+1}$ be the function that is constantly $+\infty$. Set
\[
B := \{ (u_1,u_2) \in C \times \RR \ : \ u_2 \in \cl(\surf(\es)_{u_1}) \}.
\]

\begin{defn}
   Let $\mu : D \times B \to \NN$ and $\nu : D \times B \to \NN$ be such that 
for every $d\in D$ and $u=(u_1,u_2)\in B$
\begin{enumerate}
    \item $\mu(d,u)$ is the maximal $i\in \{0,1,\dots,g(d)\}$ such that $$f_{d,\mu(d,u)}(u_1) \leq u_2,$$
    \item $\nu(d,u)$ is the the minimal $i\in \{1,\dots,g(d),g(d)+1\}$ such that $$f_{d,\nu(d,u)}(u_1) \geq u_2.$$
\end{enumerate}
\end{defn}

\begin{lem} \label{lem:factsmunu}
Let $u=(u_1,u_2)\in B$ and let $e_1,e_2 \in D$ be such that $e_2<e_1$. Then
\[
f_{e_1,\mu(e_1,u)}(u_1) \leq f_{e_2,\mu(e_2,u)}(u_1) \leq  u_2 \leq f_{e_2,\nu(e_2,u)}(u_1) \leq f_{e_1,\nu(e_1,u)}(u_1),
\]
and hence
\[
\lim_{d \to 0} f_{d,\mu(d,u)}(u_1) \leq  u_2 \leq \lim_{d \to 0} f_{d,\nu(d,u)}(u_1).
\]
\end{lem}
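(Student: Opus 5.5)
The plan is to first establish the pointwise comparison for a fixed $d$, and then use the substack property of the grid to compare two levels $e_2<e_1$. By the definition of $\mu$ and $\nu$ and the ordering $f_{d,1}<\dots<f_{d,g(d)}$ (with the conventions $f_{d,0}=-\infty$ and $f_{d,g(d)+1}=+\infty$), we immediately get $f_{d,\mu(d,u)}(u_1)\le u_2\le f_{d,\nu(d,u)}(u_1)$ for every $d\in D$. This gives the two middle inequalities for $d=e_2$. We should first check that $\mu$ and $\nu$ are well defined. The index $0$ always satisfies $f_{d,0}(u_1)=-\infty\le u_2$, so the maximum defining $\mu$ exists; symmetrically, the minimum defining $\nu$ exists. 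This step is routine.

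For the outer inequalities, the key input is condition (2) of \cref{defn:grid}: $S_{e_1}$ is a substack of $S_{e_2}$. Since $\pi(S_d)=C$ is a single cell, every connected component of $S_{e_1}$ is a graph $\Gamma(f_{e_1,i})$ over all of $C$, by \cref{lem:fin-CC}. Being a connected component of $S_{e_2}$ as well, it must equal some $\Gamma(f_{e_2,j})$. Hence every function $f_{e_1,i}$ with $1\le i\le g(e_1)$ coincides with some $f_{e_2,j}$, so
\[
\{f_{e_1,i}(u_1): 1\le i\le g(e_1)\}\subseteq \{f_{e_2,j}(u_1):1\le j\le g(e_2)\}.
\]
Now $f_{e_1,\mu(e_1,u)}(u_1)$ is either $-\infty$ or an element of the left set that is $\le u_2$. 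In the second case it is an element of the right set that is $\le u_2$. Since the $f_{e_2,j}$ are strictly increasing in $j$, $f_{e_2,\mu(e_2,u)}(u_1)$ is the largest value of the right set (together with $-\infty$) that is $\le u_2$. Therefore $f_{e_1,\mu(e_1,u)}(u_1)\le f_{e_2,\mu(e_2,u)}(u_1)$. The inequality $f_{e_2,\nu(e_2,u)}(u_1)\le f_{e_1,\nu(e_1,u)}(u_1)$ follows by the symmetric argument, using minima of values $\ge u_2$.

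For the limit statement, the first part shows that $d\mapsto f_{d,\mu(d,u)}(u_1)$ is nonincreasing in $d$, that is, nondecreasing as $d\to 0$ along $D$, and bounded above by $u_2$. Likewise, $d\mapsto f_{d,\nu(d,u)}(u_1)$ is nonincreasing as $d\to0$ and bounded below by $u_2$. Since $0$ is a limit point of $D$, both limits exist in $\RR\cup\{\pm\infty\}$, and passing to the limit in the inequalities gives the claim.

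The main point requiring care is the identification of components of $S_{e_1}$ with graphs of functions for $S_{e_2}$. This works cleanly because $\pi(S_d)=C$ is one connected cell, so every component projects onto all of $C$ and the witnessing functions share the same domain. Beyond that, the argument is bookkeeping with the $\pm\infty$ conventions.
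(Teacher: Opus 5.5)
Your proposal is correct and is essentially the paper's argument spelled out. The paper only says the lemma is immediate from the definitions of $\mu$ and $\nu$; you make explicit that the outer inequalities also use the grid condition that $S_{e_1}$ is a substack of $S_{e_2}$. In fact only the resulting inclusion $(S_{e_1})_{u_1}\subseteq (S_{e_2})_{u_1}$ is needed, since $f_{d,\mu(d,u)}(u_1)$ is the largest element of $(S_d)_{u_1}\cup\{-\infty\}$ that is at most $u_2$, and $f_{d,\nu(d,u)}(u_1)$ is the smallest element of $(S_d)_{u_1}\cup\{+\infty\}$ that is at least $u_2$. Your identification of components with witnessing graphs is therefore valid but more than necessary.
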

\begin{proof}
The statement follows immediately from the definitions of the functions $\mu$ and $\nu$.   
\end{proof}

\begin{lem}\label{lem:emptybetween}
Let $u=(u_1,u_2)\in B$. Then for all $w \in C$    
\[
\Big(\lim_{d \to 0} f_{d,\mu(d,u)}(w),\lim_{d \to 0} f_{d,\nu(d,u)}(w)\Big)\cap B_{w} = \emptyset.
\]
\end{lem}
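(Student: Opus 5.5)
The idea is to show that, for each fixed $d$, the functions $f_{d,\mu(d,u)}$ and $f_{d,\nu(d,u)}$ bound a "gap" of $S_d$ that is the same gap at every $w\in C$, and then to pass to the limit $d\to 0$. The tool is that, since $C$ is connected and each $S_d$ is a $(w,0)$-stack over $C$ witnessed by $f_{d,1}<\dots<f_{d,g(d)}$, the order between two witnessing functions of the same stack is the same at every point of $C$.

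\textbf{Step 1 (monotonicity at every $w$).} Let $e<d$ in $D$. Since $S_d$ is a substack of $S_e$, every $\Gamma(f_{d,i})$ is a connected component of $S_e$, so $f_{d,i}=f_{e,i'}$ for some $i'$.
\begin{itemize}
\item By \cref{lem:factsmunu}, $f_{d,\mu(d,u)}(u_1)\le f_{e,\mu(e,u)}(u_1)$.
\item Both functions are witnessing functions of $S_e$ (or the constant $-\infty$), and these are linearly ordered on all of $C$.
\item Hence $f_{d,\mu(d,u)}(w)\le f_{e,\mu(e,u)}(w)$ for every $w\in C$, and symmetrically $f_{e,\nu(e,u)}(w)\le f_{d,\nu(d,u)}(w)$.
\end{itemize}
So the two sequences are monotone in $d$, and the limits $L(w):=\lim_{d\to0}f_{d,\mu(d,u)}(w)$ and $U(w):=\lim_{d\to 0}f_{d,\nu(d,u)}(w)$ exist in $\RR\cup\{\pm\infty\}$. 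Moreover, for every $e\in D$,
\[
f_{e,\mu(e,u)}(w)\le L(w) \quad\text{and}\quad U(w)\le f_{e,\nu(e,u)}(w).
\]

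\textbf{Step 2 (contradiction).} Suppose some $v\in (L(w),U(w))\cap B_w$. Since $B_w=\cl(\surf(\es)_w)$ and the interval is open, there are $d\in D$ and $y\in (S_d)_w$ with $L(w)<y<U(w)$, say $y=f_{d,j}(w)$. Now choose any $e\le d$ in $D$.
\begin{itemize}
\item By the substack property, $f_{d,j}=f_{e,j'}$ for some $j'$.
\item By Step 1, $f_{e,\mu(e,u)}(w)<f_{e,j'}(w)<f_{e,\nu(e,u)}(w)$.
\item Since these are witnessing functions of the single stack $S_e$ over the connected set $C$, the same strict inequalities hold at $u_1$, and also $\mu(e,u)<j'<\nu(e,u)$.
\item Maximality of $\mu(e,u)$ forces $f_{e,j'}(u_1)>u_2$, while minimality of $\nu(e,u)$ forces $f_{e,j'}(u_1)<u_2$, which is a contradiction.
\end{itemize}

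\textbf{Degenerate case.} If $u_2=f_{e,i}(u_1)$ for some $i$, then $\mu(e,u)=\nu(e,u)$, so $L(w)=U(w)$ along the relevant $e$ and the interval is empty. The argument above covers this as well.

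The main point to check carefully is Step 1: that the order relations at $u_1$ transfer to all $w\in C$. This uses that $S_d$ is a substack of $S_e$, so all the functions involved are witnessing functions of the single stack $S_e$ over the connected set $C$, together with the conventions $f_{e,0}=-\infty$ and $f_{e,g(e)+1}=+\infty$.
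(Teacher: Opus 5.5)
Your proof is correct and follows essentially the same route as the paper: you find a witnessing function of some $S_e$ whose value at $w$ lies strictly inside the gap, transfer the order to $u_1$ because the witnessing functions of a single stack over $C$ are linearly ordered on all of $C$, and then contradict the maximality of $\mu(e,u)$ and the minimality of $\nu(e,u)$. Your Step 1 makes explicit the monotonicity of $f_{d,\mu(d,u)}(w)$ and $f_{d,\nu(d,u)}(w)$ at every $w\in C$, which the paper uses without comment, and taking the point directly from $\surf(\es)_w$ via openness of the interval is slightly more direct than the paper's detour through $\mu(e,v)$ and $\nu(e,v)$.
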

\begin{proof}
Towards a contradiction, suppose that there are $v_1 \in C$ and $v_2\in B_{v_1}$ such that
\[
\lim_{d \to 0} f_{d,\mu(d,u)}(v_1)< v_2 <\lim_{d \to 0} f_{d,\nu(d,u)}(v_1).
\]
Set $v:=(v_1,v_2)$. Since $v_2 \in B_{v_1}$, there is $e \in D$ such that one of the following two statements holds:
\begin{enumerate}
    \item $\lim_{d \to 0} f_{d,\mu(d,u)}(v_1) < f_{e,\mu(e,v)}(v_1) \leq v_2$, or
    \item $ v_2 \leq f_{e,\nu(e,v)}(v_1) <\lim_{d \to 0} f_{d,\nu(d,u)}(v_1).$
\end{enumerate}
In both cases, we can deduce that for all $d\in D$
\[
f_{d,\mu(d,u)}(v_1) < f_{e,\mu(e,v)}(v_1)<  f_{d,\nu(d,u)}(v_1).
\]
Thus for all $d \in D$
\[
f_{d,\mu(d,u)}(u_1) < f_{e,\mu(e,v)}(u_1)< f_{d,\nu(d,u)}(u_1).
\]
In particular,
\[
f_{e,\mu(e,u)}(u_1) < f_{e,\mu(e,v)}(u_1)< f_{e,\nu(e,u)}(u_1).
\]
By the maximality of $\mu(e,u)$, we obtain that
\[
f_{e,\mu(e,u)}(u_1) \leq u_2 < f_{e,\mu(e,v)}(u_1) < f_{e,\nu(e,u)}(u_1).
\]
This is a contradiction against the minimality of $\nu(e,u)$.
\end{proof}

\begin{defn}
Let $u,v\in B$. We say $u$ and $v$ have the \textbf{same approximation type} (written $u\sim_{\es} v$) if for every $d\in D$
\[
\mu(d,u)=\mu(d,v) \text{ and } \nu(d,u)=\nu(d,v).
\]
Let $u_1 \in C$. Set 
\[
\sim_{\es,u_1} := \{ (u_2,u_3) \in B_u^2 \ : \ (u_1,u_2) \sim_{\es} (u_1,u_3)\}.
\]
\end{defn}

Note that $\sim_{\es,u_1}$ is an equivalence relation on $B_{u_1}$.

\begin{lem}\label{lem:emptyequiv}
Let $u_1 \in C$ and $u_2,u_3 \in B_{u_1}$ be such that $u_2<u_3$ and $\big(u_2,u_3\big)\cap B_{u_1}=\emptyset$. Then
$u_2 \sim_{\es,u_1} u_3$.
\end{lem}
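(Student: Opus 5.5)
The plan is to compare, for a fixed $d\in D$, the finitely many values $f_{d,1}(u_1)<\dots<f_{d,g(d)}(u_1)$ with the two points $u_2<u_3$. Write $u:=(u_1,u_2)$ and $v:=(u_1,u_3)$. The first and key observation is that every value $f_{d,i}(u_1)$ with $1\le i\le g(d)$ lies in $\surf(\es)_{u_1}\subseteq \cl(\surf(\es)_{u_1})=B_{u_1}$. Since $\pi(S_d)=C$ for all $d$, these graphs do pass over $u_1$. By the hypothesis $(u_2,u_3)\cap B_{u_1}=\emptyset$, no $f_{d,i}(u_1)$ lies in the open interval $(u_2,u_3)$. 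Hence, for each $d$, the index set $\{0,\dots,g(d)+1\}$ splits into three blocks:
\begin{itemize}
\item indices with $f_{d,i}(u_1)\le u_2$,
\item indices with $f_{d,i}(u_1)\ge u_3$,
\item at most the indices whose value equals $u_2$ or $u_3$ exactly.
\end{itemize}
Here I use the conventions $f_{d,0}=-\infty$ and $f_{d,g(d)+1}=+\infty$ for the end indices.

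Next I would read off $\mu$ and $\nu$ from this splitting. Let $i_0$ be the largest index with $f_{d,i_0}(u_1)\le u_2$. By definition $\mu(d,u)=i_0$. The next index satisfies $f_{d,i_0+1}(u_1)>u_2$, and by the gap property this forces $f_{d,i_0+1}(u_1)\ge u_3$. Consequently, whenever $u_3$ is not itself a value $f_{d,i}(u_1)$, the largest index with value $\le u_3$ is again $i_0$, so $\mu(d,v)=\mu(d,u)$. Symmetrically, let $j_0$ be the least index with value $\ge u_3$, so that $\nu(d,v)=j_0$. Then $f_{d,j_0-1}(u_1)\le u_2$. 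Whenever $u_2$ is not itself a value $f_{d,i}(u_1)$, the least index with value $\ge u_2$ is $j_0$, so $\nu(d,u)=\nu(d,v)$. Since $d$ was arbitrary, this yields $u\sim_{\es}v$, that is, $u_2\sim_{\es,u_1}u_3$, in the generic situation.

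The main obstacle is the remaining endpoint case, where $u_2$ or $u_3$ equals some $f_{d,i}(u_1)$. I would first try to control it using Lemma~\ref{lem:factsmunu} together with the monotonicity of the families in $d$. The idea is to track how the bracketing values $f_{d,\mu(d,\cdot)}(u_1)$ and $f_{d,\nu(d,\cdot)}(u_1)$ move as $d\to 0$, using that each graph of $S_d$ persists in $S_e$ for $e<d$. From this I would check directly that the non-strict inequalities in the definitions of $\mu$ and $\nu$ still give the same index for $u$ and $v$.

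I have to flag a genuine worry about this case. If some single stack $S_d$ has two consecutive graphs with values exactly $u_2$ and $u_3$ over $u_1$, then the definitions give $\mu(d,u)=i$ but $\mu(d,v)=i+1$. Such a configuration is allowed by the hypotheses, since both values lie in $B_{u_1}$ and the gap between them can be empty. So the endpoint case is precisely where the argument must be scrutinized against the exact definition of $\mu$ and $\nu$. I do not expect the generic-case argument above to cover it without additional input.
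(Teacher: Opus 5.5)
Your generic-case computation is correct. Your worry about the endpoint case is not a technicality, though: it is a genuine counterexample, so neither \cref{lem:factsmunu} nor anything else can close that case.

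Take the constant family $S_d:=C\times\{0,1\}$ for all $d\in D$. This is a vertically bounded $(w,0)$-grid with $\pi(S_d)=C$, $g(d)=2$, $f_{d,1}\equiv 0$ and $f_{d,2}\equiv 1$, and $B_{u_1}=\{0,1\}$. For $u_2=0$ and $u_3=1$ the hypotheses hold. Yet
\[
\mu(d,(u_1,0))=1\neq 2=\mu(d,(u_1,1)) \quad\text{and}\quad \nu(d,(u_1,0))=1\neq 2=\nu(d,(u_1,1)).
\]
The family does not depend on $d$ at all, so monotonicity in $d$ cannot help. The lemma is false as stated, and the paper's one-line justification (``immediate from the definitions'') overlooks exactly this case.

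What your argument actually proves is the sharp version. Under the gap hypothesis, no value $f_{d,i}(u_1)$ lies in $(u_2,u_3)$, since all such values lie in $B_{u_1}$. Hence $\mu(d,(u_1,u_2))=\mu(d,(u_1,u_3))$ for all $d$ if and only if $u_3\notin\surf(\es)_{u_1}$. Likewise, $\nu(d,(u_1,u_2))=\nu(d,(u_1,u_3))$ for all $d$ if and only if $u_2\notin\surf(\es)_{u_1}$. So $u_2\sim_{\es,u_1}u_3$ holds exactly when neither endpoint lies on a graph. A single endpoint on a graph already breaks the lemma; your configuration of two consecutive graphs is not needed.

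The correct statement therefore needs the extra hypothesis $u_2,u_3\in B_{u_1}\setminus\surf(\es)_{u_1}$, or else a different definition of $\sim_{\es}$. Be aware that adding the hypothesis alone does not rescue the paper's later use. In the proof of \cref{lem:equiv2}, via \cref{lem:lmuequiv}, the lemma is applied to $v_2^-<v_2^+$, and $v_2^+$ can lie on a graph. For example, take
\[
S_d:=C\times\big(\{-e : e\in D,\ e\geq d\}\cup\{1\}\big)
\]
and $u=(u_1,0)$. Then $v_2^-=0$ and $v_2^+=1$, while $\mu(d,(u_1,0))=g(d)-1\neq g(d)=\mu(d,(u_1,1))$. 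So the repair has to be made in the notion of approximation type itself.
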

\begin{proof}
This is again immediate from the definitions of $\mu$ and $\nu$. 
\end{proof}
\noindent As a direct consequence of \cref{lem:emptybetween} and \cref{lem:emptyequiv}, we obtain the following corollary.
\begin{cor}
 \label{lem:lmuequiv}
Let $(u_1,u_2) \in B$. Then
\[
\lim_{d \to 0} f_{d,\mu(d,u)}(u_1) \sim_{\es,u_1}  u_2 \sim_{\es,u_1} \lim_{d \to 0} f_{d,\nu(d,u)}(u_1).
\]
\end{cor}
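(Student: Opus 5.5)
The plan is to derive the corollary from \cref{lem:emptybetween} and \cref{lem:emptyequiv}, after checking that the two limits are actually points of $B_{u_1}$. Fix $u=(u_1,u_2)\in B$ and set
\[
a:=\lim_{d \to 0} f_{d,\mu(d,u)}(u_1), \qquad b:=\lim_{d \to 0} f_{d,\nu(d,u)}(u_1).
\]
By \cref{lem:factsmunu}, the values $f_{d,\mu(d,u)}(u_1)$ are monotone non-decreasing and the values $f_{d,\nu(d,u)}(u_1)$ are monotone non-increasing as $d \to 0$. Both are bounded by $u_2$ on the appropriate side, so the limits exist in $\RR\cup\{\pm\infty\}$ and satisfy $a\le u_2\le b$.

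The first step is to show that $a,b\in B_{u_1}$ whenever they are finite. If $\mu(d,u)\neq 0$, then $f_{d,\mu(d,u)}(u_1)$ is the $u_1$-fiber of a point on the graph $\Gamma(f_{d,\mu(d,u)})\subseteq S_d\subseteq \surf(\es)$. Hence $f_{d,\mu(d,u)}(u_1)\in \surf(\es)_{u_1}$, and its limit $a$ lies in $\cl(\surf(\es)_{u_1})=B_{u_1}$. The same argument applies to $b$. The degenerate case is where $\mu(d,u)=0$ for all $d$ (or $\nu(d,u)=g(d)+1$ for all $d$), so that the limit is $\mp\infty$. I expect this to be the main obstacle: here the statement has to be read with the convention that only finite limits are compared, or one must argue directly from $u_2\in\cl(\surf(\es)_{u_1})$ and vertical boundedness that the relevant index is eventually nontrivial. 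I would check this case first.

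The second step is the comparison itself. If $a=u_2$, then $a\sim_{\es,u_1}u_2$ holds trivially, since $\sim_{\es,u_1}$ is an equivalence relation. Otherwise $a<u_2\le b$, and \cref{lem:emptybetween} applied with $w=u_1$ gives $(a,b)\cap B_{u_1}=\emptyset$. In particular $(a,u_2)\cap B_{u_1}=\emptyset$. Since $a,u_2\in B_{u_1}$, \cref{lem:emptyequiv} then yields $a\sim_{\es,u_1}u_2$. Symmetrically, $(u_2,b)\cap B_{u_1}=\emptyset$ gives $u_2\sim_{\es,u_1}b$, which completes the proof.
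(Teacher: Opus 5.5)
\textbf{There is a genuine gap, and it sits in the paper too.} Your argument is the paper's own: the corollary is presented there as a direct consequence of Lemmas~\ref{lem:emptybetween} and~\ref{lem:emptyequiv}. Your check that a finite limit lies in $B_{u_1}$ is fine. The failing step is the closing one, ``$a<u_2$ and $(a,u_2)\cap B_{u_1}=\emptyset$, so Lemma~\ref{lem:emptyequiv} gives $a\sim_{\es,u_1}u_2$''. Lemma~\ref{lem:emptyequiv} is false as stated.

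With the non-strict inequalities defining $\mu$ and $\nu$, a point $y=f_{d,j}(u_1)$ of $(S_d)_{u_1}$ has $\mu(d,(u_1,y))=\nu(d,(u_1,y))=j$. A point of $B_{u_1}\setminus(S_d)_{u_1}$ instead has $\nu=\mu+1$. So two distinct points of $B_{u_1}$ are never equivalent when one of them lies in $\surf(\es)_{u_1}$.

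This breaks the corollary itself. Take $C=(0,1)$ and $S_d:=C\times\big(\{0\}\cup\{1+e : e\in D,\ e\geq d\}\big)$. This is a $(1,0)$-grid with $\pi(S_d)=C$ and vertically bounded \'etale surface. For $u=(u_1,1)\in B$ one has $\mu(d,u)=1$ and $\nu(d,u)=2$ for all $d$, so the two limits are $0$ and $1=u_2$. But $\nu(d,(u_1,0))=1\neq \nu(d,u)$, so $0\not\sim_{\es,u_1}1$. So the statement as written cannot be proved. The real obstacle is not the infinite case but a limit that is attained, i.e.\ is itself a point of $\surf(\es)_{u_1}$.

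\textbf{What can be salvaged.} Lemma~\ref{lem:emptyequiv} is true under the extra hypothesis $u_2,u_3\notin\surf(\es)_{u_1}$: then no value $f_{d,i}(u_1)$ lies in $[u_2,u_3]$, so $\mu$ and $\nu$ agree at every $d$. With that version your argument proves the following.
\begin{itemize}
\item If $u_2\in\surf(\es)_{u_1}$, both limits equal $u_2$.
\item Otherwise, each limit that is finite and not attained is $\sim_{\es,u_1}$-equivalent to $u_2$. By \'etaleness, ``not attained'' is the same as ``not in $\surf(\es)_{u_1}$''.
\item An attained limit is then not equivalent to $u_2$.
\end{itemize}

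\textbf{The infinite case you flagged.} This is a further defect of the statement, not something to argue away. Your second option cannot work. For the surface $C\times\{1+e : e\in D\}$ and $u_2=1$, one has $\mu(d,u)=0$ for every $d$, despite vertical boundedness. Only your first reading, comparing finite limits only, makes sense.
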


\begin{lem}\label{lem:equiv2} Let $u_1,v_1 \in C$ and $u_2 \in B_{u_1}$. Then 
\[
\{ v_2 \in B_{v_1} \ : \ (u_1,u_2)\sim_{\es} (v_1,v_2) \} = \{ \lim_{d \to 0} f_{d,\mu(d,u)}(v_1),\lim_{d \to 0} f_{d,\nu(d,u)}(v_1)\}.
\]
\end{lem}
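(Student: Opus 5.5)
The plan is to exploit the fact that, since $C$ is connected and all the graphs $\Gamma(f_{d,i})$ live over $C$, the relative order of any two functions from the family $\{f_{d,i}\}$ is the same at every point of $C$. Two such graphs are either equal or disjoint, because each $S_d$ is a substack of $S_e$ for $d>e$ and the surface is \'etale. If they are disjoint, the argument giving \eqref{eq:lemgrco1} in the proof of \cref{lem:grid-complement} shows one function is strictly below the other on all of $C$. Consequently the approximation type of a point $(x,y)\in B$ is determined by the order relations between $y$ and the values $f_{d,i}(x)$, and this ``combinatorial position'' can be transported from one fiber to another.

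For the inclusion $\subseteq$, I fix $v_2\in B_{v_1}$ with $(u_1,u_2)\sim_{\es}(v_1,v_2)$ and set $a_d:=\mu(d,u)=\mu(d,v)$ and $b_d:=\nu(d,u)=\nu(d,v)$. Applying \cref{lem:factsmunu} to $v=(v_1,v_2)$ gives
\[
\lim_{d\to 0} f_{d,a_d}(v_1)\leq v_2 \leq \lim_{d\to 0} f_{d,b_d}(v_1).
\]
Vertical boundedness of $\surf(\es)$ makes these limits finite whenever the indices are not eventually $0$ or $g(d)+1$; I treat the boundary conventions separately. By \cref{lem:emptybetween} applied at $w=v_1$, the open interval between the two limits meets $B_{v_1}$ in the empty set. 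Since $v_2\in B_{v_1}$, it must equal one of the two endpoints.

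For $\supseteq$, I show each limit value $L(v_1):=\lim_{d\to 0} f_{d,a_d}(v_1)$ lies in $B_{v_1}$. This holds because it is a limit of points of $\surf(\es)_{v_1}$, and $B_{v_1}=\cl(\surf(\es)_{v_1})$. I then show that $(v_1,L(v_1))$ has the same type as $(u_1,L(u_1))$. For each $d$ and each index $i$, the relation ($<$, $=$ or $>$) between $f_{d,i}$ and the decreasing-in-$d$ sequence $f_{e,a_e}$ is the same at $u_1$ and at $v_1$, by the uniform-order observation above. Since $\mu$ and $\nu$ are read off from these relations, we get $\mu(d,(v_1,L(v_1)))=\mu(d,(u_1,L(u_1)))$ and likewise for $\nu$. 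Combining with \cref{lem:lmuequiv}, which gives $L(u_1)\sim_{\es,u_1}u_2$, yields $(v_1,L(v_1))\sim_{\es}(u_1,u_2)$. The upper limit is handled symmetrically.

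The main obstacle is this transport step, specifically when the limit is attained, i.e.\ the functions $f_{d,a_d}$ are eventually a single function, versus when it is approached strictly. In the attained case, the uniform ordering on $C$ forces the sequence to stabilize as functions, so the same dichotomy holds at $u_1$ and $v_1$. I would first split into these two cases and check the defining inequalities of $\mu,\nu$ (with $\leq$, $\geq$) in each. The degenerate situations where $a_d=0$ or $b_d=g(d)+1$ for all $d$ are handled by the $\pm\infty$ conventions.
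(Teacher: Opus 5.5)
Your inclusion $\subseteq$ is correct, and the paper leaves this direction implicit. Your transport step is also sound: it gives $(v_1,L(v_1))\sim_{\es}(u_1,L(u_1))$ for both limit functions $L$. It needs one ingredient you did not state: the fibers of $\surf(\es)$ are discrete, which follows from \'etaleness, and this rules out a non-stabilizing limit $L(v_1)$ equal to some $f_{d,i}(v_1)$.

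The gap is the last step, where you invoke \cref{lem:lmuequiv} to get $L(u_1)\sim_{\es,u_1}u_2$ for the endpoint that differs from $u_2$. This step fails in general. Because $\mu$ and $\nu$ are defined with $\leq$ and $\geq$:
\begin{itemize}
\item every point of $\surf(\es)_{u_1}$ has $\mu(d,\cdot)=\nu(d,\cdot)$ for all small $d$;
\item every point of $B_{u_1}\setminus\surf(\es)_{u_1}$ has $\nu(d,\cdot)=\mu(d,\cdot)+1$ for all $d$.
\end{itemize}
So the two ends of a gap of $B_{u_1}$ need not have the same type, and \cref{lem:emptyequiv}, hence \cref{lem:lmuequiv}, is false.

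Concretely, let $C=(0,1)$ and $S_d:=C\times\big(\{1-e : e\in D,\ e\geq d\}\cup\{2\}\big)$. This is a $(1,0)$-grid with vertically bounded surface and $\pi(S_d)=C$. For $u_2=1$ you get $f_{d,\mu(d,u)}\equiv 1-d$ and $f_{d,\nu(d,u)}\equiv 2$, so the right-hand side is $\{1,2\}$. But $\mu(d,(v_1,2))=\mu(d,u)+1$, so the left-hand side is only $\{1\}$. Thus the inclusion $\supseteq$ is false as stated, and no argument can close this step. The ``$\pm\infty$ conventions'' also cannot handle the degenerate case: if $\mu(d,u)=0$ for all $d$, the right-hand side contains $-\infty\notin B_{v_1}$.

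The paper's proof fails at the same point. Its route differs from yours: it argues by contradiction at $v_1$, using discreteness, that $\mu(d,u)=\mu(d,v^-)$ and $\nu(d,u)=\nu(d,v^+)$, rather than transporting from $u_1$. But that reduction relies on \cref{lem:emptyequiv} to get $v_2^-\sim_{\es,v_1}v_2^+$, which is the same false equivalence of gap endpoints.

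What your argument does prove is the correct version:
\begin{itemize}
\item the left-hand side consists exactly of the real endpoints $L(v_1)$ with $L(u_1)\sim_{\es,u_1}u_2$;
\item this always includes the endpoint $L$ with $L(u_1)=u_2$, which exists by \cref{lem:factsmunu} and \cref{lem:emptybetween}.
\end{itemize}
That is all \cref{lem:typeimplies} needs. There $u_2$ is a limit from below of points of $B_{u_1}$, so it is not in $\surf(\es)_{u_1}$ and equals $\lim_{d\to0}f_{d,\mu(d,u)}(u_1)$. Your transport step then directly gives $\big(v_1,\lim_{d\to0}f_{d,\mu(d,u)}(v_1)\big)\sim_{\es}u$.
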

\begin{proof}
Set $u:=(u_1,u_2)$, and for ease of notation, set
\[
u_2^- := \lim_{d \to 0} f_{d,\mu(d,u)}(u_1) \text{ and } u_2^+ := \lim_{d \to 0} f_{d,\nu(d,u)}(u_1)
\]
Similarly, set
\[
v_2^- := \lim_{d \to 0} f_{d,\mu(d,u)}(v_1) \text{ and }  v_2^+ := \lim_{d \to 0} f_{d,\nu(d,u)}(v_1).
\]
Set $v^-:=(v_1,v_2^-)$ and $v^+:=(v_1,v_2^+)$.
We now show that 
\[
v^{-} \sim_{\es} u  \sim_{\es} v^+.
\]
By \cref{lem:emptybetween},
\[
\big(v_2^-,v_2^+\big) \cap B_{v_1} = \emptyset.
\]
Thus by \cref{lem:emptyequiv}, $v_2^{-} \sim_{\es,v_1} v_2^+$. Hence it is enough to show that for all $d\in D$
\[
\mu(d,u)=\mu(d,v^-), \text{ and } \nu(d,u)=\nu(d,v^+).
\]
\noindent We first show that  $\mu(d,u)=\mu(d,v^-)$ for all $d\in D$. Towards a contradiction, suppose that there is $e\in D$ such that $\mu(e,u)\neq \mu(e,v^-)$. Then
\[
f_{e,\mu(e,u)}(v_1) < f_{e,\mu(e,v^-)}(v_1) \leq v_2^-,
\]
yet,
\[
f_{e,\mu(e,u)}(u_1) \leq u_2 <  f_{e,\mu(e,v^-)}(u_1).
\]
Thus for all $d\in D$
\[
f_{d,\mu(d,u)}(u_1) <  f_{e,\mu(e,v^-)}(u_1),
\]
and hence
\[
f_{d,\mu(d,u)}(v_1) <  f_{e,\mu(e,v^-)}(v_1).
\]
Thus $v_2^- = f_{e,\mu(e,v^-)}(v_1)$, contradicting $C\subseteq \fisol(A)$.\newline

Similarly, one can show that  $\nu(d,u)=\nu(d,v^+)$ for all $d\in D$. 
\end{proof}

\begin{lem}\label{lem:typeimplies}
Let $\lambda$ be an ordinal, and let $u=(u_1,u_2)\in B$ and $v_1\in C$ be such that $u_2 \in B_{u_1}^{[\lambda]}$. Then there is $v_2 \in B_{v_1}^{[\lambda]}$ such that $(u_1,u_2) \sim_{\es} (v_1,v_2)$.
\end{lem}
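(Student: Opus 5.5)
We prove the statement by transfinite induction on $\lambda$, simultaneously for all $u=(u_1,u_2)\in B$ and all $v_1\in C$. The tool throughout is \cref{lem:equiv2}: for fixed $u$ and $v_1$, the points $v_2\in B_{v_1}$ with $(u_1,u_2)\sim_{\es}(v_1,v_2)$ are exactly $v_2^-:=\lim_{d\to 0} f_{d,\mu(d,u)}(v_1)$ and $v_2^+:=\lim_{d\to 0} f_{d,\nu(d,u)}(v_1)$. This set is nonempty, so the case $\lambda=0$ is immediate, since $B_{v_1}^{[0]}=B_{v_1}$.

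For the successor step, let $u_2\in B_{u_1}^{[\lambda+1]}$. Then $u_2$ is not isolated in $B_{u_1}^{[\lambda]}$, so there is a sequence $(u_2^{(k)})_k$ in $B_{u_1}^{[\lambda]}\setminus\{u_2\}$ converging to $u_2$. Passing to a subsequence, we may assume it is monotone from one side, say from above, $u_2^{(k)}\downarrow u_2$; the other side is symmetric.

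For each $k$, the induction hypothesis gives some $v_2^{(k)}\in B_{v_1}^{[\lambda]}$ with $(u_1,u_2^{(k)})\sim_{\es}(v_1,v_2^{(k)})$. We then aim to show that $v_2^{(k)}$ accumulates at $v_2^+$ (or at $v_2^-$). This would put $v_2^{\pm}$ in the closure of $B_{v_1}^{[\lambda]}\setminus\{v_2^\pm\}$, hence in $B_{v_1}^{[\lambda+1]}$, since $B_{v_1}^{[\lambda]}$ is closed ($B_{v_1}$ is closed and Cantor--Bendixson derivatives preserve closedness).

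The key comparison uses the strict ordering of the functions. Since $u_2^{(k)}>u_2$ and the two points have different approximation type unless no point of $B_{u_1}$ lies between them, we get $\mu(d,u^{(k)})\ge \nu(d,u)$ for all small $d$. Because the $f_{d,i}$ are continuous and strictly ordered over the connected cell $C$, their order is the same at $u_1$ and at $v_1$. This yields $v_2^{(k)}\ge v_2^+$ and $v_2^{(k)}\neq v_2^+$, with $v_2^{(k)}\le f_{d,\mu(d,u^{(k)})}(v_1)$-type bounds.

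The main obstacle is proving convergence $v_2^{(k)}\to v_2^+$, since the functions $f_{d,i}$ are only continuous in the cell variable and not uniformly in $d$. We plan two routes.

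\textbf{First route: a ranking by stage.} Because $u_2^{(k)}\to u_2$ and the fibers $(S_d)_{u_1}$ are finite, for each fixed $d$ eventually $\mu(d,u^{(k)})=\nu(d,u)$ and $\nu(d,u^{(k)})\le \nu(d,u)+1$. We would transport these index relations to $v_1$ to get $v_2^+\le v_2^{(k)}\le f_{d,\nu(d,u)+1}(v_1)$ for large $k$. We would then let $d\to 0$ via \cref{lem:factsmunu}.

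\textbf{Second route: compactness.} If the first route does not close, we use vertical boundedness of $\surf(\es)$ to pick a convergent subsequence of $v_2^{(k)}$. Its limit $v^*$ lies in $B_{v_1}^{[\lambda]}$, which is closed. We would then argue via \cref{lem:emptybetween} that no point of $B_{v_1}$ lies strictly between $v_2^+$ and $v^*$ unless $v^*=v_2^+$. If the $v_2^{(k)}$ are eventually constant, they coincide with $v^*$, and we would exclude this using distinctness of the approximation types of the $u^{(k)}$.

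The limit step is formal: for a limit ordinal $\lambda$, $B^{[\lambda]}_{u_1}=\bigcap_{\mu<\lambda}B^{[\mu]}_{u_1}$. The candidates $v_2^\pm$ are fixed and independent of $\mu$, and by induction at each $\mu<\lambda$ at least one of them lies in $B_{v_1}^{[\mu]}$. Hence one of the two lies in $B_{v_1}^{[\mu]}$ for cofinally many $\mu<\lambda$. Since the derivatives are decreasing, that candidate lies in all of them, and therefore in $B_{v_1}^{[\lambda]}$.
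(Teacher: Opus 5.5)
Your overall architecture matches the paper's. You run transfinite induction, use a one-sided sequence in the successor step, apply the induction hypothesis, and show that the transported points converge to an endpoint supplied by \cref{lem:equiv2}. Your limit-ordinal step is correct and covers a case the paper leaves implicit. However, the convergence $v_2^{(k)}\to v_2^+$, which is the whole content of the successor step, is not established by either of your routes.

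In the first route, the claim that eventually $\mu(d,u^{(k)})=\nu(d,u)$ is false here. Since $u_2\in B^{[1]}_{u_1}$ is not isolated in $B_{u_1}$, while \'etaleness of $\surf(\es)$ makes every point of $\surf(\es)_{u_1}$ isolated in $B_{u_1}$, we get $u_2\notin\surf(\es)_{u_1}$. Hence $f_{d,\nu(d,u)}(u_1)>u_2$, and for large $k$ one actually has $\mu(d,u^{(k)})=\mu(d,u)=\nu(d,u)-1$. Working only with $\nu(d,u^{(k)})\le\nu(d,u)+1$, you then need $\lim_{d\to 0}f_{d,\nu(d,u)+1}(v_1)=v_2^+$. \Cref{lem:factsmunu} gives nothing of the kind: it concerns only the indices $\mu,\nu$, and only the fibre over $u_1$. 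For strictly ordered continuous functions over a cell, this limit can genuinely exceed $v_2^+$. For example, $u_2$ could be a fibre point accumulated from above over $u_1$ by graphs that stay far above it over $v_1$. So some input beyond order preservation is needed, and your sketch never supplies it.

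In the second route, \cref{lem:emptybetween} controls only the gap $(v_2^-,v_2^+)$, not the gap between $v_2^+$ and your subsequential limit $v^*$. So it does not identify $v^*$, and identifying $v^*$ is the same problem again.

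The fix is to reverse the quantifiers.
\begin{enumerate}
\item Given $\varepsilon>0$, fix a single $d$ with $f_{d,\nu(d,u)}(v_1)<v_2^++\varepsilon$.
\item Because $f_{d,\nu(d,u)}(u_1)>u_2$ (this is where $u_2\notin\surf(\es)_{u_1}$ enters), for all large $k$ we have $u_2<u_2^{(k)}<f_{d,\nu(d,u)}(u_1)$. Hence $\nu(d,u^{(k)})\le\nu(d,u)$.
\item Transport this via $(v_1,v_2^{(k)})\sim_{\es}u^{(k)}$ and the order of the $f_{d,i}$ on $C$: $v_2^{(k)}\le f_{d,\nu(d,u^{(k)})}(v_1)\le f_{d,\nu(d,u)}(v_1)<v_2^++\varepsilon$.
\item Together with your (correct) lower bound $v_2^{(k)}>v_2^+$, this yields convergence.
\end{enumerate}
This is exactly the paper's argument, mirrored. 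The paper approaches $u_2$ from below and fixes $e$ with $v_2-f_{e,\mu(e,v)}(v_1)$ small. It then notes that $\mu(e,(u_1,s_i))=\mu(e,u)$ for all large $i$, and transfers this to get $t_i\ge f_{e,\mu(e,v)}(v_1)$.
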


\begin{proof}
We proceed by induction on $\lambda$. For $\lambda = 0$, this is immediate. Now suppose that the claim holds for $\lambda$, and suppose that $u_2 \in B_{u_1}^{[\lambda+1]}$. Thus there is a sequence $(s_i)_{i\in \NN}$ of elements in $B_{u_1}^{[\lambda]}$ such that $\lim_{i\to \infty} s_{i} = u_2$. Reducing to a subsequence, we can assume
that either 
\begin{itemize}
    \item $(s_i)_{i\in \NN}$ is strictly increasing and $s_i < u_2$ for all $i\in \NN$, or
    \item $(s_i)_{i\in \NN}$ is strictly decreasing and $s_i > u_2$ for all $i\in \NN$.
\end{itemize}
 Both cases can be handled similarly. Let us assume the first case holds; that is $s_i< u_2$ for all $i\in \NN$.
By the induction hypothesis, there is a sequence $(t_i)_{i \in \NN}$ of elements of $B_{v_1}^{[\lambda]}$ such that $(u_1,s_i) \sim_{\es} (v_1,t_i)$ for each $i\in \NN$.\newline

Set $v_2:=\lim_{d \to 0} f_{d,\mu(d,u)}(v_1)$ and $v:=(v_1,v_2)$. By Lemma \ref{lem:equiv2} we directly get $u\sim_{\es} v$. Thus it is enough to show that $\lim_{i \to \infty} t_i = v_2.$\newline

We first show that $t_i < v_2$ for all $i \in \NN$. Let $i\in \NN$. Since $s_i < u_2$, there is $e \in D$ such that 
\[
s_i < f_{e,\mu(e,u)}(u_1) \leq u_2.
\]
Thus 
\[
f_{e,\mu(e,(u_1,s_i))}(u_1) < f_{e,\mu(e,u)}(u_1).
\]
Since $u\sim_{\es} v$ and $(u_1,s_i)\sim_{\es} (v_1,t_i)$, then
\[
f_{e,\mu(e,(v_1,t_i))}(v_1) < f_{e,\mu(e,v)}(v_1).
\]
Thus $t_i<v_2$.\newline

Let $\varepsilon >0$. We need to show that there is $j\in \NN$ such that $|v_2 - t_i|< \varepsilon$ for all $i\in \NN_{\geq j}$. Let $d \in D$ be such that $d < \varepsilon$. 
Since $v_2$ is the supremum of a subset of $ B_{v_1}$, there is $e\in D$ such that
\[
v_2 - f_{e,\mu(e,v)}(v_1) < d.
\]
Since $\lim_{i \to \infty} s_i=u_2$, there is $j\in \NN$ such that for all $i\in \NN_{\geq j}$
\[
u_2 > s_i \geq f_{e,\mu(e,u)}(u_1).
\]
Thus for all $i\in \NN_{\geq j}$
\[
\mu(e,u)=\mu(e,(u_1,s_i)).
\]
Since $u\sim_{\es} v$ and $(u_1,s_i)\sim_{\es} (v_1,t_i)$, we get that for all $i\in \NN_{\geq j}$ 
\[
\mu(e,v)=\mu(e,(v_1,t_i)).
\]
Thus
\[
t_i \geq f_{e,\mu(e,v)}(v_1).
\]
Hence $|v_2 - t_i|<\varepsilon$.
\end{proof}

\begin{proof}[Proof of \cref{prop:boundfortowers}]
  Let $u_1,v_1\in C$. Towards a contradiction, suppose that $\rank(B_{u_1})<\rank(B_{v_1})$. Let $N := \rank(B_{u_1})$. Let $\lambda\geq N$. Then $B_{u_1}^{[\lambda]}=\emptyset$. Since $\rank(B_{v_1})>N$, we have $B_{v_1}^{[\lambda]}\neq \emptyset$. Let $v_2 \in B_{v_1}^{[\lambda]}$. By \cref{lem:typeimplies} there is $u_2\in B_{u_1}^{[\lambda]}$.  Contradiction.
\end{proof}

\section{Main theorem}\label{section:Main-Theorem}

In this section, we present the proof of our main results. Finally, we assume that $\R$ is d-minimal.

\begin{defn}
Let $\mathcal{G}$ be a finite collection of grids in $\RR^n$. We say $\mathcal{G}$ is a \textbf{grid decomposition} of $\RR^n$ if the surfaces of elements of $\mathcal{G}$ form a partition of $\RR^n$ and $\{ \pi(\mathcal{S}) \ : \ \es \in \mathcal{G}\}$ is a grid decomposition of $\RR^{n-1}$. The only grid decomposition of $\RR^0$ is $\{\RR^0\}$.
\end{defn}

By \cref{lem:changeseqs}, we can choose the grids  in a grid decomposition $\mathcal{G}$ such that there is a sequence set $D$ such that every $\es \in \mathcal{G}$ is a $(w,D)$-grid for some $w\in \{0,1\}^n$.\newline

We say a set $A\subseteq \RR^n$ is \textbf{compatible} with a grid decomposition $\mathcal{G}$ if for all $\es \in \mathcal{G}$ either $\surf(\mathcal{S})\subseteq A$  or  $A\cap \surf(\mathcal{S}) = \emptyset$.
If $\mathcal{A}$ is a finite collection of subsets of $\RR^n$, we say $\mathcal{A}$ is compatible with the grid decompositon $\mathcal{G}$ if every $A \in \mathcal{A}$ is compatible with $\mathcal{G}$.\newline

As in case of the classical cell decomposition, it follows from compatibility that
\[
A = \bigcup_{\es \in \mathcal{I}} \surf(\es),
\]
where $\mathcal{I} := \{ \es \in \mathcal{G} \ : \ \surf(\es) \subseteq A\}$. Thus, if a set is compatible with a grid decomposition, it is a finite union of surfaces of grids.

\begin{thm}[Grid decomposition]\label{thm:mainthm} Let $n\in \NN$.
\begin{enumerate}
    \item[(GD$)_{n}$] 
    Let $\mathcal{A}$ be a finite collection of definable subsets of $\RR^n$. Then there is a grid decomposition of $\RR^n$ compatible with $\mathcal{A}$.
    \item[(UCB$)_{n+1}$] Let $A\subseteq \RR^{n+1}$ be definable such that $A_a$ has no interior for all $a \in \pi(A)$. 
    Then there is $k\in \NN$ such that for all $a\in \pi(A)$
    \[\rk(\cl(A_a))\leq k.
    \]
    \item[(PC$)_{n+1}$] Let $A\subseteq \RR^{n+1}$ be definable such that $A_a$ is discrete for all $a\in \pi(A)$. 
    Then $A$ is a finite disjoint union of surfaces of grids.
 \end{enumerate}
\end{thm}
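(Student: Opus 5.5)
We prove the three statements by a simultaneous induction on $n$, in the order (GD)$_n \Rightarrow$ (UCB)$_{n+1} \Rightarrow$ (PC)$_{n+1} \Rightarrow$ (GD)$_{n+1}$. The base case (GD)$_0$ is trivial.

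\emph{Step 1: (UCB)$_{n+1}$ from (GD)$_n$.} Let $A$ be as in (UCB)$_{n+1}$. By d-minimality each $\cl(A_a)$ is countable and has finite Cantor--Bendixson rank (\cref{fact:RK}); the issue is the uniformity of this rank.
\begin{enumerate}
\item Reduce to $A$ vertically bounded, by applying a definable homeomorphism $\RR\to(-1,1)$ fiberwise, which preserves ranks (\cref{fact:cb-homeo}).
\item Set $Y:=\{(a,y): y\in \cl(A_a)\}$. Then $Y_a$ is closed, countable, and nowhere dense for each $a$.
\item Split $Y$ into finitely many definable layers $Y^{(j)}$ with discrete fibers, using the Cantor--Bendixson derivatives $\isol(Y_a^{[j]})$. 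Here we must already know that the number of layers is bounded, which is circular.
\end{enumerate}
Instead, apply (GD)$_n$ to $\pi(A)$ together with the sets $\{a:\cl(A_a)\neq \cl(A)_a\}$. On each grid $\mathcal{T}$ of the decomposition, we handle each cell $C$ that is a connected component of some $T_d$ separately.

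Over $C$, the set $\fisol$-type pieces of $Y$ should be surfaces of $(w,0)$-grids by the argument of \cref{lem:4.11}, with $\fisol(\cdot,d)$ cutting off finite pieces as in \cref{lem:zerogrids}. Then \cref{prop:boundfortowers} gives a constant rank $N_C$ over $C$.

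The main obstacle is making $N_C$ uniform across the countably many cells of $\surf(\mathcal T)$. I would handle this by noting that the rank function $a\mapsto \rk(\cl(A_a))$ is definable on each $S$-level: $\{a:\rk\geq k\}$ is definable for each fixed $k$. Since rank is constant on each cell and cells are clopen (\cref{lem:CC0-grid}), the sets $E_k:=\{a:\rk(\cl(A_a))\geq k\}$ form a decreasing definable family of clopen unions of cells.

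If the ranks were unbounded, choose $a_k\in E_k$ and use $\Mid$ and \cref{lem:UCB} to pass to a $(w,0,0)$-type grid, collapsing each cell to a point. This produces a single definable subset of $\RR^{n+1}$ whose fiber over a discrete set carries unbounded rank. Glue these fibers, after rescaling into disjoint intervals indexed by $D$, into one definable subset of $\RR$ that is nowhere dense but has infinite Cantor--Bendixson rank. By \cref{fact:RK} this contradicts d-minimality, and is the one place where d-minimality (not merely noiselessness) is used. I expect this gluing step to be the hardest, and it is where \cref{lem:UCB} and the midpoint constructions enter.

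Finally, the points of $\pi(A)$ lying in the nowhere dense exceptional sets are covered by lower-dimensional grids of the decomposition, so the same argument applies there.

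\emph{Step 2: (PC)$_{n+1}$.} Let $A$ have discrete fibers. By (UCB)$_{n+1}$, $\cl(A_a)$ has rank at most $k$ uniformly in $a$. Peel off the layers $A^{(j)}:=\{(a,y): y\in \isol(\cl(A_a)^{[j]})\}\cap A$ for $j<k$; each has discrete, closed-in-layer fibers. Apply (GD)$_n$ to $\pi(A^{(j)})$ and to the fiber-lemma exceptional sets (\cref{cor:FL-grids}), recursing on dimension to handle the nowhere dense bad set. Over each grid, set
\[A(d):=\fisol(A^{(j)},d)\cap(\RR^n\times(-1/d,1/d)),\]
and apply \cref{lem:4.11} to get $(w,0)$-stacks. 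The family is decreasing and yields substacks as in \cref{lem:zerogrids}, with étaleness coming from isolation.

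\emph{Step 3: (GD)$_{n+1}$.} Given $\mathcal A$, for each $A\in\mathcal A$ form $\operatorname{bd}(A_x)$, the discrete-fiber parts $A_x\setminus\operatorname{int}(A_x)$, and $\Midp(\operatorname{int}A_x)$. By d-minimality, after a bounding homeomorphism these sets have fibers that are finite unions of discrete sets, so (PC)$_{n+1}$ splits them into $(w,0)$-grid surfaces. Refine by (GD)$_n$ on the projections so that all pieces lie over common grids (\cref{lem:substack2}, \cref{lem:subgrid-CC}). The remaining open fibers are then $(w,1)$-grid surfaces by \cref{lem:grid-complement}.
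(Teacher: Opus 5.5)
Your Step 1 has a genuine gap at the point you flag as the main obstacle: making the rank uniform across the countably many cells of one grid. As sketched, this never produces a definable object.
\begin{itemize}
\item The sets $E_k$ are definable for each fixed $k$, but they do not form a definable family, and choosing $a_k\in E_k$ is not a definable construction.
\item \cref{lem:UCB} only turns the coordinate directly below the fiber from $1$ to $0$; it does not collapse cells to points.
\item Nothing gives a definable way to place the fibers over a discrete subset of $\RR^n$ into disjoint intervals indexed by $D$.
\end{itemize}
The closing contradiction with d-minimality would be fine if such a set existed.

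The missing idea is to glue along a single coordinate, where a definable rescaling exists, and let the induction hypothesis (UCB)$_n$ handle the remaining coordinates; your plan never uses (UCB)$_n$. Concretely, suppose $A\subseteq\RR^{n+1}$ is the bounded surface of one $(w_1,\dots,w_n,0)$-grid $\mathcal{S}$.
\begin{itemize}
\item If $w_n=1$, restrict $A$ to the points over $\surf(\Mid(\pi(\mathcal{S})))$. By \cref{lem:UCB} this is the surface of a $(w_1,\dots,w_{n-1},0,0)$-grid. By \cref{prop:boundfortowers} (rank is constant on each cell of $\pi(A)$), no ranks are lost.
\item If $w_n=0$, each $\pi(A)_x$ is discrete. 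Let $\mu(x,y)>0$ be one third of the distance from $y$ to the rest of $\pi(A)_x$ (capped at $1$). Then $B:=\{(x,y+\mu(x,y)z) : (x,y)\in\pi(A),\ z\in\cl(A_{(x,y)})\}\subseteq\RR^n$ is definable and its fibers have no interior. Each $y+\mu(x,y)\cl(A_{(x,y)})$ is clopen in $\cl(B_x)$ and homeomorphic to $\cl(A_{(x,y)})$. So (UCB)$_n$ together with \cref{fact:cb-homeo} gives the uniform bound.
\end{itemize}

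There is a second problem: constancy of rank on a cell silently needs (PC)$_{n+1}$. \cref{prop:boundfortowers} requires a $(w,0)$-grid over the cell whose fibers have the same closures as the $A_a$. The $\fisol$-pieces of $Y$ do not give one: $Y_a=\cl(A_a)$ is closed and bounded, so when it is infinite it has a non-isolated point, and then $(a,y)\notin\fisol(Y,d)$ for every $d$. You must instead pass to $\{(a,y): y\in\isol(A_a)\}$, which has discrete fibers and the same closures since $\cl(A_a)$ is countable. Writing that set as a finite union of grid surfaces is exactly (PC)$_{n+1}$. This includes the induction on $\dim\pi(A)$ that disposes of the nowhere dense sets where \'etaleness or the fiber lemma fails for the truncations $A(e)$, $e\in D$. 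You never feed those sets to (GD)$_n$, so you only get constant rank on the cells produced by (PC)$_{n+1}$, not on the cells you fixed.

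This is repaired by reordering. (PC)$_{n+1}$ does not need (UCB)$_{n+1}$: your layering in Step~2 is vacuous, since a point isolated in the discrete set $A_a$ is isolated in $\cl(A_a)$, so $A^{(0)}=A$. The paper proves (PC)$_{n+1}$ from (GD)$_n$, then (UCB)$_{n+1}$ from (PC)$_{n+1}$ and (UCB)$_n$, then (GD)$_{n+1}$.

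The layering belongs in Step~3 instead. There you apply (PC)$_{n+1}$ to sets such as $\{(x,y)\in A: y\notin\operatorname{int}(A_x)\}$ and the fiberwise boundary of $\operatorname{int}(A_x)$. Their fibers are finite unions of discrete sets but need not be discrete, e.g.\ $\{0\}\cup\{1/k:k\in\NN\}$. Only (UCB)$_{n+1}$ lets you cut them into boundedly many definable pieces with discrete fibers, as in \cref{lem:discretefibers}.
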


Before we begin the proof of \cref{thm:mainthm}, we collect a few immediate corollaries. Of course, the most noteworthy is \cref{thm:uniform2}.

\begin{proof}[Proof of \cref{thm:uniform2}]
Let $A\subseteq \RR^{n+1}$ be definable. Set
\[
A' := \{ (x,y) \in \RR^n \times \RR \ : \ y \in A_x \setminus \inter(A_x)\}.
\]
Note that $A'_x$ has empty interior for all $x \in \RR^n$. Thus, by \cref{thm:mainthm} there is $k\in \NN$ such that for all $x\in \pi(A)$
\[
\rank (\cl(A'_x))\leq k.
\]
Let $x \in \RR^n$. Suppose that $A_x$ has empty interior. By d-minimality, $A_x$ is a finite union of discrete sets, and hence countable. It is enough to show that $A_x$ is the union of $k$ discrete sets. Since $\inter(A_x)=\emptyset$, we have that $A_x = A_x'$. Thus, $\rank(\cl(A_x))\leq k$, and $\cl(A_x)$ is the union of $k$ discrete sets  by \cref{fact:RK}. Hence, since a subset of a discrete set is discrete, we conclude that $A_x$ is the union of $k$ discrete sets.
\end{proof}

\begin{cor}[Piecewise continuity]
  Let  $A\subseteq \RR^n$ and let $f: A \to \RR$ be definable. Then there is a grid decomposition $\mathcal{G}$ of $\RR^n$ compatible with $A$ such that for every $\es\in \mathcal{G}$ with $\surf(\es)\subseteq A$, the restriction $f|_{\surf(\es)}$ is continuous. 
\end{cor}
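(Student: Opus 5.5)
The plan is to combine the grid decomposition theorem (GD)$_n$ from \cref{thm:mainthm} with almost continuity for grids, \cref{cor:gridcont}, and iterate until the discontinuity set is exhausted. The iteration terminates by induction on dimension.

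First apply (GD)$_n$ to $\{A\}$ to obtain a grid decomposition $\mathcal{G}_0$ compatible with $A$. For each $\es\in\mathcal{G}_0$ with $\surf(\es)\subseteq A$, let $Y_\es\subseteq \surf(\es)$ be the set of points at which $f|_{\surf(\es)}$ is discontinuous, and let $Z_\es$ be its closure in $\surf(\es)$. Both are definable, since continuity at a point is first-order expressible. By \cref{cor:gridcont}, $\surf(\es)\setminus Z_\es$ is open and dense in $\surf(\es)$, so $Z_\es$ is nowhere dense in $\surf(\es)$. Then \cref{lem:BC-grids}(2) gives $\dim Z_\es<\dim\surf(\es)$.

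Next, apply (GD)$_n$ again to the finite collection consisting of $A$, all $\surf(\es)$ for $\es\in\mathcal{G}_0$, and all $Z_\es$. This yields a refinement $\mathcal{G}_1$. Every grid $\es'\in\mathcal{G}_1$ with surface inside $A$ lies in some $\surf(\es)$, and it lies either in $\surf(\es)\setminus Z_\es$ or in $Z_\es$.

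In the first case, $f$ restricted to $\surf(\es)\setminus Z_\es$ is continuous. This holds because $\surf(\es)\setminus Z_\es$ is open in $\surf(\es)$, so continuity of $f|_{\surf(\es)}$ at those points transfers. Hence $f|_{\surf(\es')}$ is continuous.

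In the second case, $\dim\surf(\es')\le\dim Z_\es<\dim\surf(\es)$.

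We then repeat the procedure. Let $B_k$ be the union of the surfaces on which continuity has not yet been secured. The maximal dimension of the grids in $B_k$ strictly decreases at each stage, so after at most $n+1$ rounds no bad grids remain. For the dimension bookkeeping, use \cref{cor:grid-dim}(3): dimensions of surfaces are the integers $w_1+\dots+w_n$. Each round refines all previous sets simultaneously, so the final decomposition is still compatible with $A$, and every grid with surface in $A$ is good.

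One subtlety needs care. Continuity of $f|_{\surf(\es)}$ is about the restriction to the whole surface, and when we refine a good grid, its sub-surfaces inherit continuity because restriction preserves it. The bad grids in $Z_\es$, by contrast, must be handled afresh: we look at the discontinuity set of $f|_{\surf(\es')}$ itself, not the inherited one. That is exactly what the next round does.

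The main obstacle I expect is purely organizational. Each application of (GD)$_n$ must be to a finite collection that includes all previously produced surfaces, so that compatibility and the grid-decomposition property, including the projection condition, are maintained. Since (GD)$_n$ accepts arbitrary finite collections, this is unproblematic. Termination is guaranteed by the strict dimension drop from \cref{lem:BC-grids}(2).
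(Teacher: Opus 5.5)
Your proof is correct, but it takes a different route from the paper's.

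\textbf{The paper's route.} The paper applies (GD)$_{n+1}$ once, to the graph $\Gamma(f)\subseteq\RR^{n+1}$, and sets $\mathcal{G}:=\{\pi(\es_0) : \es_0\in\mathcal{G}_0\}$. Compatibility with $A=\pi(\Gamma(f))$ is inherited from compatibility with $\Gamma(f)$. Continuity then comes for free from the definition of a stack. A connected component $C$ of $\surf(\pi(\es_0))$ is clopen in the surface and is a connected component of some $\pi(S_d)$. So $S_d\cap(C\times\RR)$ is a union of graphs of functions in $\mathcal{C}(C)$. Because the fibers of $\Gamma(f)$ are singletons, there is exactly one such function, and $f|_C$ equals it.

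\textbf{Your route.} You run the generic-continuity-plus-dimension-descent argument inside $\RR^n$. It uses only (GD)$_n$, but it needs \cref{cor:gridcont}, \cref{lem:BC-grids}(2), and a finite iteration. Your bookkeeping for that iteration is correct. In particular, you rightly recompute the discontinuity set of $f|_{\surf(\es')}$ on each bad piece rather than inheriting it, and the maximal dimension of bad pieces does strictly drop each round.

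\textbf{Comparison.} The paper's route is one-shot and yields more: over each piece, $\Gamma(f)$ is itself a grid surface, so $f$ is given componentwise by stack functions. Your route is more modular. It relies only on topological tameness properties of grid surfaces, not on continuity being built into the definition of stacks.

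\textbf{One small point.} The statement of \cref{cor:gridcont} only gives some open dense $B$ on which $f$ is continuous. To get density of $\surf(\es)\setminus Z_\es$, add the remark that such a $B$ is disjoint from $Y_\es$, hence from its closure in $\surf(\es)$. This is exactly how the paper proves that lemma.
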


\begin{proof}
Let $\mathcal{G}_0$ be a grid decomposition of $\RR^{n+1}$ compatible with the graph $\Gamma(f)$. Set
\[
\mathcal{G} := \{ \pi(\es_0) \ : \ \es_0 \in \mathcal{G}_0\}. 
\]
Let $\es \in \mathcal{G}$, and let $\es_0=(S_d)_{d \in D} \in \mathcal{G}_0$ be such that $\pi(\es_0)=\es$. We now show that $f|_{\surf(\es)}$ is continuous. Since all connected components of $\surf(\es)$ are clopen in $\surf(\es)$, it is enough to show that the restriction to a connected component is continuous.\newline

Let $C\subseteq \surf(\es)$ be a connected component of $\surf(\es)$, and let $d \in D$ be such that $C$ is a connected component of $\pi(S_d)$. 
Let $f_1,\dots,f_m \in \mathcal{C}(C)$ witness that $S_d$ is a stack over $\pi(S_d)$. Since $\surf(\es_0)\subseteq \Gamma(f)$, we know that $m=1$. Thus
\[
\Gamma(f_1) = S_d \cap [C \times \RR] \subseteq \Gamma(f).
\]
Thus $f|_C=f_1$, and hence continuous.
\end{proof}

\begin{cor}
  Let $A \subseteq \RR^n$ be definable. Then
  \begin{enumerate}
      \item $A$ is constructible, and hence $\DSig$, and
      \item $\dim A = \max \{ w_1+\dots+w_n \ : \ A \text{ contains the surface of a $(w_1,\dots w_n)$-grid}\}$.
  \end{enumerate}
\end{cor}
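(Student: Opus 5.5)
Both parts follow from (GD)$_n$ of \cref{thm:mainthm}, applied to the collection $\{A\}$. This gives a grid decomposition $\mathcal{G}$ of $\RR^n$ compatible with $A$. As noted after the definition of compatibility,
\[
A=\bigcup_{\es\in\mathcal{I}}\surf(\es), \qquad \text{where } \mathcal{I}:=\{\es\in\mathcal{G} : \surf(\es)\subseteq A\}.
\]
This set $\mathcal{I}$ is finite. Everything else comes from the properties of surfaces of grids established in Section \ref{section:grids}.

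For (1), by \cref{lem:proj-grid}(1) each $\surf(\es)$ is locally closed. A locally closed set is the intersection of an open set and a closed set, so it is constructible. Hence $A$, being a finite union of locally closed sets, is constructible. For the $\DSig$ claim, each $\surf(\es)$ is $\DSig$ by \cref{lem:proj-grid}(1). A finite union of $\DSig$ sets is $\DSig$ by \cref{fact:D-Sigma}(2), so $A$ is $\DSig$. One point needs care: the definable family witnessing $\DSig$ for each surface must be definable. This holds because each surface is a definable locally closed set, and \cref{fact:D-Sigma}(4) applies to such sets.

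For (2), write $M$ for the maximum on the right-hand side. First I show the maximum is taken over a nonempty set, and bound it below. If $A\neq\emptyset$, then $\mathcal{I}\neq\emptyset$. Each $\es\in\mathcal{I}$ is a $(w_1,\dots,w_n)$-grid with surface contained in $A$, and $\dim\surf(\es)=w_1+\dots+w_n$ by \cref{lem:proj-grid}(3). Since $\dim$ is monotone under inclusion (\cref{fact:naive}(2)), every grid surface contained in $A$ has dimension at most $\dim A$, so $M\le\dim A$. For the reverse inequality, apply \cref{fact:naive}(4) repeatedly to the finite union $A=\bigcup_{\es\in\mathcal{I}}\surf(\es)$ of definable sets. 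This gives
\[
\dim A=\max_{\es\in\mathcal{I}}\dim\surf(\es)=\max_{\es\in\mathcal{I}}(w_1+\dots+w_n)\le M.
\]
If $A$ is empty, it contains no grid surface at all. Then the maximum is taken over the empty set, and I would adopt the convention that it equals $-\infty$, which matches $\dim\emptyset=-\infty$.

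I expect no real obstacle; the only subtle step is the $\DSig$ definability point in (1).
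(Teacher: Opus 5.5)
Your proposal is correct and follows the paper's proof essentially verbatim: you apply (GD)$_n$ to write $A$ as a finite union of grid surfaces, use that surfaces are locally closed for (1), and for (2) combine monotonicity of $\dim$, the finite-union formula for $\dim$, and $\dim\surf(\es)=w_1+\dots+w_n$. Your additional remarks only make explicit details the paper leaves implicit; these are the $\DSig$ justification via finite unions of definable locally closed sets and the convention for $A=\emptyset$.
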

\begin{proof}
For (1), observe that $A$ is a finite disjoint union of surfaces of grids by \cref{thm:mainthm}, and thus a finite disjoint union of locally closed sets by \cref{lem:proj-grid}. Hence $A$ is constructible.\newline

For (2), first observe that by \cref{lem:proj-grid} and \cref{fact:naive}(2)
\[
\dim A \geq \max \{ w_1+\dots+w_n \ : \ A \text{ contains the surface of a $(w_1,\dots w_n)$-grid}\}.
\]
 By \cref{thm:mainthm}, there are grids $\es_1,\dots,\es_m$ in $\RR^n$ such that $A = \bigcup_{i=1}^m \surf(\es_i)$. By \cref{fact:naive}(4),
\[
\dim A = \max \{\dim(\surf(\es_1)),\dots,\dim(\surf(\es_m))\}.
\]
Thus (2) follows from \cref{lem:proj-grid}.
\end{proof}

\begin{cor}
 Let $A\subseteq \RR^n$ be definable. Then there is a decreasing definable family $(A_d)_{d \in D}$ such that $A = \bigcup_{d \in D} A_d$ and for every $d \in D$
 \begin{enumerate}
     \item $A_d$ has finitely many connected components, and
     \item $\dim A = \dim A_d$.
 \end{enumerate}
\end{cor}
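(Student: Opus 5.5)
We will deduce the statement from the grid decomposition theorem (\cref{thm:mainthm}, (GD$)_n$) applied to $\mathcal{A}=\{A\}$. This gives grids $\es_1,\dots,\es_m$ with $A=\bigcup_{i=1}^m\surf(\es_i)$. By \cref{lem:changeseqs}, we may assume that all $\es_i=(S_{i,d})_{d\in D}$ are grids over a common sequence set $D$. We then set
\[
A_d:=\bigcup_{i=1}^m S_{i,d}.
\]
The family $(A_d)_{d\in D}$ is definable, since the grids are definable families indexed by $D$. It is decreasing in the sense that $A_d\subseteq A_e$ for $d>e$, because each $S_{i,d}$ is a substack, and in particular a subset, of $S_{i,e}$. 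Its union is $\bigcup_i\bigcup_d S_{i,d}=\bigcup_i\surf(\es_i)=A$.

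For (1), each $S_{i,d}$ is a stack and so has finitely many connected components by \cref{lem:fin-CC}. A finite union of sets, each with finitely many connected components, has finitely many connected components: every component of the union is a union of some of the components of the pieces. Hence each $A_d$ has finitely many connected components.

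For (2), the inequality $\dim A_d\le \dim A$ holds by \cref{fact:naive}(2). For the reverse inequality, only grids whose type realizes $\dim A$ matter. By \cref{fact:naive}(4) and \cref{lem:proj-grid}(3), $\dim A=\max_i \dim\surf(\es_i)=\max_i |w^{(i)}|$, where $w^{(i)}$ is the type of $\es_i$. Choose $i$ attaining this maximum. Then $\dim S_{i,d}=|w^{(i)}|=\dim A$ by \cref{cor:dim-stack}(3), and so $\dim A_d\ge\dim S_{i,d}=\dim A$.

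The main obstacle is that some $S_{i,d}$ may be empty. The definition of a stack allows the empty set, since it is a union of zero intervals, and a top-dimensional grid might only start being nonempty at small $d$. If $S_{i,d}=\emptyset$, the dimension computation above fails at that $d$. To handle this, we first discard all grids with empty surface. For each remaining grid $\es_i$, since the family is decreasing and $D$ accumulates only at $0$, there is $d_i\in D$ with $S_{i,e}\neq\emptyset$ for all $e\le d_i$. We then replace $D$ by the sequence set $D':=\{1\}\cup\{d\in D: d\le \min_i d_i\}$, suitably rescaled so that it lies in $(0,1]$ and contains $1$. Re-indexing the family, we put $A_1:=A_{\min_i d_i}$. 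This family is still definable, still decreasing, and has the same union $A$, but now every $S_{i,d}$ with $d\in D'$ is nonempty. With this adjustment, the argument for (2) goes through.
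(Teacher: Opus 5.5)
Your proof is correct and is essentially the paper's argument: decompose $A$ into finitely many surfaces of grids over a common sequence set $D$ and set $A_d=\bigcup_i S_{i,d}$. Your extra truncation of $D$, which makes every stack of every grid with nonempty surface nonempty, handles a point the paper's proof passes over, since its dimension computation tacitly relies on \cref{cor:dim-stack}(3) and that fails for empty stacks; also, $\{1\}\cup(D\cap(0,d_0])$ is already a sequence set, so no rescaling is needed.
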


\begin{proof}
Let $\es_1=(S_{1,d})_{d\in D},\dots,\es_m=(S_{m,d})_{d\in D}$ be grids in $\RR^n$ such that 
\[
A = \bigcup_{i=1}^m \surf(\es_i).
\]
For $d \in D$, set
\[
A_d := \bigcup_{i=1}^m S_{i,d}.
\]
Since stacks have finitely many connected components, so does $A_d$. We observe that 
\begin{align*}    
\dim A &= \max \{ \dim \surf(\es_i) \ : \ i=1,\dots, m\}\\
&= \max \{ \dim S_d \ : \ i=1,\dots, m\}\\
&= \dim A_d. 
\end{align*}
Since grids are definable families, so is $(A_d)_{d\in D}$.
\end{proof}

We recall the definition of a \textbf{countable cell decomposition}\footnote{This is called countable decomposition in \cite{Miller-tame}. We added the word ``cell'' in order to distinguish it from the grid decomposition results.} from \cite{Miller-tame}: the only countable cell decomposition of $\RR^0$ is $\{\RR^0\}$, and for $n>0$, a countable cell decomposition of $\RR^{n}$ is a countable partition $\mathcal{D}$ of $\RR^n$ into cells such that $\{ \pi(C) \ : \ C \in \mathcal{D}\}$ is a countable cell decomposition of $\RR^{n-1}$.

\begin{lem}\label{lem:ccd}
Let $\mathcal{G}$ be a grid decomposition of $\RR^n$. Then
\[
\mathcal{D} := \{ C\subseteq \RR^n \ : \ \es \in \mathcal{G}, \ C \text{ is a connected component of } \surf(\es)\}
\]
is a countable cell decomposition of $\RR^n$.
\end{lem}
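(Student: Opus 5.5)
The plan is to argue by induction on $n$, verifying three things: every member of $\mathcal{D}$ is a cell, $\mathcal{D}$ is a countable partition of $\RR^n$, and $\{\pi(C) : C\in\mathcal{D}\}$ is a countable cell decomposition of $\RR^{n-1}$. The case $n=0$ is trivial, since $\mathcal{G}=\{\RR^0\}$ and the only component is $\RR^0$.

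First, the cell property and countability. Let $\es=(S_d)_{d\in D}\in\mathcal{G}$ be a $w$-grid and let $C$ be a connected component of $\surf(\es)$. By \cref{lem:CC0-grid}(1) there is $d\in D$ such that $C$ is a connected component of $S_d$, and by \cref{lem:fin-CC}(2)(a) $C$ is then a $w$-cell. By the same lemma each $S_d$ has only finitely many connected components. Since $D$ is countable and every component of $\surf(\es)$ arises from some $S_d$, $\surf(\es)$ has countably many components. As $\mathcal{G}$ is finite, $\mathcal{D}$ is countable. The surfaces in $\mathcal{G}$ partition $\RR^n$, and the components of each surface partition that surface, so $\mathcal{D}$ partitions $\RR^n$.

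Second, the projection condition, which I expect to be the main point. I will show
\[
\{\pi(C) : C\in\mathcal{D}\} = \{ C' : \es\in\mathcal{G},\ C' \text{ a connected component of } \surf(\pi(\es))\}.
\]
Since $\{\pi(\es) : \es\in\mathcal{G}\}$ is a grid decomposition of $\RR^{n-1}$, the right-hand side is a countable cell decomposition of $\RR^{n-1}$ by the induction hypothesis, which completes the proof.

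For ``$\subseteq$'': take $C$ a component of $S_d$. By \cref{lem:fin-CC}(2)(c), $\pi(C)$ is a connected component of $\pi(S_d)$. By \cref{lem:CC0-grid}(1) applied to the grid $\pi(\es)$, it is then a component of $\surf(\pi(\es))$.

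For ``$\supseteq$'': let $C'$ be a component of $\surf(\pi(\es))$. By \cref{lem:CC0-grid}(1), $C'$ is a component of $\pi(S_d)$ for some $d$. By the definition of a stack, $S_d\cap(C'\times\RR)$ is a union of graphs or bands over $C'$; each of these is a component of $S_d$, hence of $\surf(\es)$, and projects onto $C'$. The one subtle point is that the witnessing family over $C'$ might be empty, i.e.\ $m_{C'}=0$. This cannot happen, because $C'\subseteq\pi(S_d)$ means some point of $S_d$ lies over $C'$.
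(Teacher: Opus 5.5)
Your proof is correct and follows the same route as the paper: induction on $n$, using \cref{lem:CC0-grid}(1) to pass between components of $\surf(\mathcal{S})$ and components of some $S_d$, and \cref{lem:fin-CC} to see that every component of $\pi(S_d)$ is the projection of a component of $S_d$. You are more complete than the written proof, which only shows that each component of $\surf(\pi(\mathcal{S}))$ is some $\pi(C)$; it leaves implicit the cell property, countability, and the reverse inclusion that each $\pi(C)$ is such a component.
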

\begin{proof}
We proceed by induction on $n$. The case $n=0$ is immediate. Let $n>0$. It is clear that $\mathcal{D}$ is partition. Thus it is left to show that $\{ \pi(C) \ : \ C \in \mathcal{D}\}$ is a countable cell decomposition of $\RR^{n-1}$. By definition, $\{ \pi(\es) \ : \ \es \in \mathcal{G}\}$ is a grid decomposition of $\RR^{n-1}$. By induction,
\[
\mathcal{D}':=\{ C'\subseteq \RR^{n-1} \ : \ \es \in \mathcal{G}, \ C' \text{ is a connected component of } \surf(\pi(\es))\}
\]
is a countable cell decomposition. It remains to argue that for $C'\in \mathcal{D}'$ there is $C\in \mathcal{D}$ with $\pi(C)=C'$. Let $C'\in \mathcal{D}'$ and let $\es=(S_d)_{d\in D} \in \mathcal{G}$ be such that $C'$ is a connected component of $\surf(\pi(\es))$. By \cref{lem:CC0-grid}, there is $d \in D$ such that $C'$ is a connected component of $\pi(S_d)$. By \cref{lem:fin-CC} there is a connected component $C$ of $S_d$ such that $\pi(C)=C'.$ Since $C$ is a connected component of $\surf(\es)$ by \cref{lem:CC0-grid}, we conclude that $C\in \mathcal{D}$.
\end{proof}

\begin{cor}[Countable cell decomposition]
Let $\mathcal{A}$ be a finite collection of definable subsets of $\RR^n$. Then there is a countable cell decomposition of $\RR^n$ compatible with $\mathcal{A}$.    
\end{cor}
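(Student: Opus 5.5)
Apply the grid decomposition theorem (GD$)_n$ from \cref{thm:mainthm} to $\mathcal{A}$, obtaining a grid decomposition $\mathcal{G}$ of $\RR^n$ compatible with $\mathcal{A}$. Then take $\mathcal{D}$ to be the collection of all connected components of surfaces of grids in $\mathcal{G}$. By \cref{lem:ccd}, $\mathcal{D}$ is a countable cell decomposition of $\RR^n$. Two points remain: that every member of $\mathcal{D}$ is a cell, and that $\mathcal{D}$ is compatible with $\mathcal{A}$.

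For the first point, let $C$ be a connected component of $\surf(\es)$ for some $\es=(S_d)_{d\in D}\in\mathcal{G}$. By \cref{lem:CC0-grid}(1) there is $d\in D$ such that $C$ is a connected component of the stack $S_d$. Hence $C$ is a $w$-cell by \cref{lem:fin-CC}(2)(a). Countability holds because $\mathcal{G}$ is finite, $D$ is countable, and each $S_d$ has finitely many connected components. The projection condition, that $\{\pi(C) : C\in\mathcal{D}\}$ is again a countable cell decomposition, is exactly the content of \cref{lem:ccd}, proved by induction using that $\{\pi(\es):\es\in\mathcal{G}\}$ is a grid decomposition of $\RR^{n-1}$.

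For compatibility, let $A\in\mathcal{A}$ and $C\in\mathcal{D}$, with $C\subseteq\surf(\es)$ for some $\es\in\mathcal{G}$. Since $\mathcal{G}$ is compatible with $\mathcal{A}$, either $\surf(\es)\subseteq A$, which gives $C\subseteq A$, or $\surf(\es)\cap A=\emptyset$, which gives $C\cap A=\emptyset$.

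The only substantive ingredient is (GD$)_n$ itself, which is stated in \cref{thm:mainthm} and may be assumed. The corollary therefore reduces to bookkeeping, and there is no real obstacle.
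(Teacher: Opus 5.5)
Your proposal is correct and matches the paper's proof: you take a grid decomposition $\mathcal{G}$ compatible with $\mathcal{A}$ from (GD$)_n$, let $\mathcal{D}$ be the set of connected components of the surfaces of grids in $\mathcal{G}$, invoke \cref{lem:ccd}, and inherit compatibility directly from $\mathcal{G}$. Your explicit check that each component is a cell, via \cref{lem:CC0-grid}(1) and \cref{lem:fin-CC}, fills in a detail the paper leaves implicit in \cref{lem:ccd}.
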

\begin{proof}
Let $\mathcal{G}$ be a grid decomposition compatible with $\mathcal{A}$. Then
\[
\mathcal{D} := \{ C\subseteq \RR^n \ : \ \es \in \mathcal{G}, \ C \text{ is a connected component of } \surf(\es)\}
\]
is compatible with $\mathcal{A}$, and by \cref{lem:ccd} a countable cell decomposition.
\end{proof}

\subsection{Proof of \cref{thm:mainthm}}
In the next three subsections, we show the following three implications:
\begin{itemize}
    \item[\eqref{section:step1}] (PC$)_{n}$+(UCB$)_{n-1}\Rightarrow $ (UCB$)_{n}$, for $n\geq 2$,
    \item[\eqref{section:step2}] (GD$)_{n-1}$+(UCB$)_{n}$+(PC$)_{n}\Rightarrow $ (GD$)_{n}$, for $n\geq 1$,
    \item[\eqref{section:step3}] (GD$)_{n}\Rightarrow $ (PC$)_{n+1}$, for $n\geq 1$.
\end{itemize}
Note that (GD$)_0$ holds trivially, and (PC$)_1$ holds since every discrete set is the surface of a $(0)$-grid by \cref{lem:zerogrids}. Furthermore, (UCB$)_1$ follows immeaditely from d-minimality, and we deduce (GD$)_1$ from \eqref{section:step2}. Thus \cref{thm:mainthm} follows from the three implications and simultaneous induction on $n$. 

\subsubsection{(PC$)_{n}$+(UCB$)_{n-1}\Rightarrow $ (UCB$)_{n}$}\label{section:step1}
Let $n\geq 2$, and let $A\subseteq \RR^n$ be definable such that $A_a$ has no interior for each $a \in \pi(A)$. 
Without loss of generality, we can assume that $\pi(A)\subseteq [0,1]^{n-1}$. By \cref{fact:cb-homeo}, we also may assume that $A_a\subseteq [0,1]$ for each $a \in \pi(A)$.
Set 
\[
C := \{ (a,b) \in \RR^{n-1} \times \RR \ : \ b \in \isol(A_a)\}.
\]
Note that $C\subseteq A$ and $C_a$ is discrete and non-empty for each $a \in \pi(A)$. Moreover, for each $a\in \pi(A)$,
\[
\cl(C_a) = \cl(A_a).
\]
Thus, we can reduce to the case that $A=C$, and hence to the case that $A_a$ is discrete for each $a\in \pi(A)$. 
By (PC$)_{n}$ there are grids $\es_1,\ldots,\es_m$ in $\RR^n$ whose surfaces partition $A$. 
By \cref{fact:cbmax}, we have that for each $a\in \pi(A)$
\begin{align*}
\rk(\cl(A_a)) &= \rk(\cl(\bigcup_{i=1}^m \surf(\es_i)_a)) = \rk( \bigcup_{i=1}^m\cl(\surf(\es_i)_a))\\
&= \max \{ \rk(\cl(\surf(\es_i)_a)) \ : \ i=1,\dots,m\}.
\end{align*}
Hence, we can further reduce to the case that $A$ is the surface of a grid. Let $w=(w_1,\dots,w_n)\in \{0,1\}^n$ be such that $A$ is the surface of a $w$-grid.\\

We first consider the case that $w_{n-1} = 0$. Then the fiber $\pi(A)_x$ is discrete for each $x \in \RR^{n-2}$.
Let $\mu : \pi(A) \to \RR_{\geq 0}$ be given by
\[
(x,y) \in \RR^{n-2} \times \RR \mapsto \frac13 \sup \{ t \in \RR_{\geq 0} \  : \ (y-t,y+t) \cap \pi(A)_x = \{y\}\}. 
\]
Note $\mu$ is definable, since $\pi(A)$ is definable. Since $\pi(A)_x$ is discrete for each $x\in \RR^{n-2}$, we have that $\mu(\pi(A))\subseteq \RR_{>0}$.
Define
\[
B := \bigcup_{(x,y) \in \pi(A)} \{ (x,y + \mu(x,y)z) \ : \ z \in \cl(A_{(x,y)}) \}.
\]
Observe that $B$ is definable. Since $\pi(A)_x\subseteq [0,1]$, we have that $y + \mu(x,y)\cl(A_{(x,y)})$ is closed and open in $B_x$ for each $x\in \RR^{n-2}$ and $y\in \RR$. It follows that for each $x\in \RR^{n-2}$ the fiber $B_x$ does not have interior and
\[
\cl(B_x) = \bigcup_{y \in \pi(A)_x}y + \mu(x,y)\cl(A_{(x,y)}) = B_x.
\]
Thus, by (UCB$)_{n-1}$, there is $k \in \NN$ such that for all $x \in \RR^{n-2}$
\[
\rk(\cl(B_x)) \leq k.
\]
Now let $(x,y)\in \pi(A)$. Since the sets $\cl(A_{(x,y)})$ and $y+\mu(x,y)\cl(A_{(x,y)})$ are homeomorphic, we obtain by \cref{fact:cb-homeo} that 
\[
\rank(\cl(A_{(x,y)})) = \rank(y + \mu(x,y)\cl(A_{(x,y)})) \leq \rank(\cl(B_x))\leq k.
\]
This concludes the case where $w_{n-1}=0.$\newline

Now consider the case that $w_{n-1}=1$. By \cref{lem:UCB}
\[
A' := \{(x,y,z) \in (\RR^{n-2} \times \RR \times \RR)\cap A \ : \ (x,y) \in \surf(\Mid(\pi(A)))\},
\]
is the surface of a $(w_1,\dots,w_{n-2},0,0)$-grid. By the first case, there is $k\in \NN$ such that for all $(x,y)\in \surf(\Mid(\pi(A)))$
\[
\rank(\cl(A'_{(x,y)}))\leq k.
\]
We now show that $k$ has the desired property. Let $(x,y,z)\in \RR^{n-2}\times \RR \times \RR$ be such that $(x,y,z)\in A$ and let $C$ be the connected component of $\pi(A)$ containing $(x,y)$. Observe that the connected component of $\pi(A)_x$ containing $y$ is a bounded open interval. Let $e \in \RR$ be its midpoint, and note that  $(x,e) \in C$. Hence by \cref{prop:boundfortowers}, 
\[
\rank(\cl(A_{(x,y)})) = \rank(\cl(A_{(x,e)})).
\]
Since $A'_{(x,e)}=A_{(x,e)}$, we conclude that 
\[
\rank(\cl(A_{(x,y)})) = \rank(\cl(A_{(x,e)})) = \rank(\cl(A'_{(x,e)}))\leq k.
\]

\subsubsection{ (GD$)_{n-1}$+(UCB$)_{n}$+(PC$)_{n}\Rightarrow $ (GD$)_{n}$}\label{section:step2}

First, we collect two lemmas we need in the proof. The first one is an easy consequence of (UCB$)_{n}$ and (PC$)_{n}$.

\begin{lem}\label{lem:discretefibers}
    Let $A\subseteq \RR^n$ be definable such that $A_x$ has no interior for all $x\in \pi(A)$.
    Then $A$ is a finite disjoint union of surfaces of grids.
\end{lem}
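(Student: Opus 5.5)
The plan is to use (UCB$)_n$ to stratify $A$ by Cantor--Bendixson rank into finitely many pieces, each of which has discrete fibers, and then apply (PC$)_n$ to each piece. By (UCB$)_n$ there is $k\in\NN$ such that $\rk(\cl(A_x))\leq k$ for every $x\in\pi(A)$. We define, for $j=0,\dots,k$, the sets
\[
A_j := \{ (x,y) \in A \ : \ y \in \cl(A_x)^{[j]}\setminus \cl(A_x)^{[j+1]}\}.
\]
These are definable uniformly in $x$. The reason is that each derived set $\cl(A_x)^{[j]}$ for finite $j$ is obtained from $\cl(A_x)$ by removing isolated points $j$ times, and both $\cl(\cdot)$ and $\isol(\cdot)$ are first-order definable operations applied fiberwise. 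By the choice of $k$ we have $\cl(A_x)^{[k+1]}=\emptyset$ for each $x$, using that a countable closed set of rank $\leq k$ is exhausted after $k$ derivations, as in \cref{fact:RK}. Hence the $A_j$ form a finite partition of $A$.

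Next we check that each fiber $(A_j)_x$ is discrete. Every point of $(A_j)_x$ is an isolated point of $\cl(A_x)^{[j]}$, and $(A_j)_x\subseteq \cl(A_x)^{[j]}$. So each point of $(A_j)_x$ has a neighborhood meeting $(A_j)_x$ only in itself, which means $(A_j)_x$ is discrete. (PC$)_n$ therefore applies to each $A_j$ and expresses it as a finite disjoint union of surfaces of grids. Since the $A_j$ are pairwise disjoint, the union of all these decompositions is a finite disjoint union of surfaces of grids equal to $A$.

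The only point requiring care is the finiteness of the stratification. One has to confirm that the rank bound from (UCB$)_n$ really forces $\cl(A_x)^{[k+1]}=\emptyset$, rather than merely making the derived sequence stabilize at a nonempty perfect set. If $\cl(A_x)$ had a nonempty perfect kernel, it would be uncountable. This cannot happen by d-minimality: $A_x$ has no interior, so it is a finite union of discrete sets, hence countable, and then $\cl(A_x)$ is a countable closed set by \cref{fact:unionsofdiscrete}. Its Cantor--Bendixson process therefore terminates at $\emptyset$ at stage $\rk(\cl(A_x))\leq k$, which is what we need.
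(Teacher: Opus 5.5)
Your proposal is correct and is essentially the paper's proof: bound $\rk(\cl(A_x))$ uniformly using (UCB$)_n$, split $A$ into the finitely many pairwise disjoint layers $\{(x,y)\in A : y\in \isol(\cl(A_x)^{[j]})\}$, each of which has discrete fibers, and apply (PC$)_n$ to each layer. Your indexing from $j=0$ is more accurate than the paper's $j=1,\dots,M$, which omits the points of $A_x$ that are isolated in $\cl(A_x)$.

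The one slip is your justification that $\cl(A_x)$ is countable. This follows neither from the countability of $A_x$ nor from Fact~\ref{fact:unionsofdiscrete}. Instead, observe that $A_x$, being a finite union of discrete sets, is nowhere dense. So $\cl(A_x)$ is a definable set with empty interior, and by d-minimality it is itself a finite union of discrete sets, hence countable.
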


\begin{proof}
    By (UCB)$_{n}$, there is $M\in \NN$ such that  for all $x\in \pi(A)$
    \[
    \rank(\cl(A_x))\leq M.
    \]
    For each $j=1,\ldots,M,$ define a set 
    \[
    X_j := \{(x,y)\in \RR^{n-1}\times \RR: y\in A_x \cap \isol(\cl(A_x)^{[j]})\}.
    \]
    Observe that for all $j,\ell \in \{1,\dots,M\}$
    \begin{enumerate}
        \item $A=\bigcup_{j=1}^{M} X_j$,
        \item $X_j$ is definable,
        \item $X_{j}\cap X_{\ell} = \emptyset$, whenever $j\neq \ell$, and
        \item $(X_j)_x$ is discrete for all $x\in \pi(X_j)$.
    \end{enumerate}
    By (PC)$_{n}$, the set $X_j$ is a finite disjoint union of surfaces of grids for $j\in \{1,\dots,M\}$. 
    Taking the union of all such surfaces, we conclude that $A$ is a finite disjoint union of surfaces of grids.
\end{proof}

The next lemma uses (GD$)_{n-1}$ to refine grid decompositions.
\begin{lem}\label{lem:grid-refine}
Let $\mathcal{A}=\{A_1,\ldots,A_k\}$ be a finite collection of definable subsets of $\RR^n$ such that $\mathcal{A}$ partitions $\RR^n$ and each $A_i\in\mathcal{A}$ is a finite disjoint union of surfaces of grids.
Then there is a grid decomposition of $\RR^n$ compatible with $\mathcal{A}$.
\end{lem}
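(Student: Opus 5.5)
Write each $A_i$ as a finite disjoint union of surfaces of grids, and let $\mathcal{T}=\{\es_1,\dots,\es_p\}$ be all these grids. By \cref{lem:changeseqs} we may take them all over one sequence set $D$. Since $\mathcal{A}$ partitions $\RR^n$, the surfaces $\surf(\es_1),\dots,\surf(\es_p)$ partition $\RR^n$. It is then enough to build a grid decomposition compatible with $\{\surf(\es_j)\}_j$. The surfaces already partition $\RR^n$; what is missing is that the projected grids $\pi(\es_j)$ form a grid decomposition of $\RR^{n-1}$. The plan is to refine the base with (GD$)_{n-1}$ and then cut each $\es_j$ along the refined base.

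\textbf{Step 1: refine the base.} Apply (GD$)_{n-1}$ to the finite collection $\{\surf(\pi(\es_j)) : j=1,\dots,p\}$. This gives a grid decomposition $\mathcal{G}'$ of $\RR^{n-1}$ such that each $\surf(\mathcal{T}')$, $\mathcal{T}'\in\mathcal{G}'$, is either contained in or disjoint from each $\surf(\pi(\es_j))$.

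\textbf{Step 2: restrict each $\es_j$ to a piece of the base.} For $\mathcal{T}'=(T'_d)_{d\in D}\in\mathcal{G}'$ with $\surf(\mathcal{T}')\subseteq\surf(\pi(\es_j))$, put
\[
S^{j,\mathcal{T}'}_d := S_{j,d}\cap [T'_d\times\RR].
\]
The goal is to show that $(S^{j,\mathcal{T}'}_d)_{d\in D}$, possibly after reindexing $D$ via $\lambda_D$, is a grid with base $\mathcal{T}'$ and surface $\surf(\es_j)\cap[\surf(\mathcal{T}')\times\RR]$. Then $\mathcal{G}$, the set of all such grids, has surfaces partitioning $\RR^n$ and projections forming $\mathcal{G}'$. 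Hence $\mathcal{G}$ is a grid decomposition compatible with $\mathcal{A}$.

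To see that each $S^{j,\mathcal{T}'}_d$ is a stack, note that every connected component $C'$ of $T'_d$ is a cell. By \cref{lem:CC0-grid} it lies inside a single connected component of $\surf(\pi(\es_j))$, which is a component of some $\pi(S_{j,e})$. So \cref{lem:restrict-stack} applies to restricting $S_{j,e}$ over $C'$. The étale condition passes to subsets via \cref{fact:etale}, and the substack conditions should follow from \cref{lem:substack2} together with \cref{lem:subgrid-CC}.

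\textbf{Main obstacle: matching indices.} The delicate point is that $C'$ may lie in $\pi(S_{j,e})$ only for $e<d$, not in $\pi(S_{j,d})$. In that case $S_{j,d}\cap[T'_d\times\RR]$ misses part of the surface over $C'$, or is not a stack over all of $T'_d$. To fix this I would redefine
\[
S^{j,\mathcal{T}'}_d := \bigcup\{S_{j,e}\cap [C'\times\RR] : C' \text{ a component of } T'_d,\ e\in D \text{ maximal with } C'\subseteq\pi(S_{j,e})\},
\]
and then also intersect with an index bound (say $e\ge d$ in a reparametrized sense) so that only finitely many components appear. This keeps the family definable, each member a finite union of cells, and the family decreasing.

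The surfaces over $C'$ are the same, because the restricted stacks over $C'$ grow with decreasing $e$ and each earlier component persists as a component. Verifying hypothesis (3) of \cref{lem:substack2}, that $\pi(S_{j,d})\cap T'_d$ is a substack of $T'_d$, is the step I expect to require the most care. I would handle it using that components of $T'_d$ are components of $\surf(\mathcal{T}')$ contained in a component of $\surf(\pi(\es_j))$, together with the clopenness statements of \cref{lem:CC0-grid}.
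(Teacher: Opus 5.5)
Your setup (one sequence set, Step~1 via (GD$)_{n-1}$, and Step~2 with $S_{j,d}\cap[T'_d\times\RR]$) is exactly the paper's construction. The problem is what follows: the ``main obstacle'' is not an obstacle, and the redefinition you propose to get around it is wrong.

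\textbf{The missing observation.} By \cref{lem:CC0-grid}(3) applied to $\pi(\es_j)$, each $\pi(S_{j,d})$ is clopen in $\surf(\pi(\es_j))$. A component $C'$ of $T'_d$ is connected and lies in $\surf(\pi(\es_j))$, so either $C'\subseteq \pi(S_{j,d})$ or $C'\cap\pi(S_{j,d})=\emptyset$. Hence $\pi(S_{j,d})\cap T'_d$ is a union of components of $T'_d$, and so a substack of $T'_d$ by \cref{lem:substack-CC}. This is precisely hypothesis (3) of \cref{lem:substack2}, which you deferred as the delicate step; it is a two-line consequence. \cref{lem:restrict-stack} on each such component then shows that $S_{j,d}\cap[T'_d\times\RR]$ is a stack. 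It is a stack over this substack rather than over all of $T'_d$, but nothing requires more. \cref{lem:subgrid-CC} makes the projected family a grid. The union over $d$ is all of $\surf(\es_j)\cap[\surf(\mathcal{T}')\times\RR]$: if $(x,y)\in S_{j,f}$ and $x\in T'_{d_0}$, then $(x,y)$ lies in stage $\min(f,d_0)$. One indexes by $D'=\{d\in D:\pi(S_{j,d})\cap T'_d\neq\emptyset\}$. This is what the paper does; its projected grid has the same surface as $\mathcal{T}'$ without being literally $\mathcal{T}'$.

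\textbf{Why your fix fails.} With $e(C')$ maximal such that $C'\subseteq\pi(S_{j,e})$, the part of your $S^{j,\mathcal{T}'}_d$ over $C'$ equals $S_{j,e(C')}\cap[C'\times\RR]$ for \emph{every} $d$, so it never grows. In a grid, however, new cells can appear over the same base component at smaller indices. For example, $S_d=(0,1)\times\{e\in D: e\geq d\}$ is a $(1,0)$-grid whose projection is the constant $(1)$-grid $((0,1))_{d\in D}$. With $T'_d=(0,1)$ your family is constantly $(0,1)\times\{1\}$, missing almost all of the surface. So the claim that ``the surfaces over $C'$ are the same'' is false.

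The index bound does not repair this. Each $S_{j,e}\cap[C'\times\RR]$ already has finitely many components. If instead you mean a union over all $e\geq d$ with $C'\subseteq\pi(S_{j,e})$, that union is $S_{j,d}\cap[C'\times\RR]$ or empty, which is just your original definition.

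If you want the projected grid to be literally $\mathcal{T}'$, the correct variant uses $S_{j,\min(d,e(C'))}$ over $C'$. This is definable via $x\mapsto\max\{e\in D: x\in\pi(S_{j,e})\}$, which is constant on components of $T'_d$ by the same clopenness fact, and the verification then goes through. As written, though, your central construction produces the wrong surface and leaves the one genuinely needed fact unproved.
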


\begin{proof}
    Let $\mathcal{A}=\{A_1,\dots,A_k\}$ be a collection of definable subsets of $\RR^n$ that partition $\RR^n$.
    For each $i=1,\ldots,k$ let $\es_{i,1},\ldots, \es_{i,p(i)}$ be the grids such that 
    \[
    \bigcup_{j=1}^{p(i)} \surf(\es_{i,j})=A_i.
    \]
 
    By (GD)$_{n-1}$, there is a grid decomposition $\G$ of $\RR^{n-1}$ compatible with
    \[
    \{ \surf(\pi(\es_{i,j}))\ : \ i\in \{1,\ldots,k\}, j\in \{1,\ldots,p(i)\}\}. 
    \]

    Let $i\in \{1,\ldots,k\}$  and  $j\in \{1,\ldots,p(i)\}$. 
    For ease of notation, we write $\es=(S_d)_{d\in D}$ for $\es_{i,j}$. 
    Now let $\es'= (S'_d)_{d\in D}\in \G$ be such that 
    \[
    \surf(\es') \subseteq \surf(\pi(\es)).
    \]
    Let $w=(w_1,\ldots,w_n)\in \{0,1\}^n$ be such that $\es$ is a $w$-grid, and let $w'=(w'_1,\ldots,w'_{n-1})\in \{0,1\}^{n-1}$ be such that $\es'$ is a $w'$-grid.
    Set 
    \[
    D':=\{d\in D: \pi(S_d) \cap S_d' \neq \emptyset \}
    \]
    and for $d\in D'$, set
    \[
    S^*_d:=S_d \cap [S_d' \times \RR].
    \]
    We claim that $(S^*_d)_{d\in D'}$ is a $(w_1',\ldots,w_{n-1}',w_n)$-grid in $\RR^n$.\\

    Let $d\in D$. We first show that $S_d^*$ is $(w_1',\ldots,w_{n-1}',w_n)$-stack. By \cref{lem:fin-CC} and \cref{lem:restrict-stack}, it is enough to prove that $\pi(S^*_d)$ is a substack of $S_d'$. By \cref{lem:substack-CC} it is sufficient to establish that every connected component of $\pi(S^*_d)$ is a connected component of $S_d'$.\\
    
    Let $C\subseteq \RR^{n-1}$ be a connected component of $\pi(S^*_d)=\pi(S_d)\cap S_d'$.
    Then there is a connected component $C'$ of $S_d'$ containing $C$.
    Since $\surf(\es')\subseteq \surf(\pi(\es))$, there is a connected component $C''$ of $\surf(\pi(\es))$ containing $C'$.
    By \cref{lem:CC0-grid} there is $e\in D$ such that $C''$ is a connected component of $\pi(S_e)$. 
    If $d\leq e$, then $C''$ is a connected component of $\pi(S_d)$ as well.
    On the other hand, if $e<d$, then either $C''$ is a connected component of $\pi(S_d)$ or $C''$ is disjoint from $\pi(S_d)$.
    Since $C''$ is not disjoint from $\pi(S_d)$, we must have that $C''\subseteq \pi(S_d)$.
    Therefore $C'\subseteq \pi(S_d)$ as well.
    Thus $C'\subseteq \pi(S_d)\cap S_d'$ and so $C'=C$. Hence $C$ is a connected component of $S'_d$.\\

    Let $d,e\in D'$ with $e<d$. We now show that $S_d^*$ is a substack of $S_e^*$. By \cref{lem:substack2}, it is enough to show that  $\pi(S_d^*)$ is a substack of $\pi(S_e^*)$. Let $C$ be a connected component of $\pi(S_d^*)$. Suppose towards a contradiction that $C$ is not a connected component of $\pi(S_e^*)$.
    Thus $C$ is strictly contained in a connected component $C'$ of $\pi(S_e^*)$.
    By the above, the set $C'$ is also a connected component of $S_e'$. 
    Since $C$ is a connected component of $\pi(S_d^*)$ it is a connected component of $S_d'$, and thus a connected component of $S_e'$. Thus $C=C'$, a contradiction.\\

    By \cref{lem:subgrid-CC} we have that $(\pi(S_d*))_{d\in D'}$ is a $(w_1',\ldots,w_{n-1}')$-grid.
    Lastly, suppose that that $w_n=0$. Since 
    $\bigcup_{d\in D'} S_d^* \subseteq \bigcup_{d\in D} S_d$
    and the latter being \'etale, we conclude  that $\bigcup_{d\in D'} S_d^*$ is \'etale as well.
    Therefore $(S_d^*)_{d\in D'}$ is a grid.\\

    Observe that $\surf((S_d^*)_{d\in D'})=\surf(S)\cap [\surf(S')\times \RR]$, and thus the collection of all such grids has the desired properties.
\end{proof}

We are now ready to prove (GD$)_n$. Let $A_1,\dots, A_m\subseteq \RR^m$ be definable such that $\mathcal{A}=\{A_1,\dots,A_m\}$. Without loss of generality we may assume that $A_i\cap A_j=\emptyset$ whenever $i,j\in\{1,\dots,m\}$ and $i\neq j$.
Set 
\[ A_{m+1}:= \RR^n \setminus \left (\bigcup_{i=1}^m A_i \right).\]
We will show that $A_i$ can be written as a finite disjoint union of surfaces of grids for each $i=1,\ldots,m+1$.\\

Let $i\in \{1,\ldots,m,m+1\}$. For ease of notation, set $A:=A_i$.
Let
\[
X := \{(x,y)\in \RR^{n-1}\times \RR: (x,y)\in A \land y\not\in \inter(A_x)\}.
\]
We can take $X$ to be a finite disjoint union of surfaces of grids by \cref{lem:discretefibers}.\\

Now let
\[
Y := \{(x,y)\in \RR^{n-1}\times \RR :  y\in \inter(A_x)\} .
\]
Then set 
\begin{align*}
Z_1&:=\{(x,y)\in \RR^{n-1}\times \RR: (x,y)\in A \land y\in \bd(Y_x)\},\\
Z_2&:=\{(x,y)\in \RR^{n-1}\times \RR: (x,y)\in A \land y\in \Midp(Y_x)\}.
\end{align*}
By \cref{lem:discretefibers} both $Z_1$ and $Z_2$ are finite disjoint unions of surfaces of grids.
Applying (GD)$_{n-1}$, we know that $\pi(Y)$ is a finite disjoint union of surfaces of grids.
Therefore by \cref{cor:grid-complement} we find that $Y$ is a finite disjoint union of surfaces of grids.
Since $X$ and $Y$ are disjoint and $X\cup Y=A$, we conclude that $A$ is a finite disjoint union of surfaces of grids.\\

Thus we have shown that $A_i$ can be written as a finite disjoint union of surfaces of grids for $i=1,\dots,m+1$.
Now apply \cref{lem:grid-refine} to obtain a grid decomposition of $\RR^n$ compatible with $A_1,\ldots,A_m$, as desired.

\subsubsection{(GD$)_{n}$+(PC)$_{n} \Rightarrow $ (PC$)_{n+1}$}\label{section:step3}

Let $A\subseteq \RR^{n+1}$ be definable such that $A_x$ is discrete for all $x\in \pi(A)$.
We will prove (PC$)_{n+1}$ by induction on the dimension of $\pi(A)$.\\

Suppose that $\dim(\pi(A))=0$.
By (GD)$_{n}$, we can write $\pi(A)$ as a finite disjoint union of surfaces of grids. Thus, we can reduce to the case that $\pi(A)$ is the surface of a grid.
Let $w=(w_1,\ldots,w_n)\in \{0,1\}^n$ and let $\es=(S_d)_{d\in D}$ be a $w$-grid in $\RR^n$ such that $\pi(A)=\surf(\es)$.
By \cref{cor:grid-dim}, 
\[
w_1=\cdots=w_n=0.
\]

Since the fiber $A_x$ is discrete for all $x\in \pi(A)$,
we can conclude that $A$ is the surface of a $(0,\dots,0)$-grid by  \cref{lem:zerogrids}.\\

Inductively assume that (PC)$_{n+1}$ holds for every $k<\dim(\pi(A))$.
We will need the following lemma about \'etale points.

\begin{lem}\label{cor:etale-grids}
    Let $A\subseteq \RR^{n+1}$ be definable and vertically bounded such that $\pi(A)$ is the surface of a grid and $A_x$ is finite for all $x\in \pi(A)$.
    Then 
  \[
  \{x\in \pi(A): \text{ there is $y\in \RR$ such that } A \text{ is not \'etale at } (x,y)\}   
  \]
is nowhere dense in $\pi(A)$.
\end{lem}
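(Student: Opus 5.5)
Let $E$ be the set in the statement and put $P:=\pi(A)$. By \cref{lem:BC-grids}(1) the set $E$ either has interior in $P$ or is nowhere dense in $P$. I will therefore assume that $E$ has interior in $P$ and derive a contradiction. By \cref{lem:CC0-grid} I can shrink to an open subset $U$ of $P$ that lies inside a single connected component of $\surf$. That component is a cell $C$, and I take $U\subseteq E\cap C$ open in $C$. Using \cref{fact:opensubset}, I then replace $A$ by $A\cap(U\times\RR)$. Since $\pi_w|_C$ is a homeomorphism onto an open set, I will also replace $U$ by a small open box inside a cell, so that $U$ is again a cell.

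The first step is to arrange hypotheses (1) and (2) of \cref{lem:4.11} on a smaller open piece. By \cref{lem:fl-cell}, the set of $x\in U$ with $\cl(A_x)\neq\cl(A)_x$ is nowhere dense in $U$, so I shrink $U$ to avoid it; finiteness of fibers is already assumed. Next I make the fiber size constant. Consider $U(m):=\{x\in U: |A_x|=m\}$. Since $U=\bigcup_m U(m)$ is a countable union, \cref{lem:BC-cell} together with the Baire category theorem shows that some $U(m)$ is somewhere dense, and hence has interior. Shrinking again, I may assume $|A_x|=m$ for all $x\in U$, and write $h_1<\dots<h_m$ for the functions sending $x$ to the $i$-th element of $A_x$.

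By \cref{cor:cellcont}, applied $m$ times, each $h_i$ is continuous on a dense open subset of $U$, so I shrink $U$ once more so that all $h_i$ are continuous on it. On this $U$, I claim $A\cap(U\times\RR)=\bigcup_i\Gamma(h_i)$ is étale at each of its points. Take $a\in U$ and disjoint open intervals $J_i\ni h_i(a)$. By continuity there is a box $B\ni a$ with $h_i(B\cap U)\subseteq J_i$ for every $i$. Inside $B\times J_i$ the set $A$ is then exactly $\Gamma(h_i|_{B\cap U})$, and $\pi$ restricted to that graph is a homeomorphism. I still have to pass from étaleness of $A\cap(U\times\RR)$ to étaleness of $A$ itself at points over $U$. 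For this I use that $U$ is open in $P$: shrinking the box $B$ so that $B\cap P\subseteq U$ gives $A\cap(B\times\RR)=A\cap((B\cap U)\times\RR)$. So every point of $A$ over $U$ is étale, contradicting $U\subseteq E$.

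The main obstacle is this last localization, which requires $U$ to be open in $P$ and not merely in the cell $C$. The reason is that nearby connected components of the grid surface could otherwise accumulate. This is exactly what clopenness of components in \cref{lem:CC0-grid}(2) rules out, so I would rely on that lemma there. The continuity obtained from \cref{cor:cellcont} is only on an open subset of the cell, which is why I take all shrinkings inside $C$, intersected with a neighbourhood witnessing that $C$ is open in $P$. Vertical boundedness is not needed in this argument except to keep the limits finite, if one instead prefers to get continuity from hypothesis (1) as in the proof of \cref{lem:4.11}.
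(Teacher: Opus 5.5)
Your proof is correct, but it takes a different route from the paper's.

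\textbf{The paper's route.} It works inside the simultaneous induction. It takes the set $Y$ of non-\'etale points and sets $f(x)=\min Y_x$. It then repeatedly uses (GD)$_n$ together with \cref{lem:grid-inter} to pass to subgrids that have interior in $\pi(A)$. On the final subgrid, $f$ and its neighbours $f_a,f_b$ in $A_x$ are continuous; getting there requires a four-way case split according to which points of $A_x$ lie above or below $f(x)$. It then reads off \'etaleness at $(x,f(x))$, which contradicts $(x,f(x))\in Y$.

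\textbf{Your route.} You localize to one connected component of $\pi(A)$. By \cref{lem:CC0-grid} this component is a cell and is clopen in $\pi(A)$. The Baire category theorem together with \cref{lem:BC-cell} makes $|A_x|$ constant on an open piece. \cref{cor:cellcont} then makes every enumerating function $h_1<\dots<h_m$ continuous. \'Etaleness over that piece follows once the box is chosen inside a neighbourhood witnessing that the piece is open in $\pi(A)$.

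\textbf{What each route buys.} Yours never invokes (GD)$_n$, so the lemma no longer depends on the inductive hypothesis. It uses only the noiseless-structure results on cells and grids. It also needs neither vertical boundedness nor the case split. If you run it on an arbitrary nonempty relatively open subset of $\pi(A)$, rather than one assumed to lie inside the exceptional set, it directly yields an open subset avoiding that set, so the dichotomy from \cref{lem:BC-grids} is not even needed. The paper's route stays within the grid machinery used throughout the proof of \cref{thm:mainthm}. It only has to control three functions rather than a full enumeration of the fibre, so it needs no stratification by fibre cardinality.

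Your preliminary step arranging $\cl(A_x)=\cl(A)_x$ via \cref{lem:fl-cell} is never used afterwards and can be dropped.
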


    \begin{proof}
        Let 
        \[
        Y:= \{(x,y)\in A: A \text{ is not \'etale at } (x,y)\},
        \]
        and note that $Y$ is definable. 
        Suppose towards a contradiction that $\pi(Y)$ is somewhere dense in $\pi(A)$. 
        Then $\pi(Y)$ has interior in $\pi(A)$ by \cref{cor:boundary}(1).
        Applying (GD)$_{n}$, we obtain a grid decomposition of $\RR^n$ compatible with $\pi(A)$ and $\pi(Y)$.
        Therefore, we may write $\pi(Y)$ as a finite union of surfaces of grids.
        Since $\pi(Y)$ has interior in $\pi(A)$, by Lemma \ref{lem:grid-inter} there is a grid $\es$ in $\RR^n$ whose surface is contained in $\pi(Y)$ and has interior in $\pi(A)$. Let $f : \surf(\es) \to \RR$ be the definable function which maps $x \in \surf(\es)$ to the minimum of $Y_x$. Since $Y_x$ is finite for all $x \in \pi(A)$, this function $f$ is well-defined.
         By \cref{cor:gridcont}, there is a dense, definable, open set $Z\subseteq \surf(\es)$ such that $f|_Z$ is continuous.
        Applying (GD)$_{n}$ again, we obtain a grid decomposition of $\RR^n$ compatible with $Z$ and $\pi(A)$.
        Since $Z$ has interior in $\pi(A)$, there is a grid $\es'$ by Lemma \ref{lem:grid-inter} such that $\surf(\es')\subseteq Z$ and $\surf(\es')$ has interior in $\pi(A)$.\newline

        \noindent Now define sets $A'_0, A'_a, A'_b$, and $A'_{ab}$ as follows:
        \begin{align*}
            A'_0 &:= \{x \in \surf(\es') \ : \ A_x=\{f(x)\} \},\\
            A'_a &:= \{x\in \surf(\es') : \forall y\in A_x \ y\geq f(x)\}, \\
            A'_b &:= \{x\in \surf(\es'): \forall y\in A_x \  y \leq f(x)\},\\
            A'_{ab} &:= \{x\in \surf(\es') : \exists y_1, y_2 \in A_x \ (y_1<f(x)) \land (y_2>f(x))\}.
        \end{align*}
        Each of these sets is definable and 
        \[
        \surf(\es')=A'_0\cup A'_a \cup A'_b \cup A'_{ab}.
        \]
        Using that $\surf(\es')$ has interior in $\pi(A)$, apply (GD)$_{n}$ and Lemma \ref{lem:grid-inter} in the same way as before to obtain a grid $\es''$ whose surface is contained in one of $A'_0,A'_a, A'_b,$ or $A'_{ab}$ and has interior in $\pi(A)$.
        We will only consider the case when $\surf(\es'')\subseteq S_{ab}$, as the other cases follow similarly.\\

        Let $f_a,f_b : \surf(\es'')\to \RR$ be definable function such that for all $x\in \surf(\es'')$
         \begin{align*}
            f_a(x) &:= \min{\{y':y'>\min{Y_x}\}}),\\
            f_b(x) &:= \max{\{y':y'<\min{Y_x}\}}).
        \end{align*}
        By \cref{cor:gridcont}, there are dense, definable, open sets $U_a, U_b \subseteq \surf(\es'')$ such that the restriction $f_a|_{U_a}$ and $f_b|_{U_b}$ are continuous.
        Set $U:=U_a \cap U_b$ and observe that $U$ is a dense, definable, and open subset of $\surf(\es'')$ on which $f,f_a$ and $f_b$ are continuous.\newline
        
        Let $x\in U$. We claim that $A$ is \'etale at $(x,f(x))$. 
        By construction, the graphs of $\Gamma(f|_U), \Gamma(f_a|_U)$, and $\Gamma(f_b|_U)$ do not intersect.
        Therefore, there is an open box $B\subseteq \RR^{n+1}$ centered at $(x,f(x))$ such that 
        \[
        A\cap B\subseteq \Gamma(f)\cap [U \times \RR].
        \]
        Thus $\pi$ maps $A\cap B$ homeomorphically onto an open set, and $A$ is \'etale at $(x,f(x))$. 
        However, since $(x,f(x))\in Y$, this is a contradiction.
    \end{proof}

Now we continue with our proof of (PC)$_{n+1}$.
Apply (GD)$_{n}$ to obtain a grid decomposition $\mathcal{G}$ of $\RR^n$ compatible with $\pi(A)$.
Without loss of generality, we can assume that $\pi(A)$ is the surface of one of the grids in $\mathcal{G}$.\\

For each $e\in D$, set 
\[ A(e) := \fisol(A,e) \cap \big(\RR^n \times (-\frac1e,\frac1e)\big).\]
Since  $A_x$ is discrete for all $x\in \pi(A)$ and $D \cup \{0\}$ is closed, 
\[
A= \bigcup_{e\in D} A(e).
\]
Moreover, the family $(A(e))_{e \in D}$ is decreasing and each $A(e)$ is vertically bounded.
Thus, for all $e\in D$ and $x\in \pi(A)$, the fiber $A(e)_x$ is finite.\\

By \cref{cor:boundary}(3), the boundary of $\pi(A(e))$ in $\pi(A)$ is nowhere dense. 
Hence, by \cref{lem:dim}, the union $\bigcup_{e\in D} \operatorname{bd}_{\pi(A)}(\pi(A(e))$ is nowhere dense in $\pi(A)$. 
By \cref{cor:gridnowheredense}(2) and induction on $\dim(\pi(A))$, we conclude that
\[
A \cap \Big[\big(\bigcup_{e\in D} \operatorname{bd}_{\pi(A)}(\pi(A(e))\big)\times \RR\Big]
\]
is a finite union of surfaces of grids. 
It is left to show that
\[
A \cap \Bigg[\Big(\pi(A) \setminus \big(\bigcup_{e\in D} \operatorname{bd}_{\pi(A)}(\pi(A(e))\big)\Big)\times \RR\Bigg]
\]
is a finite union of surfaces of grids. 
By (GD$)_{n}$,
\[\pi(A) \setminus \big(\bigcup_{e\in D} \operatorname{bd}_{\pi(A)}(\pi(A(e))\big)\]
is a finite union of surfaces of grids. 
Let $\es=(S_d)_{d \in D}$ be one of these grids. 
It is enough to show that
\[
A \cap (\surf(\es) \times \RR)
\]
is a finite union of surfaces of grids. \\

Let $C\subseteq \RR^n$ be a connected component of $\surf(\es)$. 
We claim that for all $e\in D$
\[
\pi(A(e)) \cap C =\emptyset \text{ or } C\subseteq \pi(A(e)).
\]
Indeed, since $C \cap \operatorname{bd}_{\pi(A)}(\pi(A(e)))=\emptyset$, we know that 
\[
C \subseteq \inter_{\pi(A)}(\pi(A(e))) \cup \inter_{\pi(A)}(\pi(A)\setminus \pi(A(e))).
\]
From connectedness of $C$, we deduce that 
\[
C \subseteq \inter_{\pi(A)}(\pi(A(e))) \text{ or } C \subseteq \inter_{\pi(A)}(\pi(A)\setminus \pi(A(e))).
\]
The claim follows.
\\

Let $e\in D$. Applying Corollary \ref{cor:etale-grids} to $A(e)\cap [\surf(\es) \times \RR]$, we know that
\[ E_e:=\{x\in \surf(\es) : \text{there is } y \in \RR \text{ such that } A(e) \cap [\surf(\es) \times \RR] \text{ is not \'etale at } (x,y) \}\]
is nowhere dense in $\surf(\es)$.
Similarly, applying \cref{cor:FL-grids} to $A(e) \cap [\surf(\es) \times \RR]$, we conclude that 
\[ 
F_e:=\{x\in \surf(\es) : \cl(A(e)_x)\neq \cl(A(e))_x \}
\]
is nowhere dense in $\surf(\es)$.
Thus $E_e\cup F_e$ is nowhere dense in $\surf(\es)$, and hence $\dim (E_e \cup F_e) < \dim(\surf(\es))$ by \cref{cor:gridnowheredense}. By \cref{lem:dim}
\[
\dim \big(\bigcup_{e \in D} (E_e \cup F_e) \big) = \max_{e \in D} (\dim (E_e \cup F_e)) < \dim(\surf(\es)).
\]
Set 
\[
V := \surf(\es) \setminus \big(\bigcup_{e\in D} (E_e \cup F_e)\big).
\]
Note that $V\subseteq \surf(\es)$ is dense, definable, and open in $\surf(\es)$, and for every $e\in D$
\begin{enumerate}
    \item $A(e) \cap (\surf(\es)\times \RR)$ is étale at every $(x,y) \in A(e)\cap (V\times \RR)$,
    \item $\cl{(A(e)_x)}=\cl{(A(e))}_x$ for all $x \in V$.
\end{enumerate}

By induction on $\dim(\pi(A))$, the set 
\[
A \cap \big((\surf(\es)\setminus V)\times \RR\big)
\]
is a finite union of surfaces of grids. It is left to show that $A\cap (V\times \RR)$ is a finite union of surfaces of grids. Observe that $V$ is a finite union of surfaces of grids by (GD$)_{n}$.
Let $\es' = (S'_d)_{d \in D}$ be one of these grids, and let $(w_1,\dots,w_n) \in \{0,1\}^n$ be such that $\es'$ is a $(w_1,\ldots,w_n)$-grid. 
It is enough to show that
\[
A \cap (\surf(\es')\times \RR)
\]
is a finite union of surfaces of grids.\\

For every $d \in D$, we set
\[
S''_{d} := A(d) \cap (S'_{d} \times \RR).
\]
It is left to show that $\es'':=(S''_d)_{d \in D}$ is a $(w_1,\ldots,w_n,0)$-grid, and 
\[
\surf(\es'') = A \cap (\surf(\es')\times \RR).
\]

Let $d\in D$.
We claim that $S_d''$ is a $(w_1,\ldots,w_n,0)$-stack. 
First, we show that $\pi(S_d'')$ is a $(w_1,\ldots,w_n)$-stack.
By \cref{lem:substack-CC}, to show this and that $\pi(S_d'')$ is a substack of $S_d'$, it is enough to prove that every connected component of $\pi(S_d'')$ is a connected component of $S_d'$.
Let $C$ be a connected component of $\pi(S_d'')$, and let $C'$ be a connected component of $S_d'$ containing $C$.
Since $S_d'\subseteq V\subseteq \surf(\es)$, we know that $C'$ is contained in a connected component of $\surf(\es)$. Therefore either 
\[
C'\subseteq \pi(A(d)) \text{ or }C'\cap \pi(A(d))=\emptyset.
\]
Since $C' \cap \pi(A(d))\neq \emptyset$, we have that $C'\subseteq \pi(A(d))$. Thus $C'\subseteq \pi(S_d'')$, and hence $C=C'$.\\

Thus $\pi(S_d'')$ is a $(w_1,\ldots,w_n)$-stack. 
To conclude that $S_d''$ is a $(w_1,\ldots,w_n,0)$-stack, it is enough to verify the assumptions of \cref{lem:4.11}. However, these follow from \cref{fact:etale} and \cref{fact:opensubset}.\\

Let $d,e\in D$ be such that $d>e$. We now show that $S_d''$ is a substack of $S_e''$. By \cref{lem:substack}, it is enough to show that $\pi(S_d'')$ is a substack of $\pi(S_e'')$. Let $C$ be a connected component of $\pi(S_d'')$. Since $\pi(S_d'')$ is a substack of $S_d'$, we know that $C$ is a connected component of $S_d'$. Thus $C$ is a connected component of $S_e'$, because $S_d'$ is a substack of $S_e'$. 
Since 
\[
C\subseteq \pi(A(d))\subseteq \pi(A(e)),
\]
the set $C$ is  a connected component of $\pi(A(e))\cap S_e'$. Since 
\[\pi(A(e))\cap S_e'=\pi(S_e''),\]
$C$ is a connected component of $\pi(S_e'')$. Thus $\pi(S_d'')$ is a substack of $\pi(S_e'')$. 
It then follows from \cref{lem:subgrid-CC} that $(\pi(S_d''))_{d\in D}$ is a $(w_1,\ldots,w_n)$-grid.\\

We now show that $\bigcup_{d\in D} S_d''$ is \'etale. For ease of notation, write 
\[
S:=\bigcup_{d\in D} S_d''.
\]
Towards a contradiction, suppose that there is $x=(x_1,\ldots,x_{n+1})\in S$ such that $S$ is not \'etale at $x$.
Since $A_{(x_1,\dots,x_n)}$ is discrete, there exists an open interval $(a,b)\subseteq \RR$ such that 
\[
A_{(x_1,\dots,x_n)}\cap \big(a,b\big) = \{x_{n+1}\}.
\]
Let $e_0\in D$ be such that $x\in S_{e_0}''$.
Since $S_{e_0}''$ is a stack and $\pi(S_{e_0}'')$ is a substack of $S_{e_0}'$, there is a connected component $C$ of $S_{e_0}'$ and $f\in \mathcal{C}(C)$ such that $f(x_1,\dots,x_n) =x_{n+1}$ and $\Gamma(f)$ is a connected component of $S_{e_0}''$.\\

By \cref{lem:CC0-grid}, there is $\ep_0>0$ such that $\pi(B_{\ep_0}(x))\cap C$ is connected and 
\[
\ep_0<\min \{x_{n+1}-a,b-x_{n+1} \}.
\]
We now show that for every $\ep\in \RR_{>0}$ with $\ep<\ep_0$ there is $y=(y_1,\dots,y_{n+1})\in B_{\ep}(x)$ and a definable function $g : C \to \RR$ such that
\begin{enumerate}
    \item $g(y_1,\dots,y_{n}) = y_{n+1}$,
    \item $g(x_1,\dots,x_n) \notin \big(a,b\big)$,
    \item $\Gamma(g) \subseteq S$.
\end{enumerate}
Let $\ep\in \RR_{>0}$ with $\ep<\ep_0$. We first claim that there exists $e_1 \in D_{<e_0}$ such that
\[
 \big(B_{\ep}(x) \cap S_{e_1}''\big) \setminus \Gamma(f) \neq \emptyset
\]
Indeed, if no such $e_1$ exists, there is an open box $B$ around $x$ such that $B\cap S \subseteq \Gamma(f)$, contradicting that $S$ is not \'etale at $x$.\newline

\noindent Fix $e_1 \in D_{<e_0}$ with the above property, and let 
\[
y=(y_{1},\dots,y_{n+1})\in \big(B_{\ep}(x) \cap S_{e_1}''\big) \setminus \Gamma(f).
\]
Thus 
\[
(x_1,\dots,x_n)\in S_{e_0}' \text{ and } (y_1,\dots,y_n)\in S_{e_1}'.
\]
Because $S_{e_0}''$ is a subgrid of $S_{e_1}''$, we know that $C$ is a connected component of $S_{e_1}''$.
Since 
\[
(y_{1},\dots,y_{n})\in \pi(B_{\ep}(x) )\subseteq C,
\]
there exists $g\in \mathcal{C}(C)$ such that $y \in \Gamma (g)$ and $\Gamma(g)$ is a connected component of $S_{e_1}''$.
As $\Gamma(f)$ and $\Gamma (g)$ are both connected components of $S_{e_1}''$, we have that 
\[
\text{either } \Gamma(f)=\Gamma(g) \text{ or }\Gamma(f)\cap \Gamma(g)=\emptyset.
\]
Thus $\Gamma(f)\cap \Gamma(g)=\emptyset$, because $y\in \Gamma(g) \setminus \Gamma(f)$. 
In particular, since 
\[
g(x_1,\dots,x_n)\neq f(x_1,\dots,x_n),
\] 
we have that $g(x_1,\dots,x_n)$ is not in the interval $\big(a,b\big)$.\newline

Let $(\ep_j)_{j \in \NN}$ be a sequence of positive real numbers smaller than $\ep_0$ such that 
\begin{enumerate}
    \item [(4)]  $\pi(B_{\ep_j}(x))\cap C$ is connected for every $j\in \NN$, and
    \item [(5)] $\lim_{j\to \infty} \ep_j = 0$. 
\end{enumerate}
For each $j\in \NN$, pick $y_j=(y_{j,1},\dots,y_{j,n+1})\in B_{\ep_j}(x)$ and a definable function $g_j: C \to \RR$ satisfying conditions (1)-(3). By the pigeonhole principle, there is an infinite subset $J\subseteq \NN$ such that
\begin{itemize}
    \item $g_j(x_1,\dots,x_n) \geq b$ for all $j\in J$, or
    \item $g_j(x_1,\dots,x_n) \leq a$ for all $j\in J$.
\end{itemize}
We will handle the first case, and note that the second case follows by the same argument. So without loss of generality assume that $g_j(x_1,\dots,x_n) \geq b$ for all $j\in \NN$. \\

Set $c := x_{n+1} + \ep_0$. Since $y_j\in B_{\ep_j}(x)$ and $\ep_j\leq \ep_0$,
\[
g_j(y_{j,1},\dots,y_{j,n}) = y_{j,n+1} < c \leq b \leq g_j(x_1,\dots,x_n).
\]
Since $\pi(B_{\ep_j}(x))\cap C$ is connected and $g_j$ is continuous, by the intermediate value theorem we can choose for each $j\in \NN$ some $u_j\in \pi(B_{\ep_j}(x))\cap C$ such that $g_j(u_j)=c$. 
Since $\lim_{j\to \infty} \ep_j = 0$,
\[
\lim_{j \to \infty} (u_j,g_j(u_j)) = (x_1,\dots,x_n,c)
\]
Thus $(x_1,\dots,x_n,c)\in \cl(A)$.
Since $c\in (a,b)$ and $(a,b)\cap A_{(x_1,\dots,x_n)}=\{x_{n+1}\}$, we have that 
\[
c\in \cl(A)_{(x_1,\dots,x_n)}\setminus\cl(A_{(x_1,\dots,x_n)}).
\]
This contradicts that $\cl(A_{(x_1,\dots,x_n)})=\cl(A)_{(x_1,\dots,x_n)}$.
Therefore $S$ is \'etale and we conclude that $(S_d'')_{d\in D}$ is a $(w_1,\ldots,w_n,0)$-grid.\\

Lastly, we show that $\surf(\es'') = A \cap (\surf(\es')\times \RR)$.
Let $x\in \surf(\es'')$, and let $d\in D$ be such that $x\in S_d''$. Thus $x\in A(d)$ and $\pi(x)\in S_d'$.
Since $A(d)\subseteq A$ and $S_d'\subseteq \surf(S')$, we conclude that $x\in A \cap (\surf(\es')\times \RR)$.
Now, let $x\in A\cap (\surf(S')\times \RR)$. Then there is $d,e\in D$ such that $x\in A(d)$ and $\pi(x)\in S_e'$.
Suppose that $e\leq d$. 
Then $A(d)\subseteq A(e)$ and thus 
\[
x\in A(e)\cap (S_e' \times \RR)=S_e''.
\]
Thus $x\in \surf(\es'')$. The case $d\leq e$ can be handled similarly.

\section{Special Manifolds and Grids}\label{section:special}

Following \cite{Miller-tame}, in this section we take submanifold to mean an embedded submanifold, everywhere of the same dimension, but not necessarily connected.
A $d$-dimensional submanifold $M$ of $\RR^n$ is called \textbf{special} if there exists some coordinate projection $\mu:\RR^n\to \RR^d$ such that for every $y\in \mu(M)$ there is some open box $B$ about $y$ such that $\mu$ maps each connected component of $M\cap\mu^{-1}(B)$ homeomorphically onto $B$.
The following decomposition theorem for strongly d-minimal structures was first developed by Miller in \cite[Theorem 4]{Miller-tame}, although a flaw was found in the proof that was later remedied by Thamrongthanyalak in \cite[Theorem B]{Athipat-Michael}.

\begin{fact}\label{thm:miller-decomp}
    Suppose $\R$ is a strongly d-minimal expansion of $\overline{\RR}$.
    Let $\A$ be a finite collection of definable subsets of $\RR^n$.
    Then there exists a finite partition $\P$ of $\RR^n$ into special submanifolds compatible with $\A$ such that for each $P\in \P$ the frontier of $P$ is a finite union of elements of $\P$.
\end{fact}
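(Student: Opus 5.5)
We prove the statement by induction on the maximal dimension of the sets involved. At each stage we peel off an open, dense, top-dimensional part of each set that is already a special submanifold, and we push everything else, including all frontiers, into a lower-dimensional set. The tools are the grid decomposition (\cref{thm:mainthm}), naive dimension, and two standard d-minimal facts.

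\textbf{Step 1 (two auxiliary facts).} Strong d-minimality is the same as d-minimality by \cref{thm:uniform2}. We first establish:
\begin{enumerate}
\item[(a)] $\dim \fr(A) < \dim A$ for every definable $A$;
\item[(b)] every definable function on an open set is $C^1$ off a definable nowhere dense set.
\end{enumerate}
For (a), decompose $A$ into surfaces of grids. Each surface is locally closed (\cref{lem:proj-grid}), so the frontier of a surface is nowhere dense in its closure. By \cref{fact:naive}(5), the frontier then has lower dimension, after comparing with $\cl$ via the noiselessness results of \cref{cor:gridnowheredense}. For (b), apply \cref{fact:genericcnt} to the definable difference quotients and their limits; Miller's argument in \cite{Miller-tame} goes through verbatim.

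\textbf{Step 2 (top-dimensional pieces).} Let $\A$ be given and set $d:=\max_{A\in\A}\dim A$. Take a grid decomposition compatible with $\A$, and let $M$ be the surface of a $w$-grid $\es$ with $|w|=d$. Put $\mu:=\pi_w$. By \cref{lem:proj-grid}, $\mu|_M$ is a local homeomorphism onto an open set, and each connected component of $M$ is a graph over an open subset of $\RR^d$ of the coordinate functions. We form the bad set $E$ as the union of two pieces:
\begin{enumerate}
\item[(i)] the points where these coordinate functions fail to be $C^1$, which has dimension $<d$ by Step 1(b) and \cref{cor:gridnowheredense};
\item[(ii)] $\mu^{-1}\bigl(\cl\,\mu(\fr M)\bigr)\cap M$, which has dimension $<d$ by Step 1(a) and \cref{fact:naive}(5).
\end{enumerate}
On $M' := M\setminus E$, the map $\mu$ is a local homeomorphism. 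It is also proper over $U:=\mu(M)\setminus\cl\,\mu(\fr M)$: a limit of points of $M'$ over a point of $U$ cannot lie in $\fr M$, so it lies in $M$. We also need the box $B$ in the definition to be contained in $U$. Hence $\mu|_{M'}$ is a covering map over each component of $U$. Restricted to any open box $B\subseteq U$, which is simply connected, the covering is trivial. So each component of $M'\cap\mu^{-1}(B)$ maps homeomorphically onto $B$, and $M'$ is a special $C^1$ submanifold. Shrinking $M'$ further by a definable lower-dimensional set keeps this property, since we may always re-choose $U$.

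\textbf{Step 3 (frontier condition and induction).} Collect the finitely many $d$-dimensional special pieces $P_1,\dots,P_k$ obtained in Step 2, and let $L$ be everything else in $\RR^n$ that is not covered by open $d$-dimensional pieces, together with $\bigcup_i \fr(P_i)$. Each $\fr(P_i)\subseteq \cl(P_i)$ is disjoint from $P_i$. If $\fr(P_i)$ meets some $P_j$, we remove $\cl\,\mu(\fr P_i)$-preimages from $P_j$ as in Step 2(ii); this only removes a set of dimension $<d$. After finitely many such removals, we may assume $\fr(P_i)\cap P_j=\emptyset$ for all $i,j$, so that $\fr(P_i)\subseteq L$. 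The set $L$ together with the traces of $\A$ on it has dimension $<d$ away from the open $d$-dimensional region. The $n$-dimensional part is handled directly: its pieces are open sets, and the frontier of an open set has dimension $<n$.

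We now apply the induction hypothesis to the collection $\{A\cap L : A \in \A\}\cup\{\fr(P_i)\}$. This yields a partition of $L$ into special submanifolds that is compatible with every $\fr(P_i)$ and satisfies the frontier condition internally. Then $\fr(P_i)$ is a finite union of cells of this partition, and frontiers of the lower-dimensional cells stay inside $L$ because $L$ is closed; the cleanest way to ensure this is to take $L$ to be the closed set $\cl(\bigcup_i\fr P_i)$ union the low-dimensional part. Adding $P_1,\dots,P_k$ gives the required partition $\P$.

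\textbf{Main obstacle.} The hard part is Step 2(ii) together with the compatibility of choices in Step 3. We need $\mu$ restricted to the good part to be genuinely proper over boxes, and we must ensure that removing bad sets does not create new frontier points in the $d$-dimensional stratum that would force infinitely many iterations. I would first try to control this with a single application of Step 1(a): choose $L$ to be closed of dimension $<d$ from the start, containing $\cl(\fr M)$ for every $d$-dimensional surface $M$ and the non-$C^1$ locus. Then every removal stays within $L$, and one pass suffices. This is also where Thamrongthanyalak's correction in \cite{Athipat-Michael} is needed, and I would follow that argument for the frontier bookkeeping.
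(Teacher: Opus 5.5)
The paper gives no proof of this statement; it quotes it from \cite[Theorem 4]{Miller-tame} with the correction in \cite[Theorem B]{Athipat-Michael}. Judged on its own terms, your argument fails at Step 1(a): the inequality $\dim\fr(A)<\dim A$ is false in every d-minimal expansion that is not o-minimal. Take a sequence set $D$ (\cref{lem:sequenceset}) and $A:=D\times(0,1)$. Then $\dim A=1$, but $\fr(A)\supseteq\{0\}\times[0,1]$, so $\dim\fr(A)=1$. Your justification breaks where you pass from ``$\fr M$ is nowhere dense in $\cl M$'' to ``$\fr M$ has smaller dimension''. The noiselessness result \cref{cor:gridnowheredense} concerns subsets of a \emph{single} grid surface. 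By contrast, $\cl M$ is only a finite union of surfaces, and one of them (here $\{0\}\times(0,1)$) can be full-dimensional and lie inside $\fr M$. This ruins Step 2(ii). For $M=A$, a $(0,1)$-grid surface with $\mu$ the second coordinate, $\cl\mu(\fr M)\supseteq[0,1]\supseteq\mu(M)$. So $U=\emptyset$ and you discard all of $M$, although $M$ is itself special. It also makes the closed set $L$ of dimension $<d$ containing all frontiers in Step 3 impossible. More fundamentally, in \emph{any} partition $\mathcal{P}$ as in the statement, $\{0\}\times(0,1)$ is covered by finitely many members lying in frontiers of members contained in $A$. These cannot all be $0$-dimensional, since finitely many discrete sets are countable. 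So the frontier condition cannot come out of an induction on dimension alone. You need a second parameter that strictly drops when passing from a $d$-dimensional member to the $d$-dimensional part of its frontier, and a bound on that parameter. In the example, the fiber over $y$ goes from $D$, whose closure has Cantor--Bendixson rank $2$, to $\{0\}$, of rank $1$. The bound is where uniformity of the kind in strong d-minimality naturally enters. Nothing in your sketch plays this role, so the ``finitely many removals'' in Step 3 is unsupported.

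Separately, the properness claim in Step 2 is false. You check only that limits of points of $M'$ over $U$ lie in $M$, not that such limits exist, and sheets can escape to infinity over interior points of $U$. For $a\neq a'$, consider the semialgebraic set $M:=\{(a,t,1/t):0<t<1\}\cup\{(a',t,0):-1<t<1\}$. It is a single $(0,1,0)$-stack, hence (as a constant family) the surface of a $(0,1,0)$-grid. With $\mu$ the second coordinate, step (i) removes nothing and $\fr M=\{(a,1,1),(a',\pm1,0)\}$, so $M'=M$ and $U=(-1,1)$. Yet $M\cap\mu^{-1}([-\frac12,\frac12])$ is unbounded, and $M$ is not $\mu$-special at $0$, because the first sheet projects only onto $(0,1)$. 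Properness is the wrong tool in this setting anyway: for the special submanifold $A=D\times(0,1)$ above, the preimage in $A$ of every nonempty compact subset of $\mu(A)$ fails to be compact. What has to be controlled is the projection $\mu(C)$ of each individual cell $C$ of the surface, not the global map $\mu|_{M'}$.
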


There are two immediate key differences between \cref{thm:miller-decomp} and \cref{thm:intro2}.
First, we are decomposing into different objects, i.e. special submanifolds and surfaces of grids.
This difference is not just cosmetic.
For example, let $X_1$ be an open disk in $\RR^2$ of radius $R\in \RR$ and $X_2$ be a closed disk of radius $r\in \RR$ with $r<R$, both centered on the same point. 
Set $X:=X_1\setminus X_2$.
Then $X$ is an open set in $\RR^2$, thus a special submanifold.
However, $X$ is not the surface of any one grid, since to handle the hole in $X$ we would need to split the projection into multiple pieces, above which $X$ is a combination of $(0,1)$-grids and $(1,1)$-grids.
On the other hand, define the set $Y\subseteq \RR^3$ as the set of triples $(x,y,z) \in \RR^3$ such that
\[ x\in (0,1) \land y\in \{0,1\} \land (y=0 \implies z\in (0,2))\land (y=1 \implies z\in (0,1)).\]
This is a $(1,0,1)$-stack, and thus also the surface of a $(1,0,1)$-grid.
However, we claim that it is not a special submanifold.
Let $\mu:\RR^3 \to \RR^2$ be the projection onto the first and third coordinates.
Then $\mu(A)=(0,1)\times (0,2)$. Consider the point $(\frac12, 1)\in \mu(A)$.
Let $B$ be an open box around $(\frac12,1)$.
Then $\mu^{-1}(B)$ has two connected components, one of which is contained in the box $(0,1)\times \{1\} \times (0,1)$ but when projected again under $\mu$ will not cover all of $B$.
Thus $Y$ is not a $\mu$-special submanifold, and all other possible projection functions do not preserve the dimension of $Y$, and so will not make $Y$ into a special submanifold either.\\

Second, in \cref{thm:intro2} we only assume $\R$ is d-minimal, and the assumption in \cref{thm:miller-decomp} is that $\R$ is strongly d-minimal.
As previously mentioned, it is this weaker assumption on our grid decomposition theorem that allows us to deduce the equivalence of d-minimality and strong d-minimality as a result.

\bibliographystyle{abbrv}
\bibliography{refs}

\end{document}